\begin{document}

\title[Ray classes for orders]{Ray class groups and ray class fields for orders of number fields}
\author{Gene S. Kopp}
\address{Department of Mathematics, Louisiana State University, Baton Rouge, LA, USA}
\email{kopp@math.lsu.edu}
\author{Jeffrey C. Lagarias}
\address{Department of Mathematics, University of Michigan, Ann Arbor, MI, USA}
\email{lagarias@umich.edu}

\subjclass[2020]{11R37 (primary), 11R54 (secondary)}

\keywords{class field theory, orders of number fields, ray class fields, ring class fields}

\date{November 30, 2024}

\begin{abstract} 
This paper contributes to the theory of orders of number fields.
This paper defines a  notion of \textit{ray class group} associated to an arbitrary order in a number field together with an arbitrary ray class modulus for that order (including Archimedean data), constructed using invertible fractional ideals of the order. 
It shows existence of  \textit{ray class fields} corresponding to the class groups. These ray class groups (resp., ray class fields) 
specialize to classical ray class groups (resp., fields) of a number field in the case of the maximal order, and they specialize to 
ring class groups (resp., fields) of orders in the case of trivial modulus. The paper gives  exact sequences for 
simultaneous change of order and change of modulus. As a consequence, we identify the ray class field of an order 
with a given modulus as a specific subfield of a ray class field of the maximal order with a larger modulus. 
We also uniquely describe each ray class field of an order in terms of the splitting behavior of primes. 
\end{abstract}

\maketitle

\tableofcontents

\section{Introduction}

This paper contributes to the theory of orders of number fields. An \textit{order} $\OO$ of an algebraic number field $K$ is a subring of $K$ containing $1$ and having finite rank equal to $[K : \Q]$ as a $\Z$-module. Dedekind showed that the ring $\OO_K$ of all algebraic integers in $K$ is the maximal order, and all other orders $\OO$ are of finite index in $\OO_K$. 

The object of this paper is to extend to general orders $\OO$ the notions of ray class groups and ray class fields for the maximal order $\OO_K$. A ray class group or field of an order is specified by a \textit{level datum} $\ddD = (\OO; \mm, \rS)$ for the field $K$, in which $\OO$ is an order of $K$, $\mm$ is an integral ideal of $\OO$, and $\rS$ is a set of real places of $K$. The pair $(\mm,\rS)$ is called a \textit{modulus}.

Ray class groups are groups of ray classes, which are sets of fractional ideals satisfying congruence conditions modulo $\mm$ and sign conditions at the places in $\rS$. The set of ray classes modulo $(\mm, \rS)$ of a non-maximal order under ideal multiplication forms a monoid (semigroup with identity) rather than a group, because it  includes non-invertible ideal classes.
The set of invertible ray classes forms a group, the \textit{ray class group} of the order.

A ray class field of an order is a certain abelian Galois extension of the number field $K$ associated to a level datum $\ddD = (\OO; \mm,\rS)$. The ray class fields of an order $\OO$ form a ``distinguished'' class of abelian extensions of $K$, cofinal in the partially ordered set of abelian extensions of $K$, whose prime decomposition properties over $K$ are described by the corresponding ray class group and by algebraic properties of the specific order $\OO$ of $K$. 

\subsection{Background}

The first complete version of class field theory for number fields was developed by Takagi \cite{Takagi:1920} in 1920, building on work of Weber, Hilbert, Furtw\"{a}ngler, and Fueter; see \cite{Hasse:67, Katsuya:95}. In Takagi's treatment, the ray class fields of a number field $K$ are associated to ray class groups, which are defined as groups of fractional ideals modulo subgroups of principal ideals satisfying congruence and sign conditions.

The ray class fields of a number field $K$ comprise an infinite set of finite abelian extensions of $K$ that are cofinal in the set of abelian extensions:~That is, every finite abelian extension is contained in some ray class field. The ray class fields $H_{\mm,\rS}$ are attached to moduli $(\mm,\rS)$, where $\mm$ is an ideal of the ring of integers $\OO_K$, and $\rS$ is a subset of the real embeddings of $K$. The extension $H_{\mm,\rS}/K$ is ramified at a subset of the primes diving $\mm$ and the infinite places in $\rS$. The ray class field attached to the modulus $(\OO_K, \emptyset)$ is the \textit{Hilbert class field}.

Associated to each non-maximal order $\OO$ of $K$, there is a separate classical notion of a \textit{ring class field} $H^{\OO}$, associated to a \textit{ring class group} $\Cl(\OO)$. When $K/\Q$ is an imaginary quadratic field, the set of ring class fields arise naturally as the fields $K(j(\tau))$ generated by values of the Klein $j$-invariant at points $\tau \in K$; see Schertz \cite[Ch.~6]{Schertz10}. The set of all ring class fields of a field $K$, obtained by varying the order $\OO$, are generally not cofinal in the set of abelian extensions of $K$ and do not generate the maximal abelian extension $K^{\rm ab}$. The extension $H^\OO/K$ is ramified at a subset of the primes dividing the conductor ideal $\ff(\OO) = \colonideal{\OO}{\OO_K}$.

\subsection{Ray class groups for orders}\label{sec:11}

The first object of the paper is the definition of ray class groups for orders, made in terms of fractional ideals of the order, and the derivation of formulas for computing invariants of such ray class groups, allowing variation of both the order $\OO$ and the ray class modulus $(\mm, \rS)$. Ray class groups for orders specialize to Takagi ray class groups (when $\OO=\OO_K$) and to ring class groups (when $(\mm,\rS)=(\OO,\emptyset)$).

In order to define ray class groups, this paper first gives a detailed review of the properties of integral and fractional ideals of  orders, emphasizing their differences from maximal orders, with examples. 
For integral ideals of non-maximal orders: 
\begin{enumerate}
\item
Ideals do not always factor (uniquely or otherwise) into products of prime ideals.
\item
If an integral ideal $\aa$ divides $\bb$, then $\bb \subseteq \aa$, but the converse need not hold. ``Greatest common divisor'' and ``least common multiple'' of ideals are not well defined. However, the notion of {\em coprimality} of two integral ideals is well defined.
\item
Unique factorization of ideals is restored for the set of all integral ideals coprime to the \textit{conductor ideal} $\ff(\OO)$ of the order, which is the set-theoretically largest ideal of $\OO$ that is also an ideal of $\OO_K$.
\end{enumerate}

For fractional ideals of non-maximal orders: 
\begin{enumerate}
\item 
There exist non-invertible nonzero fractional ideals, so that nonzero fractional ideals under the ideal product operation form a monoid rather than a group.
\item
The monoid of nonzero fractional ideals $\rJ(\OO)$ for ideal product may contain non-integral fractional ideals $\aa$ having a power $\aa^k$ that is an integral ideal. The ideal $\aa^k$ may be $\OO$, giving non-trivial torsion elements in the group of invertible fractional ideals $\rJ^{\ast}(\OO)$. (For the maximal order, $\rJ(\OO)=\rJ^{\ast}(\OO)$ is a free abelian group.)
\item
There is a notion of {\em coprimality} of a fractional ideal and an integral ideal. When a ray class modulus $\mm$ contains the conductor ideal $\ff(\OO)$, then the set $\rJ_{\mm}(\OO)$ of fractional ideals coprime to $\mm$ is a free abelian group.
\end{enumerate}
The paper also treats \textit{extension} and \textit{contraction} of integral and fractional ideals between one order $\OO$ and a larger order $\OO'$ of a fixed number field $K$. This treatment is intended to be more broadly useful beyond its application to ray class groups; see \Cref{subsec:14} for details.

The \textit{ray class group} $\Cl_{\mm, \rS}(\OO)$ is a quotient group of the group $\rJ_{\mm}^{\ast}(\OO)$ of invertible fractional ideals coprime to the modulus $\mm$ by a suitable subgroup $\rP_{\mm,\rS}(\OO)$ of principal ideals.
The modulus $\mm$ may be any nonzero integral ideal---it is permitted to be non-invertible. For this reason, the paper necessarily studies the complexities and pitfalls of non-invertible ideals. More generally, there is a \textit{ray class monoid} $\Clm_{\mm, \rS}(\OO)$ under ideal product built from the monoid of fractional ideals $\rJ_{\mm}(\OO)$ coprime to $\mm$ by modding out by the action of the subgroup $\rP_{\mm,\rS}(\OO)$. Its structure is not considered in this paper and will be treated separately in \cite{KLmonoid}.

The main result of the paper for ray class groups $\Cl_{\mm, \rS}(\OO)$ is an exact sequence relating ray class groups of orders, in which  both the order and the ideal modulus conditions can be simultaneously varied, given as \Cref{thm:exseq}. This exact sequence is obtained using extension and contraction maps relating integral ideals in two orders $\OO \subseteq \OO'$. 
The exact sequence also yields a formula for the cardinality of a ray class group of an order (\Cref{thm:neukirch2}). 

\subsection{Ray class fields for orders and class field theory for orders}\label{sec:12}

The second object of the paper is to establish the existence of ray class fields of orders attached to ray class groups of orders.  These are a distinguished set of abelian extensions of $K$, whose arithmetic of splitting of primes is described by the given ray class group of the order.  We formulate three results which together comprise a ray class field theory for orders.

The first result states that the ray class field of an order $\OO$ with modulus $(\mm, \rS)$ is uniquely specified by its splitting of primes associated to the principal ray class in the ray class group with the given level data. This definition is in the spirit of Weber's original definition of a class field in terms of a law of decomposition of prime ideals, motivated by special values of modular functions. (See \cite[p.\ 164]{Weber1908}, \cite[p.\ 266] {Hasse:67}, and \cite{Weber1897-1898}.) 

\begin{thm}\label{thm:main1} 
Let $K$ be a number field, $\OO$ an order of $K$, $\mm$ an ideal of $\OO$, and $\rS$ a (possibly empty) subset of the set of real embeddings of $K$. Then for the level datum $(\OO; \mm, \rS)$, there exists a unique abelian Galois extension $H_{\mm,\rS}^{\OO}/K$ with the property that a prime ideal $\pp$ of $\OO_K$ that is coprime to the quotient ideal $\colonideal{\mm}{\OO_K}$ (as defined in \eqref{eqn:quotient-ideal}) splits completely in $H_{\mm,\rS}^{\OO}/K$ if and only if $\pp \cap \OO = \pi\OO$, a principal prime $\OO$-ideal having $\pi \in \OO$ with $\pi \equiv 1 \Mod{\mm}$ and $\rho(\pi)>0$ for $\rho \in \rS$.
\end{thm}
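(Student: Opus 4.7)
The plan is to reduce the statement to classical Takagi ray class field theory for the maximal order $\OO_K$, using the change-of-order exact sequences of \Cref{thm:exseq}. Set $\mm' = \colonideal{\mm}{\OO_K}$, the largest $\OO_K$-ideal contained in $\mm$. A direct check shows $\mm' \subseteq \mm \cap \ff(\OO)$, so any prime of $\OO_K$ coprime to $\mm'$ is automatically coprime to both $\mm$ and to the conductor $\ff(\OO)$. Classical class field theory then supplies the Takagi ray class field $H_{\mm',\rS}/K$ together with the Artin isomorphism $\mathrm{Gal}(H_{\mm',\rS}/K) \cong \Cl_{\mm',\rS}(\OO_K)$. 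I would realize $H_{\mm,\rS}^{\OO}$ as a specific subfield of $H_{\mm',\rS}$.

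The key step is to construct a natural surjection
\[
\phi \colon \Cl_{\mm',\rS}(\OO_K) \twoheadrightarrow \Cl_{\mm,\rS}(\OO)
\]
by sending the class of an invertible $\OO_K$-ideal $\mathfrak{A}$ coprime to $\mm'$ to the class of its contraction $\mathfrak{A} \cap \OO$. The contracted ideal is an invertible $\OO$-ideal coprime to $\mm$ because coprimality to $\mm'$ forces coprimality to $\ff(\OO)$, so the extension--contraction theory developed earlier in the paper applies. Well-definedness of $\phi$ requires that contraction be multiplicative on the relevant subgroup and that a principal generator $\alpha \in \OO_K$ with $\alpha \equiv 1 \pmod{\mm'}$ and $\rho(\alpha) > 0$ for $\rho \in \rS$ lie in $\OO$ (which it does, since $\alpha - 1 \in \mm' \subseteq \OO$) and satisfy the defining conditions of $\rP_{\mm,\rS}(\OO)$. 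Surjectivity of $\phi$ is expected to follow from the exact sequences of \Cref{thm:exseq} applied to the pair $\OO \subseteq \OO_K$.

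Next, define $H_{\mm,\rS}^{\OO}$ to be the subfield of $H_{\mm',\rS}$ fixed by the subgroup of $\mathrm{Gal}(H_{\mm',\rS}/K)$ corresponding to $\ker(\phi)$ under the Artin isomorphism. Galois theory gives that $H_{\mm,\rS}^{\OO}/K$ is a finite abelian extension with $\mathrm{Gal}(H_{\mm,\rS}^{\OO}/K) \cong \Cl_{\mm,\rS}(\OO)$. For a prime $\pp$ of $\OO_K$ coprime to $\mm'$, classical Artin reciprocity identifies its Artin symbol in $\mathrm{Gal}(H_{\mm',\rS}/K)$ with $[\pp] \in \Cl_{\mm',\rS}(\OO_K)$, so $\pp$ splits completely in $H_{\mm,\rS}^{\OO}$ if and only if $\phi([\pp]) = [\pp \cap \OO]$ is trivial in $\Cl_{\mm,\rS}(\OO)$. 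Unpacking the definition of $\rP_{\mm,\rS}(\OO)$, this is equivalent to $\pp \cap \OO = \pi\OO$ with $\pi \in \OO$ satisfying $\pi \equiv 1 \pmod{\mm}$ and $\rho(\pi) > 0$ for $\rho \in \rS$---exactly the criterion in the theorem. Uniqueness of $H_{\mm,\rS}^{\OO}$ is the standard consequence of Chebotarev density (Bauer's theorem): two abelian extensions of $K$ with the same set of completely split primes must coincide.

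The main obstacle I anticipate is the rigorous construction and surjectivity of $\phi$, which requires carefully matching the principal-ideal subgroups $\rP_{\mm',\rS}(\OO_K)$ and $\rP_{\mm,\rS}(\OO)$ under contraction and keeping track of coprimality conditions through the conductor. All of this is controlled by the extension/contraction machinery and by \Cref{thm:exseq}. Once $\phi$ is in place, the splitting criterion becomes a direct translation of the classical reciprocity law through the quotient, and uniqueness is immediate.
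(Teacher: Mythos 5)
Your proposal follows essentially the same route as the paper: the paper constructs the surjection $\psi : \Cl_{\colonideal{\mm}{\OO_K},\rS}(\OO_K) \surj \Cl_{\mm,\rS}(\OO)$ via the commutative diagram of exact sequences \eqref{eqn:psi} obtained from \Cref{thm:exseq}, identifies it with the contraction map in \Cref{lem:psicon}, defines $H_{\mm,\rS}^{\OO}$ as the fixed field of $\Art(\ker\psi)$ in \Cref{defn:61}, and then invokes the Weber--Takagi existence theorem (\Cref{thm:what}) exactly as you do, so the plan is correct. The one point to tighten is your parenthetical claim that a generator $\alpha$ with $\alpha \equiv 1 \Mod{\colonideal{\mm}{\OO_K}}$ lies in $\OO$ because $\alpha - 1 \in \colonideal{\mm}{\OO_K}$: the congruence here is the semilocal one of \Cref{defn:RC-congruence}, so $\alpha$ need not be integral, and the paper instead verifies $\con(\alpha\OO_K) = \alpha\OO$ via \Cref{prop:conext} and checks the compatibility of the principal-ideal subgroups through the diagram chase in \Cref{lem:psicon}.
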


\Cref{thm:main1} is an existence theorem which, when given the level datum $(\OO; \mm, \rS)$, produces an associated ray class field. The map $(\OO; \mm, \rS) \mapsto H_{\mm,\rS}^{\OO}$ is neither one-to-one nor onto:\ A given abelian extension $H/K$ might be a ray class field for none or many different triples $(\OO'; \mm', \rS')$.

To understand \Cref{thm:main1}, it is helpful to compare the quotient ideal $\colonideal{\mm}{\OO_K}$ to more familiar ideals. Note first that, given two orders $\OO\subseteq \OO'$, any (integral) $\OO'$-ideal $\aa$ is automatically an $\OO$-ideal. In particular, all $\OO_K$-ideals are automatically $\OO$-ideals for all orders $\OO$ of $K$. Next, recall that, given
two $\OO$-ideals $\aa, \bb$, the {\em quotient ideal} 
\begin{equation}\label{eqn:quotient-ideal}
\colonideal{\aa}{\bb} := \{x \in K : x\bb \subseteq \aa\}.
\end{equation} 
The quotient ideal $\colonideal{\aa}{\bb}$ is an $\OO$-ideal, and if in addition $\bb$ is an $\OO'$-ideal, then $\colonideal{\aa}{\bb}$ will   also be an $\OO'$-ideal. Thus, given  an  $\OO$-ideal $\mm$, the quotient ideal $\colonideal{\mm}{\OO_K}$ will be an $\OO_K$-ideal, which is the (set-theoretically) largest ideal of $\OO_K$ contained in $\mm$. 

An important invariant of an order $\OO$ of an algebraic number field $K$ is its (absolute) conductor $\ff(\OO) = \colonideal{\OO}{\OO_K}$.  The conductor ideal encodes information on all the non-invertible ideals in the order $\OO$; see \Cref{lem:singular}. It is the (set-theoretically) largest integral $\OO$-ideal that is also an $\OO_K$-ideal. It follows that for any integral $\OO$-ideal $\mm$, we have $ \colonideal{\mm}{\OO_K} \subseteq \ff(\OO)$. 

The ideal $\colonideal{\mm}{\OO_K}$ satisfies the inclusions (all as $\OO_K$-ideals)
\begin{equation}\label{eq:112} 
\ff(\OO) \mm \subseteq \colonideal{\mm}{\OO_K} \subseteq \ff(\OO) \cap \mm\OO_K;
\end{equation} 
these inclusions are proven in \Cref{lem:basicinclusions} (taking $\OO' = \OO_K$ in its statement).
The three ideals in \eqref{eq:112} have the same prime divisors in the maximal order $\OO_K$: A prime ideal $\pp$ of $\OO_K$ such that $\pp \supseteq \ff(\OO) \mm = \ff(\OO) \mm\OO_K $ satisfies either $\pp \supseteq \ff(\OO)$ or $\pp \supseteq \mm\OO_K$, and thus, $\pp \supseteq \ff(\OO) \cap \mm\OO_K$. 

The second result locates the ray class field $H_{\mm,\rS}^{\OO}/K$ of an order $\OO$ for the level datum $(\OO; \mm, \rS)$ as falling between two ray class fields on the maximal order.

\begin{thm}\label{thm:main2} 
For an order $\OO$ in a number field $K$ and any level datum $(\OO; \mm, \rS)$, there are inclusions of ray class fields $H_{\mm\OO_K,\rS}^{\OO_K} \subseteq H_{\mm,\rS}^{\OO} \subseteq H_{\colonideal{\mm}{\OO_K},\rS}^{\OO_K}$. 
\end{thm}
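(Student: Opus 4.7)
The plan is to apply Theorem~\ref{thm:main1} to each of the three level data $(\OO; \mm, \rS)$, $(\OO_K; \mm\OO_K, \rS)$, and $(\OO_K; \colonideal{\mm}{\OO_K}, \rS)$, and then to invoke the standard principle that an inclusion of finite abelian extensions of $K$ is detected by the opposite inclusion of their sets of completely split primes of $\OO_K$, outside any fixed finite set of primes. Thus each of the two desired inclusions reduces to the implication: if a prime $\pp$ of $\OO_K$ coprime to $\colonideal{\mm}{\OO_K}$ splits completely in the larger field, then it splits completely in the smaller field. The primes of $\OO_K$ dividing $\colonideal{\mm}{\OO_K}$ form a finite exceptional set, and all other primes are automatically coprime to the conductor $\ff(\OO)$ because $\colonideal{\mm}{\OO_K} \subseteq \ff(\OO)$, so the extension--contraction bijection $\pp \leftrightarrow \pp \cap \OO$ applies throughout.

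For the first inclusion $H_{\mm\OO_K,\rS}^{\OO_K} \subseteq H_{\mm,\rS}^{\OO}$, I would suppose that $\pp$ splits completely in $H_{\mm,\rS}^{\OO}$. Theorem~\ref{thm:main1} furnishes $\pi \in \OO$ with $\pp \cap \OO = \pi \OO$, $\pi \equiv 1 \Mod{\mm}$, and $\rho(\pi) > 0$ for all $\rho \in \rS$. Extending the ideal equation to $\OO_K$ yields $\pp = \pi \OO_K$, and from $\pi - 1 \in \mm \subseteq \mm\OO_K$ together with the sign conditions, Theorem~\ref{thm:main1} applied to the maximal order gives that $\pp$ splits completely in $H_{\mm\OO_K,\rS}^{\OO_K}$, as required.

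For the second inclusion $H_{\mm,\rS}^{\OO} \subseteq H_{\colonideal{\mm}{\OO_K},\rS}^{\OO_K}$, I would suppose that $\pp$ splits completely in $H_{\colonideal{\mm}{\OO_K},\rS}^{\OO_K}$. Theorem~\ref{thm:main1} furnishes $\alpha \in \OO_K$ with $\pp = \alpha \OO_K$, $\alpha \equiv 1 \Mod{\colonideal{\mm}{\OO_K}}$, and the required sign conditions. From $\colonideal{\mm}{\OO_K} \subseteq \mm$ (obtained by specializing the condition $x \OO_K \subseteq \mm$ to $1 \in \OO_K$) it follows that $\alpha - 1 \in \mm \subseteq \OO$, so $\alpha \in \OO$ and $\alpha \equiv 1 \Mod{\mm}$. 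The ideals $\alpha \OO$ and $\pp \cap \OO$ are both coprime to $\ff(\OO)$ and both extend to $\pp$ in $\OO_K$, so they coincide under the extension--contraction bijection, yielding $\pp \cap \OO = \alpha \OO$. Theorem~\ref{thm:main1} then gives that $\pp$ splits completely in $H_{\mm,\rS}^{\OO}$.

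The main obstacle is conceptual rather than computational: the argument requires transporting the splitting witnesses $\pi$ and $\alpha$ between the orders $\OO$ and $\OO_K$. This transport rests on two properties of $\colonideal{\mm}{\OO_K}$ spelled out in the discussion after Theorem~\ref{thm:main1}: it is contained in $\ff(\OO)$, ensuring that primes coprime to it enjoy the extension--contraction bijection, and it is contained in $\mm$, ensuring that congruences modulo $\colonideal{\mm}{\OO_K}$ upgrade to congruences modulo $\mm$. These two facts are exactly the reason $\colonideal{\mm}{\OO_K}$ is the natural modulus sandwiching $\mm$ from below in the maximal order.
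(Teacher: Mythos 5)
Your proof is correct, but it takes a genuinely different route from the paper's. The paper deduces \Cref{thm:main2} as the special case $\OO'=\OO_K$ of \Cref{thm:main2b}, which is proved entirely on the class-group side: a commutative diagram of exact sequences from \Cref{thm:exseq} produces surjections $\Cl_{\colonideal{\mm}{\OO'},\rS}(\OO') \surj \Cl_{\mm,\rS}(\OO) \surj \Cl_{\mm\OO',\rS}(\OO')$, and the field inclusions are then read off from the Weber--Takagi correspondence (\Cref{thm:what}). You instead argue on the prime-splitting side, using \Cref{thm:main1} as a black box for all three level data and transporting the splitting witnesses across the extension--contraction bijection of \Cref{lem:conext}. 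Both work, and your key reductions are sound: primes coprime to $\colonideal{\mm}{\OO_K}$ are automatically coprime to $\ff(\OO)$ and to $\mm\OO_K$ (since $\colonideal{\mm}{\OO_K}$ is contained in both), and there is no circularity because the paper proves \Cref{thm:main1} in \Cref{subsec:63} before \Cref{thm:main2}. What the paper's route buys is the general intermediate order $\OO'$ and, more importantly, explicit identifications of the relative Galois groups $\Gal(H^{\OO'}_{\colonideal{\mm}{\OO'},\rS}/H^{\OO}_{\mm,\rS})$ and $\Gal(H^{\OO}_{\mm,\rS}/H^{\OO'}_{\mm\OO',\rS})$ as kernels of the maps $\psi$ and $\phi$, which are reused in \Cref{thm:main3}; your route is shorter and closer to Weber's viewpoint but gives only the inclusions. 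Two points you should make explicit: first, the principle that an inclusion of split-prime sets up to finite sets forces an inclusion of Galois extensions is a Chebotarev/density statement not literally contained in \Cref{thm:what}(1), so it needs to be cited as an additional standard input; second, the congruences in \Cref{thm:main1} are the semilocal ones of \Cref{defn:RC-congruence}, so ``$\pi - 1 \in \mm$'' should read $\pi - 1 \in \mm\OO[S_\mm^{-1}] \subseteq \mm\OO_K[S_{\mm\OO_K}^{-1}]$ (harmless for the first inclusion, and for the second the congruence modulo the $\OO_K$-ideal $\colonideal{\mm}{\OO_K}$ does reduce to the naive one, after which $\alpha\OO$ is coprime to $\ff(\OO)$ because $\alpha \equiv 1 \Mod{\ff(\OO)}$, so the bijection of \Cref{lem:conext} indeed forces $\pp \cap \OO = \alpha\OO$).
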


In the special case $\mm=\OO$,  
we have $\colonideal{\mm}{\OO_K}= \ff(\OO)$.
The smallest ray class field $H_{\OO, \emptyset}^{\OO}$ of the order $\OO$ is the ring class field associated to $\OO$, which always contains the (wide) Hilbert class field of $\OO_K$ and whose ramification over $K$ occurs  only at prime $\OO_K$-ideals containing the conductor ideal $\ff(\OO)$.

It follows from \Cref{thm:main2} and \eqref{eq:112}, together with standard class field theory, that the set of all ray class fields of a fixed order $\OO$ is cofinal in the set of all finite abelian extensions of $K$. \Cref{thm:main2} is obtained as  a special case of a general result  relating ray class fields of two given orders $\OO \subseteq \OO'$, given as \Cref{thm:main2b}.

The third result adapts Artin reciprocity to our setting to give a correspondence between class groups and Galois groups of class fields. 
The correspondence asserts that the ray class field $H_{\mm,\rS}^{\OO}$ of an order $\OO$ is associated to an appropriate ray class group $\Cl_{\mm,\rS}(\OO)$ in such a way that a Galois correspondence holds: $\Gal(H_{\mm,\rS}^{\OO}/K) \isom \Cl_{\mm,\rS}(\OO)$ as abelian groups. 

\begin{thm}\label{thm:main3} 
For an order $\OO$ in a number field $K$ and any level datum $(\OO; \mm, \rS)$, with associated ray class field $H_0:=H_{\mm,\rS}^{\OO}$,
there is an isomorphism $\Art_{\OO} : \Cl_{\mm, \rS}(\OO) \to \Gal(H_0/K)$, uniquely determined by its behavior on prime ideals $\pp$ of $\OO$ that are coprime to $\ff(\OO) \cap \mm$, having the property that
\begin{equation}
\Art_{\OO}([\pp])(\alpha) \equiv \alpha^q \Mod{\mathfrak{P}},
\end{equation}
where $\mathfrak{P}$ is any prime of $\OO_{H_0}$ lying over $\pp\OO_K$,
and $q=p^j$ is the number of elements in the finite field $\OO/\pp$.
For any (not necessarily prime) ideal $\aa$ of $\OO$ coprime to $\ff(\OO) \cap \mm$,
\begin{equation}\label{eqn:artin-3}
\Art_{\OO}([\aa]) = \left.\Art([\aa\OO_K])\right|_{H_0},
\end{equation}
where $\Art : \Cl_{\colonideal{\mm}{\OO_K},\rS}(\OO_K) \to \Gal(H_1/K)$ is the usual Artin map in class field theory, with $H_1 = H_{\colonideal{\mm}{\OO_K},\rS}^{\OO_K}$ being a (Takagi) ray class field for the maximal order $\OO_K$, and $H_0 \subseteq H_1$.
\end{thm}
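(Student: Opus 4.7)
The plan is to construct $\Art_\OO$ by transferring the classical Artin reciprocity isomorphism for $H_1 := H_{\colonideal{\mm}{\OO_K},\rS}^{\OO_K}$ down to $\Cl_{\mm,\rS}(\OO)$ and $H_0$, using the inclusion $H_0 \subseteq H_1$ from \Cref{thm:main2} and the splitting-of-primes characterization from \Cref{thm:main1}.

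First I would define a surjective homomorphism $\bar\varphi : \Cl_{\colonideal{\mm}{\OO_K},\rS}(\OO_K) \twoheadrightarrow \Cl_{\mm,\rS}(\OO)$ induced by the contraction $\bb \mapsto \bb \cap \OO$. Because $\colonideal{\mm}{\OO_K} \subseteq \ff(\OO)$, every ideal in the domain is coprime to the conductor, so the extension-contraction bijection promotes contraction to a group homomorphism $\rJ_{\colonideal{\mm}{\OO_K}}(\OO_K) \to \rJ^*_{\mm}(\OO)$. For well-definedness on classes it suffices to treat $\bb = \beta\OO_K$ with $\beta \equiv 1 \Mod{\colonideal{\mm}{\OO_K}}$ and $\rho(\beta)>0$ for $\rho \in \rS$: the chain of inclusions $\colonideal{\mm}{\OO_K} \subseteq \mm \subseteq \OO$ forces $\beta \in \OO$ and $\beta \equiv 1 \Mod{\mm}$, while the bijection gives $\bb \cap \OO = \beta\OO$, so $\bb \cap \OO \in \rP_{\mm,\rS}(\OO)$. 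Surjectivity of $\bar\varphi$ follows by choosing a representative $\aa \in \rJ^*_{\ff(\OO) \cap \mm}(\OO)$ of each given class in $\Cl_{\mm,\rS}(\OO)$ (weak approximation) and noting $\bar\varphi([\aa\OO_K]) = [\aa]$.

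The heart of the argument is to identify $N := \Art(\ker\bar\varphi)$ with $\Gal(H_1/H_0)$, where $\Art$ denotes the classical Artin isomorphism. Setting $L := H_1^N$, for any prime $\pp$ of $\OO_K$ coprime to $\colonideal{\mm}{\OO_K}$ one has: $\pp$ splits completely in $L/K$ iff $\Art([\pp]) \in N$ iff $[\pp] \in \ker\bar\varphi$ iff $[\pp \cap \OO] = 1$ in $\Cl_{\mm,\rS}(\OO)$; and by \Cref{thm:main1} this is equivalent to $\pp$ splitting completely in $H_0/K$. Since two finite abelian subextensions of $H_1/K$ sharing the same completely split primes (outside a finite set) must coincide, $L = H_0$ and $N = \Gal(H_1/H_0)$. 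Hence $\bar\varphi$ induces an isomorphism $\Cl_{\mm,\rS}(\OO) \isom \Gal(H_0/K)$, whose inverse is precisely the map $\Art_\OO$ defined by \eqref{eqn:artin-3}. The Frobenius formula on a prime $\OO$-ideal $\pp$ coprime to $\ff(\OO) \cap \mm$ then follows from the corresponding classical property for $\pp\OO_K$, using both that $|\OO/\pp| = |\OO_K/\pp\OO_K|$ (since $\pp$ is coprime to $\ff(\OO)$) and that Frobenius at $\pp\OO_K$ in $\Gal(H_1/K)$ restricts to Frobenius at $\mathfrak{P}$ in $\Gal(H_0/K)$.

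The main obstacle is the weak approximation step needed to produce invertible representatives in $\rJ^*_{\ff(\OO) \cap \mm}(\OO)$ of every class of $\Cl_{\mm,\rS}(\OO)$, since in a non-maximal order $\ff(\OO) \cap \mm$ may include non-invertible primes. I would handle this by factoring $\ff(\OO) \cap \mm$ into its finitely many prime divisors and multiplying a given representative by a principal $\OO$-ideal $(\alpha)$ with $\alpha \equiv 1 \Mod{\mm}$ and $\rho(\alpha)>0$ chosen by simultaneous weak approximation in $K$ to cancel any shared prime divisors, noting that such principal $\OO$-ideals are automatically invertible and lie in $\rP_{\mm,\rS}(\OO)$.
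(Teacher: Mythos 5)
Your proposal is correct and follows essentially the same route as the paper: the contraction-induced surjection $\Cl_{\colonideal{\mm}{\OO_K},\rS}(\OO_K) \surj \Cl_{\mm,\rS}(\OO)$ (the paper's map $\psi$, identified with contraction in \Cref{lem:psicon}), the Galois correspondence applied to its kernel, the lift $[\aa] \mapsto [\aa\OO_K]$ yielding \eqref{eqn:artin-3}, and the same Frobenius computation using $\OO/\pp \cong \OO_K/\pp\OO_K$ for $\pp$ coprime to $\ff(\OO) \cap \mm$. The only deviations are organizational: the paper \emph{defines} $H_0$ as $H_1^{\Art(\ker\psi)}$ (\Cref{defn:61}), so the identification $\Art(\ker\psi) = \Gal(H_1/H_0)$ is immediate rather than re-derived (as you do, validly) from \Cref{thm:main1} plus the uniqueness clause of \Cref{thm:what}; your weak-approximation step for invertible representatives coprime to $\ff(\OO)\cap\mm$ is exactly the content of the paper's \Cref{lem:reldok}; and one small slip---a generator $\beta$ of an ideal in $\rP_{\colonideal{\mm}{\OO_K},\rS}(\OO_K)$ need not lie in $\OO$, only satisfy the semilocal congruence of \Cref{defn:RC-congruence}, though the conclusion $\con(\beta\OO_K) = \beta\OO \in \rP_{\mm,\rS}(\OO)$ still holds.
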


In \Cref{thm:main3}, the set of prime ideals coprime to $\ff(\OO)\cap \mm$ includes all but finitely many of the prime ideals of $\OO$. 

For the classical case of the maximal order $\OO_K$, the Artin map $\Art( \cdot)$ on the right side of \eqref{eqn:artin-3} is given  in Neukirch \cite[Ch.~VI, Thm.~7.1]{Neukirch13} for any ray ideal $\mm$, in the case of $\rS=\rSmax$, the set of all real places of $K$. 
Additionally, the special case $\mm=\OO_K$ with $\rS=\rSmax$ produces the {\em narrow Hilbert class field} (termed by Neukirch the {\em big Hilbert class field}) of $K$, denoted $H_{\OO_K, \rSmax}^{\OO_K}$; compare \cite[Ch.~VI, Prop.~6.8]{Neukirch13}. 
The  alternative choice of\footnote{Neukirch does not introduce the real place parameters $\rS$, and his definition \cite[Ch.~VI, Defn.~6.2]{Neukirch13} of ray class groups corresponds to the choice $\rS=\rSmax$.}
the minimal set $\rS= \emptyset$ of real places produces the original (or {\em wide}) {\em Hilbert class field} (called by Neukirch the {\em small Hilbert class field}) of $K$, denoted $H_{\OO_K, \emptyset}^{\OO_K}$.  The wide class field has Galois group isomorphic to the ideal class group of $\OO_K$; see \cite[Ch.~VI, Prop.~6.9]{Neukirch13}. The difference between wide and narrow Hilbert class fields exhibits the role of the real places $\rS$ in the level datum.

Our three main theorems do not provide an independent development of global class field theory; their proofs are completed using  the existing global class field theory. Their contribution is to identify a new set of distinguished abelian extensions of $K$ whose structure encodes properties of the arithmetic of the ray class groups attached to a fixed order $\OO$ of $K$ (rather than the maximal order).

The proofs of these these results proceed on the ray class group side. We identify the $\Cl_{\mm,\rS}(\OO)$  with a certain quotient group of a Takagi ray class group of the maximal order. The exact sequences in \Cref{thm:exseq} are used. Our definition (\Cref{defn:61}) of the ray class field associated to the level datum $(\OO; \mm, \rS)$ is formulated using this identification.  
\Cref{thm:main1}, \Cref{thm:main2} and \Cref{thm:main3} are then obtained via the main correspondence and existence theorems of global class field theory in a form given in \Cref{sec:6}.

\subsection{Applications}\label{subsec:12} 
The constructions in this paper are motivated by several mathematical phenomena involving ray class groups and ray class fields of orders
that arise outside pure class field theory. We briefly describe three of them.
\begin{enumerate}
\item
Special configurations of complex lines, called \textit{SIC-POVMs (symmetric informationally complete positive operator-valued measures)} or \textit{SICs}, are of interest in quantum information theory and also suggest a geometric approach to explicit class field theory for real quadratic fields that fundamentally involves non-maximal orders. A SIC is a generalized quantum measurement (a \textit{POVM}) with restrictive information-theoretic properties that make it optimal for certain protocols in quantum information processing. It is equivalent to a maximal set of $d^2$ complex lines in $\C^d$ that are ``equiangular'' with respect to the Hermitian inner product. SICs are known to exist in at least dimensions $d \le 53$ and are conjectured by Zauner \cite{Zauner:99, Zauner:11} to exist in all dimensions. 

Recently, a surprising connection has been made between SICs and the explicit class field theory of real quadratic fields \cite{appleby1, appleby2, koppsic}. The connection was originally discovered through numerical experimentation. Work by the current authors and others \cite{KLsic, AFK} uses ray class groups and ray class fields of real quadratic orders in a conjectural framework for classifying SICs. That work also indicates SICs and generalized SICs may be used to study abelian extensions of real quadratic fields.
\item
Class field theory for imaginary quadratic orders arises naturally in the theory of complex multiplication (CM) for elliptic curves. In particular, if $E$ is an elliptic curve with CM by $\OO$ having a Weierstrass equation with coefficients in $\Q(j(E))=H_{\OO,\emptyset}^\OO$, then the $n$-torsion points of $E$ have coordinates in the field $H_{n\OO,\emptyset}^\OO$; see \cite[Thm.~1.4]{BourdonC20}. A related construction describes \textit{elliptic units} in abelian extensions of imaginary quadratic fields as special values of modular functions called \textit{modular units} \cite{KubertLang}; this construction also naturally extends to non-maximal orders.

Ray class fields of orders of higher degree arise in the study of higher-dimensional abelian varieties with CM. Pathologies of non-maximal orders of degree greater than 2, such as potentially being non-Gorenstein, create obstructions to proving theorems in CM theory \cite{Clark22}.
\item
Ray class groups and ray class fields of orders appear naturally in approaches to Hilbert's 12th Problem for real quadratic and complex cubic fields.
For example, when replacing the complex upper half plane by the $p$-adic Drinfeld upper half plane, as in the work of Darmon and Dasgupta on elliptic units for real quadratic fields \cite{DD} and the work of Darmon, Pozzi, and Vonk \cite{DV,DarmonPozziVonk21} on rigid meromorphic cocycles, non-maximal orders arise in the study of ``singular moduli'' in the same way as they do in the classical theory of complex multiplication. Non-maximal orders are needed to describe arbitrary real multiplication (RM) values of rigid meromorphic cocycles. Complex meromorphic modular cocycles introduced by the first author \cite{KoppQpoch} have RM values that are (essentially) the classical Archimedean Stark class invariants conjectured to be algebraic units by Stark in the real quadratic case. The RM values studied are naturally parameterized by elements of ray class groups (more generally, ray class monoids) of orders, and they are conjectured to lie in ray class fields of orders. 
We also expect the elliptic gamma functions connected by Bergeron, Charollois, and Garc\'{i}a to Stark units over complex cubic fields \cite{BCG} to have special values parameterized by ray classes of general complex cubic orders, which generate ray class fields of those orders.
\end{enumerate}

The treatment of integral and fractional ideals of orders, and extension and contraction maps,
given in \Cref{sec:2,sec:2a,sec:CE},
is intended to apply more broadly, outside its use in defining ray class groups of orders. The related concept of {\em ideal lattices} is foundational to various versions of lattice based-cryptography \cite{Micciancio07,MicciancioR09,LPR13} and ring-based schemes proposed for homomorphic encryption \cite{Gentry09,LPR13b}. Ideal lattices are identified with ideals in polynomial rings $\Z[x]/(f(x))$, which for irreducible monic $f(x)$ correspond to ideals of the monogenic order $\OO= \Z[\theta]$ generated by a root of $f$. An ideal lattice can be encoded as an integer matrix associated to an integral ideal of an order, as studied by Taussky \cite{Taussky62,Taussky63,Taussky78}; see also Taussky's earlier work \cite{Taussky49}. A number-theoretic perspective on ideal lattices is given by Bayer-Fluckiger \cite{BayerF99,BayerF02}.

\subsection{Prior work}\label{subsec:13}

There has been an extensive algebraic study of the structure of orders of number fields, beginning with Dedekind \cite{Dedekind:1877} in 1877; see also \cite{Dedekind:1894}. Two general references are Stevenhagen \cite{Stevenhagen08} and Neukirch \cite[Ch.\ 1, Sec.\ 12]{Neukirch13}. Neukirch views orders of number fields as number rings with ``singularities'' at the primes dividing the conductor ideal, by analogy with geometric interpretations of subrings of function fields (e.g., $F[t^2, t^3] \subseteq F(t)$ as the coordinate ring of the cuspidal cubic).

An important 1962 paper of Dade, Taussky, and Zassenhaus \cite{DTZ:62} presented fundamental results on the structure of invertible fractional ideals, class groups of orders, and the class monoids obtained when including non-invertible ideals. They developed a structure theory for one-dimensional Noetherian domains \cite[p.\ 32]{DTZ:62}. (An integral domain $\DD$ has dimension one if and only if all nonzero prime ideals are maximal. Orders in number fields form a strict subclass of one-dimensional Noetherian integral domains.) Dade, Taussky, and Zassenhaus gave a general definition of \textit{fractional ideals} valid for all integral domains $\dD$ (with quotient field denoted $K$) in \cite[Defn.\ 1.1.6]{DTZ:62}. The set $\rJ(\dD)$ of all such fractional ideals of $\dD$ is closed under four operations: $+, \cdot, \cap, \colonideal{}{}$, in which $\cdot$ is ideal multiplication and $\colonideal{}{}$ is the ideal quotient as defined by \eqref{eqn:quotient-ideal}; see \cite[Prop.\ 1.10]{DTZ:62}. They note the set $\rJ(\dD)$ carries the structure of a semigroup under ideal multiplication. Dade, Taussky, and Zassenhaus define a \textit{$\dD$-order} to be any fractional ideal $\aa$ of $\DD$ that is also an integral domain \cite[p.\ 32]{DTZ:62}. Every fractional ideal $\aa$ has an associated $\DD$-order $\ord(\aa) := \colonideal{\aa}{\aa}$, which in other contexts is called its \textit{multiplier ring}. For Noetherian integral domains, they define \textit{invertible fractional ideals} and characterize them using ideal quotient \cite[Defn.\ (p.\ 41), Prop.\ 1.3.6]{DTZ:62}.

Class field theory has gone through many versions. We use the version using ray classes, formulated in the work of Takagi, Hasse, and Artin. A useful treatment is given in Cohn \cite{CohnTau78}. An appendix of Cohn's book gives Emil Artin's 1932 lectures.

Computational class field theory, generally using ray classes, is especially important in applications (such as those mentioned in \Cref{subsec:12}). Cohen and Stevenhagen have written a useful survey \cite{CohenS:08}. 

Ring class groups and ring class fields go back to fundamental work of Weber in 1897--1898 \cite{Weber1897-1898}, motivated (in part) by complex multiplication; see his books \cite{Weber1894, Weber1896, Weber1908}. The ring class groups associated to orders of imaginary quadratic fields appear in the theory of complex multiplication, because ideal classes of those orders are classified by homothety classes of lattices in $\C$ having a given endomorphism ring $\OO \neq \Z$; see \cite[Cor.\ 10.20]{Cox13}. The Weber prime splitting criteria for ring class fields of orders $\OO= \Z[\sqrt{-n}]$ $(n >0)$ over imaginary quadratic fields are expressible in terms primes represented by the principal quadratic forms $x^2+ny^2$. Explaining this connection is the main objective of the book of Cox \cite{Cox13}, as stated in \cite[Thm.~9.2]{Cox13}.

It is known that the compositum of all ring class fields of a field $K$ need not be equal to the maximal abelian extension $K^{\rm ab}$. That is, there can be abelian extensions of $K$ that are not contained in any ring class field. In 1914, Fueter \cite[p.\ 178]{Fueter14} showed that the field $\Q(i, \sqrt[4]{1+2i})$ is not contained in any ring class field of $\Q(i)$; see also Schappacher \cite[p.\ 258]{Schappacher98}. Moreover, Bruckner \cite[Satz 8]{Bruckner66} showed for a quadratic field $K$ that the compositum of ring class fields for $K$ is the compositum of all Galois fields $L$ containing $K$ for which $\Gal(L/\Q)$ is a generalized dihedral group. The case of imaginary quadratic fields is also treated in Cox \cite[Thm.\ 9.18, Cor.\ 11.35]{Cox13}. 

In 2015, Lv and Deng \cite{LvDeng15} treated ring class fields for arbitrary orders in number fields in a classical setting, corresponding to the ``unramified'' case $(\OO; \mm,\rS)=(\OO; \OO,\emptyset)$. (The term ``unramified'' here refers to the fact that the modulus is trivial; the ring class field usually is a ramified extension at the primes containing the conductor.) An extension to general number rings (allowing inversion of arbitrary sets of nonzero elements) was given in 2018 by Yi and Lv \cite{YiLv18}.

There was significant further work on of class field theory for orders in the general ``ramified'' case, in situations related to complex multiplication. In 1935, S\"{o}hngen \cite{Sohngen35} constructed  a ``class field theory for orders'' for imaginary quadratic fields, with explicit generators given by special values of Weber functions, which is described in detail in the book of Schertz \cite[Ch.\ 3, Ch.\ 6.2]{Schertz10}. The ray class field theory for imaginary quadratic orders has been applied to CM theory for elliptic curves; we  mention  Bourdon and Clark \cite{BourdonC20} and Lozano-Robledo \cite[Sec.\ 3]{Lozano-R22}.

In another direction of generalization, in the late 1980's, Stevenhagen \cite{Stevenhagen85,Stevenhagen89} formulated an abstract development of \textit{unramified class field theory for orders}, which extends beyond orders of number fields; see also \cite{Stevenhagen85}. In the case of orders of number fields, his results would specialize to the ``unramified case'' $(\OO; \mm,\rS)=(\OO; \OO,\emptyset)$ treated here. In 2001 Stevenhagen \cite[Sec.\ 4]{Stevenhagen01} recast much of S\"{o}hngen's class field theory for imaginary quadratic orders into a profinite, id\`{e}lic framework.

Very recently Campagna and Pengo \cite[Sec.\ 4]{CampagnaP22} developed a class field theory for orders of general number fields using an  id\`{e}lic framework. They define ray class fields of orders by specifying a particular id\`{e}lic unit group attached to a level datum $(\OO; \mm, \emptyset)$, which defines the associated class field by the idelic main theorem of class field theory. They did not consider ray conditions at the Archimedean places, since their interest was  CM fields. The PhD thesis of Pengo \cite[Sec.\ 6]{Pengo20} formulated an id\`{e}lic definition of ray class fields for orders in the general case.

\subsection{Contents of the paper}\label{subsec:14} 

This paper works in  the classical framework of integral and fractional ideals. Schertz \cite[p.\ 82]{Schertz10} used the classical framework in treating the theory of complex multiplication ``because the classical language marries well with singular values of elliptic and modular functions that are essentially dependent on ideals.'' These functions are used in explicit class field theory.
The classical ideal viewpoint is appropriate also for formulating the structures of monoids of all ray classes, allowing non-invertible classes, which arise in applications \cite{KLmonoid,KoppQpoch}.

\Cref{sec:2} gives an extensive treatment of integral ideals of an order, including invertible and  non-invertible ideals, needed for the constructions of ray class groups and maps between them. It emphasizes the special features of non-maximal orders and contains many examples. This section is needed because many calculations and proofs in the paper use possibly non-invertible integral ideals, sometimes in semilocal rings obtained by inverting elements coprime to a single auxiliary ideal $\dd$. Each non-invertible integral ideal contains some non-invertible prime ideal, and the non-invertible prime ideals are exactly those prime ideals containing the conductor ideal of the order. The set of integral ideals of an order of a number field coprime to a modulus ideal $\mm$ forms a monoid under ideal multiplication.

\Cref{sec:2a}  treats  fractional ideals of an order and gives further examples of behavior special to non-maximal orders. The ideal quotient of two fractional ideals is always well-defined; however, not all fractional ideals are invertible. There exist invertible ideals that are torsion elements of the invertible ideal group. The monoid of all nonzero fractional ideals coprime to the conductor ideal of the order is a free abelian group.

\Cref{sec:CE} studies, inside a fixed number field $K$, the effect of change of order  on the structure of ideals, via the contraction and extension maps on integral and fractional ideals. A detailed treatment is needed for showing good behavior away from the conductor ideal and for proving exact sequences of ray class groups in \Cref{sec:5}. A ring-theoretic subtlety is that the contraction map $\con(\aa) = \aa \cap \OO$ is not a monoid homomorphism for integral ideals. We show the contraction map when restricted to integral ideals coprime to the (relative) conductor ideal $\ff_{\OO'}(\OO)$ is a monoid homomorphism. Using this property, we are able to define a contraction map on fractional ideals coprime to $\ff(\OO)$, which is a homomorphism but no longer agrees with the formula $\con(\aa) = \aa \cap \OO$.

\Cref{sec:group} defines ray class groups of orders for a level datum $(\OO; \mm, \rS)$. It relates such groups under extension and contraction of order. \Cref{lem:reldok} is a key technical result, showing that ray class groups do not change when adding coprimality conditions to an auxiliary modulus $\dd$, permitting comparison of different orders and different moduli. \Cref{subsec:44} determines a surjective map from a general ray class group of an order to a particular ray class group of the maximal order.

\Cref{sec:5} gives exact sequences relating ray class groups under change of order and change of modulus. It first analyzes the effect of change of order $\OO \subseteq \OO'$ on unit groups and principal ideal groups, in \Cref{prop:exseq2}. The exact sequence of \Cref{thm:exseq} in \Cref{subsec:52}, which relates unit groups and class groups of different orders and moduli, is the main formula of this paper for applications. \Cref{subsec:53} gives a formula for a generalized class number, that is, the cardinality of a given ray class group of an order. This formula generalizes a formula in Neukirch \cite[Thm.\ I.\ 12.12]{Neukirch13} in the ring class group case.

\Cref{sec:6} gives the construction of ray class fields of orders. For this purpose, it is necessary to obtain a given ray class group of an order $\OO$ as a quotient group of a particular Takagi ray class group of the maximal order $\OO_K$. This is done in \Cref{subsec:61}.
The desired ray class field of the order is then identified as a subfield $L$ of a Takagi ray class field $H_1$. It is defined as the fixed field $L = H_1^{\Art(\ker(\psi))}$ of the kernel of the map $\psi$ in \eqref{eqn:psi} acting via the Artin map.
\Cref{subsec:62} recalls the main existence theorems of class field theory  in a suitable form encoding both the formulation of Takagi (in terms of prime splitting) and of Artin (in terms of an isomorphism between ray class group and Galois groups). \Cref{subsec:63} proves \Cref{thm:main1} by identifying the map $\psi$ in \eqref{eqn:psi} with the contraction map between fractional ideals of the maximal order $\OO_K$ and the given order $\OO$. \Cref{subsec:64} states and proves a result generalizing \Cref{thm:main2}, replacing the maximal order $\OO_K$ with a general order $\OO'$ with $\OO \subseteq \OO' \subseteq \OO_K$. \Cref{subsec:65} proves \Cref{thm:main3}. 
 
\Cref{sec:examples} presents examples of ray class groups and ray class fields of quadratic orders. 

\Cref{sec:final} presents some remarks on extending results of the paper.
 
\Cref{appendix:norms} discusses norms of ideals of an order and the (consistent) extension of the norm to fractional ideals of an order. The norm is multiplicative when multiplying two fractional ideals, at least one of which is invertible, but is not multiplicative in general.

\subsection{Notation}

\begin{itemize}
\item
$\OO =$ arbitrary order of a number field $K$.
\item
$\ff(\OO)=\colonideal{\OO}{\OO}=$  conductor ideal of $\OO$.
\item
$\OO' =$ another arbitrary order of $K$ satisfying $\OO \subseteq \OO'$.
\item
$\ff_{\OO'}(\OO) = \colonideal{\OO}{\OO'} = $ relative conductor ideal.
\item
$\ord(\aa) = \colonideal{\aa}{\aa} = $ multiplier ring of the $\OO$-ideal $\aa$, which is an order $\OO'$.
\item
$\rI(\OO) =$ monoid of integral ideals of the order $\OO$.
\item
$\rI^{\ast}(\OO) =$ submonoid of $\rI(\OO)$ of integral ideals invertible as fractional ideals of $\OO$.
\item
$\mm=$  general integral $\OO$-ideal; finite part of the ray class modulus.
\item
$\rS=$ subset of the set of real embeddings of $K$; infinite part of the ray class modulus.
\item
$\rI_{\mm}(\OO) =$ monoid of integral ideals of $\OO$ coprime to the integral
ideal $\mm$.
\item
$\rI_{\mm}^{\ast} (\OO) =$ submonoid of $\rI_{\mm}(\OO)$ of integral ideals invertible as fractional ideals of $\OO$.
\item
$\rJ(\OO) =$ monoid of all  fractional ideals of $\OO$
\item
$\rJ^{\ast}(\OO) =$ group of invertible fractional ideals of $\OO$.
\item
$\rJ_{\mm} (\OO) =$ monoid of fractional ideals coprime to the
(nonzero) integral ideal
 $\mm$ of $\OO$.
\item
$\rJ_{\mm}^{\ast}(\OO) =$ group of invertible fractional ideals coprime to $\mm$.
\item
$\rP(\OO) =$ group of nonzero principal fractional ideals $\alpha\OO$, with $\alpha \in K^\times$.
\item
$\rP_{\mm, \rS}(\OO) =$ group of nonzero principal fractional ideals $\alpha\OO$, with 
$\alpha \in K^\times$, $\alpha \equiv 1 \Mod{\mm}$, and $\rho(\alpha)>0$ for $\rho \in \rS$. 
\item
$\rP_{\mm, \rS}^{\dd}(\OO) =$ subgroup of $\rP_{\mm, \rS}(\OO)$ of 
all $\alpha\OO= \colonideal{\aa}{\bb} = \aa \bb^{-1}$ an ideal quotient of invertible integral ideals $\aa, \bb$, each coprime to a given integral ideal $\dd$ of $\OO$.
\item
$\Cl_{\mm,\rS}(\OO)=$ ray class group of order $\OO$ with modulus $(\mm, \Sigma)$.
\item
$\ddD :=(\OO; \mm, \rS)$ abbreviates a (ray class) level datum, where $\OO$ is an order of a number field $K$, $\mm$ is an integral $\OO$-ideal, and $\rS$ is a subset of the real places of $K$.
\end{itemize}

\section{Ideals of orders}\label{sec:2} 

Let $\OO_K$ be the maximal order of all algebraic integers in a number field $K$. Then $\OO_K$ is a Dedekind domain, having unique prime factorization of nonzero integral ideals. All nonzero fractional ideals are invertible, and they form a free abelian group. One has an ideal class group defined as the quotient of the group of nonzero fractional ideals by the group of nonzero principal ideals. One can define ray class groups by restricting to ideals coprime to a modulus $\mm$ and quotienting by the group of principal prime ideals having a generator $\alpha \equiv 1 \Mod{\mm}$ and with some positivity conditions at a subset $\Sigma$ of real places. 

Non-maximal orders of a number field are never Dedekind domains; rather, they are one-dimensional Noetherian integral domains that are not regular rings.
The ideal theory of non-maximal orders has notable differences from that of the maximal order.
Not all integral ideals factor into prime ideals (uniquely or otherwise). There exist non-invertible integral ideals.
``Greatest common divisor'' and ``least common multiple'' of ideals are not well-defined, although there is a notion of {\em coprimality} of two ideals.
The failure of unique factorization into prime ideals is restored on restricting to ideals coprime to the conductor ideal $\ff(\OO)= \colonideal{\OO}{\OO_K}$.
In addition, the monoid of invertible integral ideals need not be free.

In the rest of \Cref{sec:2} and in \Cref{sec:2a}, we state and prove required foundational results at varying levels of generality, always restricted to commutative rings with unity. In decreasing generality, these include integral domains, Noetherian integral domains, Noetherian integral domains of dimension one, and orders $\OO$ of algebraic number fields. 

Compared to more general integral domains, orders of a number field $K$ have strong finiteness properties arising from the $\Q$-lattice structure on $K$.
A \textit{full rank $\Z$-lattice} of $K$ is any $\Z$-module $\Lambda=\alpha_1\Z + \alpha_2\Z + \cdots + \alpha_n\Z$ with each $\alpha_i \in K$, having $\Z$-rank $n = [K:\Q]$. 
The \textit{multiplier ring} $\ord(\Lambda)$ of a full rank $\Z$-lattice $\Lambda$ of a number field $K$ is given by 
\begin{equation}
\ord(\Lambda) = \colonideal{\Lambda}{\Lambda} := \{\alpha \in K : \alpha \Lambda \subseteq \Lambda\},
\end{equation}
and it is an order of $K$. The multiplier ring is the largest order (as a set) such that $\Lambda$ is a fractional ideal of that order. 
Each order $\OO$ of $K$ is the multiplier ring of some full rank $\Z$-lattice, namely, itself:~$\OO = \ord(\OO)$.
Finiteness properties of orders of number fields include the well-known finiteness of the class group $\Cl(\OO)$ of an order.

\subsection{Integral ideals, prime ideals, and primary ideals} \label{subsec:21} 

We start with a general integral domain $\DD$.
An \textit{integral ideal} (or simply an \textit{ideal} or \textit{$\DD$-ideal}) $\aa$ of $\DD$ is an $\DD$-submodule $\aa \subseteq \DD$.
The \textit{$\DD$-ideal product} $\aa \bb$ of two $\DD$-ideals $\aa, \bb$ is the 
$\DD$-ideal
\begin{equation}\label{eq:idealproduct}
\aa\bb = \aa\cdot\bb = \left\{\sum_j \alpha_j\beta_j : \alpha_j \in \aa, \beta_j \in \bb\right\}. 
\end{equation}

We let $\rI(\DD)$ denote the set of integral ideals of an integral domain, which forms a monoid for the operation of $\DD$-ideal product, with $\DD$ as the identity.
A $\DD$-ideal $\pp$ is \textit{prime} if $xy \in \pp$ implies $x \in \pp$ or $y \in \pp$, and $\pp \neq \DD$. 
Additionally, there is a notion of coprimality of $\DD$-ideals. 
\begin{defn}[Coprimality of integral ideals] 
An integral ideal $\aa \subseteq \DD$ of an integral domain $\DD$ is said to be \textit{coprime} (or \textit{relatively prime}) to another integral ideal $\mm \subseteq \DD$ if $\aa + \mm = \DD$.
\end{defn}
If $\aa, \bb$ are both coprime to $\mm$, then their  product $\aa\bb$ is coprime to $\mm$, because
\begin{equation}
\DD =(\aa+\mm)(\bb+\mm)= \aa\bb + \aa\mm+\bb\mm + \mm\mm\subseteq  \aa\bb+\mm \subseteq \DD.
\end{equation}

We now specialize $\DD$ to be a Noetherian integral domain. Thus, all ideals of $\DD$ are finitely generated.

Commutative Noetherian rings generally do not have unique factorization into products of powers of prime ideals; they possess a weaker form of decomposition of ideals under intersection, called \textit{primary decomposition} \cite[Ch.~4]{AM:69}.
A \textit{primary ideal} $\qq$ is an ideal such that, if $xy \in \qq$, then either $x \in \qq$ or $y^n \in \qq$ for some $n \ge 1$, and $\qq \neq \DD$. Any power of a prime ideal is primary. (Indeed, any power of a primary ideal is primary.) 
The primary decomposition for a Noetherian ring is a decomposition of an ideal into an intersection of primary ideals. 
In a commutative Noetherian ring, all ideals have a primary decomposition (Lasker--Noether theorem); however, a primary decomposition is not necessarily unique.

For one-dimensional Noetherian domains, stronger results hold. One-dimensional Noetherian domains are Noetherian domains for which all nonzero prime ideals are maximal. The class of one-dimensional Noetherian domains includes all orders of number fields and is closed under localization and under completion. For a one-dimensional Noetherian domain $\DD$,  primary decompositions of ideals exist and are unique. In addition, the primary decomposition given as an intersection of ideals coincides with its decomposition as a product of the same primary ideals. 
To state the result precisely, recall that the \textit{radical} of an ideal is
\begin{equation}\label{eqn:radical} 
\rad(\mm) := \{x \in A : x^n \in \mm \mbox{ for some } n \ge 1\}.
\end{equation} 
The radical $\rad(\qq)$ of a primary ideal is the unique prime ideal $\pp$ containing $\qq$. We say that such a primary ideal $\qq$ is \textit{associated} to the prime ideal $\pp$, or alternatively, that $\qq$ is {\em $\pp$-primary.}
 
\begin{prop}[Primary decomposition in dimension 1]\label{prop:221}
Let $\DD$ be a commutative Noetherian integral domain in which all nonzero prime ideals are maximal (i.e., $\DD$ has Krull dimension 1).
Then
\begin{enumerate}
\item Every non-zero ideal $\mm$ in $\DD$ has a unique primary decomposition 
\begin{equation}
\mm = \bigcap_{i} \qq_i,
\end{equation}
in which $\qq_i$ are primary ideals whose radicals $\pp_i= \rad(\qq_i)$ are pairwise distinct. 
\item The primary decomposition agrees with its product decomposition
\begin{equation}\label{eqn:pd} 
\mm= \prod_{i} \qq_i.
\end{equation} 
\end{enumerate}
\end{prop}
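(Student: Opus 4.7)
The plan is to invoke the Lasker--Noether theorem for Noetherian rings to obtain existence of a primary decomposition, and then to exploit the Krull-dimension-$1$ hypothesis to upgrade it to the sharper uniqueness and product statement. First I would apply Lasker--Noether to produce a minimal primary decomposition $\mm = \bigcap_i \qq_i$, with the $\qq_i$ being $\pp_i$-primary for pairwise distinct radicals $\pp_i$. Because $\DD$ is a domain and $\mm$ is nonzero, each $\qq_i \supseteq \mm$ is nonzero, so each $\pp_i = \rad(\qq_i)$ is a nonzero prime, and hence maximal by hypothesis. In particular the $\pp_i$ are pairwise incomparable.

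Next I would establish uniqueness via the two standard uniqueness theorems of primary decomposition. The first uniqueness theorem guarantees that the set of associated primes $\{\pp_i\}$ is an invariant of $\mm$; the second uniqueness theorem guarantees that the primary component $\qq_i$ attached to an isolated prime $\pp_i$ (one that is minimal among $\{\pp_j\}$) is likewise an invariant. Here, since the $\pp_i$ are pairwise incomparable, every associated prime is trivially isolated, so \emph{every} component $\qq_i$ is uniquely determined. If one prefers a self-contained derivation, one can write $\qq_i = \mm\DD_{\pp_i} \cap \DD$: localizing at $\pp_i$ sends each $\qq_j$ with $j \neq i$ to the unit ideal, because a suitable power of any element of $\pp_j \setminus \pp_i$ lies in $\qq_j$ but remains outside $\pp_i$ by primality of $\pp_i$.

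For part~(2) the key is to show that the $\qq_i$ are pairwise coprime. Given $i \neq j$, the distinct maximal ideals satisfy $\pp_i + \pp_j = \DD$, and Noetherianness of $\DD$ gives $\pp_i^{n_i} \subseteq \qq_i$ and $\pp_j^{n_j} \subseteq \qq_j$ for some $n_i, n_j$. Expanding $(\pp_i + \pp_j)^{n_i + n_j} = \DD$ shows $\qq_i + \qq_j \supseteq \pp_i^{n_i} + \pp_j^{n_j} = \DD$. Pairwise coprimality then yields $\bigcap_i \qq_i = \prod_i \qq_i$ by the standard induction from the two-ideal Chinese-remainder identity $\aa \cap \bb = \aa\bb$ whenever $\aa + \bb = \DD$. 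I do not anticipate a genuine obstacle here: the real content of the proposition is simply that in dimension~$1$ every primary component of a nonzero ideal is both minimal (hence isolated, forcing uniqueness) and comaximal with every other component (forcing the intersection to collapse to the product).
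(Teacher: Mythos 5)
Your proposal is correct, and it is essentially the argument behind the paper's proof, which simply cites Atiyah--MacDonald, Prop.\ 9.1 (existence via Lasker--Noether, uniqueness because every associated prime of a nonzero ideal is maximal and hence isolated in dimension one, and intersection equals product via pairwise comaximality of the components). You have written out in full the standard proof that the paper delegates to the reference; no gaps.
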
 
\begin{proof} 
This is \cite[Prop.\ 9.1]{AM:69}. The last assertion \eqref{eqn:pd} is established in its proof.
\end{proof} 
 
\Cref{prop:221}(2) gives a form of unique factorization into pairwise coprime ideals, in which every factor is primary.
 
A prime ideal $\pp$ is called \textit{non-singular} if all the $\pp$-primary ideals are powers of $\pp$; it is \textit{singular} otherwise. The singular prime ideals of orders of number fields are characterized in \Cref{lem:singular}. Each order $\OO$ of $K$ has finitely many singular prime ideals; the maximal order $\OO_K$ is the only order having no singular prime ideals.
 
\subsection{Invertible integral ideals of orders of number fields} \label{subsec:22} 
 
Recall that there is associated to each integral $\OO$-ideal $\aa$ of a number field a \textit{multiplier ring}
\begin{equation}
\ord(\aa) := \colonideal{\aa}{\aa} = \{x \in K : x\aa \subseteq \aa\}.
\end{equation} 
Necessarily $\OO \subseteq \ord(\aa) \subseteq \OO_{K}$.
All orders $\OO'$ between $\OO$ and $\OO_K$ occur this way; one may choose $\gamma \in K^{\times}$ so that  $\aa=\gamma \OO' \subseteq \OO$, and $\aa$ is then an integral $\OO$-ideal having $\ord(\aa) = \OO'.$

\begin{defn}[Invertible integral ideal]\label{defn:invertible-integral} 
An integral ideal $\aa$ of $\OO$ is \textit{invertible} if there exists another integral $\OO$-ideal $\bb$ and a nonzero $\gamma \in \OO$ such that the $\OO$-ideal product $\aa\bb = \gamma \OO$. Otherwise $\aa$ is {\em non-invertible}.
\end{defn} 

The invertibility property is preserved under $\OO$-ideal product. If $\aa \cc = \lambda \OO$ and $\bb \dd = \mu\OO$, then $(\aa\cc) (\bb\dd) = \lambda\mu \OO$, so $\aa \bb$ is invertible. This statement also has a converse.

\begin{lem}\label{lem:invertible}
Let $\OO$ be an order of a number field.
\begin{itemize}
\item[(1)] If $\cc$ is an invertible integral $\OO$-ideal, and $\cc= \aa\bb$ as $\OO$-ideals,  then both $\aa$ and $\bb$ are 
invertible $\OO$-ideals.
\item[(2)] If $\aa,  \bb, \cc$ are  $\OO$-ideals (invertible or not) and  $\ord(\cc)= \OO$, then $\cc=\aa\bb$ implies
$\ord(\aa) = \ord(\bb) = \OO$.
In particular, all invertible integral $\OO$-ideals  $\aa$ have $\ord(\aa)= \OO$. 
\end{itemize}
\end{lem}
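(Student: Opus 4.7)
My plan is to treat both parts directly from the definitions, with part (1) following almost immediately from unwinding invertibility, and part (2) reducing to a short calculation about the multiplier ring $\ord(\cdot)$. The key observation throughout is that $\OO$-ideal product is commutative and associative, so an invertibility relation $\cc\cc' = \gamma\OO$ can be combined with any factorization of $\cc$ by reassociating the product.

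For part (1), I would start from the hypothesis: since $\cc$ is invertible, there exists an integral $\OO$-ideal $\cc'$ and $\gamma \in \OO$ nonzero with $\cc\cc' = \gamma\OO$. Substituting $\cc = \aa\bb$ and reassociating, this reads $\aa(\bb\cc') = \gamma\OO$ and $\bb(\aa\cc') = \gamma\OO$. Since $\bb\cc'$ and $\aa\cc'$ are both integral $\OO$-ideals (products of integral ideals are integral), the definition of invertibility is satisfied for $\aa$ and for $\bb$. No further work is needed.

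For part (2), I would show both inclusions of $\ord(\aa) = \OO$ (and symmetrically for $\bb$). The containment $\OO \subseteq \ord(\aa)$ is automatic from $\aa$ being an $\OO$-ideal. For the reverse, let $x \in \ord(\aa)$, so $x\aa \subseteq \aa$; multiplying this inclusion on the right by $\bb$ yields $x\cc = x\aa\bb \subseteq \aa\bb = \cc$, hence $x \in \ord(\cc) = \OO$. The ``in particular'' assertion then follows by applying part (2) to the factorization $\gamma\OO = \aa\bb$ arising from any invertibility witness: one needs only the small auxiliary fact that $\ord(\gamma\OO) = \OO$, which is immediate since $x(\gamma\OO) \subseteq \gamma\OO$ is equivalent, after canceling $\gamma$, to $x\OO \subseteq \OO$.

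I do not anticipate a genuine obstacle; the argument is largely formal manipulation of ideal products and of the defining containment for $\ord(\cdot)$. The only subtle point worth flagging is that in the definition of invertibility, the ``inverse'' is required to be an integral ideal (rather than a fractional one), so in part (1) I must check that $\bb\cc'$ and $\aa\cc'$ are integral — which they visibly are as products of integral ideals. Everything else is routine.
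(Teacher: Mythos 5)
Your proof is correct and follows essentially the same route as the paper's: part (1) by reassociating the invertibility witness $\cc\cc'=\gamma\OO$ around the factorization $\cc=\aa\bb$, and part (2) via the inclusion $\ord(\aa)\subseteq\ord(\aa\bb)=\ord(\cc)$ together with $\ord(\gamma\OO)=\OO$ for the ``in particular'' clause. No gaps.
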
 
\begin{proof}
[Proof of (1)] Given  $\cc= \aa \bb$ is invertible, then there is an integral ideal $\dd$ with $\cc\dd=\gamma\OO$.
Now $\aa$ is invertible since  $\aa(\bb\dd)= \cc\dd=\gamma\OO$, and similarly for $\bb$. 
\end{proof}
\begin{proof}
[Proof of (2)] Given $\cc=\aa\bb$, we have $\ord(\aa) \subseteq \ord (\aa\bb)= \ord(\cc)$. Since $\OO \subseteq \ord(\aa) \subseteq \ord(\cc)$ and  $\ord(\cc) = \OO$ by hypothesis, we have $\ord(\aa) =\OO$; similarly $\ord(\bb) = \OO$. For invertible ideals we have $\aa \cc= \gamma \OO$. Since $\ord(\gamma \OO) = \OO$, we deduce $\ord(\aa)=\OO$.
\end{proof}  

We let $\rI^{\ast}(\OO)$ denote the monoid of invertible integral ideals under ideal multiplication. All nonzero principal ideals $\aa= \alpha \OO \in \rI^{\ast}(\OO)$, because $(\alpha\OO)(\OO) = \alpha\OO$. 

Not all ideals of a general order $\OO$ are invertible; a necessary condition for invertibility of an $\OO$-ideal $\aa$ is that $\ord(\aa)=\OO$, as given in \Cref{lem:invertible}(2). 
The next example shows this necessary condition is in general not sufficient.

\begin{eg}\label{exam:26} 
[Non-invertible ideal $\qq$ of $\OO$ with $\ord(\qq)=\OO$]
(This phenomenon occurs only for number fields $K$ with $[K:\Q] \ge 3$ having a non-Gorenstein order $\OO$; see also \cite{Clark22}.)
Take $K= \Q(\sqrt[3]{2})$, and consider three orders of $K$ with $\OO \subsetneq \OO' \subsetneq \OO_K$ given by:
\begin{align}
\OO &= \Z[2 \sqrt[3]{2}, 2 \sqrt[3]{4}]    = \Z + 2 \sqrt[3]{2}\Z + 2\sqrt[3]{4}\Z; \\
\OO' &= \Z[\sqrt[3]{4}]  = \Z + 2 \sqrt[3]{2}\Z + \sqrt[3]{4}\Z; \\
\OO_K &= \Z[\sqrt[3]{2}]  = \Z +  \sqrt[3]{2}\Z + \sqrt[3]{4}\Z.
\end{align}
We set $\qq= 2\Z + 2 \sqrt[3]{2}\Z + 4 \sqrt[3]{4}\Z$ and note it is a $\OO$-ideal of index $4$ in $\OO$, hence a primary ideal of $\OO$. 
We show $\qq$ is not invertible as an $\OO$-ideal  by contradiction. If it were invertible, then $\qq^2$ would also be an invertible $\OO$-ideal.  Now $\qq^2 = 4\Z + 4 \sqrt[3]{2}\Z + 4\sqrt[3]{4}\Z = 4 \OO_K$ has $\ord(\qq^2) = \colonideal{\qq^2}{\qq^2}= \OO_K$, so $\qq^2$ is not invertible for $\OO$ by \Cref{lem:invertible}(2), a contradiction.

Secondly, $\qq$ has multiplier ring $\ord(\qq) = \colonideal{\qq}{\qq}=\OO$. To see this, we observe that the only orders containing $\OO$ are $\OO'$ and $\OO_K$. It suffices to show $\qq\OO' \ne \qq$, which holds because $2 \in \qq$ and $\sqrt[3]{4} \in \OO'$ have product $2 \sqrt[3]{4} \nin \qq$. Thus, $\qq$ is not an $\OO'$-ideal, hence $\ord (\qq)= \OO$. 
\end{eg}

There is a factorization theory for invertible integral ideals based on a notion of irreducible integral ideal. 
In general this theory does not result in unique factorizations, nor are irreducible integral ideals always prime.

\begin{defn}[Irreducible integral ideal]\label{defn:irred-integral} 
An invertible integral ideal $\qq$ is said to be \textit{irreducible} for the one-dimensional Noetherian domain $\DD$ if $\qq \neq \DD$ and the factorization $\qq = \aa \bb$ for invertible ideals $\aa,\bb \subseteq \DD$ implies that $\aa=\DD$ or $\bb=\DD$. 
\end{defn} 

Irreducible invertible ideals of one-dimensional Noetherian domains $\DD$ are necessarily primary ideals. (If they were not primary, they would have a nontrivial primary decomposition, contradicting irreducibility.) There may be more than one irreducible invertible ideal whose radical is a given prime ideal, as well as more than one irreducible invertible ideal associated to a given non-Archimedean valuation on $K$; both phenomena are illustrated by \Cref{exam:210}.

\begin{eg}[Primary and prime ideals in non-maximal orders; invertibility]\label{exam:29} 
Consider $K= \Q(\sqrt{-13})$, which has ring of integers $\OO_K = \Z + \sqrt{-13}\Z$ of discriminant $-52$.
Let $q$ be an inert prime in $\OO_K$; for example, $q=5$.
Then $\mm = q\OO_K = q\Z + q\sqrt{-13}\Z$ is a maximal ideal of $\OO_K$ of norm $q^2$ in $\OO_K$.
It is an invertible principal ideal in the maximal order $\OO_K$.

Consider the non-maximal order $\OO = \Z + q\sqrt{-13}\Z$.
The lattice $\mm= q\Z + q \sqrt{-13}\Z$ is a maximal $\OO$-ideal; hence, it is a prime ideal of $\OO$.
It has $\ord(\mm)= \OO_K$, so it is not an invertible integral ideal of $\OO$.
Since it is not invertible, it cannot be a principal ideal of $\OO$.

On the other hand, the ideal $\qq := q \OO = q\Z + q^2 \sqrt{-13}\Z$, which has $\qq \subseteq \mm$,
is a principal ideal of $\OO$; hence, 
it is an invertible $\OO$-ideal. It is is a primary ideal of $\OO$, and 
its associated prime ideal in $\OO$ is $\rad(\qq)= \mm$, noting that $(q \sqrt{-13})^2 \in \qq$. 
\end{eg} 

\begin{eg}[Nonunique factorization of an invertible ideal  into irreducible  invertible factors] \label{exam:210} 
Let $K=\Q(\sqrt{2})$ with $\OO_K = \Z + \sqrt{2}\Z$, and let $\OO = \Z + 2\sqrt{2}\Z$.
Then $\OO$ does not contain the fundamental unit $\e = 1+ \sqrt{2}$, but does contain $\e^2= 3+ 2 \sqrt{2}$. 
Now the two $\OO$-ideals $\qq_1= (2\e) \OO = 4\Z + (2+2 \sqrt{2})\Z$ and $\qq_2 = 2\OO = 2\Z + 4 \sqrt{2}\Z$ are principal $\OO$-ideals, hence invertible. 
They are both primary ideals associated to the prime ideal 
$\pp = 2\Z +  2\sqrt{2}\Z=2\OO_K \subsetneq \OO$, which is not invertible.
The ideal $\pp$ has index $2$ in $\OO$.
Therefore $\qq_1$ and $\qq_2$, which are each of index $4$ in $\OO$ and index $2$ in $\pp$, must be irreducible.
One has 
\begin{equation}
(\qq_1)^2 = (\qq_2)^2 = 4 \OO = 4\Z + 8\sqrt{2}\Z.
\end{equation}
Thus the invertible ideal $4\OO$ has two different irreducible factorizations.
\end{eg} 

\subsection{Conductors and relative conductors of orders of number fields}\label{subsec:23} 

The conductor ideal $\ff(\OO)$ of an order $\OO$ of a number field is an important invariant of the order that contains information on the non-invertible ideals of $\OO$. 

\begin{defn}\label{def:conductor} 
The absolute and relative conductor are defined as follows.
\begin{enumerate}
\item
The \textit{(absolute) conductor} of $\OO$ (in $\OO_K$) is 
\begin{equation}\label{eq:abs-conductor} 
\ff(\OO) := \ff_{\OO_K}\!(\OO) = \colonideal{\OO}{\OO_K} = \{\alpha \in \OO_K : \alpha\OO_K \subseteq \OO\}.
\end{equation}
It is the largest $\OO_K$-ideal in $\OO$. 
\item
More generally, if $\OO \subseteq \OO'$,
then the \textit{relative conductor} of $\OO$ in $\OO'$ is
\begin{equation}\label{eq:rel-conductor}
\ff_{\OO'}\!(\OO) = \colonideal{\OO}{\OO'} = \{\alpha \in \OO' : \alpha\OO' \subseteq \OO\}.
\end{equation}
It is the largest $\OO'$-ideal in $\OO$.
\end{enumerate}
\end{defn}

The absolute conductor ideal $\ff(\OO) = \ff_{\OO_K}\!(\OO)$ is contained in all relative conductors $\ff_{\OO'}(\OO)$.
 
\begin{eg}[Conductors of quadratic orders]
If $K$ is a quadratic field of discriminant $\Delta$, then the maximal order of $K$ is given by $\OO_K = \OO_\Delta = \Z\left[\frac{\Delta+\sqrt{\Delta}}{2}\right]$. The orders of $K$ are of the form
\begin{equation}
\OO_{f^2\Delta} = \Z\left[\frac{f^2\Delta+\sqrt{f^2\Delta}}{2}\right] = \Z + f\frac{\Delta+\sqrt{\Delta}}{2}\Z
\end{equation}
for $f \in \N$. The order $\OO_{f^2\Delta}$ has discriminant $f^2\Delta$. We have $\OO_{f^2\Delta} \subseteq \OO_{(f')^2\Delta}$ if and only if $f'|f$, and the relative conductor is $\ff_{\OO_{(f')^2\Delta}}\!\left(\OO_{f^2\Delta}\right) = \frac{f}{f'}\OO_{(f')^2\Delta}$.
\end{eg}

In the quadratic field case, the absolute conductor determines the order. This does not hold in general, as the following biquadratic example shows.

\begin{eg}[The absolute conductor does not determine the order]\label{exam:abs-cond}
Let $K$ be the field generated by the $12$-th roots of unity, and write it as a biquadratic field $K = \Q(\omega,i)$, where $\omega^2+\omega+1=0$ and $i^2+1=0$. The maximal order of $K$ is $\OO_K = \Z[\omega,i]$.

Consider the two orders $\OO \subsetneq \OO' \subsetneq \OO_K$ given by
\begin{align}
\OO &= \Z[5\omega,5i,5\omega i] = \Z+5\omega\Z+5i\Z+5\omega i\Z \mbox{ and} \\ 
\OO' &= \Z[\omega,5i]= \Z+\omega\Z+5i\Z+5\omega i\Z.
\end{align}
If $\alpha = w+\omega x+iy+\omega iz \in \ff(\OO)$, then $\alpha \in \OO \implies 5|x, 5|y, 5|z$, and $i\alpha \in \OO \implies 5|w$, so we see that $\ff(\OO) = 5\OO_K$. On the other hand, if $\alpha = w+\omega x+iy+\omega iz \in \ff(\OO')$, then $\alpha \in \OO' \implies 5|y, 5|z$, and $i\alpha \in \OO' \implies 5|w, 5|x$, so we see that $\ff(\OO') = 5\OO_K$. Thus, $\OO$ and $\OO'$ have the same absolute conductor $5 \OO_K$.
\end{eg}

\begin{eg}[Noninvertible relative conductor]
We compute the relative conductor for orders $\OO$ and $\OO'$ in \Cref{exam:abs-cond}.
Taking $\alpha = w+\omega x+iy+\omega iz \in \ff_{\OO'}(\OO)$, then $\alpha \in \OO \implies 5|x, 5|y, 5|z$, and $\omega\alpha \in \OO \implies 5|(w-x)$ and thus $5|w$; we see that
the relative conductor $\ff_{\OO'}(\OO) = 5\OO_K$. This relative conductor is not only an $\OO$-ideal, it is also an $\OO_K$-ideal. 
Therefore it is not invertible as an integral $\OO$-ideal by \Cref{lem:invertible}(2).
\end{eg} 

The conductor determines all singular prime ideals.

\begin{lem}\label{lem:singular} 
Let $\OO$ be an order of a number field and $\pp$ a nonzero prime ideal of $\OO$. The following are equivalent.
\begin{enumerate}
\item[(1)]
$\pp$ is not coprime to $\ff(\OO)$, that is, $\pp + \ff(\OO) = \pp$.
Equivalently, $\ff(\OO) \subseteq \pp$.
\item[(2)]
$\pp$ is a non-invertible prime ideal of $\OO$.
\item[(3)]
$\pp$ is a singular prime ideal of $\OO$.
\end{enumerate} 
\end{lem}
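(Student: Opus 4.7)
The plan is to localize at $\pp$ and show that each of (1), (2), (3) is equivalent to the statement that $\OO_\pp$, the localization of $\OO$ at the prime $\pp$, fails to be a discrete valuation ring (DVR). Since $\OO$ is one-dimensional Noetherian, $\pp$ is maximal, and $\OO_\pp$ is a one-dimensional Noetherian local domain with maximal ideal $\pp\OO_\pp$.

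First, (1) as stated is equivalent to $\ff(\OO) \subseteq \pp$: by maximality, $\pp + \ff(\OO)$ equals either $\pp$ or $\OO$. I would then show that $\ff(\OO) \subseteq \pp$ if and only if $\OO_\pp$ is not a DVR. If some $c \in \ff(\OO) \setminus \pp$ exists, then $c$ is a unit in $\OO_\pp$ and $c\OO_K \subseteq \OO$, so $\OO_K \subseteq c^{-1}\OO \subseteq \OO_\pp$; combined with $\OO \subseteq \OO_K$ this forces $\OO_\pp = (\OO_K)_{\OO\setminus\pp}$, a one-dimensional Noetherian integrally closed local domain, hence a DVR. Conversely, if $\OO_\pp$ is a DVR then it is integrally closed in $K$, so $\OO_K \subseteq \OO_\pp$; because $\OO_K$ is finitely generated as an $\OO$-module, I can clear a common denominator to produce $s \in \OO \setminus \pp$ with $s\OO_K \subseteq \OO$, whence $s \in \ff(\OO) \setminus \pp$. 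This last step, which uses the number-field hypothesis essentially, is the main obstacle.

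For (2), I would use that invertibility of a finitely generated ideal in a Noetherian domain is a local property: $\pp$ is invertible in $\OO$ if and only if $\pp\OO_\qq$ is invertible in $\OO_\qq$ for every maximal ideal $\qq$ of $\OO$. At $\qq \neq \pp$, $\pp\OO_\qq = \OO_\qq$ is trivially invertible, while at $\qq = \pp$, $\pp\OO_\pp$ is invertible in the local ring $\OO_\pp$ if and only if it is principal, if and only if $\OO_\pp$ is a DVR (the standard characterization in dimension one). Hence (2) holds if and only if $\OO_\pp$ is not a DVR.

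For (3), extension and contraction give a bijection between $\pp$-primary ideals of $\OO$ and $\pp\OO_\pp$-primary ideals of $\OO_\pp$, matching $\pp^k$ with $(\pp\OO_\pp)^k$; in dimension one, every nonzero proper ideal of $\OO_\pp$ is $\pp\OO_\pp$-primary. If $\OO_\pp$ is a DVR, every such ideal is a power of $\pp\OO_\pp$, so every $\pp$-primary ideal of $\OO$ has the form $\pp^k$ and $\pp$ is nonsingular. Conversely, if $\OO_\pp$ is not a DVR then $\pp\OO_\pp$ is not principal; picking $t \in \pp\OO_\pp \setminus (\pp\OO_\pp)^2$ (nonempty by Nakayama), the ideal $(t)$ is $\pp\OO_\pp$-primary but differs from every power $(\pp\OO_\pp)^k$ (the case $k=1$ would make $\pp\OO_\pp$ principal, and $k \geq 2$ contradicts $t \notin (\pp\OO_\pp)^2$), so its contraction to $\OO$ is a $\pp$-primary ideal not of the form $\pp^k$, making $\pp$ singular.
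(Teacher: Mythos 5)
Your proof is correct and follows essentially the same route as the paper's: the paper also reduces everything to whether the localization $\OO_\pp$ is a discrete valuation ring, proving $(2)\Leftrightarrow(3)$ exactly as you do and citing Neukirch (Ch.\ I, Prop.\ 12.10) for $(1)\Leftrightarrow(2)$. The only difference is that you write out a self-contained argument for the conductor criterion (the $c\OO_K\subseteq\OO$ trick and the clearing-of-denominators step using that $\OO_K$ is a finite $\OO$-module), which is precisely the content of the cited result.
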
 
\begin{proof} 
For any prime ideal $\pp$ we  have $\pp \subseteq \pp+ \ff(\OO) \subseteq \OO$. Since all nonzero prime ideals $\pp$ are maximal, 
$\pp$ is not coprime to $\ff(\OO)$ if and only if $\pp + \ff(\OO) = \pp$.

\textit{Proof of $(1) \Leftrightarrow (2)$.} 
The contrapositive of this assertion says: $\pp$ is an invertible prime ideal of $\OO$
if and only if 
$\ff(\OO) \not\subseteq \pp$. Since $\pp$ is maximal, the latter means $\pp+\ff(\OO) = \OO$, that is, $\pp$ is coprime to $\ff(\OO)$.
The contrapositive is proved as \cite[Ch.\ I, Prop.\ 12.10]{Neukirch13}.

\textit{Proof of $(2) \Leftrightarrow (3)$.} 
The contrapositive of the assertion says: 
$\pp$ is an invertible prime ideal of $\OO$ if and only if $\pp$ is a non-singular prime ideal of $\OO$. 
Now a prime ideal $\pp$ is invertible if and only if $\pp\OO_{\pp}=\alpha_{\pp}\OO_{\pp}$ is a principal ideal, where $\OO_{\pp}$ denotes the localization of $\OO$ at the maximal ideal $\pp$. (See \cite[Ch.\ I, Sec.~11, 12 and Lem.~12.4]{Neukirch13}.)
Equivalently, the localization $\OO_{\pp}$ is a discrete valuation ring, so the set of all nonzero ideals in the local ring are powers of the maximal ideal $\pp\OO_{\pp}$, that is, $\pp$ is non-singular, as required.
\end{proof} 

In the next example, the {\em norm} of an integral ideal $\aa$ of an order $\OO$ is the index
\begin{equation}
\Nm_{\OO}\!\left(\aa\right) := [\OO: \aa]
\end{equation}
with $\OO$ and $\aa$ regarded as $\Z$-modules. This definition coincides with the usual definition of {\em norm} of an ideal for the maximal order. Norms are treated in \Cref{appendix:norms}, where the definition of {\em norm} is extended to fractional ideals of an order. 

\begin{eg}
[Structure of primary ideals for $\pp$ containing $\ff(\OO)$; failure of gcd and lcm] 
\label{exam:214} 
The set of primary ideals $\qq$ associated to  a prime ideal $\pp$ containing the conductor ideal $\ff(\OO)$ 
can have a very complicated structure.
\begin{enumerate}
\item[(1)]
The containment of ideals $\qq_1 \subseteq \qq_2$  does not  always imply the divisibility of $\qq_1$ by $\qq_2$.
\item[(2)] 
For suitable pairs of  primary ideal, non-existence of $\gcd$ and of $\lcm$ can occur.
\item[(3)]
The ideal norm function may not multiplicative on primary ideals.
\end{enumerate}

Let $K=\Q(i)$ with maximal order $\OO_K = \Z[i]= \Z + i\Z$, and consider the order $\OO = \Z[2i] = \Z + 2i\Z$ of index $2$ in $\OO_K$. The conductor ideal  $\ff(\OO) = 2\OO_K = 2\Z + 2i \Z$ is a maximal  ideal of $\OO$, since it is of index $2$ in $\OO$.  
We consider the set $\mathcal{F}$ of all primary ideals associated to $\ff(\OO)$, and sort them by the size of their norm, which is always a power of $2$. The unique element of norm $2$ in $\OO$  is $\QQ_2 := \ff(\OO)$. Since $\ord(\QQ_2) = \OO_K$ is larger than $\OO$, $\QQ_2$ is not invertible for $\OO$ by \Cref{lem:invertible}(2).

There are two invertible ideals in $\mathcal{F}$ of norm $4$, both principal: 
\begin{equation} 
\qq_4 := 2 \OO= 2\Z + 4i \Z \quad \mbox{and} \quad \qq_4' := 2i \OO = 4\Z + 2i \Z.
\end{equation} 
There is also one  non-invertible ideal of norm $4$,
\begin{equation}
\QQ_4 := 2(1+i) \OO_K = 4\Z + (2+2i)\Z.
\end{equation} 
Both $\qq_4$ and $\qq_4'$ are contained in $\QQ_2$, but they cannot be divisible by $\QQ_2$ because it is a non-invertible ideal, while all divisors of invertible ideals are invertible by \Cref{lem:invertible}(1). This illustrates (1). 

Note that $\qq_4$ and $\qq_{4}'$ are irreducible integral ideals, as their only (integral ideal) common divisor is $\OO$. However, they are not coprime integral ideals, since $\qq_4 + \qq_4'= \QQ_2$. 

In addition $\QQ_4$ is an irreducible $\OO$-integral ideal, because it is not divisible by $\QQ_2$ viewed as an $\OO$-integral ideal.  This holds since $(1+i) \OO_K$ is not an integral $\OO$-ideal, as it contains $1+i \not\in \Z[2i]$. ($\QQ_2$ is divisible by $(1+i)\OO_K$ viewed as an $\OO_K$-ideal.) 

There is a single ideal in $\mathcal{F}$ of norm $8$, which is $\QQ_{8} := (\QQ_2)^2 = 4\OO_K = 4\Z + 4i \Z$. It is a non-invertible ideal, since $\ord(\QQ_{8}) = \OO_K$. Now $\QQ_{8} = (\QQ_2)^2$, so we have 
\begin{equation}
8 = \Nm_{\OO}(\QQ_8) = \Nm_{\OO}(( \QQ_2)^2) > (\Nm_{\OO} \QQ_2)^2 = 4.
\end{equation} 
This illustrates that the ideal norm on $\OO$ is not always multiplicative, giving assertion (3).

We also note that 
$\qq_4 \QQ_2 =  \qq_4' \QQ_2 = \QQ_{8}$.
The norms are multiplicative in this case, since $\qq_4$ is invertible; see \Cref{prop:normmult} in \Cref{appendix:norms}.

We now study the primary ideals of norm $16$ in $\mathcal{F}$. There are two invertible ideals:
\begin{equation}
\qq_{16} := 4 \OO= 4\Z + 8i \Z \quad \mbox{and} \quad \qq_{16}' := 4i \OO =  8\Z+ 4i \Z.
\end{equation} 
There is also a non-invertible ideal of norm $16$,
\begin{equation}
\QQ_{16} := 4(1+i) \OO_K = 8\Z + (4+4i)\Z. 
\end{equation} 
As invertible ideals, $\qq_{16}$ and $\qq_{16}'$   cannot be divisible by $\QQ_2$, $\QQ_4$ or $\QQ_{8}$.
We find that
\begin{equation}\label{eq:cover}
\qq_{16} = (\qq_4)^2= (\qq_4')^2 \quad \mbox{and} \quad \qq_{16}' = \qq_4 \qq_4'.
\end{equation}  
It follows that each of  $\qq_{16}$ and $\qq_{16}'$ are contained in both
$\qq_4$ and in $\qq_4'$. 
In the divisibility partial order, the factorization \eqref{eq:cover} shows that there are no intermediate elements of the partial order between $\qq_{4}$ (respectively, $\qq_4'$) and either $\qq_{16}$ or $\qq_{16}'$ (since $\qq_4$ and $\qq_4'$ are irreducible).
Consequently, the $\lcm$ of $\qq_{4}$ and $\qq_{4}'$ is not well-defined, and the $\gcd$ of $\qq_{16}$ and $\qq_{16}'$ is not well defined. This illustrates (2). 

The divisibility partial order relating these ideals is pictured in \Cref{fig:21}.

\begin{figure}[h] 
\begin{tikzcd}
{{\mathfrak q}_{16}} \ar[dash]{d} \ar[dash]{dr} & {\mathfrak q}_{16}' \ar[dash]{d} \ar[dash]{dl} \\
{\mathfrak q}_{4} & {\mathfrak q}_4'
\end{tikzcd}
\caption{Hasse diagram for $\{\qq_{4}, \qq_4', \qq_{16}, \qq_{16}'\}$.} 
\label{fig:21} 
\end{figure}

\end{eg} 

\begin{rmk}[Structure of primary ideals for prime ideals $\pp$ not containing $\ff(\OO)$]\label{rmk:reg}
For prime ideals $\pp$ coprime to the conductor ideal $\ff(\OO)$, none of the pathologies (1)--(3) shown in \Cref{exam:214} occur. The contrapositive of \Cref{lem:singular} shows that $\pp$ is an invertible ideal and is non-singular, that is, its complete
set of $\pp$-primary ideals is $\{\pp^j : j \ge 1\}$. For $\pp$-primary ideals, containment is equivalent to divisibility, so $\gcd$ and $\lcm$ are well-defined. The $\OO$-norms of $\pp$-primary ideals are multiplicative by \Cref{prop:normmult}, because all $\pp^j$ are invertible. 
\end{rmk}

%
%
\subsection{Monoids of integral ideals coprime to a fixed modulus $\mm$}\label{subsec:24a} 

Recall that $\rI(\OO)$ denotes the monoid of all integral ideals and $\rI^{\ast}(\OO)$ the monoid of all invertible integral ideals.
We introduce monoids of integral ideals and invertible integral ideals coprime to a fixed modulus $\mm$.

\begin{defn}[Monoids of integral ideals coprime to $\mm$]\label{defn:ideal-monoids}  
Given an integral ideal $\mm$ of an order $\OO$ of a number field, let $\rI_{\mm}(\OO)$ denote the set of {\em $\mm$-coprime integral  ideals} of $\OO$, and let $\rI_{\mm}^{\ast}(\OO)$ denote the set of {\em $\mm$-coprime invertible integral ideals}. That is,
\begin{align}
\rI_{\mm}(\OO) &= \{\aa \in \rI(\OO) : \aa+\mm = \OO\}, 
& \mbox{and} &&
\rI_{\mm}^{\ast}(\OO) &= \{\aa \in \rI^{\ast}(\OO) : \aa+\mm = \OO\}.
\end{align}
\end{defn} 

\begin{lem}\label{lem:monoid-well-defined} 
Let $\OO$ be an order of a number field and $\mm$ an integral ideal of $\OO$.
\begin{enumerate}
\item[(1)]
The sets $\rI_{\mm}(\OO)$ and $\rI_{\mm}^{\ast}(\OO)$ are monoids for $\OO$-ideal product.
\item[(2)]
If $\mm, \mm'$ are ideals of $\OO$  with $\mm \subseteq \mm'$, then $\rI_{\mm}(\OO) \subseteq \rI_{\mm'}(\OO)$, and $\rI_{\mm}^{\ast}(\OO) \subseteq \rI_{\mm'}^{\ast}(\OO)$.
\end{enumerate}
\end{lem}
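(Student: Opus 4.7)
The plan is to verify the monoid axioms of $\rI_\mm(\OO)$ and $\rI_\mm^\ast(\OO)$ directly from \Cref{defn:ideal-monoids}, using coprimality and invertibility facts already established earlier in \Cref{sec:2}. Since associativity and commutativity of $\OO$-ideal product are inherited from the ambient monoid $\rI(\OO)$, the only substantive checks in (1) are closure under product and existence of the identity; part (2) reduces to a one-line inclusion.

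For part (1), I would first observe that $\OO$ lies in $\rI_\mm^\ast(\OO) \subseteq \rI_\mm(\OO)$: the identity $\OO + \mm = \OO$ is immediate because $\mm \subseteq \OO$, and $\OO$ is trivially invertible since $\OO \cdot \OO = 1 \cdot \OO$. For closure of $\rI_\mm(\OO)$ under product, I would simply cite the short computation already appearing immediately after the definition of coprimality: if $\aa + \mm = \OO = \bb + \mm$, then $\OO = (\aa+\mm)(\bb+\mm) = \aa\bb + \aa\mm + \bb\mm + \mm\mm \subseteq \aa\bb + \mm \subseteq \OO$, forcing $\aa\bb + \mm = \OO$. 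For closure of $\rI_\mm^\ast(\OO)$, I would additionally invoke the remark preceding \Cref{lem:invertible} that invertibility is preserved under $\OO$-ideal product: if $\aa\cc = \alpha\OO$ and $\bb\dd = \beta\OO$ are principal with $\alpha,\beta \in \OO$, then $(\aa\bb)(\cc\dd) = \alpha\beta\OO$ is principal, so $\aa\bb \in \rI^\ast(\OO)$.

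For part (2), suppose $\aa \in \rI_\mm(\OO)$ and $\mm \subseteq \mm'$. Then $\OO = \aa + \mm \subseteq \aa + \mm' \subseteq \OO$ forces $\aa + \mm' = \OO$, giving $\aa \in \rI_{\mm'}(\OO)$. The inclusion $\rI_\mm^\ast(\OO) \subseteq \rI_{\mm'}^\ast(\OO)$ follows by the same argument restricted to invertible ideals, since invertibility of $\aa$ is intrinsic to $\aa$ and independent of the modulus.

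There is no serious obstacle here; the lemma is essentially a bookkeeping statement that packages the coprimality-under-product identity and the stability of invertibility under product into the monoid language needed for the rest of the paper. The only minor point worth flagging is that one must explicitly include $\OO$ as the identity element (rather than tacitly assuming it lies in $\rI_\mm^\ast(\OO)$), since the definition of $\rI_\mm^\ast(\OO)$ in \Cref{defn:ideal-monoids} is phrased as a subset of $\rI^\ast(\OO)$ without mentioning the unit.
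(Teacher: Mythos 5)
Your proof is correct and follows essentially the same route as the paper: closure under product via the identity $(\aa+\mm)(\bb+\mm) = \aa\bb+\mm$, preservation of invertibility by the remark preceding \Cref{lem:invertible}, and the one-line inclusion $\OO = \aa+\mm \subseteq \aa+\mm' \subseteq \OO$ for part (2). Your explicit check that $\OO$ itself lies in $\rI_\mm^\ast(\OO)$ is a small addition the paper leaves tacit, but nothing of substance differs.
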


\begin{proof}
[Proof of (1)]
We check that these sets are closed under $\OO$-ideal product. If $\aa + \mm=\OO$ and $\bb+\mm=\OO$, then
\begin{equation}
\OO = (\aa + \mm)(\bb+\mm) = \aa\bb+ (\aa\mm+ \bb\mm+ \mm \mm) = \aa\bb+\mm.
\end{equation}
We noted earlier that  invertibility of integral ideals is preserved under $\OO$-ideal product.
\end{proof}
\begin{proof}
[Proof of (2)]
If $\aa+\mm=\OO$ and $\mm \subseteq \mm'$, then $\aa+ \mm'=\OO$, since $\aa+ \mm \subseteq \OO$.
\end{proof}

The next result shows that for any $\mm$ contained in the conductor ideal, the associated monoid of integral ideals coprime to $\mm$ has unique factorization into prime ideals.

\begin{prop}\label{prop:m-integral-coprime} 
Let $\OO$ be an order of a number field $K$, with $\ff(\OO)$ its absolute conductor ideal.
Suppose the integral ideal $\mm$ has $\mm \subseteq \ff(\OO)$.  Then, the following hold.
\begin{enumerate}
\item[(1)]
The monoid $\rI_{\mm}^{\ast}(\OO)$ is a free abelian monoid whose set of generators is the set of prime ideals $\sP(\mm) = \{\pp: \, \mm \not\subseteq \pp, \, \pp \neq \{0\}\}$.
All the prime ideals in $\sP(\mm)$ are non-singular, and all ideals $\aa \in \rI_{\mm}^{\ast}(\OO)$ have unique prime factorization into nonnegative powers of prime ideals in $\sP(\mm)$. 
\item[(2)]
The monoid $\rI_{\mm}(\OO)$ of nonzero ideals coprime to $\mm$ and the monoid $\rI_{\mm}^{\ast}(\OO)$ of invertible ideals coprime to $\mm$ coincide.
\end{enumerate}
\end{prop}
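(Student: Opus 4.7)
The plan is to reduce both parts to known facts about non-singular prime ideals via primary decomposition in dimension one. First I would identify $\sP(\mm)$ under the hypothesis $\mm \subseteq \ff(\OO)$, then prove (2), and finally deduce (1) from the uniqueness clause of primary decomposition.

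First I would show that every $\pp \in \sP(\mm)$ is invertible and non-singular. Since nonzero primes of $\OO$ are maximal, the condition $\mm \not\subseteq \pp$ is equivalent to $\pp + \mm = \OO$. The hypothesis $\mm \subseteq \ff(\OO)$ then yields $\ff(\OO) + \pp \supseteq \mm + \pp = \OO$, so $\pp$ is coprime to the conductor. By \Cref{lem:singular}, every such $\pp$ is invertible and non-singular, so in particular every $\pp$-primary ideal is a power of $\pp$.

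To prove (2), I would take $\aa \in \rI_{\mm}(\OO)$ and apply \Cref{prop:221} to write $\aa = \qq_1 \cdots \qq_k$ as a product of $\pp_i$-primary ideals with pairwise distinct radicals $\pp_i$. Since $\aa \subseteq \qq_i \subseteq \pp_i$, the coprimality $\aa + \mm = \OO$ forces $\pp_i + \mm = \OO$, so $\pp_i \in \sP(\mm)$. By the previous paragraph each $\pp_i$ is non-singular, so $\qq_i = \pp_i^{e_i}$ for some $e_i \ge 1$. Thus $\aa = \pp_1^{e_1} \cdots \pp_k^{e_k}$ is a product of invertible prime ideals, hence itself invertible by the multiplicativity of invertibility, proving $\aa \in \rI_{\mm}^{\ast}(\OO)$ and yielding (2).

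For (1), the factorization exhibited above shows that $\sP(\mm)$ generates $\rI_{\mm}^{\ast}(\OO)$. For uniqueness, I would observe that any factorization $\aa = \prod_i \pp_i^{e_i}$ with the $\pp_i \in \sP(\mm)$ pairwise distinct and $e_i \ge 1$ is itself a primary decomposition in the sense of \Cref{prop:221}(1), since each $\pp_i^{e_i}$ is $\pp_i$-primary with pairwise distinct radicals. The uniqueness clause of \Cref{prop:221}(1) then forces uniqueness of both the set of primes appearing and the exponents, establishing the free abelian monoid structure. The main subtlety is the initial step reducing coprimality with $\mm$ to coprimality with $\ff(\OO)$, which is where the hypothesis $\mm \subseteq \ff(\OO)$ plays its essential role; once \Cref{lem:singular} is invoked to supply non-singularity and invertibility, both parts of the proposition follow routinely from primary decomposition in dimension one.
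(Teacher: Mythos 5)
Your proposal is correct and follows essentially the same route as the paper: reduce coprimality to $\mm$ to coprimality to the conductor via $\mm \subseteq \ff(\OO)$, invoke \Cref{lem:singular} to get that the relevant primes are invertible and non-singular, and then use the primary decomposition of \Cref{prop:221} for both existence and uniqueness of the factorization. Your write-up is somewhat more detailed than the paper's (which states the argument tersely and derives (2) from (1) rather than the reverse), but the mathematical content is identical.
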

\begin{proof}
[Proof of (1)] 
We suppose $\mm \subseteq \ff(\OO) = \colonideal{\OO}{\OO_K}$.
All ideals in $\rI_{\mm}(\OO)$ are coprime to $\mm$, hence they are coprime to $\ff(\OO)$.
But the prime ideals not coprime to $\ff(\OO)$ are exactly the singular prime ideals by \Cref{lem:singular}(3).
Thus $\sP(\mm)$ contains only non-singular prime ideals.
Since coprimality to $\mm$ is preserved by ideal product, all products of a finite number of ideals drawn from $\sP(\mm)$ (allowing multiplicity) belong to $\rI_{\mm}(\OO)$. This exhausts the allowed primary decompositions of elements of $\rI(\OO)$, hence $\rI(\OO)$ is a free abelian monoid.
\end{proof}
\begin{proof}
[Proof of (2)]
The result follows (1) because all non-singular prime ideals of $\OO$ are invertible, by \Cref{lem:singular}.
\end{proof}

\begin{rmk}\label{rmk:212N}
The conclusions of \Cref{prop:m-integral-coprime} need not hold for non-maximal orders when $\mm \not\subseteq \ff(\OO)$. The following examples take $\mm=\OO$.
\begin{enumerate}
\item[(1)] 
For some non-maximal orders $\OO$, the  monoid of all invertible integral ideals $\rI^{\ast}(\OO) = \rI_{\OO}^{\ast}(\OO)$ is not a  free abelian monoid, because its irreducible elements may satisfy nontrivial relations.
See \Cref{exam:214}, \eqref{eq:cover}, where $\qq_4$ and $\qq_4^{'}$ are distinct  irreducible ideals, but $(\qq_4)^2= (\qq_4^{'})^2$.
\item[(2)]
For any non-maximal order, the monoid $\rI(\OO) = \rI_{\OO}(\OO)$ of all integral ideals contains noninvertible ideals.
The conductor ideal has $\ord(\ff(\OO)) = \OO_K \ne \OO$, so is not invertible by \Cref{lem:invertible}(2). 
\end{enumerate}
\end{rmk} 

%
%
\section{Fractional ideals of orders}\label{sec:2a} 

This section treats fractional ideals 
of Noetherian integral domains of dimension one, 
and in particular,
orders of number fields. 

The fractional ideal theory of non-maximal orders of number fields has some key differences from that of the maximal order.
Not all fractional ideals are invertible, and the set of fractional ideals under ideal multiplication has the structure of a monoid (a semigroup with identity) that is not a group. The group of invertible fractional ideals need not be free; it may contain (a finite number of) torsion elements.

\subsection{Fractional ideals for integral domains}\label{subsec:24A} 

Following \cite[Defn.\ 1.1.6]{DTZ:62}, define fractional ideals for Noetherian integral domains.

\begin{defn}[Fractional ideal]
A \textit{fractional ideal} $\aa$ of a Noetherian integral domain $\dD$ is a $\dD$-submodule of its fraction field $K$ with the property that $\lambda\aa \subseteq \dD$ for some $\lambda \in K^\times$.
(A \textit{integral ideal} is a fractional ideal contained in $\dD$.)
\end{defn}

The set of fractional ideals of $\dD$ has sum, product, intersection, and quotient operations.
In particular, the \textit{ideal product} operation $\cdot$ is defined as in \eqref{eq:idealproduct}.
The \textit{ideal quotient} (or \textit{colon ideal}) operation $\colonideal{}{}$ is 
\begin{equation}\label{eqn:ideal-quotient1} 
\colonideal{\aa}{\bb} := \{ x \in K : x \bb \subseteq \aa\}.
\end{equation}
As for integral ideals, we define the \textit{multiplier ring} of a fractional ideal $\aa$ as $\ord(\aa) = \colonideal{\aa}{\aa}$.

\begin{prop}\label{prop:fractional-ops} 
The set $\rJ(\dD)$ of all fractional ideals of a Noetherian integral domain $\dD$ is closed under the four operations
$+$, $\cdot$, $\cap$, and $\colonideal{}{}$ (addition, multiplication, intersection, and ideal quotient.) 
\end{prop}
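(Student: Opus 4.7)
The plan is to verify each of the four closure properties in turn. For each operation the verification splits into two routine tasks: checking that the output is a $\dD$-submodule of $K$ (which is immediate from the definitions of sum, product, intersection, and colon of submodules), and producing a single nonzero scalar $\nu \in K^\times$ such that $\nu$ times the output is contained in $\dD$. To set up, I would fix fractional ideals $\aa, \bb$ of $\dD$ together with common denominators $\lambda, \mu \in K^\times$ satisfying $\lambda \aa \subseteq \dD$ and $\mu \bb \subseteq \dD$.

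The three ``direct'' operations can be dispatched in rapid succession, the witness scalar being assembled directly from $\lambda$ and $\mu$. For the sum, $\nu = \lambda\mu$ works, since $\lambda\mu(\aa+\bb) \subseteq \mu(\lambda\aa) + \lambda(\mu\bb) \subseteq \dD$. For the product, the same $\nu = \lambda\mu$ works, because each generator $\alpha_i\beta_i$ of $\aa\bb$ satisfies $\lambda\mu\,\alpha_i\beta_i = (\lambda\alpha_i)(\mu\beta_i) \in \dD$. For the intersection, $\nu = \lambda$ alone suffices, since $\aa \cap \bb \subseteq \aa$ gives $\lambda(\aa \cap \bb) \subseteq \lambda\aa \subseteq \dD$.

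The ideal quotient $\colonideal{\aa}{\bb}$ is the step requiring a small idea. Assuming $\bb \neq \{0\}$ (if $\bb = \{0\}$ then $\colonideal{\aa}{\bb} = K$, which is not a fractional ideal unless $\dD = K$, so the proposition is implicitly read with this nondegeneracy assumption), I would pick any nonzero $\beta \in \bb$. By the defining condition $\colonideal{\aa}{\bb} = \{x \in K : x\bb \subseteq \aa\}$, we have $\beta \cdot \colonideal{\aa}{\bb} \subseteq \aa$, so applying $\lambda$ yields $\lambda\beta \cdot \colonideal{\aa}{\bb} \subseteq \dD$. Hence $\nu = \lambda\beta \in K^\times$ is a valid common denominator. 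The $\dD$-submodule property for $\colonideal{\aa}{\bb}$ is immediate: if $x, y \in \colonideal{\aa}{\bb}$ and $d \in \dD$, then $(x+y)\bb \subseteq x\bb + y\bb \subseteq \aa$ and $(dx)\bb = d(x\bb) \subseteq d\aa \subseteq \aa$.

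The only conceptual obstacle is the ideal quotient case: one must recognize that the denominator witness should come from inside $\bb$ itself, rather than from the pre-existing scalars $\lambda, \mu$. Once that move is spotted, the rest is bookkeeping. I note in passing that the Noetherian hypothesis is not actually used anywhere in this argument; the statement (with the nondegeneracy caveat for the quotient) holds for any integral domain, in agreement with the formulation in Dade--Taussky--Zassenhaus cited earlier.
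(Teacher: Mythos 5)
Your argument is correct in substance, but it takes a different route from the paper only in the sense that the paper gives no argument at all: it simply cites Dade--Taussky--Zassenhaus \cite[Prop.\ 1.1.11]{DTZ:62}. What you have written is essentially the standard verification that the cited source carries out, including the one non-routine move, namely that for $\colonideal{\aa}{\bb}$ the denominator witness must be manufactured from a nonzero element $\beta \in \bb$ (giving $\lambda\beta \cdot \colonideal{\aa}{\bb} \subseteq \lambda\aa \subseteq \dD$) rather than from the denominators of $\aa$ and $\bb$. Your observation that Noetherianity plays no role is also consistent with the paper's own remark that the DTZ definition and closure properties are stated for arbitrary integral domains. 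One small repair is needed in the sum case: with $\lambda, \mu$ taken merely in $K^\times$, the chain $\lambda\mu(\aa+\bb) \subseteq \mu(\lambda\aa) + \lambda(\mu\bb) \subseteq \dD$ breaks at the last step, since $\mu\dD \subseteq \dD$ requires $\mu \in \dD$ (e.g.\ $\dD=\Z$, $\aa=\Z$, $\lambda=1$, $\mu=\tfrac12$). The fix is the usual normalization: writing $\lambda = a/b$ with $a,b \in \dD$ nonzero, one has $a\aa = b(\lambda\aa) \subseteq \dD$, so one may assume from the outset that $\lambda, \mu \in \dD \setminus \{0\}$, after which every inclusion you wrote is valid. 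Your nondegeneracy caveat for $\colonideal{\aa}{\{0\}} = K$ is a fair reading of the statement and does not conflict with anything the paper does, since the paper only ever applies the quotient to nonzero ideals.
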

\begin{proof} 
This is \cite[Prop.\ 1.1.11]{DTZ:62}.
\end{proof} 

The set of all fractional ideals of $\rJ(\dD)$ forms a monoid under the ideal product operation, with identity element $\dD$.

We extend the notion of coprimality to that between a fractional ideal and an integral ideal; we do not define coprimality between two non-integral fractional ideals of $\dD$. 

\begin{defn}[Coprimality of a fractional ideal and an integral ideal]\label{def:frac-coprime}
A fractional $\dD$-ideal $\dd$ is \textit{coprime} to an integral ideal $\cc \subseteq \dD$ if it may be written as a quotient $\dd = \colonideal{\aa}{\bb}$, where $\aa$ is an $\dD$-integral ideal coprime to $\cc$, and $\bb$ is an invertible $\dD$-integral ideal  coprime to $\cc$.
\end{defn} 

In this definition, one cannot always choose $\aa, \bb$ to be coprime to each other, 
even when $\dD=\OO$ is an order of a number field; see \Cref{exam:torsion}.
Nevertheless one may check that, if $\dd_1, \dd_2$ are coprime to $\cc$, then so is their ideal product $\dd_1 \dd_2$. 

\begin{defn}[Invertible fractional ideal] \label{defn:invertible}
A fractional $\dD$-ideal $\aa$ is \textit{invertible} for $\dD$ if there is another fractional ideal $\bb$ of $\dD$ such that $\aa\bb = \dD$. 
Otherwise it is {\em non-invertible} for $\dD$. 
\end{defn}

We let $\rJ^{\ast}(\dD)$ denote the set of all invertible fractional ideals of $\dD$; they form a group under multiplication
in which $\dD$ is the identity element.  
There is a converse invertibility result for fractional ideals that extends \Cref{lem:invertible} for integral ideals.

\begin{lem}\label{lem:fract-invertible}
Let $\dD$ be a Noetherian integral domain.
\begin{itemize}
\item[(1)] If $\cc$ is an invertible fractional $\dD$-ideal, and $\cc= \aa\bb$ with $\aa, \bb$ fractional $\dD$-ideals,  then  $\aa$ and $\bb$ are 
both invertible fractional  $\dD$-ideals.
\item[(2)] If $\aa,  \bb, \cc$ are all $\dD$-fractional ideals  (invertible or not) and  $\ord(\cc)= \dD$, then $\cc=\aa\bb$ implies
$\ord(\aa) = \ord(\bb) = \dD$.
In particular all invertible fractional $\dD$-ideals  $\aa$ have $\ord(\aa)= \dD$. 
\end{itemize}
\end{lem}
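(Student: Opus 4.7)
The plan is to mirror the proof of \Cref{lem:invertible} for integral ideals, with the simplification that invertibility for fractional ideals is defined by $\aa\bb = \dD$ (rather than $\aa\bb = \gamma \OO$ for integral ideals). The key facts I will use are that $\rJ(\dD)$ is closed under ideal product (part of \Cref{prop:fractional-ops}), that ideal product is associative and commutative, and that every fractional ideal $\aa$ satisfies $\dD \subseteq \ord(\aa)$ (immediate from the $\dD$-module structure of $\aa$).

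For (1), I would start from the hypothesis that $\cc$ is invertible, so there exists a fractional $\dD$-ideal $\dd$ with $\cc \dd = \dD$. Using $\cc = \aa\bb$ and associativity of ideal product, I would rewrite
\begin{equation}
\aa (\bb \dd) = (\aa \bb) \dd = \cc \dd = \dD,
\end{equation}
exhibiting $\bb \dd$ as an inverse of $\aa$ in $\rJ(\dD)$. By symmetry, $\aa \dd$ is an inverse of $\bb$. So both $\aa$ and $\bb$ lie in $\rJ^{\ast}(\dD)$.

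For (2), I would first show that $\ord$ is monotone with respect to multiplication by a fractional ideal: if $x \in \ord(\aa)$, then $x\aa \subseteq \aa$, hence $x(\aa\bb) \subseteq \aa\bb$, so $x \in \ord(\aa\bb)$. Thus $\ord(\aa) \subseteq \ord(\aa\bb) = \ord(\cc) = \dD$, and combined with $\dD \subseteq \ord(\aa)$, this forces $\ord(\aa) = \dD$. The same reasoning gives $\ord(\bb) = \dD$. For the final sentence, if $\aa$ is an invertible fractional $\dD$-ideal with inverse $\aa^{-1}$, then $\aa \aa^{-1} = \dD$; applying the statement just proved with $\cc = \dD$ (and noting $\ord(\dD) = \dD$ trivially) yields $\ord(\aa) = \dD$.

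The argument involves no real obstacle; the only subtle point is confirming monotonicity of $\ord$ under multiplication, which is a one-line verification using the definition of the multiplier ring. The proof is essentially a verbatim transcription of the integral case in \Cref{lem:invertible}, with the identity element $\gamma \OO$ replaced by $\dD$, and it works at the full generality of Noetherian integral domains without any number-theoretic input.
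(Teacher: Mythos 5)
Your proof is correct and follows essentially the same route as the paper, which simply states that the argument of \Cref{lem:invertible} carries over verbatim with $\OO$ replaced by $\dD$ and "integral" by "fractional." Your explicit write-out — using $\aa(\bb\dd)=\dD$ for part (1) and the monotonicity $\ord(\aa)\subseteq\ord(\aa\bb)$ together with $\dD\subseteq\ord(\aa)$ for part (2) — is exactly that transcription.
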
 
\begin{proof}
The proof is identical to the proof of \Cref{lem:invertible}, replacing $\OO$ by $\dD$ and ``integral'' by ``fractional.'' 
\end{proof}
 
The next two lemmas show that the definitions of invertibility and coprimality for fractional ideals are consistent with 
the earlier definitions for integral ideals.

\begin{lem}\label{lem:coloninv}
Let $\dD$ be a Noetherian integral domain, and let $\aa$ and $\bb$ be integral $\dD$-ideals. If $\bb$ is invertible as a fractional ideal of $\dD$, then the fractional ideal $\colonideal{\aa}{\bb} = \aa\bb^{-1}$.
\end{lem}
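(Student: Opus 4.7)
The plan is to prove the two inclusions $\aa\bb^{-1} \subseteq \colonideal{\aa}{\bb}$ and $\colonideal{\aa}{\bb} \subseteq \aa\bb^{-1}$ separately, each using the hypothesis that $\bb\bb^{-1} = \dD$. No nontrivial structure of $\dD$ beyond being a Noetherian integral domain should be needed; the argument is purely formal manipulation with the ideal product and quotient operations (whose well-definedness is guaranteed by \Cref{prop:fractional-ops}).

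For the forward inclusion $\aa\bb^{-1} \subseteq \colonideal{\aa}{\bb}$, I would take $x \in \aa\bb^{-1}$ and compute $x\bb \subseteq (\aa\bb^{-1})\bb = \aa(\bb^{-1}\bb) = \aa\dD = \aa$, which shows $x \in \colonideal{\aa}{\bb}$ by the defining property \eqref{eqn:ideal-quotient1}. This direction in fact does not need invertibility of $\bb$ and would work for any fractional ideal $\bb$ if we replaced $\bb^{-1}$ with any fractional ideal $\cc$ satisfying $\cc\bb \subseteq \dD$; it uses only associativity of the ideal product.

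For the reverse inclusion $\colonideal{\aa}{\bb} \subseteq \aa\bb^{-1}$, I would take $x \in \colonideal{\aa}{\bb}$, so $x\bb \subseteq \aa$. Then I multiply (as fractional ideals) on the right by $\bb^{-1}$ to obtain $x\bb\bb^{-1} \subseteq \aa\bb^{-1}$. Since $\bb\bb^{-1} = \dD$, the left side equals $x\dD$, which contains $x \cdot 1 = x$; therefore $x \in \aa\bb^{-1}$. This direction is the one that genuinely requires invertibility of $\bb$.

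There is no real obstacle here: the lemma is essentially the statement that in the group $\rJ^{\ast}(\dD)$ the colon operation by an invertible ideal coincides with multiplication by its inverse, which falls out of the definitions once one observes that the ideal product and quotient obey the expected module-theoretic manipulations. The only care needed is to treat $x$ as generating the principal fractional ideal $x\dD$ when passing between element-wise and ideal-wise containments, and to note that $\aa\bb^{-1}$ is indeed a fractional ideal of $\dD$ (since it is a product of two such), so both sides of the claimed equality lie in $\rJ(\dD)$.
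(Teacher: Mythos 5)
Your proof is correct and follows essentially the same route as the paper's: both directions are established by the same formal manipulations with the ideal product and quotient, with invertibility of $\bb$ invoked exactly where you invoke it (to pass from $x\bb \subseteq \aa$ to $x \in \aa\bb^{-1}$ via $\bb\bb^{-1} = \dD$). The only difference is cosmetic — the paper phrases the containments on generators $cb$ and $ad$ rather than on a general element $x$ — so there is nothing further to add.
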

\begin{proof}
We first show $\colonideal{\aa}{\bb} \subseteq \aa\bb^{-1}$, or equivalently (since $\bb$ is invertible) that $\colonideal{\aa}{\bb}\bb \subseteq \aa$. Let $c \in \colonideal{\aa}{\bb}$ and $b \in \bb$.
Thus $c\bb \subseteq \aa$ by the definition of $\colonideal{\aa}{\bb}$, so in particular, $cb \in \aa$.

We show the reverse inclusion $\colonideal{\aa}{\bb} \supseteq \aa\bb^{-1}$. 
Let $a \in \aa$ and $d \in \bb^{-1}$. Then, $d\bb \subseteq \dD$, so $ad\bb \subseteq a\dD \subseteq \aa$. 
By definition of the quotient ideal, $ad \in \colonideal{\aa}{\bb}$.
\end{proof}

\begin{lem}\label{lem:invert-frac-int}
Let $\dD$ be a Noetherian integral domain.
\begin{itemize}
\item[(1)] Any invertible fractional ideal $\aa$ of $\dD$ with $\aa \subseteq \dD$ is an invertible integral ideal, and conversely. 
\item[(2)] If two integral ideals $\cc, \dd$ of $\dD$ are coprime as integral ideals, then 
$\dd$, regarded as a fractional ideal, is coprime to $\cc$, and conversely. 
\end{itemize}
\end{lem}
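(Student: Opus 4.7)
The plan is to match the definitions of invertibility and coprimality in the integral and fractional settings by rescaling, using that $K$ is the fraction field of $\dD$ so any element of $K^\times$ differs from an element of $\dD$ by multiplication by a nonzero element of $\dD$.

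For the forward direction of (1), I would start from an invertible fractional ideal $\aa \subseteq \dD$ with fractional inverse $\bb$ (so $\aa\bb = \dD$) and produce the data required by \Cref{defn:invertible-integral}. Since $\bb$ is fractional, some $\lambda \in K^\times$ satisfies $\lambda\bb \subseteq \dD$; writing $\lambda = \alpha/\beta$ with $\alpha,\beta \in \dD$ nonzero, the element $\alpha = \beta\lambda$ lies in $\dD$ and still sends $\bb$ into $\dD$, so $\bb' := \alpha\bb$ is an integral ideal of $\dD$ satisfying $\aa\bb' = \alpha(\aa\bb) = \alpha\dD$, giving invertibility as an integral ideal with $\gamma = \alpha$. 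Conversely, from $\aa\bb = \gamma\dD$ with $\bb$ integral and $\gamma \in \dD$ nonzero, set $\bb' := \gamma^{-1}\bb$; this is a fractional ideal because $\gamma\bb' = \bb \subseteq \dD$, and $\aa\bb' = \gamma^{-1}\gamma\dD = \dD$, so $\aa$ is invertible as a fractional ideal.

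For (2), the forward direction is essentially by inspection: given integral ideals with $\cc + \dd = \dD$, take $\aa = \dd$ and $\bb = \dD$ in \Cref{def:frac-coprime}, using that $\colonideal{\dd}{\dD} = \dd$ and that $\dD$ itself is invertible and trivially coprime to $\cc$. For the converse, assume $\dd$ is integral and $\dd = \colonideal{\aa}{\bb}$ with $\aa,\bb$ integral, both coprime to $\cc$, and $\bb$ invertible. By \Cref{lem:coloninv}, $\aa = \dd\bb$. Since $\bb \subseteq \dD$ and $\dd$ is a $\dD$-module, $\dd\bb \subseteq \dd$, so $\dD = \aa + \cc = \dd\bb + \cc \subseteq \dd + \cc \subseteq \dD$, forcing $\dd + \cc = \dD$.

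I do not anticipate any genuine obstacle here; the argument is mostly bookkeeping across compatible definitions. The one mild subtlety is ensuring that the scalar produced in part (1) lies in $\dD$ rather than merely in $K^\times$, which is the point of the denominator-clearing step $\alpha = \beta\lambda$ above. In part (2), the key algebraic fact used is that integrality of $\bb$ forces the containment $\dd\bb \subseteq \dd$, which allows the coprimality of $\aa = \dd\bb$ with $\cc$ to be transferred to coprimality of the potentially larger ideal $\dd$ with $\cc$ without further appeal to the invertibility or coprimality of $\bb$.
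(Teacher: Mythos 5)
Your proof is correct and follows essentially the same route as the paper's: the same denominator-clearing in part (1), the same use of the decomposition $\dd = \colonideal{\dd}{\dD}$ for the forward direction of (2), and the same appeal to \Cref{lem:coloninv} plus the containment $\aa = \dd\bb \subseteq \dd$ for the converse. Your explicit step producing $\alpha = \beta\lambda \in \dD$ is a small point of extra care that the paper glosses over, but it does not change the argument.
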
 
\begin{proof}[Proof of (1)]
Given an integral ideal $\aa \subseteq \dD$ invertible as a fractional ideal, there is a fractional ideal $\cc$ such that  $\aa \cc = \dD$. There exists $\lambda \in \dD$ so that $ \lambda\cc \subseteq \dD$ is an integral ideal, and $\aa(\lambda \cc) = \lambda\dD$ certifies that $\aa$ is an invertible integral ideal.

Given an invertible integral ideal $\aa$, there is some integral ideal $\bb$ such that 
$\aa \bb = \gamma \dD$ for a nonzero $\gamma\in \dD$. Then $\aa (\gamma^{-1}\bb) = \dD$ certifies $\aa$ is an invertible fractional ideal.
\end{proof}
\begin{proof}[Proof of (2)]
Suppose $\cc, \dd$ are integral ideals satisfying the coprimality condition $\cc + \dd= \dD$.
Viewing $\dd$ as a fractional ideal, the  decomposition $\dd = \colonideal{\dd}{\dD}$ certifies the fractional ideal $\dd$ is 
coprime to $\cc$, since $\dd$ (on the right side of the decomposition) is by hypothesis coprime to $\cc$ as an integral ideal, and $\dD$ is an invertible integral ideal coprime to $\cc$. 

Conversely, we are given integral ideals $\cc, \dd$ for which $\dd$, regarded as a fractional ideal, is coprime to $\cc$. 
Then we have $\dd=\colonideal{\aa}{\bb}$ for two integral ideals $\aa, \bb$ in which $\bb$ is invertible
and $\cc+\aa = \cc+\bb = \dD$.
By \Cref{lem:coloninv}, $\dd= \aa\bb^{-1}$.
Thus, as fractional ideals, we have $\dd\bb = \aa$, and $\aa \subseteq \dd$, since $\bb$ is an integral ideal.
Now $\cc + \aa = \dD$ means  there  exists $c \in \cc$ and $a \in \aa$ such that $c+a=1$, and necessarily  $a \in \dd$, 
so $\cc+\dd=\dD$.
\end{proof}

Invertible fractional ideals are characterized by local data. 

\begin{prop}\label{prop:invertible-ops3} 
If $\dD$ is any Noetherian integral domain, then a fractional ideal $\aa \in \rJ(\dD)$ is invertible for $\dD$ if and only if
each localized ideal $\aa_{\mm}$ is a principal $\dD_{\mm}$-ideal for each maximal ideal $\mm$ of $\dD$.
\end{prop}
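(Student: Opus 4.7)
My plan is to prove both directions by passing to localizations at maximal ideals of $\dD$ and using the fact that a nonzero principal fractional ideal over a local ring is tautologically invertible. Throughout, I will use that localization is exact and commutes with finite sums, finite intersections, and products of ideals.

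For the forward direction, suppose $\aa\bb = \dD$ for some fractional ideal $\bb$. Since $1 \in \dD = \aa\bb$, I can write $1 = \sum_{i=1}^{n} a_i b_i$ with $a_i \in \aa$ and $b_i \in \bb$. Localizing at a maximal ideal $\mm$ gives $\aa_{\mm}\bb_{\mm} = \dD_{\mm}$, and since $\dD_{\mm}$ is local with maximal ideal $\mm\dD_{\mm}$, the relation $1 = \sum a_i b_i$ forces at least one term to be a unit; relabel so that $u := a_1 b_1 \in \dD_{\mm}^{\times}$. Then for any $a \in \aa_{\mm}$, the product $ab_1$ lies in $\aa_{\mm}\bb_{\mm} = \dD_{\mm}$, so $(ab_1)u^{-1} \in \dD_{\mm}$; but this element equals $a/a_1$ in $K$, so $a \in a_1\dD_{\mm}$. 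Combined with the obvious reverse inclusion $a_1\dD_{\mm} \subseteq \aa_{\mm}$, this gives $\aa_{\mm} = a_1\dD_{\mm}$, a principal $\dD_{\mm}$-ideal.

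For the reverse direction, assume each $\aa_{\mm}$ is a principal $\dD_{\mm}$-ideal. Define $\aa^{-1} := \colonideal{\dD}{\aa}$, which is a fractional ideal by \Cref{prop:fractional-ops}, and consider the integral ideal $\cc := \aa\aa^{-1} \subseteq \dD$. The plan is to show $\cc = \dD$, which by the standard local-global principle reduces to verifying $\cc_{\mm} = \dD_{\mm}$ for every maximal ideal $\mm$. Because $\dD$ is Noetherian, $\aa$ is a finitely generated $\dD$-module: choose $\lambda \in K^{\times}$ with $\lambda\aa \subseteq \dD$, observe that $\lambda\aa$ is a finitely generated integral ideal, and multiply back by $\lambda^{-1}$. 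Under finite generation, the colon construction commutes with localization, so $(\aa^{-1})_{\mm} = \colonideal{\dD_{\mm}}{\aa_{\mm}}$. By hypothesis $\aa_{\mm} = \alpha\dD_{\mm}$ for some $\alpha \in K^{\times}$, and hence $\colonideal{\dD_{\mm}}{\aa_{\mm}} = \alpha^{-1}\dD_{\mm}$, giving $\cc_{\mm} = (\alpha\dD_{\mm})(\alpha^{-1}\dD_{\mm}) = \dD_{\mm}$, as required.

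The main obstacle will be justifying the identity $(\colonideal{\dD}{\aa})_{\mm} = \colonideal{\dD_{\mm}}{\aa_{\mm}}$, which is the only nontrivial piece of commutative algebra used; it relies on finite generation of $\aa$ together with flatness of localization, and I would cite this as a standard fact from \cite{AM:69} rather than reprove it. Apart from that, the argument is routine bookkeeping: both directions turn invertibility of $\aa$ into the local statement that $\aa_{\mm}$ is a nonzero principal fractional ideal, and the local-global principle for containments in $\dD$ does the gluing.
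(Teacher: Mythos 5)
Your proof is correct, but it is worth noting that the paper does not actually prove this proposition at all: it simply cites \cite[Cor.~2.1.7]{DTZ:62}. So your argument is a genuine addition rather than a retread. Both directions are the standard commutative-algebra proof and both check out. In the forward direction, the partition-of-unity trick ($1=\sum a_ib_i$ forces some $a_ib_i$ to be a unit in the local ring $\dD_\mm$, and then $\aa_\mm = a_1\dD_\mm$) is clean and complete. In the reverse direction, the two facts you lean on --- that $\aa$ is finitely generated because $\dD$ is Noetherian and $\lambda\aa\subseteq\dD$, and that $\colonideal{\dD}{\aa}$ localizes to $\colonideal{\dD_\mm}{\aa_\mm}$ for finitely generated $\aa$ --- are exactly the right ones, and the second is indeed standard (write $\colonideal{\dD}{\aa}=\bigcap_i g_i^{-1}\dD$ for generators $g_i$ and use that localization commutes with finite intersections), so citing \cite{AM:69} is appropriate. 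The only caveat is the trivial edge case $\aa=0$: the zero module is a fractional ideal under the paper's definition and localizes to a principal ideal everywhere without being invertible, so your step ``$\aa_\mm=\alpha\dD_\mm$ for some $\alpha\in K^\times$'' tacitly assumes $\aa\neq 0$ (equivalently, that ``principal'' means ``nonzero principal,'' which is the convention the paper uses elsewhere, e.g.\ in \Cref{prop:lociso}). With that convention made explicit, the proof is complete, and it has the advantage over the paper's treatment of being self-contained.
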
 
\begin{proof} 
This is a special case of of \cite[Cor.~2.1.7]{DTZ:62}.
\end{proof} 

The next lemma collects basic properties of ideal quotients of fractional ideals and deduces
the inclusions in (5), which justify \eqref{eq:112}, taking $\dD=\OO$ and $\dD'=\OO'$.

\begin{lem}\label{lem:basicinclusions}
Let $\aa, \bb, \cc$ be fractional ideals in a Noetherian integral domain $\dD$.
\begin{itemize}
\item[(1)] $\colonideal{\aa}{\bb} \subseteq \colonideal{\aa\cc}{\bb\cc}$, with equality if $\cc$ is invertible.
\item[(2)] $\colonideal{\aa}{\bb}\cc \subseteq \colonideal{\aa\cc}{\bb}$, with equality if $\cc$ is invertible.
\item[(3)] $\colonideal{\colonideal{\aa}{\bb}}{\cc} = \colonideal{\aa}{\bb\cc}$.
\item[(4)] If $\aa \subseteq \bb \subseteq \cc$, then $\colonideal{\aa}{\cc} \subseteq \colonideal{\aa}{\bb}$ and $\colonideal{\aa}{\cc} \subseteq \colonideal{\bb}{\cc}$. 
\end{itemize}
Now let $\mm$ be an integral ideal of $\dD$, and suppose $\dD \subseteq \dD'$ with $\dD'$ a Noetherian integral domain
that is also a fractional ideal of $\dD$.  Set $\ff_{\dD'}(\dD) = \colonideal{\dD}{\dD'}$. Then
\begin{itemize}
\item[(5)] $\ff_{\dD'}(\dD)\mm \subseteq \colonideal{\mm}{\dD'} \subseteq \ff_{\dD'}(\dD) \cap \mm\dD'$.
\end{itemize}
\end{lem}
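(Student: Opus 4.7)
The plan is to derive parts (1)--(4) as essentially mechanical consequences of the definition $\colonideal{\aa}{\bb} = \{x \in K : x\bb \subseteq \aa\}$, and then to build part (5) on top of part (3) together with the defining property of the relative conductor $\ff_{\dD'}(\dD) = \colonideal{\dD}{\dD'}$.

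For (1), if $x \in \colonideal{\aa}{\bb}$, then $x\bb \subseteq \aa$, so $x(\bb\cc) = (x\bb)\cc \subseteq \aa\cc$, giving $x \in \colonideal{\aa\cc}{\bb\cc}$. When $\cc$ is invertible, the opposite containment follows by multiplying the relation $x\bb\cc \subseteq \aa\cc$ by $\cc^{-1}$. For (2), if $x \in \colonideal{\aa}{\bb}$ and $y \in \cc$, then $xy\bb \subseteq \aa\cc$, so $\colonideal{\aa}{\bb}\cc \subseteq \colonideal{\aa\cc}{\bb}$; when $\cc$ is invertible, any $z \in \colonideal{\aa\cc}{\bb}$ satisfies $z\cc^{-1}\bb \subseteq \aa$, so $z\cc^{-1} \in \colonideal{\aa}{\bb}$ and therefore $z \in \colonideal{\aa}{\bb}\cc$. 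For (3), we just unfold the definitions: $x \in \colonideal{\colonideal{\aa}{\bb}}{\cc}$ iff $x\cc \subseteq \colonideal{\aa}{\bb}$ iff $x\cc\bb \subseteq \aa$ iff $x \in \colonideal{\aa}{\bb\cc}$. For (4), if $x\cc \subseteq \aa$ and $\bb \subseteq \cc$, then $x\bb \subseteq x\cc \subseteq \aa$, giving the first inclusion; the second follows from $x\cc \subseteq \aa \subseteq \bb$.

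For (5), I would unpack the two inclusions separately. For the left inclusion, starting from $\ff_{\dD'}(\dD)\dD' = \colonideal{\dD}{\dD'}\dD' \subseteq \dD$ (which is immediate from the definition of $\colonideal{\dD}{\dD'}$), multiplying by $\mm$ yields $\ff_{\dD'}(\dD)\mm\dD' \subseteq \mm\dD \subseteq \mm$, which is precisely the statement that $\ff_{\dD'}(\dD)\mm \subseteq \colonideal{\mm}{\dD'}$. For the right inclusion, let $x \in \colonideal{\mm}{\dD'}$, so $x\dD' \subseteq \mm$. Since $1 \in \dD'$, we have $x \in \mm$, and therefore $x \in \mm\dD'$. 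Moreover, $x\dD' \subseteq \mm \subseteq \dD$, so by definition $x \in \colonideal{\dD}{\dD'} = \ff_{\dD'}(\dD)$. Combining, $x \in \ff_{\dD'}(\dD) \cap \mm\dD'$.

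No step here is a genuine obstacle; the only subtlety is keeping track of which ring an element is asserted to lie in (for (5), the key small point is that $\mm$ is an integral ideal and $1 \in \dD'$, so $x \in x\dD' \subseteq \mm$ gives $x \in \dD$ for free). I would present parts (1)--(4) tersely in a single paragraph and give part (5) its own short paragraph so the two inclusions stand out clearly.
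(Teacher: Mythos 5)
Your proof is correct and takes essentially the same approach as the paper: parts (1)--(4) are read off from the definitions, and both inclusions in (5) come from the same computations (the paper packages them as applications of (1), (2), and (4), while you unwind them to elements, which is the same argument).
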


\begin{proof}
Properties (1), (2), (3) and (4) follow directly from the definitions of the quotient and product ideals. To prove the left-hand inclusion of property (5), observe using (2) that
\begin{equation}
\ff_{\dD'}(\dD)\mm = \colonideal{\dD}{\dD'}\mm \subseteq \colonideal{\dD\mm}{\dD'} = \colonideal{\mm}{\dD'}.
\end{equation}
To prove the right-hand inclusion, use (1) and (4):
\begin{equation}
\colonideal{\mm}{\dD'} \subseteq \colonideal{\mm\dD'}{\dD'\dD'} = \colonideal{\mm\dD'}{\dD'} = \mm\dD',
\end{equation}
and $\colonideal{\mm}{\dD'} \subseteq \colonideal{\dD}{\dD'} = \ff_{\dD'}(\dD)$.
\end{proof}

%
%
\subsection{Fractional ideals for orders of number fields}\label{subsec:fractionalideals} 

We specialize to the case of an order $\OO$ of a number field $K$.

For the maximal order $\OO_K$ (and more generally for Dedekind domains),
the fractional ideal group $\rJ^{\ast}(\OO_K)$  is a free abelian group. For non-maximal orders $\OO$,
$\rJ^{\ast}(\OO)$ may contain a (finite) nontrivial torsion subgroup; see \Cref{exam:torsion}. 

Concerning coprimality, each nonzero fractional ideal $\dd$ of an order $\OO$ is coprime to all but a finite set of nonzero prime ideals (i.e., maximal ideals) of $\OO$. 

\begin{eg}[Non-integral fractional ideals having an ideal power that is an integral ideal]
\label{exam:224} 
Consider the non-maximal order $\OO=\Z[2i]$ of the Gaussian field $K=\Q(i)$, whose maximal order is $\OO_K= \Z[i]$. 
\begin{itemize}
\item[(1)] The non-integral fractional ideal 
\begin{equation}
\aa := (1+i)\OO = 4\Z + (1+i)\Z
\end{equation}
is a principal fractional ideal; hence, it is an invertible fractional ideal. It has ideal square $\aa^2 = 2i\OO = \qq_4'$ (in the notation of \Cref{exam:214}), which is an integral ideal, shown to be irreducible in \Cref{exam:214}.

Thus the irreducible integral ideal $\qq_4'$ 
becomes a perfect square viewed in the group $\rJ^{\ast}(\OO)$ of
invertible fractional ideals.
In contrast, the irreducible integral ideal  $\qq_{4}= 2\OO$ is not a perfect
square in $\rJ^{\ast}(\OO)$. 

\item[(2)] The non-integral fractional ideal
\begin{equation}
\bb := (1+i)\OO_K = 2\Z + (1+i)\Z
\end{equation}
is a non-invertible fractional $\OO$-ideal. It has ideal square $\bb^2= 2\OO_K=\QQ_2$ (in the notation of \Cref{exam:214}),
which is a non-invertible irreducible integral $\OO$-ideal.
The integral ideal $\QQ_2$ is the square of an element $\bb$ of
the monoid $\rJ(\OO)$ of fractional ideals.
\end{itemize}
\end{eg} 

\begin{eg}[A finite order element of $\rJ^{\ast}(\OO)$]\label{exam:torsion} 
Consider the order  $\OO = \Z + 2i \Z$, which is a suborder of $\OO_K= \Z+ i \Z$ in $K= \Q(i)$. Its conductor is $\ff(\OO) = 2 \OO_K = 2\Z + 2i\Z$.

The $\OO$-fractional ideal $\dd:= i \OO = 2\Z + i\Z$,
is not an integral $\OO$-ideal since $i \not\in \OO$. The  ideal $\dd$ is a torsion element of $\rJ^{\ast}(\OO)$, with $\dd^2= \OO$.
It is an invertible fractional ideal, and  it is another example of a (non-integral) fractional ideal that has  an ideal power that is an integral ideal.
\end{eg}
 
\begin{eg}[An element of $\rJ^{\ast}(\OO)$ not a quotient of two coprime integral ideals]\label{exam:non-coprime-invertible fractional} 
We continue to consider the  $\OO$-fractional ideal $\dd=i\OO$ for the non-maximal order $\OO=  \Z + 2i \Z$ of  $K= \Q(i)$.
It is expressible  as a ratio $i\OO= \aa \bb^{-1}$
of two invertible integral $\OO$-ideals, taking $\aa= 2 \OO$ and $\bb= 2i \OO$.
But $\aa+ \bb =2 \OO_K$, so $\aa, \bb$ are not coprime integral ideals of $\OO$.

In fact, $i\OO$ cannot be expressed as a ratio of two coprime integral $\OO$-ideals, which we now show by contrapositive. Suppose $i\OO = \aa\bb^{-1}$ for integral $\OO$-ideals $\aa$ and $\bb$, and multiply both sides by $\bb$ to obtain $i\bb = \aa$. Thus, $\aa = i\bb \subseteq i\OO$, and $\aa \subseteq \OO$, so $\aa \subseteq \OO \cap i\OO = (\Z + 2i\Z) \cap (2\Z + i\Z) = 2\OO_K$. Similarly, $\bb = i\aa \subseteq i\OO$ and $\bb \subseteq \OO$, so $\bb \subseteq 2\OO_K$. Thus, $\aa + \bb \subseteq 2\OO_K$, so $\aa$ and $\bb$ cannot be coprime.
\end{eg}

\subsection{Monoids of fractional ideals of orders coprime to a modulus $\mm$}\label{subsec:33a} 

Recall that $\rJ(\OO)$ is the monoid of fractional ideals of the order $\OO$, and $\rJ^{\ast}(\OO)$ is the group of invertible fractional ideals of $\OO$.
We have introduced a notion of $\mm$-coprimality for fractional ideals in \Cref{def:frac-coprime}.
We now define monoids of fractional ideals and groups of invertible fractional ideals coprime to a modulus $\mm$ which is an integral ideal. 

\begin{defn}[Monoids of fractional ideals coprime to $\mm$]\label{def:frac-ideal-monoids} 
Given an integral ideal $\mm$ of an order $\OO$ of a number field, let
$\rJ_{\mm}(\OO)$ denote the set of {\em $\mm$-coprime fractional ideals} of $\OO$,
\begin{equation}
\rJ_{\mm}(\OO) = \{ \aa \in \rJ(\OO) : \aa \mbox{ is coprime to } \mm \}. 
\end{equation}
Let  $\rJ_{\mm}^{\ast}(\OO)$ denote the set of {\em $\mm$-coprime invertible fractional ideals},
\begin{equation}
\rJ_{\mm}^{\ast}(\OO) = \{ \aa \in \rJ^{\ast}(\OO) : \aa \mbox{ is coprime to } \mm \}.
\end{equation}
\end{defn} 

\begin{lem}\label{lem:mm-fract-monoids} 
Let $\OO$ be an order of a number field and $\mm$ an integral $\OO$-ideal.
\begin{itemize}
\item[(1)]
The set $\rJ_{\mm}(\OO)$ of $\mm$-coprime fractional ideals is a monoid under $\OO$-fractional ideal product.
\item[(2)] 
The set $\rJ_{\mm}^{\ast}(\OO)$ of $\mm$-coprime invertible fractional ideals is a group under $\OO$-fractional ideal product.
\item[(3)]
If $\mm, \mm'$ are ideals of $\OO$  with $\mm \subseteq \mm'$, then $\rJ_{\mm}(\OO) \subseteq \rJ_{\mm'}(\OO)$, and $\rJ_{\mm}^{\ast}(\OO) \subseteq \rJ_{\mm'}^{\ast}(\OO)$.
\end{itemize}
\end{lem}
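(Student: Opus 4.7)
The plan is to verify all three parts of the lemma by unwinding the definition of $\mm$-coprimality from \Cref{def:frac-coprime} and applying \Cref{lem:coloninv} to convert colon-ideal representations into product-with-inverse form.

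For part (1), I will first check that the monoid identity $\OO$ lies in $\rJ_{\mm}(\OO)$: writing $\OO = \colonideal{\OO}{\OO}$, both numerator and denominator are the invertible integral ideal $\OO$, which is trivially coprime to $\mm$ because $\OO + \mm = \OO$. For closure, suppose $\aa_1, \aa_2 \in \rJ_{\mm}(\OO)$, with representations $\aa_i = \colonideal{\cc_i}{\bb_i}$, where the $\cc_i$ are integral and the $\bb_i$ are invertible integral, all coprime to $\mm$. Because each $\bb_i$ is invertible, \Cref{lem:coloninv} gives $\aa_i = \cc_i\bb_i^{-1}$, and hence
\begin{equation}
\aa_1\aa_2 = \cc_1\cc_2\,(\bb_1\bb_2)^{-1} = \colonideal{\cc_1\cc_2}{\bb_1\bb_2},
\end{equation}
where the second equality again uses \Cref{lem:coloninv}, since $\bb_1\bb_2$ is invertible (as the product of two invertibles). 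The numerator $\cc_1\cc_2$ is integral and coprime to $\mm$, and the denominator $\bb_1\bb_2$ is invertible integral and coprime to $\mm$, because coprimality is preserved under ideal products (as noted following \Cref{def:frac-coprime}, cf.\ also \Cref{subsec:21}). Thus $\aa_1\aa_2 \in \rJ_{\mm}(\OO)$.

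For part (2), I additionally need closure under fractional-ideal inversion. Given $\aa \in \rJ_{\mm}^{\ast}(\OO)$, choose a representation $\aa = \colonideal{\cc}{\bb} = \cc\bb^{-1}$ as above. Since $\aa$ and $\bb$ are both invertible fractional ideals, their product $\cc = \aa\bb$ is invertible; equivalently, by \Cref{lem:fract-invertible}(1) applied to $\cc = \aa \bb$, invertibility of $\cc$ follows from invertibility of $\aa$. Hence $\aa^{-1} = \bb\cc^{-1} = \colonideal{\bb}{\cc}$ (using \Cref{lem:coloninv} once more, since $\cc$ is now known invertible), where both $\bb$ and $\cc$ are invertible integral ideals coprime to $\mm$. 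This certifies $\aa^{-1} \in \rJ_{\mm}^{\ast}(\OO)$, completing the group structure.

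Part (3) is essentially immediate: if $\mm \subseteq \mm'$ and $\aa = \colonideal{\cc}{\bb}$ witnesses $\aa \in \rJ_{\mm}(\OO)$, then $\cc + \mm' \supseteq \cc + \mm = \OO$ and likewise $\bb + \mm' = \OO$, so the same representation witnesses $\aa \in \rJ_{\mm'}(\OO)$; invertibility of $\bb$ is unchanged, and in the invertible case the same argument shows $\rJ_{\mm}^{\ast}(\OO) \subseteq \rJ_{\mm'}^{\ast}(\OO)$.

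The only mild subtlety, and the point I expect to require the most care in writing, is making sure that the representations used to certify $\mm$-coprimality really do compose cleanly under product and inverse. That is, one must translate between the colon-ideal form mandated by \Cref{def:frac-coprime} and the product-with-inverse form needed to manipulate algebraically; the crucial tool for this back-and-forth is \Cref{lem:coloninv}, which applies precisely because the denominators in all relevant representations are invertible integral ideals. Beyond this bookkeeping, there is no substantive obstacle.
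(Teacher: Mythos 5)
Your proof is correct and follows essentially the same route as the paper's: write each $\mm$-coprime fractional ideal as $\cc\bb^{-1}$ with $\cc,\bb$ integral and coprime to $\mm$, use that coprimality is preserved under integral ideal products for closure, and invert by swapping numerator and denominator. If anything, your treatment of inverses is slightly more explicit than the paper's, since you verify that the numerator $\cc = \aa\bb$ is itself invertible before using it as the denominator of the representation of $\aa^{-1}$, which is exactly what \Cref{def:frac-coprime} requires.
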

\begin{proof}[Proof of (1) and (2)]
Let $\cc_1, \cc_2 \in \rJ_{\mm}(\OO)$; we check that $\cc_1\cc_2 \in \rJ_{\mm}(\OO)$. Write $\cc_i = \aa_i\bb_i^{-1}$ for integral ideals $\aa_i, \bb_i$ coprime to $\mm$. Then $\cc_1\cc_2 = (\aa_1\aa_2)(\bb_1\bb_2)^{-1}$. The ideals $\aa_1\aa_2$ and $\bb_1\bb_2$ are coprime to $\mm$ by \Cref{lem:monoid-well-defined}(1), so $\cc_1\cc_2 \in \rJ_{\mm}(\OO)$. Moreover, if $\cc_1$ and $\cc_2$ are invertible (that is, in $\rJ_{\mm}^\ast(\OO)$), then so is $\cc_1\cc_2$. Both $\rJ_{\mm}(\OO)$ and $\rJ_{\mm}^\ast(\OO)$ have identity element $\OO$, and $\rJ_{\mm}^\ast(\OO)$ has inverses $(\aa\bb^{-1})^{-1} = \bb\aa^{-1}$.
\end{proof}
\begin{proof}[Proof of (3)]
The result is inherited from the inclusions in \Cref{lem:monoid-well-defined}(2) applied to integral ideals $\aa,\bb$ defining an element $\cc = \aa\bb^{-1} \in \rJ_\mm(\OO)$.
\end{proof}
The next result shows that for all ideals $\mm$ contained in the conductor ideal, the
associated monoid of integral ideals coprime to $\mm$ have unique factorization
into prime ideals.

\begin{prop}\label{prop:m-frac-coprime} 
Let $\OO$ be an order of a number field $K$. Suppose the integral ideal $\mm$
has $\mm \subseteq \ff(\OO)$.  Then the following hold.

\begin{itemize}
\item[(1)] 
The monoid $\rJ_{\mm}^{\ast}(\OO)$ of invertible fractional ideals is a free abelian group whose generators are the set of prime ideals $\sP(\mm) = \{\pp : \mm \not\subseteq \pp, \pp \neq 0\}$. All of these prime ideals are non-singular. Ideals $\aa \in \rJ_{\mm}^{\ast}(\OO)$ have unique factorization into integral powers of the prime ideals in $\sP(\mm)$. 
\item[(2)]
The monoid $\rJ_{\mm}(\OO)$ of fractional ideals coprime to $\mm$ under ideal multiplication and the monoid $\rJ_{\mm}^{\ast}(\OO)$ of invertible fractional ideals coprime to $\mm$ coincide.
\end{itemize}
\end{prop}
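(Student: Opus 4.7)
The strategy is to bootstrap from the integral coprime case (\Cref{prop:m-integral-coprime}) by writing every fractional $\mm$-coprime ideal as a quotient of two integral $\mm$-coprime ideals, using \Cref{lem:coloninv} to identify the colon ideal $\colonideal{\aa}{\bb}$ with $\aa\bb^{-1}$ whenever $\bb$ is invertible.

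I would prove part (2) first, since part (1) then follows cleanly from it. Given $\cc \in \rJ_{\mm}(\OO)$, the definition yields $\cc = \colonideal{\aa}{\bb}$ with $\aa \in \rI_{\mm}(\OO)$ and $\bb \in \rI_{\mm}^{\ast}(\OO)$. Since $\mm \subseteq \ff(\OO)$, \Cref{prop:m-integral-coprime}(2) forces $\rI_{\mm}(\OO) = \rI_{\mm}^{\ast}(\OO)$, so $\aa$ is also invertible. By \Cref{lem:coloninv}, $\cc = \aa\bb^{-1}$, which is invertible as a fractional ideal, so $\cc \in \rJ_{\mm}^{\ast}(\OO)$. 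The reverse inclusion $\rJ_{\mm}^{\ast}(\OO) \subseteq \rJ_{\mm}(\OO)$ is immediate.

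For part (1), I first verify non-singularity: if $\pp \in \sP(\mm)$, then $\mm \not\subseteq \pp$, so by $\mm \subseteq \ff(\OO)$ we cannot have $\ff(\OO) \subseteq \pp$ (else $\mm \subseteq \pp$); \Cref{lem:singular} then shows $\pp$ is non-singular and invertible. Next, given $\cc \in \rJ_{\mm}^{\ast}(\OO)$, write $\cc = \aa\bb^{-1}$ with $\aa,\bb \in \rI_{\mm}^{\ast}(\OO)$ as above, and apply \Cref{prop:m-integral-coprime}(1) to get unique factorizations $\aa = \prod_{\pp \in \sP(\mm)} \pp^{a_\pp}$ and $\bb = \prod_{\pp \in \sP(\mm)} \pp^{b_\pp}$ with non-negative exponents, almost all zero. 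Then $\cc = \prod_{\pp \in \sP(\mm)} \pp^{a_\pp - b_\pp}$, realizing $\cc$ as a product of integer powers of the generators.

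The only real subtlety is uniqueness of signed exponents, which I would reduce to the unsigned uniqueness in \Cref{prop:m-integral-coprime}(1) as follows. Suppose $\prod_{\pp \in \sP(\mm)} \pp^{e_\pp} = \OO$ in $\rJ_{\mm}^{\ast}(\OO)$ (with almost all $e_\pp = 0$). Partitioning $\sP(\mm)$ into $\sP_+ = \{\pp : e_\pp > 0\}$ and $\sP_- = \{\pp : e_\pp < 0\}$ and multiplying through, we obtain the equality of integral ideals
\begin{equation}
\prod_{\pp \in \sP_+} \pp^{e_\pp} = \prod_{\pp \in \sP_-} \pp^{-e_\pp}
\end{equation}
inside $\rI_{\mm}^{\ast}(\OO)$. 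Since this monoid is free abelian on $\sP(\mm)$, and the two sides involve disjoint generators, both sides must equal $\OO$, forcing $\sP_+ = \sP_- = \emptyset$ and $e_\pp = 0$ for all $\pp$. This establishes that $\rJ_{\mm}^{\ast}(\OO)$ is free abelian on $\sP(\mm)$ and completes the proof.
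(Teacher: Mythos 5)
Your proposal is correct and follows essentially the same route as the paper: both bootstrap from \Cref{prop:m-integral-coprime} by expressing each $\mm$-coprime fractional ideal as a quotient $\aa\bb^{-1}$ of $\mm$-coprime integral ideals (via \Cref{lem:coloninv}) and then transferring the free-monoid structure of $\rI_{\mm}^{\ast}(\OO)$ to the group of fractions. The paper's proof is much terser; your write-up supplies the details it leaves implicit, in particular the disjoint-support argument for uniqueness of signed exponents, and is a faithful elaboration rather than a different method.
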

\begin{proof}[Proof of (1)]
This result follows from the unique factorization property of \Cref{prop:m-integral-coprime}(1) using the fact that all $\pp \in \Pj(\mm)$ are invertible integral ideals, as shown in \Cref{prop:m-integral-coprime}(2), hence invertible fractional ideals.
\end{proof}
\begin{proof}[Proof of (2)]
This follows from the fact that all non-singular prime $\OO$-ideals are invertible.
\end{proof}

\begin{rmk}\label{rek:37}
The conclusions of \Cref{prop:m-frac-coprime} for fractional ideals do not generally hold for $\mm \not\subseteq \ff(\OO)$, e.g., for $\mm=\OO$ and $\OO$ non-maximal. 
\begin{enumerate}
\item[(1)] For some non-maximal orders $\OO$, choosing $\mm=\OO$, the monoid of  all invertible fractional ideals $\rJ^{\ast}(\OO) = \rJ_{\OO}^{\ast}(\OO)$ is not a  free abelian monoid, because it contains nontrivial torsion elements. See \Cref{exam:torsion}. 
\item[(2)] For any non-maximal order $\OO$, the monoid $\rJ(\OO) = \rJ_{\OO}(\OO)$ of all fractional ideals contains non-invertible ideals. The conductor ideal $\ff(\OO)$ is a noninvertible fractional ideal, because its non-invertibility as an integral ideal (by \Cref{lem:singular}) implies the same as a fractional ideal by \Cref{lem:invert-frac-int}(1).
\end{enumerate}
\end{rmk}

%
%
\section{Change of orders in a number field:~extension and contraction of ideals}\label{sec:CE} 

For two orders $\OO \subseteq \OO'$ having the same quotient field $K$, we consider the effect of natural maps sending (integral and fractional) $\OO$-ideals to $\OO'$-ideals, and vice versa.

\begin{defn}\label{def:EC}
The inclusion map $\OO \inj \OO'$ defines extension and contraction maps on integral ideals. 
\begin{enumerate}
\item[(1)]
If $\aa$ is an integral ideal of $\OO$, then the \textit{extension} $\ext(\aa) := \aa\OO'$ is the integral $\OO'$-ideal generated by the elements of $\aa$. 
\item[(2)]
If $\aa'$ is an integral ideal of $\OO'$, then the \textit{contraction} $\con(\aa') := \aa' \cap \OO$ is the integral $\OO$-ideal of elements of $\aa'$ also in $\OO$.
\end{enumerate} 
\end{defn}

This is a special case of extension and contraction of ideals under ring homomorphism \cite[p.\ 9]{AM:69}.
In \Cref{subsec:31}, we study the effect of $\ext$ and $\con$ on general integral ideals.
In \Cref{subsec:32}, we show these maps are bijective monoid homomorphisms when restricted to submonoids of integral ideals coprime to the relative conductor $f_{\OO'}(\OO)$. 
In \Cref{subsec:33}, we extend $\ext$ and $\con$ to bijective homomorphisms on groups of invertible fractional ideals 
coprime to $f_{\OO'}(\OO)$.

%
%
\subsection{Extension and contraction of general integral ideals}\label{subsec:31} 

Given the inclusion of two orders $\iota : \OO \to \OO'$ 
of a fixed algebraic number field $K$, 
we have well-defined functions $\ext: \rI(\OO) \to \rI(\OO')$ and
$\con: \rI(\OO') \to \rI(\OO)$. 

The extension map $\ext: \rI(\OO) \to \rI(\OO')$ is a monoid homomorphism for ideal multiplication.
It is easy to check that this
map preserves invertibility of ideals, and it preserves the property of being a principal ideal.
The map $\ext$ may not be surjective; see \Cref{exam:non-maximal}.

The contraction map $\con: \rI(\OO') \to \rI(\OO)$ in general is not a monoid homomorphism. 
One always has $\con(\aa') \con(\bb') \subseteq \con(\aa' \bb')$, but strict inclusion may sometimes hold; see \Cref{exam:con-non-hom}.
Contraction need not preserve invertibility of ideals nor the property of being a principal ideal; see \Cref{exam:29cont}.
We will show in \Cref{subsec:32} that $\con$ is a monoid homomorphism (and thus preserves invertibility) when restricted to $\I_\ff(\OO')$, where $\ff = \ff_{\OO'}\!(\OO)$.
 
For later use, we study the effect of $\ext$ and $\con$ on maximal ideals. 
 
\begin{lem}\label{lem:conextmax}
Let $\OO \subseteq \OO'$ be orders of a number field $K$, and let $\con$ and $\ext$ be the contraction and extension maps
on integral ideals.
\begin{itemize}
\item[(1)] If $\pp$ is a maximal ideal of $\OO$, then $\con(\ext(\pp)) = \pp$.
\item[(2)] If $\pp'$ is a maximal ideal of $\OO'$, then $\pp= \con(\pp')$ is a maximal ideal of $\OO$.
\item[(3)] If $\pp$ is a maximal ideal of $\OO$, then $\pp = \con(\pp')$ for some maximal ideal $\pp'$ of $\OO'$.
\item[(4)] If $\pp'$ is a maximal ideal of $\OO'$, then $\ext(\con(\pp')) \subseteq \pp'$, and strict inequality 
may occur. 
\end{itemize}
\end{lem}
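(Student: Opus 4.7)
The plan is to exploit two structural facts about the inclusion $\OO \subseteq \OO'$: first, that $\OO'$ is a finitely generated $\OO$-module (both orders have finite index in $\OO_K$), so that Nakayama's lemma applies; and second, that $\OO$ and $\OO'$ are both one-dimensional Noetherian domains, so every nonzero prime is maximal. The four parts will be proved in the order (1), (3), (2), (4) even though stated differently, because (1) gives the nontriviality of $\pp\OO'$ that underlies (3).

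For (1), I would first note the trivial inclusion $\pp \subseteq \con(\ext(\pp)) = \pp\OO' \cap \OO$, which holds because $\pp$ is contained in both $\pp\OO'$ and $\OO$. Since $\pp$ is maximal, the reverse inclusion will follow once I verify that $\pp\OO' \cap \OO \neq \OO$, i.e., that $\pp\OO' \neq \OO'$. The key step is Nakayama's lemma applied to $\OO'$ as a finitely generated $\OO$-module: if $\pp \OO' = \OO'$, then there exists $a \in \OO$ with $a \equiv 1 \pmod{\pp}$ and $a\OO' = 0$; since $\OO'$ is a domain containing $1$, this forces $a=0$, contradicting $a \not\in \pp$.

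For (3), since part (1) has established $\pp\OO' \subsetneq \OO'$, the ideal $\pp\OO'$ is contained in some maximal ideal $\pp'$ of $\OO'$. Then $\pp \subseteq \pp' \cap \OO \subsetneq \OO$, and maximality of $\pp$ forces $\pp = \pp' \cap \OO = \con(\pp')$. For (2), given $\pp'$ maximal (hence nonzero) in $\OO'$, I would use that $\OO'$ is integral over $\OO$: taking $0 \neq x \in \pp'$ and an integral relation $x^n + a_{n-1}x^{n-1} + \cdots + a_0 = 0$ with $a_i \in \OO$ of minimal degree, minimality gives $a_0 \neq 0$, and then $a_0 = -x(x^{n-1} + \cdots + a_1) \in \pp' \cap \OO$. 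Thus $\con(\pp')$ is a nonzero prime ideal of $\OO$, and one-dimensionality makes it maximal.

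For (4), the inclusion $\ext(\con(\pp')) = (\pp' \cap \OO)\OO' \subseteq \pp'$ is immediate from $\pp' \cap \OO \subseteq \pp'$ together with the $\OO'$-ideal property of $\pp'$. The strict inequality is witnessed by taking $\OO = \Z[2i] \subsetneq \OO' = \Z[i] = \OO_K$ from \Cref{exam:214} and letting $\pp' = (1+i)\OO_K$ be the unique prime of $\OO_K$ above $2$; then a direct computation gives $\con(\pp') = 2\Z + 2i\Z = \ff(\OO)$, and $\ext(\con(\pp')) = 2\OO_K \subsetneq (1+i)\OO_K = \pp'$, since $2\OO_K$ has index $4$ in $\OO_K$ while $\pp'$ has index $2$. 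The only genuine subtlety in the whole lemma is the Nakayama step in (1); the remaining parts are straightforward consequences of the integrality of the extension $\OO \subseteq \OO'$ and one-dimensionality.
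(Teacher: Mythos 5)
Your proof is correct, and parts (1) and (2) take a genuinely different route from the paper's. For (1), the paper rules out $\ext(\pp)=\OO'$ by passing all the way up to the maximal order and invoking the identity $\overline{\pp}=\rad(\pp\OO_K)$ for the integral closure of an ideal (\cite[Lem.\ 5.14]{AM:69}), then extracting a monic relation $f(1)=0$ with coefficients in $\pp$ to reach a contradiction; your Nakayama argument on $\OO'$ as a finitely generated $\OO$-module reaches the same contradiction more directly and never needs to mention $\OO_K$ or integral closures of ideals. For (2), the paper shows maximality of $\con(\pp')$ by hand: any $a\in\OO\setminus\con(\pp')$ is a unit in the finite field $\OO'/\pp'$, so $a^n\equiv 1 \Mod{\pp'}$ for some $n$, and since $a^n-1\in\OO$ this congruence descends to $\con(\pp')$, making $a$ invertible modulo $\con(\pp')$. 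You instead show $\con(\pp')$ is a nonzero prime via the minimal-degree integral relation (the constant term $a_0\neq 0$ lies in $\pp'\cap\OO$) and then invoke one-dimensionality of $\OO$; this is the standard lying-over-style argument and is equally valid, trading the explicit finite-field computation for the fact that nonzero primes of an order are maximal. Part (3) is essentially identical to the paper's. For (4) the inclusion is the same routine observation in both treatments; the paper's witness for strictness lives in $\Q(\sqrt{2})$ with the orders $\Z+5^2\sqrt{2}\Z\subseteq\Z+5\sqrt{2}\Z$, whereas your Gaussian example with $\pp'=(1+i)\OO_K$ over $\OO=\Z[2i]$, giving $\ext(\con(\pp'))=2\OO_K\subsetneq(1+i)\OO_K$, is a correct alternative.
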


To prove \Cref{lem:conextmax}, we will need a standard result in commutative algebra. 

\begin{lem}\label{lem:idealclosure}
Let $A \subseteq B$ and $C$ be commutative rings with unity such that $C$ 
is the integral closure of $A$ in $B$. Let $\aa$ be an ideal of $A$, and let $\ext(\aa)$ the extension of $\aa$ to $C$. 
Let $\overline{\aa}$ be the integral closure of $\aa$ in $B$, that is,
\begin{equation}
\overline{\aa} = \{\alpha \in C : f(\alpha) = 0 \mbox{ for a monic polynomial } f(x) \mbox{ with all coefficients in } \aa\}.
\end{equation}
Then, $\overline{\aa} = \rad(\ext(\aa))$,
so in particular $\overline{\aa}$ is a $B$-ideal.
\end{lem}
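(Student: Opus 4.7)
The plan is to prove the equality $\overline{\aa} = \rad(\ext(\aa))$ by establishing the two inclusions separately. Once equality is in hand, the ideal-theoretic conclusion is automatic: $\overline{\aa} \subseteq C$ because every element of $\overline{\aa}$ is by definition integral over $A$, and the radical of any ideal of $C$ is an ideal of $C$ (so the ``$B$-ideal'' claim, which I read as the statement that $\overline{\aa}$ is an ideal of $C \subseteq B$, will follow for free).

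The forward inclusion $\overline{\aa} \subseteq \rad(\ext(\aa))$ will be elementary. If $\alpha \in \overline{\aa}$ satisfies $\alpha^n + a_{n-1}\alpha^{n-1} + \cdots + a_0 = 0$ with each $a_i \in \aa$, then $\alpha \in C$ by integrality over $A$, and rearranging yields
\begin{equation}
\alpha^n = -\sum_{i=0}^{n-1} a_i\alpha^i \in \aa C = \ext(\aa),
\end{equation}
so $\alpha \in \rad(\ext(\aa))$.

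The reverse inclusion is the technical heart of the lemma and will be the main obstacle. The plan is to invoke the determinant trick (Cayley--Hamilton for finitely generated modules). Given $\alpha \in \rad(\ext(\aa))$, we have $\alpha \in C$ by the ambient ring in which the radical is taken, and $\alpha^n \in \aa C$ for some $n \ge 1$; write $\alpha^n = \sum_{j=1}^{k} a_j c_j$ with $a_j \in \aa$ and $c_j \in C$. The subring $M := A[\alpha, c_1, \ldots, c_k]$ of $C$ is a finitely generated $A$-module, because each of its generators is integral over $A$. Multiplication by $\alpha^n$ is then an $A$-linear endomorphism of $M$ with image in $\aa M$, and the determinant trick applied to this endomorphism produces a monic relation
\begin{equation}
(\alpha^n)^r + e_{r-1}(\alpha^n)^{r-1} + \cdots + e_0 = 0
\end{equation}
in $M$, with each $e_i \in \aa$; evaluating the resulting operator identity at $1 \in M$ and reading as a polynomial in $\alpha$ of degree $nr$, every coefficient lies in $\aa$ (the unspecified intermediate coefficients being zero), so $\alpha \in \overline{\aa}$. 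The subtlety to verify carefully is the finite generation of $M$ as an $A$-module, which reduces to the integrality over $A$ of each $c_j$ and of $\alpha$ itself --- both immediate from membership in $C$.
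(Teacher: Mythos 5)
Your proof is correct. The paper itself gives no argument here: its ``proof'' is a one-line citation to \cite[Lem.\ 5.14]{AM:69}, and what you have written is precisely the standard proof of that lemma --- the easy inclusion $\overline{\aa} \subseteq \rad(\ext(\aa))$ by rearranging the equation of integral dependence, and the reverse inclusion by writing $\alpha^n = \sum_j a_j c_j$, forming the finitely generated $A$-module $M = A[\alpha, c_1, \ldots, c_k]$, and applying the determinant trick to multiplication by $\alpha^n$ (which maps $M$ into $\aa M$). All the steps you flag as needing care do go through: $M$ is module-finite over $A$ because its ring generators lie in $C$ and are hence integral over $A$, the Cayley--Hamilton coefficients $e_i$ lie in $\aa^{r-i} \subseteq \aa$, and the vanishing intermediate coefficients of the degree-$nr$ polynomial in $\alpha$ are of course also in $\aa$. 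Your reading of the final clause is also the right one: the conclusion is that $\overline{\aa} = \rad(\aa C)$ is an ideal of $C$ (sitting inside $B$), which is what the lemma is used for in the proof of \Cref{lem:conextmax}. So the only difference from the paper is that you supply the argument it outsources.
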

\begin{proof}
This is \cite[Lem.\ 5.14]{AM:69}.
\end{proof}
\begin{proof}[Proof of \Cref{lem:conextmax}(1)] 
The inclusion $\aa \subseteq \con(\ext(\aa))$ holds for all $\aa \in \rI(\OO)$ \cite[Prop.\ I.17]{AM:69}.
Since $\pp$ is maximal, either $\con(\ext(\pp)) = \OO$ or $\con(\ext(\pp))=\pp$.
Suppose for a contradiction that $\con(\ext(\pp)) = \OO$. Then, $\ext(\pp)$ contains $\OO$, so it contains $\OO' \OO= \OO'$, so 
$\ext(\pp) = \OO'$. It follows that the extension to the maximal order $\ext_{\OO_K}\!(\pp) = \OO_K$.
But $\OO_K$ is the integral closure of $\OO$ in $K$, so by \Cref{lem:idealclosure}, $\overline{\pp} = \rad(\OO_K) = \OO_K$. 
Thus, $1 \in \overline{\pp}$, so $f(1) = 0$ 
for some monic polynomial $f(x) = x^n + \mu_{n-1}x^{n-1} + \cdots + \mu_0$ with 
$\mu_j \in \pp$. So $1 = -\left(\mu_{n-1} + \cdots + \mu_0\right) \in \pp$, 
which means $\pp=\OO$, contradicting the hypothesis that $\pp$ is a maximal ideal of $\OO$. It follows that $\con(\ext(\pp))=\pp$.
\end{proof}
\begin{proof}[Proof of \Cref{lem:conextmax}(2)] 
Consider any $a \in \OO$ such that $a \nin \con(\pp')$. Then, $\pp'+a\OO'$ is an ideal of $\OO'$ 
strictly containing $\pp'$, so $\pp'+a\OO' = \OO'$. Thus, $a$ is invertible in $\OO'/\pp'$, and $\OO'/\pp'$ is finite 
(indeed, a finite field), so there is some $n \in \N$ such that $a^n \equiv 1 \Mod{\pp'}$. That is, $a^n = 1 + p'$ for some $p' \in \pp'$. However, $p' = a^n-1 \in \OO$, 
so in fact $a^n \equiv 1 \Mod{\con(\pp')}$. Thus, $\con(\pp')+a\OO = \OO$. 
Since this holds for any $a \in \OO \setminus \con(\pp')$, we have shown that $\con(\pp')$ is a maximal ideal.
\end{proof}
\begin{proof}[Proof of \Cref{lem:conextmax}(3)] 
By (1) we have $\con(\ext(\pp)) = \pp$. Let $\pp'$ be any maximal ideal containing $\ext(\pp)$. 
By maximality of $\pp$, either $\con(\pp')=\pp$ or $\con(\pp')=\OO$.
The latter case implies $1 \in \pp'$, a contradiction.
\end{proof}
\begin{proof}[Proof of \Cref{lem:conextmax}(4)] 
The inclusion $\ext(\con(\aa)) \subseteq \aa$ holds for general ideals in $\rI(\OO')$ \cite[Prop.\ I.17]{AM:69}.
An example of strict inclusion 
is given in \Cref{exam:non-maximal} below. 
\end{proof}

\begin{eg}\label{exam:non-maximal} 
[For a maximal $\OO$-ideal $\pp$, $\ext(\pp)$ need not be a maximal $\OO'$-ideal.]
Consider the orders $\OO = \Z + 5^2\sqrt{2}\Z$ and $\OO' = \Z + 5 \sqrt{2}\Z$ in $K=\Q(\sqrt{2})$,
with $\OO \subseteq \OO'$. Both these orders 
are strictly smaller than the maximal order $\OO_K = \Z + \sqrt{2}\Z$.
Consider $\ext: \rI(\OO) \to \rI(\OO')$ and $\con: \rI(\OO') \to \rI(\OO).$
Set $\pp = 5\Z + 5^2 \sqrt{2}\Z$. By inspection
$\pp$ is an integral ideal of $\OO$, and since it is of
prime index $5$ in $\OO$, it is a maximal ideal of $\OO$. By \Cref{lem:idealclosure}(3) there
is a maximal ideal $\pp'$ of $\OO'$ such that $\con(\pp') = \pp.$ We may in fact choose $\pp'=5\Z+5\sqrt{2}\Z$.

On the other hand, $\pp=5\OO'$ is by inspection a principal $\OO'$-ideal.
It follows that $\ext(\pp)=\pp$. 
Now $\pp \subsetneq \pp' \subsetneq \OO'$, so 
$\pp= \ext(\con(\pp'))$ is not maximal as an $\OO'$-ideal. 

The maximal ideal $\pp'$ of $\OO'$ is not in the image of the map $\ext$:~Suppose it were, so $\pp'= \ext(\aa)$ for some $\aa \in \rI(\OO)$. Then, $\ext (\con(\ext(\aa))) = \ext(\aa)$ by \cite[Prop.\ I.17]{AM:69}. But 
\begin{equation}
\ext(\con(\ext(\aa))) = \ext(\con(\pp')) = \ext(\pp) = \pp,
\end{equation}
so we would obtain $\pp = \ext(\aa) = \pp'$, which is false.
\end{eg} 

\begin{eg}\label{exam:con-non-hom} 
[The contraction map between ideal monoids  $\rI(\OO')$ and $\rI(\OO)$ need not be a monoid homomorphism.]
As in \Cref{exam:non-maximal}, consider the orders $\OO = \Z + 5^2\sqrt{2}\Z$ 
and $\OO' = \Z + 5 \sqrt{2}\Z$ in $K=\Q(\sqrt{2})$,
with $\OO \subseteq \OO'$. We consider $\con: \rI(\OO') \to \rI(\OO)$. 
Also, as in \Cref{exam:non-maximal}, let $\pp = 5\Z+5^2\sqrt{2}\Z$ and $\pp'=5\Z+5\sqrt{2}\Z$; we have $\con(\pp') = \pp$.

Take $\aa'=\bb'= \pp'$. As an $\OO'$-ideal,
\begin{equation}
\aa'\bb' = (\pp')^2= \left(5\Z+ 5 \sqrt{2}\Z\right)^2= 5^2 \Z + 5^2 \sqrt{2}\Z.
\end{equation}
Now $\aa'\bb'=(\pp')^2$ is also an $\OO$-ideal, so that 
\begin{equation}
\con(\aa' \bb') = \con((\pp')^2) = (\pp')^2= 5^2 \Z + 5^2 \sqrt{2}\Z. 
\end{equation}
Since $\con(\pp') =\pp= (5 \Z+ 5^2 \sqrt{2} \Z)$, we have 
\begin{equation}
\con(\aa') \con (\bb') = \left(\con (\pp)\right)^2 = (\pp^2) = 5^2 \Z + 5^3 \sqrt{2} \Z.
\end{equation}
We have shown $\con(\aa') \con(\bb') \subsetneq \con(\aa' \bb')$.
\end{eg} 

\begin{eg}\label{exam:29cont} 
[Contraction and extension of ideals in non-maximal orders]
Consider $K= \Q(\sqrt{-13})$ with maximal order $\OO_K = \Z + \sqrt{-13}\Z$,
and consider a non-maximal order $\OO = \Z + q\sqrt{-13}\Z$
where $q$ is an odd inert prime (for example, $q=5$). 
Recall from \Cref{exam:29} that $\mm = q\OO_K = q\Z + q\sqrt{-13}\Z$ is a maximal $\OO_K$-ideal of norm $q^2$, and $\mm$ is principal in $\OO_K$, hence invertible.

Now $\mm$ is also an $\OO$-ideal and is a maximal ideal for $\OO$. It is
the conductor ideal for $\OO$, so it is non-invertible as an $\OO$-ideal. 
It is immediate that $\ext(\mm) = \mm$.
For the contraction map $\con : \OO_K \to \OO$, we have 
\begin{equation}
\con(\mm) = \mm \cap \OO = \mm.
\end{equation}
This example verifies $\con(\ext(\mm)) =\mm$, and also $\ext(\con(\mm))=\mm$, preserving
the maximal ideal property.
However, the contracted ideal $\mm$ is not an invertible fractional $\OO$-ideal hence also not a principal $\OO$-ideal.

Furthermore, consider $\qq := q \OO = q\Z + q^2\sqrt{-13}\Z$.
It is a principal $\OO$-ideal, so it is an invertible $\OO$-ideal; hence, it cannot be an $\OO_K$-ideal.
We have $\qq \subsetneq \mm$. 
It is easy to see that 
$\ext(\qq) = \mm$.
So we now have $\con(\ext(\qq)) = \con (\mm) = \mm$ and $\qq \subsetneq \con(\ext(\qq))=\mm$.
(Thus $\con(\ext(\aa))$ can be a maximal ideal while $\aa$ is not maximal.)
\end{eg}

\subsection{Extension and contraction of integral ideals coprime to the relative conductor}\label{subsec:32} 

Extension and contraction operations behave well when restricted to integral ideals coprime to
the relative conductor ideal.
\begin{lem}\label{lem:conext}
On the set of
integral ideals $\rI_{\ff}(\OO)$ coprime to the (relative) conductor 
$\ff = \ff_{\OO'}(\OO) \in \rI(\OO) \cap \rI(\OO')$, contraction $\con : \rI_{\ff}(\OO') \to \rI_{\ff}(\OO)$ 
defines an isomorphism of monoids, with inverse the extension map $\ext : \rI_{\ff}(\OO) \to \rI_{\ff}(\OO')$.
\end{lem}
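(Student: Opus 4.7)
The plan is to verify four things in sequence: (a) $\ext$ restricts to a map into $\rI_{\ff}(\OO')$; (b) $\con$ restricts to a map into $\rI_{\ff}(\OO)$; (c) the restricted maps are mutually inverse as set maps; (d) this forces both to be monoid homomorphisms. The key structural fact we will exploit is that $\ff = \ff_{\OO'}(\OO)$ is simultaneously an $\OO$-ideal and an $\OO'$-ideal with $\ff \subseteq \OO \subseteq \OO'$, so in particular $\OO'\ff = \ff$.

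For (a), given $\aa \in \rI_{\ff}(\OO)$ with $\aa + \ff = \OO$, multiply through by $\OO'$: since $\ff\OO' = \ff$, we get $\ext(\aa) + \ff = \OO'$, so $\ext(\aa) \in \rI_{\ff}(\OO')$. For (b), given $\aa' \in \rI_{\ff}(\OO')$, pick $a' \in \aa'$ and $f \in \ff$ with $a' + f = 1$; since $f \in \ff \subseteq \OO$, the element $a' = 1 - f$ lies in $\OO$, hence in $\aa' \cap \OO = \con(\aa')$, so $\con(\aa') + \ff = \OO$.

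For (c), the general inclusions $\aa \subseteq \con(\ext(\aa))$ and $\ext(\con(\aa')) \subseteq \aa'$ are recalled from \Cref{lem:conextmax} (and \cite[Prop.\ I.17]{AM:69}). I need the reverse inclusions under coprimality. For any $\aa \in \rI_{\ff}(\OO)$, choose $a \in \aa$, $f \in \ff$ with $a + f = 1$, and let $x \in \aa\OO' \cap \OO$. Then
\begin{equation}
x = xa + xf,
\end{equation}
where $xa \in \OO \cdot \aa \subseteq \aa$, and $xf \in (\aa\OO')\ff = \aa(\OO'\ff) = \aa\ff \subseteq \aa$. Hence $x \in \aa$, proving $\con(\ext(\aa)) = \aa$. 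For the other direction, given $\aa' \in \rI_{\ff}(\OO')$, choose $a' \in \aa'$, $f \in \ff$ with $a' + f = 1$; as in step (b), $a' \in \con(\aa')$. For any $y \in \aa'$,
\begin{equation}
y = ya' + yf,
\end{equation}
and $ya' \in \OO' \cdot \con(\aa') = \ext(\con(\aa'))$, while $yf \in \aa'\ff \subseteq \OO'\ff = \ff \subseteq \OO$, so $yf \in \aa' \cap \OO = \con(\aa') \subseteq \ext(\con(\aa'))$. Thus $\aa' \subseteq \ext(\con(\aa'))$.

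For (d), $\ext$ is a monoid homomorphism on all of $\rI(\OO)$ by inspection of the definition, and we have just shown its restriction to $\rI_{\ff}(\OO)$ is a bijection with inverse $\con$; a bijection whose inverse is a monoid homomorphism is itself a monoid homomorphism, so $\con : \rI_{\ff}(\OO') \to \rI_{\ff}(\OO)$ is also a monoid isomorphism. The main subtlety is in step (c), where one must resist the temptation to multiply $\ext$ and $\con$ blindly: the identity $\OO'\ff = \ff$, which encodes the defining property of the relative conductor, is precisely what lets the $\ff$-absorbing terms $xf$ and $yf$ stay inside $\aa$ and inside $\OO$ respectively. Without the coprimality hypothesis, as \Cref{exam:29cont} shows, the reverse inclusions can fail.
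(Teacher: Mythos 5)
Your proof is correct and follows essentially the same route as the paper's: both rely on the coprimality relation $1 = a + f$ to decompose an arbitrary element as $x = xa + xf$ and use the identity $\OO'\ff = \ff$ to absorb the conductor term, then deduce that $\con$ is a homomorphism from the fact that it inverts the homomorphism $\ext$. Your explicit separation of the codomain checks (a)--(b) from the mutual-inverse computation (c) is only an organizational difference from the paper's argument.
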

\begin{proof}
We first show that $\con$ and $\ext$ are bijections inverse to each other. For general ring maps,
it is easily seen that $\ext(\con(\aa')) \subseteq \aa'$ and $\aa \subseteq \con(\ext(\aa))$ \cite[Prop.\ I.17]{AM:69}. 
The reverse inclusions are not true in general, even in our case of the inclusion map of a suborder in an order. 
We must use coprimality to the conductor.

For the first, consider $\aa' \in \rI_{\ff}(\OO')$, and set $\aa= \con(\aa') = \aa' \cap \OO$.
We will show $\ext(\con(\aa'))= \aa'$.
By coprimality of $\aa'$ to the relative conductor $\ff= \ff_{\OO'}(\OO)$ we have
\begin{equation}\label{eq:ccc-prime}
1 = a' + c'
\end{equation}
for some $a' \in \aa'$ and $c \in \ff \subseteq \OO$. Then $a' = 1-c\in \OO$, so $a' \in \aa= \con(\aa')$. Then 
\eqref{eq:ccc-prime} certifies that $\aa$ is coprime to $\ff$ in the order $\OO$. Also 
$a' \in \ext(\con(\aa'))$, so 
$\ext(\con(\aa'))$ is coprime to $\ff$. 
We must show $\aa' \subseteq \ext(\con(\aa'))$. We have $\aa' \ff \subseteq \res(\aa)$, because $\aa' \ff \subseteq \ff \subseteq \OO$ and $\aa' \ff \subseteq \aa'\OO' = \aa'$.
Now, given $b' \in \aa'$, we have from \eqref{eq:ccc-prime} that 
\begin{equation}
b' = b'a + b'c'.
\end{equation}
Now $b'a \in \ext(\con(\aa))$ since $b' \in \OO$ and $a \in \con(\aa)$, while $b'c' \in \aa'\ff \subseteq \con(\aa) \subseteq \ext(\con(\aa))$, hence $b'\in \ext(\con(\aa'))$. Thus, $\aa' \subseteq \ext(\con(\aa'))$, so we conclude that $ \ext(\con(\aa'))=\aa'$.

For the second, consider $\aa \in \rI_{\ff}(\OO)$, and set $\aa' = \ext(\aa)$. 
We will show $\con(\ext(\aa))= \aa$. By the coprimality assumption, there exist $a \in \aa$ and $f \in \ff= \ff_{\OO'}(\OO)$ with
$1=a+f$. Since $a \in \aa'=\ext(\aa)$, the ideals $\aa'$ and $\ff$ are coprime in $\OO'$, and in addition $a \in \con(\ext(\aa))$. We must show
$\con(\ext(\aa)) \subseteq \aa$.
Suppose $b \in \con(\ext(\aa)) \subseteq \OO$; then the coprimality equation implies
$b= ba+ bf$. We show $b \in \aa$ by showing both summands of the right hand side are in $\aa$.
Now $b \in \OO$, so $ba \in \OO \aa = \aa$. Now 
\begin{equation}
bf \in \ext(\aa) \ff = (\aa \OO') \ff = \aa (\OO' \ff) = \aa \ff \subseteq \aa \OO= \aa.
\end{equation} 
Thus $\con(\ext(\aa)) \subseteq \aa$, whence $\con(\ext(\aa)) =\aa$. 

Finally, for two ideals $\aa, \bb \in \rI_{\ff}(\OO)$, it follows from the definition of the extension map
that $\ext(\aa\bb) = \ext(\aa)\ext(\bb)$. Because $\con$ defines an inverse to $\ext$, 
it follows that $\con$ is also a homomorphism from $\rI_{\ff}(\OO')$ onto $\rI_{\ff}(\OO)$.
\end{proof}

\subsection{Extension and contraction of fractional ideals coprime to the relative conductor}\label{subsec:33} 

The extension and contraction maps between orders $\OO \subseteq \OO'$ of a number field $K$ consistently extend from integral ideals to fractional ideals, provided that one restricts to fractional ideals coprime to the relative conductor $\ff_{\OO'}(\OO)$.

\begin{prop}\label{prop:conext}
Consider two orders $\OO \subseteq \OO'$ of the number field $K$.
Let $\ff = \ff_{\OO'}(\OO)$ denote the relative conductor.
Let $\mm'$ be an integral ideal of $\OO'$ having $\mm' \subseteq \ff$.
 Then 
the contraction and extension maps 
extend uniquely 
to isomorphisms between
groups of fractional ideals coprime to $\mm'$.
That is, the maps 
\begin{align}
\con &: \rJ_{\mm'}(\OO') \to \rJ_{\mm'}(\OO) \mbox{ and}\\
\ext &: \rJ_{\mm'}(\OO) \to \rJ_{\mm'}(\OO')
\end{align} 
are well-defined and are inverses of each other.
\end{prop}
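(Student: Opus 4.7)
My plan is to extend the monoid isomorphism $\con : \rI_\ff(\OO') \to \rI_\ff(\OO)$ from \Cref{lem:conext} (with inverse $\ext$) first to the submonoids $\rI_{\mm'}$, and then to the ambient monoids $\rJ_{\mm'}$ of fractional ideals. Since $\mm' \subseteq \ff$, any $\mm'$-coprime ideal is $\ff$-coprime, so $\rI_{\mm'} \subseteq \rI_\ff$ on both sides. To confirm that $\con$ and $\ext$ preserve coprimality to $\mm'$, I would take $\aa' \in \rI_{\mm'}(\OO')$ and write $1 = a' + m$ with $a' \in \aa'$ and $m \in \mm' \subseteq \ff \subseteq \OO$; then $a' = 1 - m \in \aa' \cap \OO = \con(\aa')$, so $\con(\aa') + \mm' = \OO$. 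A symmetric argument handles $\ext$ on $\rI_{\mm'}(\OO)$.

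To extend to fractional ideals, I use that every $\cc \in \rJ_{\mm'}(\OO)$ admits a representation $\cc = \aa\bb^{-1}$ with $\aa, \bb \in \rI_{\mm'}(\OO)$ and $\bb$ invertible, by \Cref{def:frac-coprime} combined with \Cref{lem:coloninv}, and similarly for elements of $\rJ_{\mm'}(\OO')$. I define $\ext(\cc) := \ext(\aa)\ext(\bb)^{-1}$; this makes sense because $\ext(\bb) = \bb\OO'$ has fractional $\OO'$-inverse $\bb^{-1}\OO'$ in $\rJ(\OO')$. Well-definedness under a second representation $\cc = \aa_1\bb_1^{-1}$ reduces to the integral identity $\aa\bb_1 = \aa_1\bb$ and the multiplicativity of $\ext$ on $\rI_{\mm'}$ from \Cref{lem:conext}. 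The image $\ext(\cc) = \colonideal{\ext(\aa)}{\ext(\bb)}$ lies in $\rJ_{\mm'}(\OO')$ by \Cref{def:frac-coprime}, and multiplicativity is inherited from the integral level.

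I define $\con$ on $\rJ_{\mm'}(\OO')$ by the analogous formula $\con(\cc') := \con(\aa')\con(\bb')^{-1}$. The main obstacle is verifying that $\con(\bb')$ is invertible in $\rJ(\OO)$ whenever $\bb' \in \rI_{\mm'}(\OO')$ is invertible. I handle this via the local criterion of \Cref{prop:invertible-ops3}. For a maximal ideal $\pp$ of $\OO$ with $\pp \supseteq \ff$: since $\mm' \subseteq \ff \subseteq \pp$ and $\con(\bb') + \mm' = \OO$, we get $\con(\bb') \not\subseteq \pp$, so $\con(\bb')_\pp = \OO_\pp$ is principal. For $\pp \not\supseteq \ff$: pick $f \in \ff \setminus \pp$; then $f\OO' \subseteq \OO$ and $f$ is a unit in $\OO_\pp$, forcing $\OO'_{(\pp)} = \OO_\pp$ (both localized at the multiplicative set $\OO \setminus \pp$). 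Since $\OO'_{(\pp)} = \OO_\pp$ is local and $\bb'$ is invertible over $\OO'$, the localization $\bb'_{(\pp)}$ is a principal ideal of $\OO_\pp$ (as invertible fractional ideals of a local ring are principal), and by exactness of localization $\con(\bb')_\pp = \bb'_{(\pp)} \cap \OO_\pp = \bb'_{(\pp)}$ is principal.

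With these pieces, that $\ext$ and $\con$ are mutually inverse on $\rJ_{\mm'}$ follows by applying \Cref{lem:conext} to the integral factors: $\con(\ext(\cc)) = \con(\ext(\aa))\con(\ext(\bb))^{-1} = \aa\bb^{-1} = \cc$, and symmetrically. Uniqueness of the extension is immediate, because any multiplicative lift $\widetilde{\ext}$ must send $\bb^{-1}$ to $\widetilde{\ext}(\bb)^{-1} = \ext(\bb)^{-1}$ on invertible $\bb$, forcing the formula $\widetilde{\ext}(\aa\bb^{-1}) = \ext(\aa)\ext(\bb)^{-1}$.
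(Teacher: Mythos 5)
Your proposal is correct and follows essentially the same route as the paper: show that $\con$ and $\ext$ preserve coprimality to $\mm'$ on integral ideals via the $1 = a' + m$ trick, invoke \Cref{lem:conext} (using $\mm' \subseteq \ff$ so that $\mm'$-coprime implies $\ff$-coprime), and then pass to fractional ideals by the quotient formula $\aa\bb^{-1} \mapsto \ext(\aa)\ext(\bb)^{-1}$ with well-definedness checked by cross-multiplication. The one place you go beyond the paper is the explicit local verification (splitting on whether $\pp \supseteq \ff$, and using $\OO'_{(\pp)} = \OO_\pp$ away from the conductor) that $\con(\bb')$ is again invertible so that $\con(\bb')^{-1}$ makes sense; the paper's ``by a similar argument'' leaves this implicit, and your argument for it is sound and a worthwhile addition.
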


\begin{rmk}\label{rmk:con-2}
The extension map on fractional ideals $\aa$ coprime to $\ff_{\OO'}(\OO)$ is $\ext(\aa)=\aa\OO'$, as in the case for integral ideals
in \Cref{def:EC}(1). 
However, the contraction map on fractional ideals coprime to $\ff_{\OO'}(\OO)$ requires a new definition
different from \Cref{def:EC}(2) for integral ideals. That is, the contraction map 
on fractional ideals does not always have $\con(\aa') = \aa' \cap \OO$, although the inclusion $ \aa' \cap \OO \subseteq \con(\aa')$ holds.
For example, let $\bb'$ be any proper integral $\OO'$-ideal coprime to $\ff_{\OO'}(\OO)$, and set $\aa'= (\bb')^{-1}$. Then $\OO' \subsetneq \aa'$, so $\aa' \cap \OO= \OO$. However, by \Cref{prop:conext}, we will have $\con(\aa') = \con(\bb')^{-1} = (\bb \cap \OO)^{-1} = \bb^{-1} = \aa'$. Since $\bb'$ is a proper ideal, $\con(\aa') \neq \aa' \cap \OO$.
\end{rmk} 

\begin{proof}[Proof of \Cref{prop:conext}]
Note that $\mm'$ is both an integral $\OO$-ideal and an integral $\OO'$-ideal, the latter by assumption, and the former because $\OO \subseteq \OO'$ (so $\mm'$ is a fractional $\OO$-ideal) and $\mm' \subseteq \ff_{\OO'}(\OO) \subseteq \OO$, hence
 $\mm'$ is integral as an $\OO$-ideal by \Cref{lem:invert-frac-int}(1). 

We first claim that the maps $\con: \rI_{\mm'}(\OO') \to \rI_{\mm'}(\OO) $ and $\ext: \rI_{\mm'}(\OO) \to \rI_{\mm'}(\OO')$ 
send integral ideals in the specified domains into integral ideals in the specified codomains.

To prove the claim for $\con$, suppose $\aa' \in \rI_{\mm'}(\OO')$, so $\aa'+\mm'=\OO'$.
Then there exist $a' \in \aa'$ and $m' \in \mm'$ such that $a'+m'=1$. 
Now $m' \in \mm' \subseteq \ff \subseteq \OO$, so
$a' = 1-m' \in \OO$, showing that $a' \in \con(\aa')$. 
Therefore $\con(\aa')$ is coprime to $\mm'$ in $\OO$.

To prove the claim for $\ext$, suppose $\aa \in \rI_{\mm'}(\OO)$, so $\aa+\mm'=\OO$.
Then, there exist $a \in \aa$ and $m \in \mm'$ with $a+m=1$. Clearly, $a \in \ext(\aa)$. 
Therefore, $\ext(\aa)$ is coprime to $\mm'$ in $\OO'$.

Now \Cref{lem:conext} asserts that $\con(\ext(\aa)) = \aa$ and $\ext(\con(\aa')) = \aa'$ for all integral ideals in their respective domains. 
Because the domains and codomains of the maps above match on integral ideals, 
the isomorphisms given by \Cref{lem:conext}
restricts to bijective isomorphisms
$\con : \rI_{\mm'}(\OO') \to \rI_{\mm'}(\OO)$ and 
$\ext : \rI_{\mm'}(\OO) \to \rI_{\mm'}(\OO')$ 
that are inverses of each other.

We now consider any 
fractional ideal $\dd = \colonideal{\aa}{\bb} = \aa\bb^{-1}\in \rJ_{\mm'}(\OO)$ with $\aa \in \rI_{\mm'}(\OO)$ and $\bb \in \rI_{\mm'}^\ast(\OO)$,
where $\dd = \aa\bb^{-1}$ by \Cref{lem:coloninv}.
We define $\ext(\dd)=\ext(\aa)\ext(\bb)^{-1}$; 
any group homomorphism extending $\ext$ must be defined thus. To show this definition is independent of the choice of expression of $\dd$ as a ratio of integral ideals, 
consider two such expressions $\dd = \aa_1\bb_1^{-1} = \aa_2\bb_2^{-1}$. 
Then, $\aa_1\bb_2 = \aa_2\bb_1$, so $\ext(\aa_1)\ext(\bb_2) = \ext(\aa_2)\ext(\bb_1)$, so $\ext(\aa_1)\ext(\bb_1)^{-1} = \ext(\aa_2)\ext(\bb_2)^{-1}$, whence $\ext(\dd)$ is well-defined.
 By a similar argument, defining $\con\!\left(\aa'(\bb')^{-1}\right)=\con(\aa')\con(\bb')^{-1}$ for $\aa'\in \rI_{\mm'}(\OO')$ and $\bb'\in \rI_{\mm'}^\ast(\OO')$ gives a unique well-defined homomorphism. The fact that $\con$ and $\ext$ are inverses of each other then follows from the same fact for integral ideals.
\end{proof}

\section{Ray class groups of orders}\label{sec:group}

In this section, we define ray class groups of an order $\OO$ as quotients of certain groups of fractional ideals, and we show that those groups can be taken to satisfy auxiliary coprimality conditions to an arbitrary ideal $\dd$.

%
%
\subsection{Definition of ray class groups of orders}\label{subsec:41}

Let $\OO$ be an order of a number field $K$.
\begin{defn}\label{def:principal} 
A fractional ideal $\aa$ of $\OO$ is \textit{principal} if it may be written as $\aa = \alpha\OO$ for some $\alpha \in K$. The group of principal fractional ideals is denoted by $\rP(\OO)$.
\end{defn}

When working with ray class groups with level datum $(\OO; \mm, \rS)$, 
one needs to talk about modular congruences between non-integral elements of $K$. This requires a bit of care,
given in the following definition. 

\begin{defn}\label{defn:RC-congruence} 
Given $\alpha, \beta \in K$, we say $\alpha \equiv_{\OO} \beta \Mod{\mm}$ (abbreviated $\alpha \equiv \beta \Mod{\mm}$ when $\OO$ is known from context) 
if $\alpha = \frac{\alpha_1}{\alpha_2}$ and $\beta = \frac{\beta_1}{\beta_2}$ for some $\alpha_1, \alpha_2, \beta_1, \beta_2 \in \OO$ with $\alpha_2\OO$ and $\beta_2\OO$ coprime to $\mm$, satisfying $\alpha_1\beta_2 - \alpha_2\beta_1 \in \mm$.
This is equivalent to saying that $\alpha-\beta \in \mm\OO[S_\mm^{-1}]$, where $\OO[S_\mm^{-1}]$ is the semilocal ring obtained by inverting the elements of $\OO$ coprime to $\mm$. (See also \Cref{defn:localization}.)
\end{defn}

\begin{defn}[Principal ray ideal group]\label{def:rayclasses} 
Given an integral ideal $\mm$ in $\OO$ and a subset $\rS$ of the real places of $K$ (possibly empty), define the group of 
\textit{principal ray ideals of $\OO$ modulo $(\mm,\rS)$}, denoted $\rP_{\mm,\rS}(\OO)$, by:
\begin{equation}
\rP_{\mm,\rS}(\OO) = \{\mbox{$\alpha\OO$ : $\alpha \in K^\times$ such that $\alpha \Con 1 \Mod{\mm}$ and $\rho(\alpha) > 0$ for all $\rho \in \rS$}\}.
\end{equation}
\end{defn}

\begin{defn}\label{defn:ray-clas-order}
The \textit{ray class group} of $\OO$ modulo $(\mm,\rS)$ is 
\begin{equation}
\Cl_{\mm,\rS}(\OO) = \frac{\rJ_{\mm}^{\ast}(\OO)}{\rP_{\mm,\rS}(\OO)}.
\end{equation}
That is, 
\begin{equation}\label{eq:ray-class-group-order}
\Cl_{\mm,\rS}(\OO) = \frac{\{\mbox{invertible fractional ideals of $\OO$ coprime to $\mm$}\}}{\{\mbox{$\alpha\OO$ : $\alpha \in K^{\times}$ such that $\alpha \Con 1 \Mod{\mm}$ and $\rho(\alpha) > 0$ for all $\rho \in \rS$}\}}.
\end{equation}
\end{defn}

This definition of the ray class group for an order $\OO$ parallels the definition of the ray class group for the maximal order. 
To make the definition flexible, we will show in \Cref{subsec:43} that one may add auxiliary congruence conditions without changing the group, for example, requiring coprimality to the conductor ideal $\ff(\OO)$.

\begin{defn}\label{defn:ring-class-order}
The \textit{(wide) ring class group} (or {\em Picard group}) $\Cl(\OO)$ of an order $\OO$ is the special
case of modulus $(\OO,\emptyset)$, so that 
\begin{equation}
\Cl(\OO) := \Cl_{\OO,\emptyset}(\OO) = \frac{\rJ_{\OO}^{\ast}(\OO)}{\rP_{\OO,\emptyset}(\OO)} =  \frac{\{\mbox{invertible fractional ideals of $\OO$}\}}{\{\mbox{$\alpha\OO$: $\alpha \in K^{\times}$}\}}.
\end{equation}
\end{defn}

One can show this definition of ring class group (and the corresponding ring class field described by splitting of degree one prime ideals) is consistent with the classical definitions in the case of quadratic fields, as given in \cite[pp.~114--115]{Cohn94} and \cite[pp.~162--163]{Cox13}.

\subsection{Local behavior of ideals}\label{subsec:42} 

Before proving results about the ray class group, we collect some facts about the interaction of localization with invertibility and ideal inclusion.

We introduce a notation for localization of rings, for later use.

\begin{defn}\label{defn:localization}
For a commutative ring with unity $R$ and an ideal $I$ of $R$, we denote by $S_I$ the multiplicatively closed set of elements coprime to $I$,
\begin{equation}
S_I := \{a \in R : aR+I=R\}.
\end{equation}
We denote by $\loc{R}{I}$ the ring defined by inverting the elements of $S_I$. (We avoid the notation $S_I^{-1}R$ to prevent any confusion with multiplication of ideals.)
\end{defn}

If $R$ is a Noetherian ring of dimension 1 (such as an order in a number field), then for any nonzero ideal $I$, the ring $\loc{R}{I}$ is a \textit{semilocal ring}---a ring with finitely many maximal ideals. For example, the ring $\loc{\Z}{(6)}$ consists of those rational numbers whose denominators are contain no factors of $2$ or $3$, and the only maximal ideals are $(2)$ and $(3)$. If $\pp$ is a maximal ideal of $R$, then $\loc{R}{\pp} = R[(R \setminus \pp)^{-1}] = R_\pp$ is termed the {\em localization away from} $\pp$. 

The localization of an $R$-modules $M$ may be defined by the extension of scalars $\loc{M}{I} := M \tensor_R \loc{R}{I}$ (or by an equivalent direct construction given in \cite[Ch.\ 3]{AM:69}). We use the notation $M_\pp := \loc{M}{\pp} = M \tensor_R R_\pp$ for the localization of an $\OO$-module $M$ away from a prime ideal $\pp$. In the cases we consider, the natural map $M \to M_\pp$ is injective, so we will drop the tensor product notation, simply writing $\loc{M}{\pp} = M R_\pp = R_\pp M$.

The following  proposition recalls a basic fact from commutative algebra: Fractional ideal containment (and more generally, injectivity of module maps) is a local property.
\begin{prop}\label{prop:locinj}
For any commutative ring $R$ and a map of $R$-modules $\phi : M \to N$, the following are equivalent:
\begin{itemize}
\item[(1)] $\phi$ is injective.
\item[(2)] The induced map $\phi_\pp : M_\pp \to N_\pp$ is injective for every prime ideal $\pp$ of $R$.
\item[(3)] The induced map $\phi_\pp : M_\pp \to N_\pp$ is injective for every maximal ideal $\pp$ of $R$.
\end{itemize}
In particular, for an order $\OO$ and fractional ideals $\aa, \bb \in \rJ(\OO)$,
\begin{equation}
\aa \subseteq \bb \iff \aa_\pp \subseteq \bb_\pp \mbox{ for all nonzero prime ideals } \pp \subseteq \OO.
\end{equation}
\end{prop}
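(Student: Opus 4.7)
The plan is to establish the equivalence of (1), (2), and (3) first, then derive the fractional ideal statement as a corollary. The implication $(1) \Rightarrow (2)$ follows immediately from the exactness of localization: applying the exact functor $- \tensor_R R_\pp$ to the injection $0 \to M \stackrel{\phi}{\to} N$ preserves the zero kernel. The implication $(2) \Rightarrow (3)$ is trivial since every maximal ideal is prime.

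The content is in $(3) \Rightarrow (1)$, and I would prove it by contrapositive using annihilators. Suppose $\phi$ is not injective and pick $0 \ne m \in \ker(\phi)$. The annihilator $\Ann_R(m) = \{r \in R : rm = 0\}$ is a proper ideal of $R$ (since $1 \cdot m = m \ne 0$), hence is contained in some maximal ideal $\pp$. I claim the image of $m$ in $M_\pp$ is nonzero: if $m/1 = 0$ in $M_\pp$, then $sm = 0$ for some $s \in R \setminus \pp$, giving $s \in \Ann_R(m) \subseteq \pp$, a contradiction. But $\phi_\pp(m/1) = \phi(m)/1 = 0$, violating injectivity of $\phi_\pp$.

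For the ``in particular'' clause, the forward direction $\aa \subseteq \bb \Rightarrow \aa_\pp \subseteq \bb_\pp$ is immediate by functoriality. For the converse, I would reduce to a statement about vanishing of a module. Consider the $\OO$-module quotient $N := (\aa + \bb)/\bb$, a submodule of $K/\bb$; then $\aa \subseteq \bb$ is equivalent to $N = 0$. Since localization is exact and commutes with finite sums, $N_\pp = (\aa_\pp + \bb_\pp)/\bb_\pp$, which equals $0$ for every maximal $\pp$ by hypothesis. To conclude $N = 0$, apply the established equivalence to the zero map $0 \to N$: it is injective iff each of its localizations is, which happens iff $N_\pp = 0$ for all maximal $\pp$. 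Hence $N = 0$ and $\aa \subseteq \bb$.

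The main obstacle, if any, is the $(3) \Rightarrow (1)$ step, since it requires the nonstandard move of passing from a single nonzero kernel element to its annihilator and invoking maximality; everything else is a bookkeeping exercise with exact functors. No deep machinery beyond exactness of localization and the existence of maximal ideals containing any proper ideal is needed.
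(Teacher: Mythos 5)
Your route is essentially the paper's: the paper disposes of the module statement by citing Atiyah--Macdonald, Prop.\ 3.9, and the annihilator argument you give for $(3)\Rightarrow(1)$ is precisely the standard proof of that result (via the companion fact that a module is zero iff its localizations at all maximal ideals are zero). Reducing the fractional-ideal clause to the vanishing of $(\aa+\bb)/\bb$ is likewise the intended reading of the paper's one-line remark, so there is nothing genuinely different in approach here --- you have just written out the proof the paper outsources.

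One small but real slip in the last step: you apply the equivalence to the map $0 \to N$, but that map is injective for \emph{every} module $N$ (the zero module injects into anything), so it detects nothing and cannot yield $N=0$. You want the map $N \to 0$ instead: it is injective iff $N = 0$, and its localization $N_\pp \to 0$ is injective iff $N_\pp = 0$, which is exactly what your hypothesis gives at every maximal ideal. (Equivalently, invoke directly the fact that $N=0$ iff $N_\pp=0$ for all maximal $\pp$, which your annihilator argument already establishes en route.) With that correction the proof is complete. A cosmetic point: the displayed equivalence quantifies over all nonzero primes of $\OO$, which for an order coincide with the maximal ideals, so condition (3) is the one you need and your restriction to maximal $\pp$ is fine.
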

\begin{proof}
The statement for $R$-modules is \cite[Prop.\ 3.9]{AM:69}. The statement for fractional $\OO$-ideals follows by taking $\phi: \aa \to \bb$ to be the inclusion map.
\end{proof}

We next recall a sharpening of \Cref{prop:invertible-ops3}, valid for orders of number fields.

\begin{prop}\label{prop:lociso}
Let $\OO$ be an order in a number field $K$. Invertible fractional ideals of $\OO$ are locally principal:~If $\aa \in \rJ^\ast(\OO)$, then $\aa_\pp = \aa \OO_p$ is a principal ideal of $\OO_\pp$. Moreover, the correspondence $\aa \mapsto (\aa_\pp) := (\aa\OO_\pp)$ defines an isomorphism
\begin{equation}
\rJ^\ast(\OO) \isom \bigoplus_\pp \rP(\OO_\pp),
\end{equation}
where $\pp$ varies over the nonzero prime ideals of $\OO$, and
$\rP(\OO_{\pp})= \{\alpha\OO_{\pp} : \alpha \in K_{\pp}\}$
is the group of nonzero principal fractional ideals of $\OO_{\pp}$ in its quotient field $K_{\pp}$.
\end{prop}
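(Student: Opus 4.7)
The first assertion---that invertible fractional ideals of $\OO$ are locally principal---is the ``only if'' direction of \Cref{prop:invertible-ops3}, applied to $\OO$ at each nonzero prime $\pp$. This makes the assignment $\phi: \aa \mapsto (\aa\OO_\pp)_\pp$ land in $\prod_\pp \rP(\OO_\pp)$. To see it in fact lands in the direct sum, I would write $\aa = \bb\cc^{-1}$ for invertible integral $\OO$-ideals $\bb, \cc$ (using \Cref{lem:coloninv}). Because $\OO$ is Noetherian of dimension $1$, each of $\bb$ and $\cc$ is contained in only finitely many nonzero prime ideals. At any $\pp$ containing neither, $\bb\OO_\pp = \cc\OO_\pp = \OO_\pp$, and hence $\aa\OO_\pp = \OO_\pp$, so $\phi(\aa)$ is trivial at cofinitely many primes.

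Next I would check that $\phi$ is a group homomorphism, which is immediate since localization commutes with products of $\OO$-submodules of $K$: $(\aa\bb)\OO_\pp = (\aa\OO_\pp)(\bb\OO_\pp)$. Injectivity follows from \Cref{prop:locinj} applied to the inclusion $\aa \cap \bb \hookrightarrow \aa$: if $\aa\OO_\pp = \bb\OO_\pp$ for every nonzero prime $\pp$, then $(\aa \cap \bb)\OO_\pp = \aa\OO_\pp \cap \bb\OO_\pp = \aa\OO_\pp$, so $\aa \cap \bb = \aa$ by the local-to-global principle, and symmetrically $\aa \cap \bb = \bb$, giving $\aa = \bb$.

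The main step is surjectivity. Given a tuple $(\alpha_\pp\OO_\pp)_\pp$ nontrivial only at a finite set of primes $\pp_1, \dots, \pp_n$, I would construct the candidate fractional ideal
\begin{equation}
\aa := \bigcap_{i=1}^n \bigl(\alpha_{\pp_i}\OO_{\pp_i}\bigr) \cap \bigcap_{\pp \notin \{\pp_1,\dots,\pp_n\}} \OO_\pp,
\end{equation}
intersection taken inside $K$. Clearing denominators from each $\alpha_{\pp_i}$, one sees $\lambda \aa \subseteq \OO$ for some nonzero $\lambda \in \OO$, so $\aa \in \rJ(\OO)$. By construction $\aa\OO_{\pp} \subseteq \alpha_{\pp}\OO_\pp$ for every $\pp$. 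To pass $\aa$ through $\phi$ and recover the prescribed data, one must also establish the reverse inclusion $\alpha_\pp\OO_\pp \subseteq \aa\OO_\pp$ at each prime, and then invoke the ``if'' direction of \Cref{prop:invertible-ops3} to conclude that $\aa$ is invertible.

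The principal obstacle is precisely this last verification of local equality $\aa\OO_{\pp_i} = \alpha_{\pp_i}\OO_{\pp_i}$, because the global intersection defining $\aa$ imposes constraints at every other prime. The plan is to apply the Chinese Remainder Theorem inside the semilocal ring $\loc{\OO}{\pp_1\cdots\pp_n}$ (as in \Cref{defn:localization}): for each target generator $\alpha_{\pp_i}$, produce an element $\beta_i \in K^\times$ agreeing with $\alpha_{\pp_i}$ modulo a sufficiently high power of $\pp_i$ in $\OO_{\pp_i}$, and lying in $\OO_{\pp_j}^\times$ for $j \neq i$, so that $\beta_i \in \aa$ generates $\alpha_{\pp_i}\OO_{\pp_i}$ after localizing at $\pp_i$. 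Such elements exist because the semilocal ring $\loc{\OO}{\pp_1\cdots\pp_n}$ has only finitely many maximal ideals, in each of which the needed local conditions can be prescribed independently. Summing the resulting principal contributions inside $\aa\OO_{\pp_i}$ gives the desired reverse inclusion.
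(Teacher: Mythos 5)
Your overall architecture is sound and is essentially the standard argument behind the result that the paper itself does not prove but merely cites (\cite[Ch.~I, Prop.~12.6]{Neukirch13} and \cite[Thm.~2.14]{Stevenhagen19}), so there is no in-paper proof to compare against beyond that citation. The first several steps---local principality via \Cref{prop:invertible-ops3}, finite support of $\phi(\aa)$, the homomorphism property, and injectivity via \Cref{prop:locinj}---are fine (for injectivity you could apply the cited proposition directly to the two containments rather than routing through $\aa \cap \bb$). However, the surjectivity step, which you correctly identify as the crux, does not work as written, for two reasons. First, for $\beta_i$ to lie in $\aa$ you need $\beta_i \in \alpha_{\pp_j}\OO_{\pp_j}$ for every $j \neq i$, not $\beta_i \in \OO_{\pp_j}^\times$; when $\alpha_{\pp_j}$ is a non-unit of $\OO_{\pp_j}$ these two conditions are mutually exclusive, since $\OO_{\pp_j}^\times \cap \alpha_{\pp_j}\OO_{\pp_j} = \emptyset$, so your $\beta_i$ can never lie in $\aa$ in that case. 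Second, an element produced by CRT in the semilocal ring $\loc{\OO}{\pp_1\cdots\pp_n}$ has the form $a/s$ with $s$ coprime to $\pp_1\cdots\pp_n$ but possibly lying in some other maximal ideal $\pp$, so nothing forces $\beta_i \in \OO_\pp$ at the infinitely many primes outside your finite set---a second obstruction to $\beta_i \in \aa$.

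Both problems are repaired by a standard modification. First reduce to the case $\alpha_{\pp_i} \in \OO$ for all $i$ by multiplying the target tuple by $\phi(c\OO)$ for a nonzero $c \in \OO$ clearing denominators (this may enlarge the finite support, which is harmless since $\phi(c\OO)$ is already in the image). Then choose $N$ so large that $\pp_j^N\OO_{\pp_j} \subseteq \alpha_{\pp_j}\pp_j\OO_{\pp_j}$ for each $j$---possible because every nonzero ideal of the one-dimensional local Noetherian domain $\OO_{\pp_j}$ contains a power of its maximal ideal---and use CRT in $\OO$ for the pairwise comaximal ideals $\pp_1^N, \ldots, \pp_n^N$ to find $\beta \in \OO$ with $\beta \equiv \alpha_{\pp_i} \Mod{\pp_i^N}$ and $\beta \equiv 0 \Mod{\pp_j^N}$ for $j \neq i$. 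Then $\beta \in \alpha_{\pp_j}\OO_{\pp_j}$ for all $j$ and $\beta \in \OO \subseteq \OO_\pp$ at all other primes, so $\beta \in \aa$; and $\beta = \alpha_{\pp_i}(1+m)$ with $m \in \pp_i\OO_{\pp_i}$, so $\beta\OO_{\pp_i} = \alpha_{\pp_i}\OO_{\pp_i}$, which yields the reverse inclusion $\alpha_{\pp_i}\OO_{\pp_i} \subseteq \aa\OO_{\pp_i}$ (no summing is needed: one such $\beta$ per prime suffices). With that repair, the remainder of your argument, including the final appeal to the ``if'' direction of \Cref{prop:invertible-ops3} to conclude that $\aa$ is invertible, goes through.
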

\begin{proof}
This is \cite[Ch.\ I, Prop.\ 12.6]{Neukirch13} and is also proved in \cite[Thm.~2.14]{Stevenhagen19}.
\end{proof}

We now show that the group of invertible fractional ideals coprime to $\mm$ is determined by the set of nonzero prime ideals containing $\mm$.
\begin{lem}\label{lem:mtilde}
Let $\OO$ be an order in a number field $K$, and let $\mm_1, \mm_2$ be nonzero integral ideals of $\OO$. Then:
\begin{itemize}
\item[(1)] One has
$\{\pp : \pp \mbox{ a prime $\OO$-ideal with } \mm_1 \subseteq \pp\} \subseteq \{\pp : \pp \mbox{ a prime $\OO$-ideal with } \mm_2 \subseteq \pp\}$ if and only if $\rI_{\mm_2}(\OO) \subseteq \rI_{\mm_1}(\OO)$.
\item[(2)] If $\rI_{\mm_2}(\OO) \subseteq \rI_{\mm_1}(\OO)$, then $\rJ^{\ast}_{\mm_2}(\OO) \subseteq \rJ^{\ast}_{\mm_1}(\OO).$
\item[(3)] For any nonzero $\OO$-ideal $\mm$, there exists an invertible ideal $\widetilde{\mm} \in \rI^\ast(\OO)$ such that $\widetilde{\mm} \subseteq \mm$ and $\rI_\mm(\OO) = \rI_{\widetilde{\mm}}(\OO)$ (and thus $\rJ^\ast_\mm(\OO) = \rJ^\ast_{\widetilde{\mm}}(\OO)$).
\end{itemize}
\end{lem}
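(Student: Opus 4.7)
The plan is to reduce each part to the local structure of the order and the characterization of coprimality in terms of maximal ideals. For part (1), I would use the fact that in an order every nonzero prime ideal is maximal, so an integral ideal $\aa$ is coprime to $\mm$ if and only if $\aa$ is contained in no prime ideal containing $\mm$. Given this characterization, the forward direction follows immediately. The converse is proved by contrapositive: a prime $\pp$ with $\mm_1 \subseteq \pp$ but $\mm_2 \not\subseteq \pp$ would lie in $\rI_{\mm_2}(\OO)\setminus \rI_{\mm_1}(\OO)$. Part (2) then follows from part (1) by writing any element of $\rJ^\ast_{\mm_2}(\OO)$ as a quotient $\aa\bb^{-1}$ with $\aa,\bb\in \rI^\ast_{\mm_2}(\OO)$, as provided by \Cref{def:frac-coprime}, and invoking part (1) to relocate $\aa$ and $\bb$ in $\rI^\ast_{\mm_1}(\OO)$.

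For part (3), the main obstacle is that some prime ideals containing $\mm$ may be singular, so one cannot simply take $\widetilde{\mm}$ to be a product of primes or a power of $\mm$. I would instead build $\widetilde\mm$ locally via the isomorphism $\rJ^\ast(\OO)\isom\bigoplus_\pp \rP(\OO_\pp)$ of \Cref{prop:lociso}. Let $\pp_1,\ldots,\pp_r$ be the finitely many nonzero primes containing $\mm$. For each $i$, choose a nonzero element $\beta_i\in \mm_{\pp_i}$, which exists because $\mm\neq 0$ embeds into each localization. Define $\widetilde\mm$ by the local data $\widetilde\mm_{\pp_i}=\beta_i\OO_{\pp_i}$ for $i=1,\ldots,r$ and $\widetilde\mm_\pp=\OO_\pp$ otherwise. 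Only finitely many components differ from $\OO_\pp$, so this determines a well-defined element of $\rJ^\ast(\OO)$.

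To finish, I would verify the required properties using the local injectivity criterion \Cref{prop:locinj}. The inclusion $\widetilde\mm\subseteq\mm$ holds at every prime: trivially at primes not containing $\mm$, and because $\beta_i\in\mm_{\pp_i}$ at each $\pp_i$. Since $\widetilde\mm\subseteq\mm\subseteq\OO$, the invertible fractional ideal $\widetilde\mm$ is integral, hence lies in $\rI^\ast(\OO)$ by \Cref{lem:invert-frac-int}(1). To conclude $\rI_\mm(\OO)=\rI_{\widetilde\mm}(\OO)$, by part (1) it suffices to check that $\widetilde\mm$ and $\mm$ are contained in exactly the same prime ideals. Primes $\pp\not\supseteq\mm$ do not contain $\widetilde\mm$, because $\widetilde\mm_\pp=\OO_\pp$ there. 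Conversely, each $\pp_i$ contains $\widetilde\mm$: since $\beta_i$ is a nonzero element of $\mm_{\pp_i}\subsetneq\OO_{\pp_i}$, it is a non-unit of $\OO_{\pp_i}$, hence $\widetilde\mm_{\pp_i}=\beta_i\OO_{\pp_i}\subsetneq\OO_{\pp_i}$, forcing $\widetilde\mm\subseteq\pp_i$. The parenthetical equality $\rJ^\ast_\mm(\OO)=\rJ^\ast_{\widetilde\mm}(\OO)$ is then immediate from part (2).
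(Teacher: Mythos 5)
Your proposal is correct and follows essentially the same route as the paper: part (1) via maximality of nonzero primes, part (2) by writing invertible coprime fractional ideals as quotients of invertible coprime integral ideals, and part (3) by gluing local generators of $\mm$ at the finitely many primes containing it using \Cref{prop:lociso} and then invoking (1) and (2). The only cosmetic differences are that the paper proves the ``only if'' direction of (1) by an explicit localization chain rather than the ``coprime iff no common maximal ideal'' characterization, and that you spell out the verifications that $\widetilde{\mm} \subseteq \mm$ and that $\widetilde{\mm}$ is integral, which the paper leaves implicit.
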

\begin{proof}
If $\mm_1 \subseteq \pp$ but $\mm_2 \not\subseteq \pp$, then $\pp \in \rI_{\mm_2}(\OO)$ (because $\pp$ is maximal and hence $\pp+\mm_2=\OO$) but $\pp \nin \rI_{\mm_1}(\OO)$ (because $\pp+\mm_1=\pp\neq\OO$). This proves the ``if'' direction of (1) (by proving the contrapositive).
To prove the ``only if'' direction of (1), suppose 
\begin{equation}
\{\pp_1, \ldots, \pp_m\} = \{\mbox{nonzero prime ideals } \pp \supseteq \mm_1\} \subseteq \{\mbox{nonzero prime ideals } \pp \supseteq \mm_2 \}.
\end{equation}
For any \textit{integral} ideal $\bb \in \rI(\OO)$, we have
\begin{align}
\bb + \mm_2 = \OO 
&\implies \bb\OO_{\pp_j} + \mm_2\OO_{\pp_j} = \OO_{\pp_j} \mbox{ for } 1 \leq j \leq m 
\\
&\implies \bb\OO_{\pp_j} = \OO_{\pp_j} \mbox{ for } 1 \leq j \leq m, \mbox{ because } \mm_2\OO_{\pp_j} \subseteq \pp_j\OO_{\pp_j} \nn\\
&\ \ \ \ \ \ \ \ \ \ \ \mbox{and } \pp_j\OO_{\pp_j} \mbox{ is the unique maximal ideal of } \OO_{\pp_j} \\
&\implies \bb\OO_\pp + \mm_1\OO_\pp = \OO_\pp \mbox{ for all nonzero prime $\OO$-ideals } \pp, \nn\\
&\ \ \ \ \ \ \ \ \ \ \ \mbox{because } \mm_1\OO_\pp=\OO_\pp \mbox{ for } \pp \nin \{\pp_1, \ldots, \pp_m\} \\
&\implies \bb+\mm_1 = \OO \mbox{ by \Cref{prop:locinj}}.
\end{align}
In other words, $\rI_{\mm_2}(\OO) \subseteq \rI_{\mm_1}(\OO)$.

To prove (2), by definition $\rJ^\ast_{\mm}(\OO)$ consists of fractional ideals of the form $\aa\bb^{-1}$ where $\aa, \bb \in \rI^\ast_{\mm}(\OO)$. 
Thus, $\rI_{\mm_2}(\OO) \subseteq \rI_{\mm_1}(\OO) \implies \rI^\ast_{\mm_2}(\OO) \subseteq \rI^\ast_{\mm_1}(\OO) \implies \rJ^\ast_{\mm_2}(\OO) \subseteq \rJ^\ast_{\mm_1}(\OO)$.

To prove (3), consider a nonzero $\OO$-ideal $\mm$. 
Let $\{\pp_1, \ldots, \pp_m\}$ be the set of all prime ideals containing $\mm$.
For each $j$ with $1 \leq j \leq m$, choose some nonzero element $\mu_j \in \mm\OO_{\pp_j}$.
By \Cref{prop:lociso}, there exists a unique invertible ideal $\widetilde{\mm} \in \rJ^\ast(\OO)$ such that 
$\widetilde{\mm}\OO_{\pp_j} = \mu_j\OO_{\pp_j}$ for $1 \leq j \leq m$ and $\widetilde{\mm}\OO_\pp = \OO_\pp$ for $\pp \nin \{\pp_1, \ldots, \pp_m\}$. 
We have 
\begin{equation}
\{\mbox{nonzero prime ideals $\pp$ with } \mm \subseteq \pp\} = \{\mbox{nonzero prime ideals $\pp$ with } \widetilde{\mm} \subseteq \pp \}.
\end{equation}
Thus, by (1), both $\rI_{\widetilde{\mm}}(\OO) \subseteq \rI_{\mm}(\OO)$ and $\rI_{\mm}(\OO) \subseteq \rI_{\widetilde\mm}(\OO)$.
Then, by (2), $\rJ^\ast_\mm(\OO) = \rJ^\ast_{\widetilde{\mm}}(\OO)$.
\end{proof}

\subsection{Auxiliary coprimality conditions on ray class groups of orders}\label{subsec:43} 

In the definition of the ray class group for the maximal order $\OO_K$, it is well known that restricting  the 
fractional ideal group to fractional ideals coprime to some ideal $\dd$, and restricting the principal ideals
similarly, yields the same group. 
In this subsection we show this 
is true for arbitrary orders. 

Being able to impose an auxiliary coprimality
condition to some ideal $\dd$ on the fractional ideal groups,
done in \Cref{lem:reldok},
makes possible the comparison of ray class groups for different orders
and different ray class moduli, 
particularly the comparison of arbitrary ray class groups with certain ray class groups of the maximal order. 

\begin{defn}[Principal ray ideal group coprime to $\dd$]\label{def:rayclasses2} 
Given an integral $\OO$-ideal $\dd$, the group of \textit{principal ray ideals (modulo $\mm$) coprime to $\dd$}, denoted $\rP_{\mm,\rS}^{\dd}(\OO)$, 
is given by: 
\begin{equation}
\rP_{\mm,\rS}^{\dd}(\OO) = \{\mbox{$\alpha\OO$ : $\alpha \in K^\times$, $\alpha \Con 1 \Mod{\mm}$, $\alpha\OO$ coprime to $\dd$,
$\rho(\alpha) > 0$ for all $\rho \in \rS$} \}.
\end{equation}
\end{defn}

The following lemma shows that, without loss of generality, we may suppose $\dd \subseteq \mm$. 

\begin{lem}\label{lem:WOLOG}
If $\dd$ is an integral $\OO$-ideal, then $\rP_{\mm,\rS}^\dd(\OO) = \rP_{\mm,\rS}^{\dd\cap\mm}(\OO) = \rP_{\mm,\rS}^{\dd\mm}(\OO)$.
In particular, $\rP_{\mm,\rS}(\OO) = \rP_{\mm,\rS}^\mm(\OO)$.
\end{lem}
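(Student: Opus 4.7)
The plan is to prove the chain of inclusions
\[
\rP_{\mm,\rS}^{\dd\mm}(\OO) \subseteq \rP_{\mm,\rS}^{\dd\cap\mm}(\OO) \subseteq \rP_{\mm,\rS}^{\dd}(\OO) \subseteq \rP_{\mm,\rS}^{\dd\mm}(\OO),
\]
which collapses to the required triple equality. The first two inclusions are immediate from the integral ideal containments $\dd\mm \subseteq \dd\cap\mm \subseteq \dd$ (the left inclusion because $\dd\mm \subseteq \dd$ and $\dd\mm \subseteq \mm$), together with the general principle that coprimality of a fractional ideal to a \emph{smaller} integral ideal is a \emph{stronger} condition---the fractional-ideal analogue of \Cref{lem:mm-fract-monoids}(3). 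Because the sign conditions at $\rS$ and the congruence $\alpha \equiv 1 \Mod\mm$ are preserved verbatim across all three groups, only the $\dd$-superscripted coprimality constraint changes.

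The substantive content is the reverse inclusion $\rP_{\mm,\rS}^{\dd}(\OO) \subseteq \rP_{\mm,\rS}^{\dd\mm}(\OO)$, for which it suffices to show: if $\alpha \in K^\times$ satisfies $\alpha \equiv 1 \Mod\mm$ and $\alpha\OO$ is coprime to $\dd$, then $\alpha\OO$ is coprime to $\dd\mm$. The key preliminary lemma is that $\alpha \equiv 1 \Mod\mm$ already forces $\alpha\OO$ to be coprime to $\mm$. Indeed, unpacking \Cref{defn:RC-congruence} gives $\alpha - 1 \in \mm\loc{\OO}{\mm}$; at any prime $\pp$ of $\OO$ containing $\mm$, localizing yields $\alpha - 1 \in \mm\OO_\pp \subseteq \pp\OO_\pp$, so $\alpha \in 1 + \pp\OO_\pp \subseteq \OO_\pp^\times$. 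Equivalently, $\alpha$ is a unit in the semilocal ring $\loc{\OO}{\mm}$, and writing $\alpha = \alpha_1/\alpha_2$ and $\alpha^{-1} = \beta_1/\beta_2$ with numerators and denominators in $\OO$ and denominators coprime to $\mm$, one checks $\alpha_1\OO$ and $\alpha_2\OO$ are both coprime to $\mm$, giving $\alpha\OO$ coprime to $\mm$ per \Cref{def:frac-coprime}.

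Having coprimality of $\alpha\OO$ to both $\dd$ (by hypothesis) and $\mm$ (by the preceding step), I would deduce coprimality to $\dd\mm$ via the local characterization: for a principal fractional ideal $\alpha\OO$ and an integral ideal $\ee$, coprimality per \Cref{def:frac-coprime} is equivalent to $\alpha \in \OO_\pp^\times$ at every prime $\pp \supseteq \ee$. Since a prime of $\OO$ contains $\dd\mm$ iff it contains $\dd$ or contains $\mm$, the two hypotheses combine to give $\alpha \in \OO_\pp^\times$ at every $\pp \supseteq \dd\mm$, hence $\alpha \in \loc{\OO}{\dd\mm}^\times$. As in the preceding paragraph, this furnishes a presentation $\alpha = \gamma_1/\gamma_2$ with $\gamma_i \in \OO$ and $\gamma_i\OO$ coprime to $\dd\mm$, certifying $\alpha\OO \in \rP_{\mm,\rS}^{\dd\mm}(\OO)$. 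The final assertion $\rP_{\mm,\rS}(\OO) = \rP_{\mm,\rS}^{\mm}(\OO)$ is then the specialization $\dd = \OO$, using $\OO\mm = \mm$ and that coprimality to $\OO$ is a vacuous constraint.

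The main obstacle is the local characterization of fractional-ideal coprimality, since a single presentation $\alpha\OO = \aa\bb^{-1}$ realizing $\dd$-coprimality need not have $\aa, \bb$ coprime to $\mm$, so one cannot simply combine two given presentations. Passing through $\loc{\OO}{\ee}$ and using that units in a semilocal ring are detected at each of its finitely many maximal ideals bypasses this difficulty cleanly; all other steps are routine manipulations with localization and ideal containment.
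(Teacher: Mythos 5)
Your proof is correct and follows essentially the same route as the paper: reduce to the single nontrivial inclusion $\rP_{\mm,\rS}^{\dd}(\OO) \subseteq \rP_{\mm,\rS}^{\dd\mm}(\OO)$ and observe that $\alpha \equiv 1 \Mod{\mm}$ already forces $\alpha\OO$ to be coprime to $\mm$. The one place you go beyond the paper is the combination step: the paper simply asserts that coprimality to $\dd$ and to $\mm$ yields coprimality to $\dd\mm$, whereas you correctly note that two separate presentations $\aa\bb^{-1}$ cannot be naively merged and supply the missing justification via the local (semilocal-ring) characterization of coprimality for principal fractional ideals.
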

\begin{proof}
Clearly $\rP_{\mm,\rS}^{\dd\mm}(\OO) \subseteq \rP_{\mm,\rS}^{\dd\cap\mm}(\OO) \subseteq \rP_{\mm,\rS}^{\dd}(\OO)$. To show the reverse inclusions, suppose $\aa \in \rP_{\mm,\rS}^\dd(\OO)$. Write $\aa = \alpha\OO$ for $\alpha \in K^\times$ such that $\alpha \Con 1 \Mod{\mm}$, $\alpha$ coprime to $\dd$, and
$\rho(\alpha) > 0$ for all $\rho \in \rS$. Write $\alpha = \frac{\alpha_1}{\alpha_2}$ for $\alpha_1, \alpha_2 \in \OO$ with $\alpha_2$ coprime to $\mm$ (which is possible by \Cref{defn:RC-congruence}). Then $\alpha_1 \Con \alpha_2 \Mod{\mm}$, so $\alpha_1$ is also coprime to $\mm$, so $\alpha$ is coprime to $\mm$. Since $\alpha$ is coprime to $\dd$ and to $\mm$, $\alpha$ is also coprime to $\dd\mm$ and to $\dd\cap\mm$.

The equality $\rP_{\mm,\rS}(\OO) = \rP_{\mm,\rS}^\mm(\OO)$ follows by taking $\dd=\OO$.
\end{proof}

The following key lemma is the most technically tricky result in this paper.  
\begin{lem}\label{lem:reldok}
For any nonzero ideals $\dd \subseteq \mm \subseteq \OO$ and any set
$\Sigma$ of real places of $K$ (possibly empty), 
\begin{equation}\label{eq:45}
\Cl_{\mm,\rS}(\OO) = \frac{\rJ_{\dd}^{\ast}(\OO)}{\rP^{\dd}_{\mm,\rS}(\OO)}.
\end{equation}
\end{lem}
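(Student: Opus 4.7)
The plan is to show that the inclusion $\rJ_\dd^\ast(\OO) \subseteq \rJ_\mm^\ast(\OO)$ from \Cref{lem:mm-fract-monoids}(3), which by definition carries $\rP_{\mm,\rS}^\dd(\OO)$ into $\rP_{\mm,\rS}(\OO)$, induces an isomorphism
\[
\bar\iota\colon \rJ_\dd^\ast(\OO)/\rP_{\mm,\rS}^\dd(\OO) \longrightarrow \rJ_\mm^\ast(\OO)/\rP_{\mm,\rS}(\OO) = \Cl_{\mm,\rS}(\OO).
\]
Injectivity is immediate: if $\aa \in \rJ_\dd^\ast(\OO)$ represents an element of $\ker(\bar\iota)$, then $\aa = \alpha\OO$ for some $\alpha$ satisfying the ray conditions at $(\mm,\rS)$, and since $\aa$ is already coprime to $\dd$ by hypothesis, $\alpha\OO$ lies in $\rP_{\mm,\rS}^\dd(\OO)$.

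The bulk of the proof is surjectivity. Given $\aa \in \rJ_\mm^\ast(\OO)$, I construct $\gamma \in K^\times$ with $\gamma\OO \in \rP_{\mm,\rS}(\OO)$ and $\gamma\aa \in \rJ_\dd^\ast(\OO)$; then $[\gamma\aa]=[\aa]$ in $\Cl_{\mm,\rS}(\OO)$ and the class lifts to the refined quotient. Let $\qq_1,\ldots,\qq_s$ be the finitely many nonzero prime $\OO$-ideals containing $\dd$ but not $\mm$. At any prime $\pp$ of $\OO$ containing $\mm$, the ideal $\aa$ is already locally trivial (since $\aa+\mm=\OO$ and $\mm\OO_\pp \subseteq \pp\OO_\pp$), and the congruence $\gamma \equiv 1 \Mod{\mm}$ imposed below will force $\gamma\OO_\pp = \OO_\pp$ as well, so the only local correction required is at the $\qq_j$. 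By \Cref{prop:lociso}, for each $j$ the fractional ideal $\aa\OO_{\qq_j}$ is principal, say $\aa\OO_{\qq_j} = \pi_j\OO_{\qq_j}$ for some $\pi_j \in K^\times$. Weak approximation in the number field $K$, applied at the finite set of places consisting of the places of $\OO_K$ above $\mm$, those above the $\qq_j$, and the real places in $\rS$ (three pairwise disjoint sets, since each $\qq_j$ is coprime to $\mm$), yields $\gamma \in K^\times$ that simultaneously satisfies: (i) $\gamma$ is congruent to $\pi_j^{-1}$ to sufficiently high order at every place above $\qq_j$, ensuring $(\gamma\aa)\OO_{\qq_j} = \OO_{\qq_j}$; (ii) $\gamma \equiv 1 \Mod{\mm}$ in the sense of \Cref{defn:RC-congruence}; and (iii) $\rho(\gamma) > 0$ for each $\rho \in \rS$. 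Conditions (ii) and (iii) give $\gamma\OO \in \rP_{\mm,\rS}(\OO)$; condition (i), combined with local triviality of $\aa$ above $\mm$, gives $(\gamma\aa)\OO_\pp = \OO_\pp$ for every nonzero prime $\pp \supseteq \dd$, so $\gamma\aa$ is invertible and coprime to $\dd$.

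The main obstacle is the approximation step. A prime $\qq_j$ of $\OO$ may be singular (contained in $\ff(\OO)$), so $\OO_{\qq_j}$ need not be a DVR and there is no uniformizer in $\OO$ itself to manipulate; this is sidestepped by using \Cref{prop:lociso} to obtain a principal generator $\pi_j \in K^\times$ of $\aa\OO_{\qq_j}$ and then lifting all local conditions to the possibly several primes of $\OO_K$ above each $\qq_j$, where classical weak approximation in $K$ supplies the desired $\gamma$. A secondary technicality is verifying that an invertible fractional ideal that is locally trivial at every prime of $\dd$ admits the integral-ratio decomposition required by \Cref{def:frac-coprime}; this follows by reading off, from the local data, ideals $\cc_1, \cc_2 \in \rI_\dd^\ast(\OO)$ with $\gamma\aa = \cc_1\cc_2^{-1}$.
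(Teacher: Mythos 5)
Your overall strategy coincides with the paper's: realize $\Cl_{\mm,\rS}(\OO)$ as the target of the map from $\rJ_{\dd}^{\ast}(\OO)$ induced by inclusion, identify the kernel as $\rP^{\dd}_{\mm,\rS}(\OO)$, and prove surjectivity by writing the relevant invertible ideals locally as principal ideals (\Cref{prop:lociso}) and correcting by a global element obtained from approximation. The injectivity half and your treatment of the primes containing $\dd$ but not $\mm$ are sound, including the observation that the local conditions must be imposed at \emph{all} primes of $\OO_K$ above each such prime, with ``sufficiently high order'' absorbing the passage from a unit of $\overline{\OO}_{\qq_j}$ back to a unit of $\OO_{\qq_j}$.

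The gap is at your condition (ii). You assert that weak approximation at the places of $\OO_K$ above $\mm$ yields $\gamma \equiv 1 \Mod{\mm}$ in the sense of \Cref{defn:RC-congruence}, i.e.\ $\gamma - 1 \in \mm\OO[S_\mm^{-1}]$. But this congruence lives in the possibly singular order $\OO$ modulo a possibly non-invertible ideal $\mm$, and it is not on its face a finite conjunction of conditions $\abs{\gamma-1}_w < \epsilon_w$ at places of $K$; reducing (ii) to such conditions is precisely the hard content of the lemma. Two ingredients are needed and are missing from your argument. First, since $\mm$ need not be locally principal, one must replace it by an invertible ideal $\widetilde{\mm} \subseteq \mm$ with the same prime divisors (\Cref{lem:mtilde}(3)), whose local generators $\mu_\pp$ convert the congruence into valuation statements. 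Second, a bound forcing $\gamma - 1 \in \mu_\pp\overline{\OO}_\pp$ only lands $\gamma-1$ in $\mu_\pp$ times the integral closure $\overline{\OO}_\pp$, not in $\mu_\pp\OO_\pp$; one must additionally multiply by a nonzero element $\tilde{f}_\pp$ of the local conductor $\{x : x\overline{\OO}_\pp \subseteq \OO_\pp\}$ to obtain $\gamma - 1 \in \mu_\pp\tilde{f}_\pp\overline{\OO}_\pp \subseteq \widetilde{\mm}\OO_\pp \subseteq \mm\OO_\pp$ at each prime $\pp \supseteq \mm$. You apply exactly this kind of care at the primes above $\dd$ not containing $\mm$, but at the primes containing $\mm$ --- where the paper concentrates its effort --- you treat the congruence as automatic. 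The statement is true and your plan can be completed along these lines, but as written the central step is unjustified.
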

\begin{proof}
Consider the inclusion $\rJ_{\dd}^{\ast} (\OO) \inj \rJ_{\mm}^{\ast}(\OO)$, and compose with the quotient map to get a homomorphism
\begin{equation}\label{eqn:surjective}
\phi : \rJ_{\dd}^{\ast}(\OO) \to \frac{\rJ_{\mm}^{\ast}(\OO)}{\rP_{\mm,\rS}(\OO)} = \Cl_{\mm,\rS}(\OO).
\end{equation}
The kernel $\ker \phi = \rJ_{\dd}^{\ast}(\OO) \cap \rP_{\mm,\rS}(\OO) = 
\rP^{\dd}_{\mm,\rS}(\OO)$. 
For the equality \eqref{eq:45}, it suffices to prove that $\phi$ is surjective. 

Given $\bb \in \rJ_\mm^{\ast}(\OO)$, to show surjectivity we must find some 
$\beta\OO \in \rP_{\mm,\rS}(\OO)$  (equivalently, $\beta^{-1}\OO \in \rP_{\mm,\rS}(\OO) $) such that
$\aa= \beta^{-1} \bb \in \rJ_\dd^{\ast} (\OO)$.
To construct a suitable element $\beta$, the argument will move to the maximal order $\OO_K$ and then back to $\OO$.

For the integral ideal $\mm$, by \Cref{lem:mtilde}(3), there exists some invertible ideal $\widetilde{\mm} \subseteq \mm$ with the property that the set $\{\qq_1, \ldots \qq_n\}$ of all nonzero prime ideals of $\OO$ for which $\widetilde{\mm} \subseteq \qq_j$ is identical to  the set of nonzero prime ideals with  $\mm \subseteq \qq_j$. By   \Cref{prop:lociso}, the invertible fractional ideals $\widetilde{\mm}, \bb$ are locally principal: For each nonzero prime ideal $\pp$ of $\OO$, 
\begin{enumerate}
\item $\widetilde{\mm}\OO_\pp = \mu_\pp\OO_\pp$ for some $\mu_\pp \in K^\times$, and we may  choose $\mu_\pp=1$ whenever $\pp \nin \{\qq_1, \ldots, \qq_n\}$;
\item $\bb\OO_\pp = \beta_{\pp}\OO_\pp$ for some $\beta_\pp \in K^\times$, and we may choose $\beta_\pp=1$ whenever $\bb \not\subseteq \pp$.
\end{enumerate}
The finite set of prime ideals $\pp$ with $\bb \not\subseteq \pp$ is a union of two subsets:
\begin{enumerate}
\item
a subset (possibly empty) of the set of prime ideals $\{\pp_i : 1 \le i \le m\}$ having $\dd \subseteq \pp_i$ and $\mm \not\subseteq \pp_i$;
\item
a set (possibly empty) of prime ideals $\{\rr_k : 1 \leq j \leq \ell\}$ having $\dd \not\subseteq \rr_k$.
\end{enumerate} 
(Note that the prime ideals $\pp_i$, $\qq_j$, $\rr_k$ need not be invertible.)

The condition that $\bb + \mm = \OO$ is equivalent to the condition that $\bb \not\in \qq_j$ for $1 \leq j \leq n$, and, in turn, to the condition that $\bb + \widetilde{\mm} = \OO$. Additionally, it follows that the sets $\{\pp_1, \ldots, \pp_m\}$, $\{\qq_1, \ldots, \qq_n\}$, and  $\{\rr_1, \ldots, \rr_\ell\}$ are all disjoint. 

To show surjectivity \eqref{eqn:surjective} our object is to multiply $\bb$ by an element in $\rP_{\mm,\rS}(\OO)$ to force an additional coprimality condition with respect to the prime ideals $\{\pp_1, \cdots \pp_m\}$ without changing its behavior locally at the prime ideals $\qq_i$. 
The set of primes $\{\rr_1, \ldots, \rr_\ell\}$, being disjoint from both the set of $\pp_i$ and the set of $\qq_j$, play no role in the following argument.

We now  move to the maximal order $\OO_K$.
For $1 \leq i \leq m$, write $\pp_i = \con(\pp_i')$ for some nonzero prime ideal $\pp_i'$ of $\OO_K$; this is possible by \Cref{lem:conextmax}. Similarly, for $1 \leq j \leq n$, write $\qq_j = \con(\qq_j')$ for some nonzero prime ideal $\qq_j'$ of $\OO_K$. These primes $\pp_i', \qq_j'$ are all distinct, because the primes $\pp_i, \qq_j$ are all distinct. Thus, there are pairwise independent multiplicative valuations (absolute values) $\abs{\cdot}_{v_1}, \ldots, \abs{\cdot}_{v_m}, \abs{\cdot}_{w_1}, \ldots, \abs{\cdot}_{w_n}$ on $K$ corresponding to $\pp_1', \ldots, \pp_m', \qq_1', \ldots \qq_n'$, respectively.

Let $\ff_\pp$ be the ``local conductor'' at $\pp$, that is,
\begin{equation}
\ff_\pp = \{x \in \OO_\pp : x\overline{\OO}_\pp \subseteq \OO_\pp\},
\end{equation}
where $\overline{\OO}_\pp$ is the integral closure of $\OO_\pp$ in its fraction field $K_\pp=K$.
Let $\tilde{f}_\pp$ be any nonzero element of $\ff_\pp$,
so $\tilde{f}_\pp\OO_\pp$ is a nonzero principal ideal contained in $\ff_\pp$.

By the Approximation Theorem \cite[Thm.\ 3.4]{Neukirch13}, we can find $\beta \in K$ such that 
\begin{itemize}
\item[(1)] $\abs{\beta - \beta_{\pp_i}}_{v_i} < \abs{\beta}_{v_i}$ for $1 \leq i \leq m$,
\item[(2)] $\abs{\beta - 1}_{w_j} \leq \abs{\tilde{f}_{\qq_j}\mu_{\qq_j}}_{w_j}^{-1}$ for $1 \leq j \leq n$, and
\item[(3)] $\rho(\beta) > 0$ for $\rho \in \rS$, via the Archimedean bound $\abs{\beta-1}_{\rho} < 1$. 
\end{itemize}
We now define the invertible fractional $\OO$-ideal $\aa$ by $\aa := \beta^{-1} \bb$.

We have $\aa\OO_{\pp} = \beta^{-1}\beta_{\pp}\OO_{\pp}$ for all prime $\OO$-ideals $\pp$. 
By (1), for $1 \leq i \leq m$, $\abs{\beta^{-1}\beta_{\pp_i}-1}_{v_i} < 1$, so in particular, $\beta^{-1}\beta_{\pp_i}$ is a unit in $\OO_{\pp_i'} = \overline{\OO}_{\pp_i}$. But $\overline{\OO}_{\pp_i}^\times \cap \OO_{\pp_i} = \OO_{\pp_i}^\times$, so $\beta^{-1}\beta_{\pp_i}$ is a unit in $\OO_{\pp_i}$, and thus $\aa\OO_{\pp_i} = \OO_{\pp_i}$.
On the other hand, for $1 \leq j \leq n$, $\aa\OO_{\qq_j} = \beta^{-1}\OO_{\qq_j}$ 
because $\qq_j \supseteq \mm$, and thus $\beta_{\qq_j}=1$. It follows from (2) that $\abs{\beta^{-1}-1}_{w_j} < 1$, so in particular, $\beta^{-1}$ is a unit in $\OO_{\qq_j'} = \overline{\OO}_{\qq_j}$ 
and thus a unit in $\OO_{\qq_j}$, so $\aa\OO_{\qq_j} = \OO_{\qq_j}$. 
Since $\aa\OO_{\pp} = \OO_{\pp}$ for all $\pp \in \{\pp_1, \ldots, \pp_m\}\cup\{\qq_1, \ldots, \qq_n\}$, it follows that $\aa$ is coprime to $\dd$, so $\aa \in \rJ^\ast_\dd(\OO)$.

Moreover, for $1 \leq j \leq n$, $\abs{\beta^{-1} - 1}_{w_j} \leq \abs{\tilde{f}_{\qq_j}\mu_{\qq_j}}_{w_j}^{-1}$ by (2), so 
$\beta^{-1} - 1 \in \tilde{f}_{\qq_j}\mu_{\qq_j}\OO_{\qq_j'}$. We have the inclusion of ideals
\begin{equation}
\tilde{f}_{\qq_j}\mu_{\qq_j}\OO_{\qq_j'} = \mu_{\qq_j}(\tilde{f}_{\qq_j}\overline{\OO}_{\qq_j}) \subseteq \mu_{\qq_j}\OO_{\qq_j} = \widetilde{\mm}\OO_{\qq_j},
\end{equation}
so $\beta^{-1}-1 \in \widetilde{\mm}\OO_{\qq_j}$ for each $j$.
It follows that $\beta^{-1} \equiv 1 \Mod{\widetilde{\mm}}$, so in particular, $\beta^{-1} \equiv 1 \Mod{\mm}$, 
since $\widetilde{\mm} \subseteq \mm$. Combining this congruence with the positivity condition (3), we have $\aa\bb^{-1} = \beta^{-1}\OO \in \rP_{\mm,\rS}(\OO_K)$.
\end{proof}

\subsection{Effect of change of order and modulus on ray class groups of orders}\label{subsec:44} 

While it is clear that \Cref{lem:reldok} implies the existence of surjective change-of-modulus maps $\Cl_{\mm,\rS}(\OO) \to \Cl_{\mm', \rS'}(\OO)$ whenever $\mm \subseteq \mm'$ and $\rS \supseteq \rS'$, it is also true (but not immediately obvious) that one can change the order $\OO$. For orders $\OO \subseteq \OO'$, the change-of-order map between ray class groups is induced by the extension map $\ext: \rJ^\ast(\OO) \to \rJ^\ast(\OO')$ on fractional ideals. We thus call it the \textit{induced extension map} $\ol{\ext}$.

We show that the induced extension map is well-defined and surjective. This result will be applied and refined in \Cref{prop:exseq1} to explicitly describe the kernel of $\ol{\ext}$.

\begin{lem}\label{lem:surjtook}
Let $K$ be a number field, and consider level data $\ddD = (\OO;\mm,\rS)$ and $\ddD' = (\OO';\mm',\rS')$ for $K$ such that $\OO \subseteq \OO'$, $\mm\OO' \subseteq \mm'$, and $\rS \supseteq \rS'$.
There exists a unique homomorphism
\begin{equation}\label{eq:57}
\ol{\ext} = \ol{\ext}_{\ddD}^{\ddD'} : \Cl_{\mm,\rS}(\OO) \to \Cl_{\mm',\rS'}(\OO')
\end{equation}
satisfying $\ol{\ext}\!\left([\aa]\right) = [\ext(\aa)]$ for $\aa \in \rJ_\mm^\ast(\OO)$, and this map is surjective. For another level datum $\ddD'' = (\OO'',\mm'',\rS'')$ for $K$ such that $\OO' \subseteq \OO''$, $\mm'\OO'' \subseteq \mm''$, and $\rS' \supseteq \rS''$, the compatibility relation $\ol{\ext}_{\ddD'}^{\ddD''} \!\circ \ol{\ext}_{\ddD}^{\ddD'} = \ol{\ext}_{\ddD}^{\ddD''}$ holds.
\end{lem}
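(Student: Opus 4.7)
The proof decomposes into well-definedness of $\ol{\ext}$, surjectivity, and composition compatibility.

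For well-definedness, I would first verify that $\ext$ sends $\rJ^\ast_\mm(\OO)$ into $\rJ^\ast_{\mm'}(\OO')$. Invertibility is preserved because $\aa\bb = \gamma\OO$ implies $\ext(\aa)\ext(\bb) = \gamma\OO'$, while coprimality transfers through the hypothesis $\mm\OO' \subseteq \mm'$: from $\aa + \mm = \OO$ one obtains $\ext(\aa) + \mm' \supseteq \aa\OO' + \mm\OO' = \OO'$, and the fractional case reduces to the integral one via \Cref{def:frac-coprime}. Next I would show $\ext(\rP_{\mm,\rS}(\OO)) \subseteq \rP_{\mm',\rS'}(\OO')$. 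Unpacking \Cref{defn:RC-congruence}, an element $\alpha\OO \in \rP_{\mm,\rS}(\OO)$ admits a representation $\alpha = \alpha_1/\alpha_2$ with $\alpha_i \in \OO$, $\alpha_2\OO$ coprime to $\mm$, and $\alpha_1 - \alpha_2 \in \mm$; the same representation witnesses $\alpha \Con 1 \Mod{\mm'}$ in $\OO'$ via $\mm \subseteq \mm\OO' \subseteq \mm'$, and positivity at each $\rho \in \rS' \subseteq \rS$ is inherited. Together these two containments yield the descent $\ol{\ext}$; uniqueness follows because every class in $\Cl_{\mm,\rS}(\OO)$ is represented by an ideal in $\rJ^\ast_\mm(\OO)$.

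Surjectivity is the principal technical step. I would engineer \Cref{lem:reldok} and \Cref{prop:conext} in tandem by choosing the auxiliary ideal $\dd' := \mm \cdot \ff_{\OO'}(\OO)$. This is a nonzero integral ideal of both $\OO$ and $\OO'$ (since $\ff_{\OO'}(\OO)$ absorbs multiplication by $\OO'$ on either side), satisfying $\dd' \subseteq \mm'$ via $\mm\OO' \subseteq \mm'$, $\dd' \subseteq \ff_{\OO'}(\OO)$, and $\dd' \subseteq \mm$ as $\OO$-ideals. Given $[\bb'] \in \Cl_{\mm',\rS'}(\OO')$, \Cref{lem:reldok} applied to the order $\OO'$ with auxiliary ideal $\dd'$ yields a representative $\aa' \in \rJ^\ast_{\dd'}(\OO')$. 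The inclusion $\dd' \subseteq \ff_{\OO'}(\OO)$ lets me apply \Cref{prop:conext} (with its integral $\OO'$-ideal taken to be $\dd'$), producing $\aa := \con(\aa') \in \rJ^\ast_{\dd'}(\OO)$ with $\ext(\aa) = \aa'$. Finally $\dd' \subseteq \mm$ combined with \Cref{lem:mm-fract-monoids}(3) places $\aa$ in $\rJ^\ast_\mm(\OO)$, so $\ol{\ext}([\aa]) = [\aa'] = [\bb']$, establishing surjectivity.

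For the composition compatibility, the identity $(\aa\OO')\OO'' = \aa\OO''$ shows that the two-step extension agrees with the direct extension on any representative $\aa \in \rJ^\ast_\mm(\OO)$, and the uniqueness clause promotes this pointwise identity to the relation $\ol{\ext}_{\ddD'}^{\ddD''} \circ \ol{\ext}_{\ddD}^{\ddD'} = \ol{\ext}_{\ddD}^{\ddD''}$.
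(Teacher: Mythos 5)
Your proposal is correct and follows essentially the same route as the paper: well-definedness by checking $\ext$ carries $\rP_{\mm,\rS}(\OO)$ into $\rP_{\mm',\rS'}(\OO')$, surjectivity by using \Cref{lem:reldok} to represent classes of $\Cl_{\mm',\rS'}(\OO')$ by ideals coprime to an auxiliary $\OO'$-ideal contained in both $\mm$ and $\ff_{\OO'}(\OO)$ and then contracting via \Cref{prop:conext}, and compatibility from the defining formula. The only difference is cosmetic: you take the auxiliary ideal to be $\mm\,\ff_{\OO'}(\OO)$ where the paper takes $\colonideal{\mm}{\OO'}$, but the paper's argument explicitly permits any $\OO'$-ideal with the required containments, and by \Cref{lem:basicinclusions}(5) your choice satisfies them.
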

\begin{proof}
Uniqueness follows from the formula $\ol{\ext}\!\left([\aa]\right) = [\ext(\aa)]$, provided that this formula defines a well-defined homomorphism. 
 
To see that $\ol{\ext}$ is a well-defined, it suffices to show the image of $\rP_{\mm, \rS}(\OO)$ under ring extension from $\OO$ to $\OO'$ is contained in $\rP_{\mm', \rS'}(\OO')$. 
So consider an ideal $\bb \in \rP_{\mm, \rS}(\OO)$. 
Then $\bb = b\OO$ for some $b \in K$, $b - 1 \in \mm\OO[S_\mm^{-1}]$, and $\rho(b) >0$ for $\rho \in \rS$. 
Since $\mm\OO[S_\mm^{-1}] \subseteq \mm\OO'[S_\mm^{-1}] \subseteq \mm'\OO'[S_{\mm'}^{-1}]$, we have $b - 1 \in \mm'\OO'[S_{\mm'}^{-1}]$, that is, $b \equiv 1 \Mod{\mm'}$. 
Additionally, since $\rS \supseteq \rS'$, we have $\rho(b)>0$ for $\rho \in \rS'$.
Thus, $\ext(\bb) \in \rP_{\mm', \rS'}(\OO')$.

Let $\ff = \ff_{\OO'}(\OO)$ be the relative conductor of $\OO$ in $\OO'$. Let $\dd$ be an ideal of $\OO'$ contained in both $\mm$ and $\ff$. (For example, take $\dd = \colonideal{\mm}{\OO'}$.) Using \Cref{lem:reldok}, under the inclusion map on ideals, we have
\begin{equation}
\Cl_{\mm',\rS'}(\OO') := 
\frac{\rJ_{\mm'}^{\ast}(\OO')}{\rP_{\mm',\rS'}(\OO')} 
= \frac{\rJ_{\dd}^{\ast}(\OO')}{\rP^{\dd}_{\mm',\rS'}(\OO')}.
\end{equation}
To see that the map $\ol{\ext}$ is surjective, let $A \in \Cl_{\mm',\rS'}(\OO')$, and let $\aa \in \rJ_{\dd}^{\ast}(\OO')$ be a representative of the class $A$. 
Because $\aa$ is coprime to the conductor $\ff$, we have $\ext(\con(\aa))=\aa$ by \Cref{lem:conext}, so $\ol{\ext}\!\left([\con(\aa)]\right) = [\aa] = A$.

The compatibility relation $\ol{\ext}_{\ddD'}^{\ddD''} \!\circ \ol{\ext}_{\ddD}^{\ddD'} = \ol{\ext}_{\ddD}^{\ddD''}$ is a direct consequence of the formula $\ol{\ext}\!\left([\aa]\right) = [\ext(\aa)]$.
\end{proof}

\section{Exact sequences for ray class groups of orders}\label{sec:5} 

In this section, we compare ray class groups for pairs of level data for the same field. So as to have a convenient shorthand to indicate which pairs of level data are comparable, we introduce a partial order on level data.
\begin{defn}\label{defn:levelleq}
Let $\ddD = (\OO; \mm, \rS)$ and $\ddD' = (\OO'; \mm', \rS')$ be level data for the same number field $K$. (That is, $\OO, \OO'$ are $K$-orders, $\mm$ is an integral $\OO$-ideal, $\mm'$ is an integral $\OO'$-ideal, and $\rS, \rS' \subseteq \{\text{embeddings } K \inj \R\}$.) We say that
\begin{equation}
\ddD \leq \ddD'
\end{equation}
if and only if
\begin{align}
\OO \subseteq \OO', \, \mm\OO' \subseteq \mm', \mbox{ and } \rS \supseteq \rS'.
\end{align}
\end{defn}
When $\ddD \leq \ddD'$, the induced extension map $\ol{\ext}_{\ddD}^{\ddD'}$ is shown in \Cref{lem:surjtook} to be a surjective group homomorphism from $\Cl_{\mm,\rS}(\OO)$ to $\Cl_{\mm',\rS'}(\OO')$. In this section, we completely describe the kernel of $\ol{\ext}$ in terms of local and global unit groups.

\subsection{Exact sequences relating unit groups and principal ideals for varying orders}\label{subsec:51} 

We describe unit groups that we will relate by exact sequences to groups of principal ideals of varying orders.

\begin{defn}\label{defn:ugroups}
For a commutative ring with unity $R$ and an ideal $I$ of $R$, define the group
\begin{equation}
\U_I(R) := \{\alpha \in R^\times : \alpha \equiv 1 \Mod{I}\} = (1+I) \cap R^\times.
\end{equation}

If $R$ has real embeddings, and $\rS$ is a subset of the real embeddings of $R$, define
\begin{equation}
\U_{I,\rS}(R) := \{\alpha \in R^\times : \alpha \equiv 1 \Mod{I} \mbox{ and } \rho(\alpha)>0 \mbox{ for } \rho \in \rS\}.
\end{equation}
\end{defn}

We also make use of the following extension of this notation: If there is an obvious map $\phi : R_1 \to R_2$ implicit in the discussion, and if $I$ is an ideal of $R_1$, then
we will let $\U_I(R_2) := \U_{\phi(I)R_2}(R_2)$ and $\U_{I,\rS}(R_2) := \U_{\phi(I)R_2,\rS}(R_2)$.

\begin{prop}\label{prop:uupseq}
For any level datum $(\OO;\mm,\rS)$,
we have an exact sequence
\begin{equation}
1 \to \U_{\mm,\rS}(\OO) \to \U_{\mm,\rS}(\loc{\OO}{\dd}) \to \rP^{\dd}_{\mm,\rS}(\OO) \to 1,
\end{equation}
where $S_{\dd} = \{ \alpha \in \OO : \alpha \OO + \dd = \OO\}$.
\end{prop}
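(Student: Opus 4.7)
The plan is to define the two nontrivial maps, verify each is a well-defined group homomorphism, and then establish exactness at the middle term and surjectivity on the right.

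The left map $i: \U_{\mm,\rS}(\OO) \to \U_{\mm,\rS}(\loc{\OO}{\dd})$ will be the inclusion induced by $\OO \inj \loc{\OO}{\dd}$: a unit of $\OO$ remains a unit after localization, the congruence $\alpha - 1 \in \mm$ gives $\alpha - 1 \in \mm\loc{\OO}{\dd}$, and positivity at real embeddings is preserved; injectivity is immediate from injectivity of localization for integral domains. The right map $p: \U_{\mm,\rS}(\loc{\OO}{\dd}) \to \rP^\dd_{\mm,\rS}(\OO)$ will be $\alpha \mapsto \alpha\OO$, which is a group homomorphism because $(\alpha\beta)\OO = (\alpha\OO)(\beta\OO)$.

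The well-definedness of $p$ is the technical crux. Given $\alpha \in \U_{\mm,\rS}(\loc{\OO}{\dd})$, I would write $\alpha = a/s$ and $\alpha^{-1} = a'/s'$ with $a, a' \in \OO$ and $s, s' \in S_\dd$. Then $aa' = ss' \in S_\dd$, so both $a\OO$ and $s\OO$ are integral ideals coprime to $\dd$ (for $a\OO$, use $a\OO \supseteq aa'\OO$), and $\alpha\OO = (a\OO)(s\OO)^{-1}$ is coprime to $\dd$ as a fractional ideal via \Cref{def:frac-coprime}. To verify the congruence $\alpha \equiv 1 \Mod \mm$ in the $\OO$-sense of \Cref{defn:RC-congruence}, I would invoke the inclusion $\loc{\OO}{\dd} \subseteq \loc{\OO}{\mm}$ valid in the natural setting $\dd \subseteq \mm$ (or apply \Cref{lem:WOLOG} to replace $\dd$ by $\dd \cap \mm$ on the right side to reduce to that case). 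Writing $\alpha = \alpha_1/\alpha_2$ with $\alpha_2 \in S_\mm$, the condition $\alpha - 1 \in \mm\loc{\OO}{\mm}$ gives $\alpha_1 - \alpha_2 \in \mm\loc{\OO}{\mm} \cap \OO$. The key identity $\mm\loc{\OO}{\mm} \cap \OO = \mm$ then yields $\alpha_1 - \alpha_2 \in \mm$; I would prove this identity by writing an element of the intersection as $m/t$ with $m \in \mm$, $t \in S_\mm$, and applying a coprimality relation $rt + n = 1$ with $r \in \OO$, $n \in \mm$ to deduce the membership in $\mm$.

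Exactness at the middle will follow: $\ker(p) = \OO^\times \cap \U_{\mm,\rS}(\loc{\OO}{\dd})$, and the same identity $\mm\loc{\OO}{\mm} \cap \OO = \mm$ shows this equals $\U_{\mm,\rS}(\OO)$. For surjectivity, given a class $\beta\OO \in \rP^\dd_{\mm,\rS}(\OO)$ with generator $\beta$ satisfying the defining conditions, fractional coprimality of $\beta\OO$ to $\dd$ forces $\beta\loc{\OO}{\dd} = \loc{\OO}{\dd}$ (any integral ideal coprime to $\dd$ extends to the unit ideal in the localization), so $\beta \in \loc{\OO}{\dd}^\times$. The congruence $\beta - 1 \in \mm\loc{\OO}{\dd}$ will follow by a local check at each prime $\pp \supseteq \dd$ of $\loc{\OO}{\dd}$: where $\pp \supseteq \mm$, the $\OO$-sense congruence localizes to $\beta - 1 \in \mm\OO_\pp$, and where $\pp \not\supseteq \mm$ the condition is automatic because $\mm\OO_\pp = \OO_\pp$. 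The main obstacle throughout is the well-definedness of $p$, specifically the congruence side, which requires bridging between $\mm\loc{\OO}{\dd}$ and the $\OO$-sense congruence of \Cref{defn:RC-congruence}; this hinges on the identity $\mm\loc{\OO}{\mm} \cap \OO = \mm$ and on reconciling the different localizations in play.
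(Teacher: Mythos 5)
Your proposal is correct and follows the same route as the paper's own proof, which simply defines $\phi(\alpha) := \alpha\OO$ on $\U_{\mm,\rS}(\loc{\OO}{\dd})$, asserts surjectivity onto $\rP^{\dd}_{\mm,\rS}(\OO)$, and computes the kernel; you supply the verifications the paper leaves implicit (coprimality of $\alpha\OO$ to $\dd$ via $aa'=ss'$, the saturation identity $\mm\loc{\OO}{\mm}\cap\OO=\mm$, and the local check at primes containing $\dd$ for surjectivity). You are also right to flag the hypothesis $\dd\subseteq\mm$: the statement as printed quantifies only the level datum and never constrains $\dd$, and without $\dd\subseteq\mm$ (or the reduction via \Cref{lem:WOLOG}) the map $\alpha\mapsto\alpha\OO$ need not land in $\rP^{\dd}_{\mm,\rS}(\OO)$ at all, though in every application in the paper one has $\dd\subseteq\colonideal{\mm}{\OO'}\subseteq\mm$, so the result is used only where it is valid.
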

\begin{proof}
By definition,
\begin{equation}
\rP^{\dd}_{\mm,\rS}(\OO) = \{\alpha\OO : \alpha \in \loc{\OO}{\dd}, \ \alpha \equiv 1 \Mod{\mm}, \ \rho(\alpha)>0 \mbox{ for all } \rho \in \rS\},
\end{equation}
so $\phi(\alpha) := \alpha\OO$ defines a surjective map $\phi: \U_{\mm,\rS}(\loc{\OO}{\dd}) \to \rP^{\dd}_{\mm,\rS}(\OO)$.
Moreover,
\begin{align}
\ker(\phi) &= \{\alpha \in \loc{\OO}{\dd} : \alpha\OO = \OO, \ \alpha \equiv 1 \Mod{\mm}, \ \rho(\alpha)>0 \mbox{ for all } \rho \in \rS\} \\
&= \U_{\mm,\rS}(\OO).
\end{align}
The proposition follows.
\end{proof}

We now relate unit groups and principal ideal groups of varying orders.
\begin{prop}\label{prop:exseq2}
Let $K$ be a number field, and consider level data $\ddD = (\OO; \mm, \rS)$ and $\ddD' = (\OO'; \mm',\rS')$ for $K$ such that $\ddD \leq \ddD'$.
Let $\dd$ be any $\OO'$-ideal such that $\dd \subseteq \colonideal{\mm}{\OO'}$.
Then we have a short exact sequence of the form
\begin{equation}
1 \to \frac{\U_{\mm',\rS'}\left(\OO'\right)}{\U_{\mm,\rS}\left(\OO\right)} \to \frac{\U_{\mm'}\left(\OO'/\dd\right)}{\U_{\mm}\left(\OO/\dd\right)} \times \{\pm1\}^{|\rS \setminus \rS'|}
\to \frac{\rP_{\mm',\rS'}^{\dd}(\OO')}{\rP_{\mm,\rS}^{\dd}(\OO)} \to 1.
\end{equation}
\end{prop}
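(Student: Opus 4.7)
The plan is to apply the snake lemma to a vertical morphism of the short exact sequences provided by \Cref{prop:uupseq}, and then to identify the middle cokernel by reduction modulo $\dd$ together with the sign characters at the real places in $\rS \setminus \rS'$.

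Applying \Cref{prop:uupseq} to both level data yields a short exact sequence for $(\OO;\mm,\rS)$ and one for $(\OO';\mm',\rS')$, each with auxiliary ideal $\dd$. The inclusion $\OO \inj \OO'$ induces three vertical maps between them. The first two, on unit groups, are subgroup inclusions, using $\mm\OO' \subseteq \mm'$ and $\rS \supseteq \rS'$ to preserve the congruence and positivity conditions. The third, $\alpha\OO \mapsto \alpha\OO'$ on principal ray classes, is well-defined (since $\OO^\times \subseteq \OO'^\times$) and is injective because $\dd \subseteq \colonideal{\mm}{\OO'} \subseteq \ff_{\OO'}(\OO)$ lets me invoke \Cref{prop:conext}, which gives extension as an isomorphism $\rJ^{\ast}_{\dd}(\OO) \isom \rJ^{\ast}_{\dd}(\OO')$. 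With all three vertical maps injective, the snake lemma yields the short exact sequence
\begin{equation*}
1 \to \frac{\U_{\mm',\rS'}(\OO')}{\U_{\mm,\rS}(\OO)} \to \frac{\U_{\mm',\rS'}(\loc{\OO'}{\dd})}{\U_{\mm,\rS}(\loc{\OO}{\dd})} \to \frac{\rP_{\mm',\rS'}^{\dd}(\OO')}{\rP_{\mm,\rS}^{\dd}(\OO)} \to 1.
\end{equation*}

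To finish, I identify the middle quotient. Define
\begin{equation*}
\pi : \U_{\mm',\rS'}(\loc{\OO'}{\dd}) \to \U_{\mm'}(\OO'/\dd) \times \{\pm 1\}^{|\rS \setminus \rS'|}, \quad \alpha \mapsto \bigl(\alpha \bmod \dd,\ (\operatorname{sign}\rho(\alpha))_{\rho \in \rS \setminus \rS'}\bigr),
\end{equation*}
using the canonical isomorphism $\loc{\OO'}{\dd}/\dd\loc{\OO'}{\dd} \isom \OO'/\dd$. Then $\pi$ carries $\U_{\mm,\rS}(\loc{\OO}{\dd})$ into $\U_{\mm}(\OO/\dd) \times \{1\}^{|\rS \setminus \rS'|}$ and so descends to a homomorphism $\bar\pi$ on the quotients. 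Surjectivity of $\bar\pi$ follows from weak approximation at the Archimedean places: I lift the first component to some $\beta_0 \in \OO'$ coprime to $\dd$ with $\beta_0 \equiv 1 \Mod{\mm'}$, then multiply by an element $u = 1+d$ with $d \in \dd$ chosen so that $\rho(u\beta_0)$ has the prescribed sign for each $\rho \in \rS$ (such $u$ is automatically a unit of $\loc{\OO'}{\dd}$ since $\dd$ lies in every maximal ideal).

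The main obstacle is the injectivity of $\bar\pi$, in which the hypothesis $\dd \subseteq \colonideal{\mm}{\OO'}$ is used essentially: it forces $\dd\OO' = \dd \subseteq \OO$, hence $\dd\loc{\OO'}{\dd} \subseteq \loc{\OO}{\dd}$. So if $\alpha \in \U_{\mm',\rS'}(\loc{\OO'}{\dd})$ has $\bar\alpha \in \U_\mm(\OO/\dd)$ and $\rho(\alpha) > 0$ for $\rho \in \rS \setminus \rS'$, lifting $\bar\alpha$ to $\alpha_0 \in \OO$ with $\alpha_0 \equiv 1 \Mod{\mm}$ gives $\alpha - \alpha_0 \in \dd\loc{\OO'}{\dd} \subseteq \loc{\OO}{\dd}$, and hence $\alpha \in \loc{\OO}{\dd}$. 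The analogous argument applied to $\alpha^{-1}$ shows $\alpha \in \loc{\OO}{\dd}^\times$, after which the congruence $\alpha \equiv 1 \Mod{\mm}$ in $\loc{\OO}{\dd}$ and positivity at all $\rho \in \rS$ are immediate. Composing $\bar\pi$ with the snake short exact sequence then yields the claimed sequence.
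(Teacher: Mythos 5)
Your proposal is correct and follows essentially the same route as the paper: apply \Cref{prop:uupseq} to both level data, take the snake lemma of the resulting vertical morphism of short exact sequences, and then identify the middle cokernel via reduction modulo $\dd$ together with sign characters at $\rS \setminus \rS'$, using a lattice/approximation argument for surjectivity and $\dd\loc{\OO'}{\dd} \subseteq \loc{\OO}{\dd}$ for injectivity. The only (harmless) deviations are cosmetic: you adjust signs multiplicatively by $1+d$ where the paper adds $\lambda \in \dd$, and you justify injectivity of the rightmost vertical map via \Cref{prop:conext}, which the paper merely asserts.
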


\begin{proof}
Here $\dd$ is an integral $\OO$-ideal because $\dd \subseteq \colonideal{\mm}{\OO'}=\{ a \in K : a\OO' \subseteq \mm\} \subseteq \mm \subseteq \OO$,
and $\dd\OO \subseteq \dd\OO' = \dd$. Thus $\OO/\dd$ is well-defined. We localize away from $\dd$ by inverting $S_{\dd}$,
for the two rings $\OO$ and $\OO'$ separately.
By \Cref{prop:uupseq}, we have short exact sequences
\begin{equation}
\begin{array}{ccccccccc}
1 & \to & \U_{\mm,\rS}(\OO) & \to & \U_{\mm,\rS}(\loc{\OO}{\dd}) & \to & \rP_{\mm,\rS}^{\dd}(\OO) & \to & 1 \\
& & \downarrow & & \downarrow & & \downarrow & & \\
1 & \to & \U_{\mm',\rS'}(\OO') & \to & \U_{\mm',\rS'}(\loc{\OO'}{\dd}) & \to & \rP_{\mm',\rS'}^{\dd}(\OO') & \to & 1.
\end{array}
\end{equation}
The downward maps are all injective---in particular, the rightmost map is---so, by the snake lemma, the sequence of cokernels is exact:
\begin{equation}
1 \to \frac{\U_{\mm',\rS'}(\OO')}{\U_{\mm,\rS}(\OO)}
 \to \frac{\U_{\mm',\rS'}(\loc{\OO'}{\dd})}{\U_{\mm,\rS}(\loc{\OO}{\dd})} 
 \to \frac{\rP_{\mm',\rS'}^{\dd}(\OO')}{\rP_{\mm,\rS}^{\dd}(\OO)}
 \to 1.
\end{equation}
This is the short exact sequence in the proposition statement, except for the middle group.
We now must show the middle group is isomorphic to the group in the proposition statement. 

First of all, note that the localization maps induce compatible isomorphisms
\begin{equation}
\begin{array}{cccc}
\iota : & \U_{\mm}(\OO/\dd) & \isomto & \U_{\mm}(\loc{\OO}{\dd}/\dd\loc{\OO}{\dd}) \\
& \downarrow & & \downarrow \\
\iota': & \U_{\mm'}(\OO'/\dd) & \isomto & \U_{\mm'}(\loc{\OO'}{\dd}/\dd\loc{\OO'}{\dd}).
\end{array}
\end{equation}
Let $\Upsilon = \{\mbox{embeddings } K \inj \R\}$. Consider the map
\begin{equation}
\phi : \U_{\mm',\rS'}(\loc{\OO'}{\dd}) \to \U_{\mm'}\left(\OO'/\dd\right) \times \{\pm 1\}^{\Upsilon \setminus \rS'}
\end{equation}
given by $\phi(u) = \left({\iota'}^{-1}\left(u \Mod{\dd\loc{\OO'}{\dd}}\right), (\sign(\rho(u)))_{\rho \in {\Upsilon \setminus \rS'}}\right)$.

We see that $\phi$ is surjective as follows: Consider $(u,\e) \in \U_{\mm'}\!\left(\OO'/\dd\right) \times \{\pm 1\}^{\Upsilon \setminus \rS'}$. 
Choose a lift $\widetilde{u} \in \OO'$ of $u$; $\widetilde{u} \in \U_{\mm'}(\loc{\OO'}{\dd})$ because it is coprime to $\dd$ and congruent to 1 modulo $\mm'$. 
We may replace $\widetilde{u}$ with $\widetilde{u}+\lambda$ for any $\lambda\in\dd\cap\mm'=\dd$
without affecting its image in $\U_{\mm'}\!\left(\OO'/\dd\right)$. Since $\dd$ forms a lattice in $K \tensor \R$, 
we may choose $\lambda$ appropriately so that $\sign(\rho(\widetilde{u}+\lambda)) = +1$ for 
$\rho \in \rS'$ and $\sign(\rho(\widetilde{u}+\lambda)) = \e_\rho$ for $\rho \in \Upsilon \setminus \rS$. 
Thus, $\widetilde{u}+\lambda \in \U_{\mm',\rS'}(\loc{\OO'}{\dd})$, and $\phi(\widetilde{u}+\lambda) = (u,\e)$.

Now, define
\begin{equation}
\ol{\phi} : \U_{\mm',\rS'}(\loc{\OO'}{\dd}) \to \frac{\U_{\mm'}\left(\OO'/\dd\right) \times \{\pm 1\}^{\Upsilon \setminus \rS'}}{\U_{\mm}\left(\OO/\dd\right) \times \{\pm 1\}^{\Upsilon \setminus \rS}}
\end{equation}
by $\ol{\phi}(u) := \phi(u) \Mod{\U_{\mm}\left(\OO/\dd\right) \times \{\pm 1\}^{E \setminus \rS}}$; $\ol{\phi}$ is surjective because $\phi$ is.
Furthermore, we compute the kernel of $\ol{\phi}$ as follows.
\begin{align}
\ol{\phi}(u) = 1
& \iff {\iota'}^{-1}\left(u \Mod{\dd\loc{\OO'}{\dd}}\right) \in \U_{\mm}(\OO/\dd)
\mbox{ and } \sign(\rho(u))=1 \mbox{ for all } \rho \in \rS \setminus \rS'.
\end{align}
We already knew that $\sign(\rho(u)) = 1$ for all $\rho \in \rS'$, so in fact, the second condition tells us that $\rho(u) > 0$ for all $\rho \in \rS$. 
We now reformulate the first condition.
\begin{align}
{\iota'}^{-1}&\!\left(u \Mod{\dd\loc{\OO'}{\dd}}\right) \in \U_{\mm}(\OO/\dd) \\
& \iff u \Mod{\dd\loc{\OO'}{\dd}} \in \iota'\!\left(\U_{\mm}(\OO/\dd)\right) = \iota\!\left(\U_{\mm}(\OO/\dd)\right) = \U_{\mm}\!\left(\loc{\OO}{\dd}/\dd\loc{\OO}{\dd}\right) \\
& \iff u \equiv 1 \Mod{\mm \loc{\OO}{\dd}}.
\end{align}
This last condition also implies that $u \in \loc{\OO}{\dd}$ (rather than simply being in $\loc{\OO'}{\dd}$), since $\dd$ is an integral $\OO$-ideal. Thus, we have shown that
\begin{align}
\ker\left(\ol{\phi}\right) 
&= \{u \in \U_{\mm',\rS'}(\loc{\OO'}{\dd}) : u \in \loc{\OO}{\dd}, \ u \equiv 1 \Mod{\mm \loc{\OO}{\dd}}, \ \rho(u) > 0 \mbox{ for } \rho \in \rS\} \\
&= \U_{\mm,\rS}(\loc{\OO}{\dd}).
\end{align}
Therefore, $\ol{\phi}$ induces an isomorphism
\begin{equation}
\frac{\U_{\mm',\rS'}(\loc{\OO'}{\dd})}{\U_{\mm,\rS}(\loc{\OO}{\dd})} \isomto
\frac{\U_{\mm'}\left(\OO'/\dd\right) \times \{\pm 1\}^{E \setminus \rS'}}{\U_{\mm}\left(\OO/\dd\right) \times \{\pm 1\}^{E \setminus \rS}} \isom
\frac{\U_{\mm'}\left(\OO'/\dd\right)}{\U_{\mm}\left(\OO/\dd\right)} \times \{\pm 1\}^{\abs{\rS \setminus \rS'}},
\end{equation}
proving the proposition.
\end{proof}

\subsection{Exact sequences relating ray class groups of varying orders}\label{subsec:52} 

We relate class groups of varying orders and moduli; the formula \eqref{eq:exseq} is important for applications. The following results generalize results presented in \cite[Ch.~1, Sec.~10]{Neukirch13} corresponding to the special case $\ddD = (\OO;\OO,\emptyset)$, $\ddD' = (\OO_K; \OO_K,\emptyset)$. More loosely speaking, they generalize results presented in \cite[Sec.~7.2]{Cohn94} corresponding to the special case $\ddD = (\OO_K;\mm,\rS)$, $\ddD' = (\OO_K; \OO_K,\emptyset)$.

\begin{thm}\label{thm:exseq}
Let $K$ be a number field, and consider level data $\ddD= (\OO; \mm, \rS)$ and $\ddD'=(\OO'; \mm',\rS')$ for $K$ such that $\ddD \leq \ddD'$ in the sense of \Cref{defn:levelleq}.
Let $\dd$ be any $\OO'$-ideal such that $\dd \subseteq \colonideal{\mm}{\OO'}$.
With the $\U$-groups defined as in \Cref{defn:ugroups}, we have the following exact sequence.
\begin{equation}\label{eq:exseq}
1 \to \frac{\U_{\mm',\rS'}\!\left(\OO'\right)}{\U_{\mm,\rS}\!\left(\OO\right)} \to \frac{\U_{\mm'}\!\left(\OO'/\dd\right)}{\U_{\mm}\!\left(\OO/\dd\right)} \times \{\pm 1\}^{|\rS \setminus \rS'|} \to \Cl_{\mm,\rS}(\OO) \to \Cl_{\mm',\rS'}(\OO') \to 1.
\end{equation}
\end{thm}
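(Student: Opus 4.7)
The strategy is to splice the short exact sequence of \Cref{prop:exseq2} with a second short exact sequence
\begin{equation}
1 \to \frac{\rP_{\mm',\rS'}^{\dd}(\OO')}{\rP_{\mm,\rS}^{\dd}(\OO)} \to \Cl_{\mm,\rS}(\OO) \to \Cl_{\mm',\rS'}(\OO') \to 1
\end{equation}
at their common middle term. To derive this second sequence, I would first note that the hypothesis $\dd \subseteq \colonideal{\mm}{\OO'}$ implies $\dd\OO' \subseteq \mm \subseteq \OO$, so in particular $\dd \subseteq \ff_{\OO'}(\OO)$, and moreover $\dd \subseteq \mm$ and $\dd \subseteq \mm\OO' \subseteq \mm'$ (the latter using the hypothesis $\ddD \leq \ddD'$). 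By \Cref{prop:conext}, extension then defines an isomorphism of groups $\ext \colon \rJ_{\dd}^{\ast}(\OO) \isomto \rJ_{\dd}^{\ast}(\OO')$, and on principal ideals it acts by $\ext(\alpha\OO) = \alpha\OO'$.

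Next, by \Cref{lem:reldok} I can rewrite the two ray class groups as $\Cl_{\mm,\rS}(\OO) = \rJ_{\dd}^{\ast}(\OO)/\rP_{\mm,\rS}^{\dd}(\OO)$ and $\Cl_{\mm',\rS'}(\OO') = \rJ_{\dd}^{\ast}(\OO')/\rP_{\mm',\rS'}^{\dd}(\OO')$. Using the hypotheses $\mm\OO' \subseteq \mm'$ and $\rS' \subseteq \rS$, a direct check shows that $\ext$ carries $\rP_{\mm,\rS}^{\dd}(\OO)$ into $\rP_{\mm',\rS'}^{\dd}(\OO')$: if $\alpha\OO \in \rP_{\mm,\rS}^{\dd}(\OO)$, then $\alpha \equiv 1 \Mod{\mm}$ forces $\alpha \equiv 1 \Mod{\mm'}$, positivity at $\rS$ forces positivity at $\rS'$, and coprimality of $\alpha\OO$ to $\dd$ passes to coprimality of $\alpha\OO'$ to $\dd$. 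Hence $\ext$ descends to a surjective homomorphism $\Cl_{\mm,\rS}(\OO) \to \Cl_{\mm',\rS'}(\OO')$, which by construction agrees with the map $\ol{\ext}_{\ddD}^{\ddD'}$ of \Cref{lem:surjtook}. Because $\ext$ is an isomorphism on $\rJ_{\dd}^{\ast}$, the kernel of $\ol{\ext}_{\ddD}^{\ddD'}$ is precisely $\rP_{\mm',\rS'}^{\dd}(\OO')/\ext(\rP_{\mm,\rS}^{\dd}(\OO))$, giving the displayed short exact sequence.

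Finally, splicing this short exact sequence with the one from \Cref{prop:exseq2} at the shared term $\rP_{\mm',\rS'}^{\dd}(\OO')/\rP_{\mm,\rS}^{\dd}(\OO)$ produces the desired four-term exact sequence \eqref{eq:exseq}. The main obstacle is really just bookkeeping around the auxiliary ideal $\dd$: one must check that $\dd$ is simultaneously an integral $\OO$-ideal and an integral $\OO'$-ideal, that it lies in the relative conductor $\ff_{\OO'}(\OO)$ so that \Cref{prop:conext} is applicable, and that both of the ray class group identifications furnished by \Cref{lem:reldok} are valid. No further global class field theoretic input is needed; the result reduces entirely to the algebraic lemmas already established in \Cref{sec:CE} and earlier in \Cref{sec:group,sec:5}.
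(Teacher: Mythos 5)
Your proposal is correct and follows essentially the same route as the paper: the short exact sequence you derive is precisely \Cref{prop:exseq1}, whose proof in the paper likewise rests on \Cref{lem:reldok} and \Cref{prop:conext}, and the paper's proof of \Cref{thm:exseq} is exactly the splicing of \Cref{prop:exseq2} with \Cref{prop:exseq1} at the common term $\rP^{\dd}_{\mm',\rS'}(\OO')/\rP^{\dd}_{\mm,\rS}(\OO)$. The only cosmetic difference is that you identify $\ker\!\left(\ol{\ext}\right)$ by transporting along the isomorphism $\ext$ on $\rJ^{\ast}_{\dd}$, whereas the paper constructs an explicit surjection $\rP^{\dd}_{\mm',\rS'}(\OO') \to \ker\!\left(\ol{\ext}\right)$ via contraction; these are equivalent.
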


To prove this result, we use the following proposition.

\begin{prop}\label{prop:exseq1}
Let $K$ be a number field, and consider level data $\ddD= (\OO; \mm, \rS)$ and $\ddD'=(\OO'; \mm',\rS')$ for $K$ such that $\ddD \leq \ddD'$.
Let $\dd$ be any $\OO'$-ideal such that $\dd \subseteq \colonideal{\mm}{\OO'}$.
Then we have an exact sequence of the form
\begin{equation}
1 \to \frac{\rP^{\dd}_{\mm',\rS'}(\OO')}{\rP^{\dd}_{\mm,\rS}(\OO)} \to \Cl_{\mm,\rS}(\OO) \to \Cl_{\mm',\rS'}(\OO') \to 1.
\end{equation}
\end{prop}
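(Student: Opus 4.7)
The plan is to deduce the proposition directly from \Cref{lem:reldok} and \Cref{prop:conext} by rewriting both ray class groups using fractional ideals coprime to the auxiliary ideal $\dd$, and then identifying the kernel of the surjective map $\ol{\ext}$ from \Cref{lem:surjtook}.

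First I would verify the background compatibilities. Since $\dd$ is an $\OO'$-ideal with $\dd \subseteq \colonideal{\mm}{\OO'} \subseteq \colonideal{\OO}{\OO'} = \ff_{\OO'}(\OO)$, the ideal $\dd$ is contained in $\OO$ and satisfies $\dd\OO' = \dd$, so $\dd$ is simultaneously an integral ideal of both $\OO$ and $\OO'$ and is contained in the relative conductor. By \Cref{prop:conext} applied with $\mm' = \dd$, the extension and contraction maps give inverse group isomorphisms $\ext : \rJ_{\dd}^{\ast}(\OO) \isomto \rJ_{\dd}^{\ast}(\OO')$ and $\con : \rJ_{\dd}^{\ast}(\OO') \isomto \rJ_{\dd}^{\ast}(\OO)$. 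Moreover, applying \Cref{lem:reldok} (which requires $\dd \subseteq \mm$ and $\dd \subseteq \mm'$, both of which follow from $\dd \subseteq \colonideal{\mm}{\OO'} \subseteq \mm \subseteq \mm'$) gives
\begin{equation}
\Cl_{\mm,\rS}(\OO) = \frac{\rJ_{\dd}^{\ast}(\OO)}{\rP_{\mm,\rS}^{\dd}(\OO)} \quad \text{and} \quad \Cl_{\mm',\rS'}(\OO') = \frac{\rJ_{\dd}^{\ast}(\OO')}{\rP_{\mm',\rS'}^{\dd}(\OO')}.
\end{equation}

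Next I would show that $\ext$ restricts to an injection $\rP_{\mm,\rS}^{\dd}(\OO) \inj \rP_{\mm',\rS'}^{\dd}(\OO')$. Injectivity is inherited from $\ext : \rJ_{\dd}^{\ast}(\OO) \isomto \rJ_{\dd}^{\ast}(\OO')$. For containment of images in $\rP_{\mm',\rS'}^{\dd}(\OO')$, an element $\alpha\OO \in \rP_{\mm,\rS}^{\dd}(\OO)$ has image $\alpha\OO'$; since $\mm\OO' \subseteq \mm'$ we have $\alpha \equiv 1 \Mod{\mm'}$, the sign conditions persist because $\rS \supseteq \rS'$, and coprimality to $\dd$ is preserved by $\ext$. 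Via this injection we regard $\rP_{\mm,\rS}^{\dd}(\OO)$ as a subgroup of $\rP_{\mm',\rS'}^{\dd}(\OO')$, which justifies the quotient appearing in the statement.

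Finally I would assemble the exact sequence. The surjective map $\ol{\ext} : \Cl_{\mm,\rS}(\OO) \to \Cl_{\mm',\rS'}(\OO')$ of \Cref{lem:surjtook} is, under the identifications above, induced by the group isomorphism $\ext$ on the numerators modulo the respective denominators. Hence the standard ``third isomorphism theorem'' argument applies: for $[\aa] \in \Cl_{\mm,\rS}(\OO)$ represented by $\aa \in \rJ_{\dd}^{\ast}(\OO)$, one has $\ol{\ext}([\aa]) = 1$ iff $\ext(\aa) \in \rP_{\mm',\rS'}^{\dd}(\OO')$, iff $\aa \in \ext^{-1}(\rP_{\mm',\rS'}^{\dd}(\OO'))$. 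This preimage is carried by $\ext$ isomorphically onto $\rP_{\mm',\rS'}^{\dd}(\OO')$, while its subgroup $\rP_{\mm,\rS}^{\dd}(\OO)$ is carried onto $\ext(\rP_{\mm,\rS}^{\dd}(\OO))$, so
\begin{equation}
\ker(\ol{\ext}) = \frac{\ext^{-1}(\rP_{\mm',\rS'}^{\dd}(\OO'))}{\rP_{\mm,\rS}^{\dd}(\OO)} \isom \frac{\rP_{\mm',\rS'}^{\dd}(\OO')}{\rP_{\mm,\rS}^{\dd}(\OO)},
\end{equation}
which yields the claimed exact sequence.

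I do not anticipate a serious obstacle here; the only genuine care needed is in checking that $\dd$ lies inside the relative conductor so that \Cref{prop:conext} applies, and in keeping the identification $\rP_{\mm,\rS}^{\dd}(\OO) \subseteq \rP_{\mm',\rS'}^{\dd}(\OO')$ via $\ext$ consistent throughout, since these two groups a priori live in different ambient monoids.
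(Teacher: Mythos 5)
Your proposal is correct and follows essentially the same route as the paper: both arguments rewrite the two class groups via \Cref{lem:reldok} with the auxiliary modulus $\dd$, use $\dd \subseteq \ff_{\OO'}(\OO)$ together with \Cref{prop:conext} to make $\ext$ and $\con$ mutually inverse on $\dd$-coprime ideals, and identify $\ker(\ol{\ext})$ with $\rP^{\dd}_{\mm',\rS'}(\OO')/\rP^{\dd}_{\mm,\rS}(\OO)$ (the paper phrases this by mapping $\rP^{\dd}_{\mm',\rS'}(\OO')$ onto the kernel via $\con$ and computing the kernel of that map, while you run the mirror-image argument through $\ext$ and the third isomorphism theorem). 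Your explicit check that $\alpha\OO \mapsto \alpha\OO'$ embeds $\rP^{\dd}_{\mm,\rS}(\OO)$ into $\rP^{\dd}_{\mm',\rS'}(\OO')$ is the same identification the paper uses implicitly.
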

\begin{proof}
The extension map $\ol{\ext} : \Cl_{\mm,\rS}(\OO) \to \Cl_{\mm',\rS'}(\OO')$ is surjective by \Cref{lem:surjtook}, so the sequence is exact at $\Cl_{\mm',\rS'}(\OO')$.

Note that $\dd$ is both an $\OO'$-ideal and $\OO$-ideal, because it is an $\OO'$-ideal and
$\dd \subseteq \colonideal{\mm}{\OO'} \subseteq \mm \subseteq \OO$.
In addition $\dd \subseteq \mm$, so that $\dd \subseteq \mm'$ (because $\mm \subseteq \mm'$).
By \Cref{lem:reldok}, 
\begin{equation}
\Cl_{\mm,\rS}(\OO) = \frac{\rI_{\dd}(\OO)}{\rP^{\dd}_{\mm,\rS}(\OO)} \mbox{ and } \Cl_{\mm',\rS'}(\OO') = \frac{\rI_{\dd}(\OO')}{\rP^{\dd}_{\mm',\rS'}(\OO')}.
\end{equation}
The kernel of the extension map is
\begin{equation}
\ker(\ol{\ext}) = \{[\aa] \in \Cl_{\mm,\rS}(\OO) : \aa\OO' = \alpha\OO', \ \alpha \in \OO', \ \alpha \equiv 1 \Mod{\mm'}, \ \rho(\alpha)>0 \mbox{ for all } \rho \in \rS'\}.
\end{equation}
The ideal $\dd \subseteq \colonideal{\OO}{\OO'} = \ff_{\OO'}(\OO)$, so fractional ideals (of $\OO$ or $\OO'$) coprime to $\dd$ are also coprime to the conductor $\ff_{\OO'}(\OO)$.
By \Cref{prop:conext}, $\con$ and $\ext$ act as inverses on ideals coprime to the conductor, so setting $\phi(\alpha \OO') = [\con(\alpha\OO')]$ defines a surjective map 
\begin{equation}
\phi : \rP^{\dd}_{\mm',\rS'}(\OO') \to \ker(\ol{\ext}).
\end{equation}
Moreover,
\begin{equation}
\ker(\phi) = \{\alpha\OO' : \alpha \in \loc{\OO}{\dd}, \ \alpha \equiv 1 \Mod{\mm}, \ \rho(\alpha) > 0 \mbox{ for } \rho \in \rS\} = \rP^{\dd\cap\mm}_{\mm,\rS}(\OO) = \rP^{\dd}_{\mm,\rS}(\OO).
\end{equation}
The proposition follows.
\end{proof}

We now prove the main exact sequence.

\begin{proof}[Proof of \Cref{thm:exseq}]
The result follows by gluing together the two short exact sequences in \Cref{prop:exseq2} and \Cref{prop:exseq1}.
\end{proof}

We also give the specialization of \Cref{thm:exseq} to  
$\ddD=(\OO; \mm, \rS)$ and $\ddD'=(\OO; \OO, \emptyset)$.
This case will be important for the application to SIC-POVMs \cite{AFK,KLsic}.
\begin{cor}\label{cor:raystructure}
Let $K$ be a number field and $\OO$ an order of $K$.
Let $\mm$ be an ideal of $\OO$ and $\rS \subseteq \{\mbox{embeddings } K \inj \R\}$. 
With the $\U$-groups defined as in \Cref{defn:ugroups}, we have the following exact sequence.
\begin{equation}\label{eq:raystructure}
1 \to \frac{\OO^\times}{\U_{\mm,\rS}\!\left(\OO\right)} \to \left(\OO/\mm\right)^\times \times \{\pm 1\}^{|\rS|} \to \Cl_{\mm,\rS}(\OO) \to \Cl(\OO) \to 1.
\end{equation}
In particular, the kernel
\begin{equation}
\ker\!\left(\Cl_{\mm,\rS}(\OO) \surj \Cl(\OO)\right) \isom \frac{\left(\OO/\mm\right)^\times \times \{\pm 1\}^{|\rS|}}{{\rm im}(\OO^\times)},
\end{equation}
where ${\rm im}(\OO^\times)$ is the image of global units under 
the map $\e \mapsto \left(\ol{\e}, (\sgn(\rho(\e)))_{\rho \in \rS}\right)$.
\end{cor}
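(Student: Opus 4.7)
The plan is to deduce the corollary by direct specialization of Theorem \ref{thm:exseq} to the level data $\ddD = (\OO; \mm, \rS)$ and $\ddD' = (\OO; \OO, \emptyset)$, choosing the auxiliary modulus $\dd = \mm$. First I would verify the hypotheses: $\ddD \leq \ddD'$ holds trivially (since $\OO \subseteq \OO$, $\mm\OO = \mm \subseteq \OO$, and $\rS \supseteq \emptyset$), and $\dd = \mm$ is an integral $\OO$-ideal satisfying $\dd \subseteq \colonideal{\mm}{\OO} = \mm$.

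With these choices in hand, the next step is to simplify each of the four nontrivial terms appearing in \eqref{eq:exseq}. For the leftmost quotient, the conventions on $\U$-groups from \Cref{defn:ugroups} give $\U_{\OO,\emptyset}(\OO) = \OO^\times$, because both the congruence ``$\alpha \equiv 1 \Mod{\OO}$'' and the (empty) positivity conditions are vacuous. For the middle quotient, $\U_{\OO}(\OO/\mm)$ unfolds to $\U_{(\OO/\mm)}(\OO/\mm) = (\OO/\mm)^\times$ (vacuous congruence), while $\U_{\mm}(\OO/\mm)$ unfolds to $\U_{(0)}(\OO/\mm) = \{1\}$, since the extended ideal $\mm \cdot (\OO/\mm)$ is the zero ideal in the quotient. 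The sign factor becomes $\{\pm 1\}^{|\rS \setminus \emptyset|} = \{\pm 1\}^{|\rS|}$, and the rightmost group is $\Cl_{\OO,\emptyset}(\OO) = \Cl(\OO)$ by \Cref{defn:ring-class-order}. Substituting these four identifications into \eqref{eq:exseq} yields precisely the exact sequence \eqref{eq:raystructure}.

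The kernel formula in the final display is then a standard consequence of the exactness of \eqref{eq:raystructure}. Exactness at $\Cl_{\mm,\rS}(\OO)$ identifies the kernel of $\Cl_{\mm,\rS}(\OO) \surj \Cl(\OO)$ with the image of the middle map $\Psi : (\OO/\mm)^\times \times \{\pm 1\}^{|\rS|} \to \Cl_{\mm,\rS}(\OO)$; by the first isomorphism theorem, this image is isomorphic to the quotient of $(\OO/\mm)^\times \times \{\pm 1\}^{|\rS|}$ by $\ker\Psi$. Exactness at the middle term then identifies $\ker\Psi$ with the image of the injection $\OO^\times/\U_{\mm,\rS}(\OO) \inj (\OO/\mm)^\times \times \{\pm 1\}^{|\rS|}$. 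To obtain the stated description of this image, I would observe that the composite $\OO^\times \to \OO^\times/\U_{\mm,\rS}(\OO) \inj (\OO/\mm)^\times \times \{\pm 1\}^{|\rS|}$ is precisely the homomorphism $\e \mapsto \bigl(\ol{\e}, (\sgn(\rho(\e)))_{\rho \in \rS}\bigr)$, whose kernel is by definition $\U_{\mm,\rS}(\OO)$; hence its image equals $\OO^\times/\U_{\mm,\rS}(\OO)$ and matches the ``${\rm im}(\OO^\times)$'' appearing in the statement. No substantive obstacle arises in this proof; the only care required is in bookkeeping the vacuous congruences correctly when instantiating the general $\U$-groups.
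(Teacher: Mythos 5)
Your proposal is correct and follows essentially the same route as the paper: specialize \Cref{thm:exseq} to $\ddD = (\OO;\mm,\rS)$, $\ddD' = (\OO;\OO,\emptyset)$ with $\dd = \mm = \colonideal{\mm}{\OO}$, and simplify the $\U$-groups exactly as you do. The paper leaves the kernel identification implicit, so your explicit unwinding via exactness and the first isomorphism theorem is a harmless elaboration rather than a different argument.
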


\begin{proof}
Take ray class level data $\ddD = (\OO; \mm, \rS)$ and $\ddD'=(\OO; \OO, \emptyset)$,
and choose $\dd=\mm$ in \Cref{thm:exseq}. All the hypotheses are satisfied,
noting  $\dd =\mm =  \colonideal{\mm}{\OO}$. The terms in the exact sequence simplify, 
with  $\U_{\mm}\!\left(\OO/\mm\right)= \U_{\dd}\!\left(\OO/\dd\right)$ being the trivial group,
 $\U_{\OO}\!\left(\OO/\dd\right)= (\OO/\mm)^{\times}$, and $\U_{\OO, \emptyset}(\OO)= \OO^{\times}$.
\end{proof}

\begin{rmk}\label{rmk:raystructure}
\Cref{cor:raystructure} parallels a result of Campagna and Pengo \cite[Thm.\ 4.6]{CampagnaP22} for their id\`elic formulation of class field theory for orders.
The kernel $\ker\!\left(\Cl_{\mm,\rS}(\OO) \surj \Cl(\OO)\right)$ is isomorphic to the Galois group $\Gal\!\left(H_{\mm,\rS}^\OO/H^\OO\right)$ by \Cref{thm:main3}, so \Cref{cor:raystructure} describes the structure of this Galois group. More generally, \Cref{thm:exseq} describes the structure of the Galois group $\Gal\!\left(H_{\mm,\rS}^\OO/H_{\mm',\rS'}^{\OO'}\right)$. 
\end{rmk}

\subsection{Cardinality of ray class groups of orders} \label{subsec:53} 

The following result gives a formula for the ``class number'' of the ray class group with level datum $(\OO; \mm, \rS)$. It generalizes a formula given by Neukirch \cite[Thm.\ I.12.12]{Neukirch13} for the cardinality of the Picard group of an order (corresponding to level datum $(\OO; \OO, \emptyset)$) as well as a formula given by Cohn \cite[Thm.\ 7.2.7]{Cohn94} for the ``unit ray class number'' (corresponding to level datum $(\OO_K; \mm,\rS)$).

\begin{thm}\label{thm:neukirch2}
Let $K$ be an algebraic number field having $r$ real places and $s$ conjugate pairs of complex places.
Let $\OO_K$ be the maximal order, and let $\left(\OO;\mm,\rS\right)$ be a level datum of $K$.

The groups $\frac{\OO_K^\times}{\U_{\mm,\rS}\left(\OO\right)}$ and $\Cl_{\mm, \rS}(\OO)$ are finite, and one has
\begin{equation}\label{eq:neukirch2.1}
\# \Cl_{\mm, \rS} (\OO) = \frac{h_K}{[\OO_K^\times : \U_{\mm,\rS}\left(\OO\right)]} \cdot \frac{2^{\abs{\rS}}\#\!\left(\OO_K/\!\colonideal{\mm}{\OO_K}\right)^\times}{\#\U_{\mm}\!\left(\OO/\!\colonideal{\mm}{\OO_K}\right)}
\end{equation}
where $h_{K}$ is the class number of $K$. In particular, one has
\begin{equation}\label{eq:neukirch2.2}
\rank\!\left(\U_{\mm,\rS}\left(\OO\right)\right) = \rank(\OO_K^{\times}) = r+s-1.
\end{equation} 
\end{thm}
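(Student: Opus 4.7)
The plan is to deduce the formula by specializing the main exact sequence of \Cref{thm:exseq} to the case of comparing the given level datum $\ddD = (\OO; \mm, \rS)$ against the maximal level datum $\ddD' = (\OO_K; \OO_K, \emptyset)$, choosing the auxiliary ideal $\dd = \colonideal{\mm}{\OO_K}$. First I would verify the hypotheses: we have $\OO \subseteq \OO_K$, $\mm\OO_K \subseteq \OO_K$, and $\rS \supseteq \emptyset$, so $\ddD \leq \ddD'$; moreover, $\dd = \colonideal{\mm}{\OO_K}$ is by construction an $\OO_K$-ideal contained in itself, so $\dd \subseteq \colonideal{\mm}{\OO_K}$ trivially.

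Applying \Cref{thm:exseq} then produces the four-term exact sequence
\begin{equation}
1 \to \frac{\OO_K^\times}{\U_{\mm,\rS}(\OO)} \to \frac{(\OO_K/\dd)^\times}{\U_\mm(\OO/\dd)} \times \{\pm 1\}^{|\rS|} \to \Cl_{\mm,\rS}(\OO) \to \Cl(\OO_K) \to 1,
\end{equation}
after the simplifications $\U_{\OO_K,\emptyset}(\OO_K) = \OO_K^\times$, $\U_{\OO_K}(\OO_K/\dd) = (\OO_K/\dd)^\times$, and $\Cl_{\OO_K,\emptyset}(\OO_K) = \Cl(\OO_K)$ of order $h_K$. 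Here the inclusion $\U_\mm(\OO/\dd) \hookrightarrow (\OO_K/\dd)^\times$ makes sense because $\dd \subseteq \mm \subseteq \OO$ and $\dd$ is an ideal of both $\OO$ and $\OO_K$, so $\OO/\dd$ embeds in $\OO_K/\dd$.

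Next I would establish finiteness. The right-most term $\Cl(\OO_K)$ is finite of order $h_K$ by classical class field theory, and the second term is finite because $\OO_K/\dd$ is finite (as $\dd$ is a nonzero ideal). Exactness then forces both $\OO_K^\times/\U_{\mm,\rS}(\OO)$ and $\Cl_{\mm,\rS}(\OO)$ to be finite. Taking alternating cardinalities in the exact sequence gives
\begin{equation}
\#\Cl_{\mm,\rS}(\OO) = \frac{h_K}{[\OO_K^\times : \U_{\mm,\rS}(\OO)]} \cdot \frac{2^{|\rS|} \cdot \#(\OO_K/\dd)^\times}{\#\U_\mm(\OO/\dd)},
\end{equation}
which is \eqref{eq:neukirch2.1}.

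For the rank formula \eqref{eq:neukirch2.2}, I would observe that $\U_{\mm,\rS}(\OO) \subseteq \OO^\times \subseteq \OO_K^\times$, the latter inclusion because a unit of $\OO$ has its inverse in $\OO \subseteq \OO_K$. Since the index $[\OO_K^\times : \U_{\mm,\rS}(\OO)]$ is finite (just established), and $\rank(\OO_K^\times) = r + s - 1$ by Dirichlet's unit theorem, $\U_{\mm,\rS}(\OO)$ has the same rank $r+s-1$. The only mild obstacle is the bookkeeping around the middle term of \Cref{thm:exseq}: one must trace through that the congruence $\alpha \equiv 1 \Mod{\OO_K}$ is vacuous (so the $\U_{\OO_K}$ factor really is $(\OO_K/\dd)^\times$) and that the sign exponent $|\rS \setminus \emptyset| = |\rS|$ gives the claimed power of two. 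Once these routine checks are done, the formula follows immediately from the Euler-characteristic identity for the four-term exact sequence.
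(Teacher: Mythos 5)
Your proposal is correct and follows essentially the same route as the paper: both specialize \Cref{thm:exseq} to $\ddD = (\OO;\mm,\rS)$, $\ddD' = (\OO_K;\OO_K,\emptyset)$ with $\dd = \colonideal{\mm}{\OO_K}$, deduce finiteness of the first and third terms from the finiteness of the second and fourth, take the alternating product of cardinalities, and obtain the rank statement from Dirichlet's Unit Theorem. The simplifications of the middle term and the verification of the hypotheses are exactly the routine checks the paper performs.
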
 
\begin{proof}
We specialize the short exact sequence in \Cref{thm:exseq}, given $\mm \subseteq \OO$, 
for the level data  $\ddD = (\OO; \mm, \rS)$ and $\ddD'= (\OO_K; \OO_K, \emptyset)$.
We choose $\dd = \colonideal{\mm}{\OO_K}$. All the hypotheses of  \Cref{thm:exseq} are satisfied, since $\mm\OO_K \subseteq \OO_K =\mm'$, and we obtain
\begin{equation}\label{eq:exactspecial}
1 \to \frac{\OO_K^\times}{\U_{\mm,\rS}\!\left(\OO\right)} \to \frac{\left(\OO_K/\!\colonideal{\mm}{\OO_K}\right)^\times}{\U_{\mm}\!\left(\OO/\!\colonideal{\mm}{\OO_K}\right)} \times \{\pm 1\}^{|\rS|} \to \Cl_{\mm,\rS}(\OO) \to \Cl(\OO_K) \to 1.
\end{equation}
The second nontrivial term in this exact sequence is clearly finite, and the fourth term is finite of order $h_K$ by the finiteness of the class group. It follows that the other two terms are finite. 
Equation \eqref{eq:neukirch2.2} follows from the finiteness of the first term and Dirichlet's Unit Theorem.
Moreover, the alternating product of the cardinality of the terms in an exact sequence of finite groups is $1$. Writing this product and solving for $\# \Cl_{\mm, \rS} (\OO)$ yields \eqref{eq:neukirch2.1}.
\end{proof}

\subsection{Ring class groups of orders} \label{subsec:54}

\Cref{thm:exseq} allows us to express the (wide) ring class group $\Cl(\OO) = \Cl_{\OO,\emptyset}(\OO)$
of an order $\OO$, as defined in \Cref{defn:ring-class-order}, 
as a quotient of the Takagi ray class group $\Cl_{\ff}(\OO_k)=\Cl_{\ff(\OO), \emptyset}(\OO_K)$ (of the maximal order)
for the conductor ideal $\ff(\OO)$, permitting us to quantify the difference between them.

To understand the structure of the (wide) ring class group $\Cl(\OO)$, take $\ddD=(\OO; \OO, \emptyset)$ and $\ddD'=(\OO_K; \OO_K, \emptyset)$. 
Now choose $\dd = \ff = \ff(\OO)$. In particular, since $\dd = \colonideal{\OO}{\OO_K}$, 
this is the case already used in \eqref{eq:exactspecial} with the further specialization $(\mm,\rS)=(\OO,\emptyset)$.
\Cref{thm:exseq} applies to give the exact sequence
\begin{equation}
1 \to \frac{\OO_K^\times}{\OO^\times} \to \frac{\left(\OO_K/\ff\right)^\times}{\left(\OO/\ff\right)^\times} \to \Cl(\OO) \to \Cl(\OO_K) \to 1.
\end{equation}
For the Takagi ray class group, choose instead
$\ddD = (\OO_K; \ff(\OO), \emptyset)$ and $\ddD' = (\OO_K; \OO_K, \emptyset)$. 
Again set $\dd = \ff= \ff(\OO)$, and note  $\dd \subseteq (\ff(\OO): \OO_K)=\ff(\OO)$. Note that the unit group $\U_\ff(\OO_K/\ff) = \{1\}$, so we obtain the exact sequence
\begin{equation}
1 \to \frac{\OO_K^\times}{\U_\ff(\OO_K)} \to \left(\OO_K/\ff\right)^\times \to \Cl_{\ff}(\OO_K) \to \Cl(\OO_K) \to 1.
\end{equation}
There are natural quotient maps making the following diagram commute
(because $\U_\ff(\OO_K)$ is a subgroup of $\OO^\times$), 
so there is a surjective induced map $\psi$ from the ray class group to the ring class group.
\begin{equation}\label{eqn:ring-class-group-seq} 
\begin{tikzcd}
1 \arrow[r] & \frac{\OO_K^\times}{\U_\ff(\OO_K)} \arrow[r] \arrow[d, two heads] & \left(\OO_K/\ff\right)^\times \arrow[r] \arrow[d, two heads] & \Cl_{\ff}(\OO_K) \arrow[r] \arrow[d, two heads, dashed, "\psi"] & \Cl(\OO_K) \arrow[r] \arrow[d, equal] & 1 \\
1 \arrow[r] & \frac{\OO_K^\times}{\OO^\times} \arrow[r] & \frac{\left(\OO_K/\ff\right)^\times}{\left(\OO/\ff\right)^\times} \arrow[r] & \Cl(\OO) \arrow[r] & \Cl(\OO_K) \arrow[r] & 1
\end{tikzcd}
\end{equation}

In the next section, we generalize this comparison by introducing a ray class modulus.

\section{Ray class fields of orders}\label{sec:6}

In this section, we define and prove existence of class fields associated to the ray class groups of \Cref{sec:group}.
As usual, we fix a number field $K$. We will attach a ray class field $H^{\OO}_{\mm,\rS}$ to the level datum $\ddD = (\OO; \mm, \rS)$ for $K$; this field will be an abelian extension of $K$.

%
%
\subsection{Ray class fields of orders defined via Takagi ray class groups}\label{subsec:61}

We define ray class fields of orders by the following recipe. 
We are given the level datum $(\OO; \mm, \rS)$,
and we also consider the level datum $(\OO_K; \colonideal{\mm}{\OO_K}\!, \rS)$.
We define a homomorphism $\psi$ from the group $\Cl_{\colonideal{\mm}{\OO_K},\rS}(\OO_K)$ to the given ray class group $\Cl_{\mm,\rS}(\OO)$
and show that it is surjective.
We let $J = J(\OO; \mm, \rS) = \ker\psi$ be the kernel.
As a subgroup of $\Cl_{\colonideal{\mm}{\OO_K},\rS}(\OO_K)$, $J$ has an associated Takagi ray class (sub)field $L = L_J$ (of $H_{\colonideal{\mm}{\OO_K},\rS}$), 
given in \Cref{thm:what}(1) below. 
We define the field $L$ obtained this way to be the ray class field $H_{\mm, \rS}^{\OO}$ 
assigned to the level datum $(\OO; \mm, \rS)$ for the order $\OO$.

To construct $\psi$, we start from the exact sequence from \Cref{thm:exseq} for $\ddD=(\OO; \mm, \rS)$ and $\ddD' = (\OO_K; \OO_K, \emptyset)$ with and $\dd :=\colonideal{\mm}{\OO_K}$. It is
\begin{equation}
1 
\to \frac{\OO_K^\times}{\U_{\mm,\rS}\left(\OO\right)} 
\to \frac{\left(\OO_K/\colonideal{\mm}{\OO_K})\right)^\times}{\U_{\mm}\left(\OO/\colonideal{\mm}{\OO_K}\right)} \times \{\pm 1\}^{|\rS|} 
\to \Cl_{\mm,\rS}(\OO) 
\to \Cl(\OO_K) 
\to 1,
\end{equation}
since 
$\U_{\mm'}( \OO'/ \dd) = \left(\OO_K/\colonideal{\mm}{\OO_K}\right)^\times$.
Consider also a second exact sequence given by \Cref{thm:exseq}, taking $\ddD = (\OO_K; \colonideal{\mm}{\OO_K}\!, \rS)$, 
$\ddD' = (\OO_K; \OO_K, \emptyset)$, and $\dd = \colonideal{\mm}{\OO_K}$. In this exact sequence, the denominator in the second term is $\U_{\colonideal{\mm}{\OO_K}}(\OO_K/\colonideal{\mm}{\OO_K})$
(since $\colonideal{\colonideal{\mm}{\OO_K}}{\OO_K} = \colonideal{\mm}{\OO_K}$), which is the trivial group.
We obtain
\begin{equation}
1 \to \frac{\OO_K^\times}{\U_{\colonideal{\mm}{\OO_K},\rS}\!\left(\OO_K\right)} 
\to \left(\OO_K/\colonideal{\mm}{\OO_K}\right)^\times \times \{\pm 1\}^{|\rS|} 
\to \Cl_{\colonideal{\mm}{\OO_K},\rS}(\OO_K) 
\to \Cl(\OO_K) 
\to 1.
\end{equation}

As in \Cref{subsec:53}, 
there are natural quotient maps between the objects in these two exact sequences corresponding to 
the downward maps labeled $\kappa$, $\pi$, and $\id$
in the following diagram. The map $\kappa$ exists and is surjective because $\U_{\colonideal{\mm}{\OO_K},\rS}\left(\OO_K\right)$ is a subgroup of $\U_{\mm,\rS}\left(\OO\right)$; the map $\pi$ is a quotient in the first coordinate; and the map $\id$ is the identity map. 
Moreover, it is straightforward to check that the diagram commutes, which implies that there is an induced surjective map $\psi$ in the position shown.
\begin{equation}\label{eqn:psi}
\begin{tikzcd}[column sep=small]
1 \arrow[r] 
& \frac{\OO_K^\times}{\U_{\colonideal{\mm}{\OO_K},\rS}\left(\OO_K\right)} \arrow[r] \arrow[d, two heads, "\kappa"] 
& \left(\OO_K/\colonideal{\mm}{\OO_K}\right)^\times \times \{\pm 1\}^{|\rS|} \arrow[r] \arrow[d, two heads, "\pi"] 
& \Cl_{\colonideal{\mm}{\OO_K},\rS}(\OO_K) \arrow[r] \arrow[d, two heads, dashed, "\psi"] 
& \Cl(\OO_K) \arrow[r] \arrow[d, equal, "\id"] 
& 1
\\
1 \arrow[r] 
& \frac{\OO_K^\times}{\U_{\mm,\rS}\left(\OO\right)} \arrow[r] 
& \frac{\left(\OO_K/\colonideal{\mm}{\OO_K}\right)^\times}{\U_{\mm}\left(\OO/\colonideal{\mm}{\OO_K}\right)} \times \{\pm 1\}^{|\rS|} \arrow[r] 
& \Cl_{\mm,\rS}(\OO) \arrow[r] 
& \Cl(\OO_K) \arrow[r] 
& 1 
\end{tikzcd}
\end{equation}
We have thus constructed a surjective map $\psi : \Cl_{\colonideal{\mm}{\OO_K},\rS}(\OO_K) \surj \Cl_{\mm,\rS}(\OO)$.

We make the following definition.
\begin{defn}\label{defn:61} 
The \textit{ray class field of the order $\OO$ with modulus $(\mm,\rS)$} is the subfield $H_{\mm,\rS}^\OO$ of
the Takagi ray class field $H_{\colonideal{\mm}{\OO_K},\rS}^{\OO_K}$ associated to $J(\OO; \mm, \rS) := \ker \psi$
in \eqref{eqn:psi} 
 under the Galois correspondence between subgroups of the Galois group $\Gal(H_{\colonideal{\mm}{\OO_K},\rS}^{\OO_K}/K) \isom \Cl_{\colonideal{\mm}{\OO_K},\rS}(\OO_K)$ and subfields of $H_{\colonideal{\mm}{\OO_K},\rS}^{\OO_K}$ containing $K$.
\end{defn}

\Cref{thm:main1} will identify $H_{\mm, \rS}^{\OO}$ in terms of data associated to
the splitting of primes in $\OO_K$, their contractions to $\OO$, and a ray class congruence condition on those contractions.
Namely, we will show the field $L=H_{\mm, \rS}^{\OO}$ produced by this definition is the unique extension field of $K$ whose set of prime ideals $\pp$ over $\OO_K$ that split completely in $L/K$
agrees (with symmetric difference a finite set) with the set of prime ideals $\pp$ of $\OO_K$ whose contraction $\pp \cap \OO$
to $\OO$ is a principal prime ideal $\pi\OO$ having a generator $\pi \equiv_{\OO} 1 \Mod{\mm}$ and $\rho(\pi) >0$
for $\rho \in \rS$.

%
%
\subsection{The classical existence theorem}\label{subsec:62}
We state the classical existence theorem of class field theory, called the Weber--Hilbert--Artin--Takagi [WHAT] correspondence by Cohn \cite[Chap.\ 7]{Cohn94}, in our notation.

\begin{thm}[WHAT correspondence]\label{thm:what}
Let $K$ be a number field, $\mm$ an ideal of $\OO_K$, and $\rS$ a subset of the set of real embeddings of $K$. 
\begin{enumerate}
\item[(1)]
{\rm (Weber--Takagi)} Let $J$ be a subgroup of the (Takagi) ray class group $\Cl_{\mm,\rS}(\OO_K)$. Then, there is a unique abelian extension $L_J/K$ with the property that a prime
ideal $\pp$ of $\OO_K$ splits completely in $L_J$ if and only if the ray class $[\pp]$ lies in $J$, with finitely many exceptions.
(The exceptions are among the prime ideals dividing $\mm$.) 
\begin{enumerate}
\item
If $J_1 \subseteq J_2$, then $L_{J_2} \subseteq L_{J_1}$.
\item
For $J =\{\II\}$, where $\II = [\OO_K]$ is the principal ray class modulo $(\mm, \rS)$, the field $L= L_{\{\II\} }= H_{\mm, \rS}^{\OO_K}$
is the principal ray class field. 
\end{enumerate}
\item[(2)]
{\rm (Artin)} Under the correspondence (1), for $L=L_{\{\II\}}= H_{\mm, \rS}^{\OO_K}$ 
there is an isomorphism $\Art: \Cl_{\mm,\rS}(\OO_K) \to \Gal (L/K)$, 
the so-called Artin isomorphism $\Art= \Art_{\mm, \rS}$, which is 
determined by sending prime ideals $[\pp]$ of $\OO_K$ (coprime to $\mm$)
to $ \left[ \frac{L/K}{\pp} \right] \in \Gal(L/K)$, 
and extending this map multiplicatively to all ray ideals $[\aa]$ coprime to $\mm$.
Under this isomorphism, $L_J$ is the fixed field of the principal ray class field $L=L_{\{\II\}}$
under the action of the group of automorphisms $\Art(J) \subseteq \Gal(L/K)$; that is,
\begin{equation}\label{eqn:WHAT-2} 
L_J := \left(H_{\mm,\rS}^{\OO_K} \right)^{\Art(J)}.
\end{equation} 
\end{enumerate}
In the statement of (2), the Artin symbol 
$\left[ \frac{L/K}{\pp} \right]$ denotes the Frobenius automorphism $\sigma_{\pp} \in \Gal(L/K)$
computed for a prime ideal $\PP$ of $\OO_L$ lying over $\pp$ as $\sigma_p(x) \equiv x^{q}  \Mod{\PP}$
for all $x \in \OO_L$, where $q=p^j$ is the number of elements in the finite field $\OO_K/\pp$. 
\end{thm}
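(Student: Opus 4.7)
The plan is to deduce both parts from classical global class field theory for the maximal order $\OO_K$, as developed in, e.g., \cite[Ch.~VI]{Neukirch13} or \cite{Cohn94}. I would begin by invoking the combined existence and reciprocity theorem in its standard form: for any modulus $(\mm,\rS)$ on $\OO_K$, there exists a finite abelian extension $H := H_{\mm,\rS}^{\OO_K}/K$ together with a canonical isomorphism $\Art : \Cl_{\mm,\rS}(\OO_K) \isomto \Gal(H/K)$ sending the class of an unramified prime $[\pp]$ to its Frobenius $\left[\frac{H/K}{\pp}\right]$. The field $H$ is uniquely characterized (up to finitely many exceptional primes dividing $\mm$) by the property that a prime $\pp$ of $\OO_K$ splits completely in $H/K$ if and only if $[\pp] = \II$ in $\Cl_{\mm,\rS}(\OO_K)$. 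The entire proposition is then a packaging of this result.

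For part (1), given a subgroup $J \subseteq \Cl_{\mm,\rS}(\OO_K)$, I would define $L_J := H^{\Art(J)}$, the fixed field of the image subgroup $\Art(J) \subseteq \Gal(H/K)$. By the Galois correspondence, $L_J/K$ is abelian with $\Gal(L_J/K) \isom \Gal(H/K)/\Art(J) \isom \Cl_{\mm,\rS}(\OO_K)/J$. For a prime $\pp$ coprime to $\mm$, the Frobenius of $\pp$ in $\Gal(L_J/K)$ is the restriction of $\left[\frac{H/K}{\pp}\right]$, which is trivial if and only if $\left[\frac{H/K}{\pp}\right] \in \Art(J)$, equivalently $[\pp] \in J$. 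Uniqueness of $L_J$ follows from Bauer's theorem, which asserts that a finite Galois extension of $K$ is determined by its set of completely split primes (even up to a finite symmetric difference). Sub-assertion (1)(a) is immediate: $J_1 \subseteq J_2 \implies \Art(J_1) \subseteq \Art(J_2) \implies L_{J_2} = H^{\Art(J_2)} \subseteq H^{\Art(J_1)} = L_{J_1}$. Sub-assertion (1)(b) is the case $J = \{\II\}$, for which $\Art(J) = \{1\}$ and hence $L_{\{\II\}} = H = H_{\mm,\rS}^{\OO_K}$.

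Part (2) is then a direct restatement of Artin reciprocity combined with the definition in part (1): the isomorphism $\Art$ is precisely the one furnished by classical class field theory, and the formula $L_J = \left(H_{\mm,\rS}^{\OO_K}\right)^{\Art(J)}$ is our defining equation reinterpreted under the Galois correspondence for the abelian extension $H/K$.

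The only potential obstacle is notational rather than mathematical: some standard references (e.g., Neukirch) fix $\rS = \rSmax$, whereas we permit an arbitrary subset of real places. This is easily handled by applying the reference result to $\rSmax$ and then passing to the intermediate field associated with the subgroup of $\Cl_{\mm,\rSmax}(\OO_K)$ that corresponds (via the surjection $\Cl_{\mm,\rSmax}(\OO_K) \surj \Cl_{\mm,\rS}(\OO_K)$) to the chosen $\rS$; alternatively, one may cite a treatment such as \cite{Cohn94} that admits general $\rS$ directly. Once the conventions are aligned, no further work is required.
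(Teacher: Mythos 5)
Your proposal is correct and follows essentially the same route as the paper, which simply cites the classical existence and reciprocity theorems (Cohn, Thm.~7.4.1--7.4.2; Halter-Koch, Thm.~6.6.8 and 6.6.2) without spelling out the packaging. Your unpacking via $L_J := H^{\Art(J)}$, the Galois correspondence, and Bauer's theorem for uniqueness is the standard derivation those references provide, and your remark about the $\rS$ convention matches the paper's own footnoted caveat about Neukirch fixing $\rS = \rSmax$.
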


\begin{proof}
This statement (1) is extracted from \cite[Thm.\ 7.4.1]{Cohn94}, whereas (2) follows from \cite[Thm.\ 7.4.2]{Cohn94}.
See also \cite[Thm.~6.6.8]{Halter-Koch:22} for (1) and \cite[Thm.~and Defn.~6.6.2]{Halter-Koch:22} for (2).
\end{proof} 

\begin{rmk}\label{rmk:artin}
For $J$ a subgroup of $\Cl_{\mm,\rS}(\OO_K)$, with $L = H^{\OO_K}_{\mm,\rS}$ and $L_J = L^{\Art(J)}$ as in \Cref{thm:what}, one can also define an Artin isomorphism $\Art_J$ making the diagram
\begin{equation}
\begin{tikzcd}
\Cl_{\mm,\rS}(\OO_K) \ar[r, "\Art_{\mm,\rS}"] \ar[d, two heads] & \Gal(L/K) \ar[d, two heads] \\
\Cl_{\mm,\rS}(\OO_K)/J \ar[r, "\Art_J"] & \Gal(L_J/K)
\end{tikzcd}
\end{equation}
commute, as a direct consequence of the Galois correspondence. Moreover, the Artin maps with different class field moduli are compatible because of their local definition via the Frobenius map. That is, if $\phi : \Cl_{\mm,\rS}(\OO_K) \to \Cl_{\mm',\rS'}(\OO_K)$ is the natural quotient map, then $\Art_{\ker \phi} = \Art_{\mm',\rS'}$.
\end{rmk}

\subsection{Proof of \Cref{thm:main1}}\label{subsec:63}

The main step in the proof is to identify the map $\psi$ introduced in \Cref{subsec:61}
with contraction to $\OO$
on the set of fractional ideals of $\OO_K$ coprime to $\colonideal{\mm}{\OO_K}$ (or equivalently, coprime to $\mm\OO_K \cap \ff(\OO_K)$).

\begin{lem}\label{lem:psicon}
For a class $[\aa] \in \Cl_{\colonideal{\mm}{\OO_K}, \rS}(\OO_K)$ represented by some $\aa \in \rJ_{\colonideal{\mm}{\OO_K}, \rS}(\OO_K)$, the map ${\psi: \Cl_{\colonideal{\mm}{\OO_K},\rS}(\OO_K) \to \Cl_{\mm,\rS}(\OO)}$ 
defined by \eqref{eqn:psi} may be explicitly written $\psi([\aa]) = [\con(\aa)]$. 
\end{lem}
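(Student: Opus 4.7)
The plan is to construct a candidate homomorphism $\widetilde\psi([\aa]) := [\con(\aa)]$ directly and then identify it with the induced map $\psi$ from diagram \eqref{eqn:psi}. Since $\colonideal{\mm}{\OO_K} \subseteq \ff(\OO)$ by \Cref{lem:basicinclusions}(5) and $\colonideal{\mm}{\OO_K} \subseteq \mm$ directly from \eqref{eqn:quotient-ideal}, \Cref{prop:conext} yields a well-defined homomorphism $\con : \rJ^{\ast}_{\colonideal{\mm}{\OO_K}}(\OO_K) \to \rJ^{\ast}_{\colonideal{\mm}{\OO_K}}(\OO) \subseteq \rJ^{\ast}_{\mm}(\OO)$, and composition with the quotient onto $\Cl_{\mm,\rS}(\OO)$ defines $\widetilde\psi$ at the level of invertible fractional ideals.

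By \Cref{lem:reldok} with $\dd = \colonideal{\mm}{\OO_K}$, one has $\Cl_{\colonideal{\mm}{\OO_K},\rS}(\OO_K) = \rJ^{\ast}_{\colonideal{\mm}{\OO_K}}(\OO_K) / \rP^{\colonideal{\mm}{\OO_K}}_{\colonideal{\mm}{\OO_K},\rS}(\OO_K)$, so descent of $\widetilde\psi$ to the ray class group reduces to checking that it annihilates $\rP^{\colonideal{\mm}{\OO_K}}_{\colonideal{\mm}{\OO_K},\rS}(\OO_K)$. Given $\alpha\OO_K$ in this group, \Cref{defn:RC-congruence} lets me write $\alpha = \alpha_1/\alpha_2$ with $\alpha_1,\alpha_2 \in \OO_K$, $\alpha_2\OO_K$ coprime to $\colonideal{\mm}{\OO_K}$, and $\alpha_1-\alpha_2 \in \colonideal{\mm}{\OO_K}$. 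Coprimality provides $\beta \in \OO_K$ and $c \in \colonideal{\mm}{\OO_K}$ with $\alpha_2\beta = 1-c$; since $\colonideal{\mm}{\OO_K} \subseteq \OO$, both $\widetilde\alpha_2 := \alpha_2\beta$ and $\widetilde\alpha_1 := \alpha_1\beta = (\alpha_1-\alpha_2)\beta + \alpha_2\beta$ lie in $\OO$ and are congruent to $1$ modulo $\colonideal{\mm}{\OO_K}$. Hence $\widetilde\alpha_i\OO$ is coprime to $\ff(\OO)$, \Cref{prop:conext} yields $\con(\widetilde\alpha_i\OO_K) = \widetilde\alpha_i\OO$, and a direct computation gives $\con(\alpha\OO_K) = \widetilde\alpha_1\widetilde\alpha_2^{-1}\OO = \alpha\OO$. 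Since $\widetilde\alpha_1 \equiv \widetilde\alpha_2 \Mod{\colonideal{\mm}{\OO_K}} \subseteq \mm$ and $\rho(\alpha)>0$ for $\rho \in \rS$ is inherited from the original datum, we have $\alpha\OO \in \rP_{\mm,\rS}(\OO)$, so $[\con(\alpha\OO_K)] = [\OO]$.

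Finally, the rows of \eqref{eqn:psi} are two instances of \Cref{thm:exseq} that share the common principal-ideal piece $\rP^{\colonideal{\mm}{\OO_K}}_{\OO_K,\emptyset}(\OO_K)$. Unwinding \Cref{prop:exseq1} and \Cref{prop:exseq2}, a lift $\widetilde u \in \U_{\OO_K,\emptyset}(\loc{\OO_K}{\colonideal{\mm}{\OO_K}})$ of an element of the top-middle group is sent to $[\widetilde u\OO_K]$ in $\Cl_{\colonideal{\mm}{\OO_K},\rS}(\OO_K)$ on top and to $[\con(\widetilde u\OO_K)]$ in $\Cl_{\mm,\rS}(\OO)$ on the bottom; commutativity of the middle square of \eqref{eqn:psi} therefore forces $\psi = \widetilde\psi$ on the image of the middle column, which by exactness equals $\ker(\Cl_{\colonideal{\mm}{\OO_K},\rS}(\OO_K) \surj \Cl(\OO_K))$. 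Since $\widetilde\psi$ also intertwines the surjections onto $\Cl(\OO_K)$ via $\ol{\ext}(\widetilde\psi([\aa])) = [\ext(\con(\aa))] = [\aa]$ (by \Cref{prop:conext}), naturality of the snake-lemma construction underlying \eqref{eqn:psi} pins down $\widetilde\psi = \psi$ on all of $\Cl_{\colonideal{\mm}{\OO_K},\rS}(\OO_K)$. The main obstacle is the construction of $\beta$ in the preceding paragraph, which uses coprimality of $\alpha_2\OO_K$ to $\colonideal{\mm}{\OO_K}$ to produce a common $\OO_K$-multiplier landing both $\widetilde\alpha_i$ in $\OO$ so that \Cref{prop:conext} applies.
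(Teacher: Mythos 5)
Your proposal is correct and follows essentially the same route as the paper: define $\widetilde\psi([\aa]) = [\con(\aa)]$ via \Cref{prop:conext}, check that contraction carries $\rP_{\colonideal{\mm}{\OO_K},\rS}(\OO_K)$ into $\rP_{\mm,\rS}(\OO)$, and then identify $\widetilde\psi$ with $\psi$ by verifying that $\widetilde\psi$ makes both squares of \eqref{eqn:psi} commute. Your descent step is somewhat more explicit than the paper's (which simply notes $\alpha\OO_K = \ext(\alpha\OO)$, leaving implicit why $\alpha\OO$ is coprime to the conductor), and your closing appeal to ``naturality'' is exactly as informal as the paper's own assertion that commutativity of the diagram suffices to pin down the induced map.
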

\begin{proof}
We apply \Cref{prop:conext} for the case $\OO \subseteq \OO_K$, taking $\mm':= \colonideal{\mm}{\OO_K}$ and noting that $\colonideal{\mm}{\OO_K} \subseteq \ff(\OO)$; see \Cref{lem:basicinclusions}. \Cref{prop:conext} constructed an isomorphism
$\con : \rJ_{\colonideal{\mm}{\OO_K}}(\OO_K) \to \rJ_{\colonideal{\mm}{\OO_K}}(\OO)$ 
and showed that its inverse map was $\ext : \rJ_{\colonideal{\mm}{\OO_K}}(\OO) \to \rJ_{\colonideal{\mm}{\OO_K}}(\OO_K)$.
For a principal ideal $\aa \in \rP_{\colonideal{\mm}{\OO_K},\rS}(\OO_K)$ with $\aa = \alpha\OO_K$, note that $\aa = \ext(\alpha\OO)$; thus, $\con(\aa) = \con(\ext(\alpha\OO)) = \alpha\OO$.
Thus, the composition 
\begin{equation}
\begin{tikzcd}
\rJ_{\colonideal{\mm}{\OO_K}}(\OO_K) \arrow[r,"\con"] & \rJ_{\colonideal{\mm}{\OO_K}}(\OO) \arrow[r,hook] & \rJ_{\mm}(\OO)
\end{tikzcd}
\end{equation}
sends the subgroup $\rP_{\colonideal{\mm}{\OO_K},\rS}(\OO_K)$ to a subgroup of $\rP_{\mm,\rS}(\OO)$ and thus defines a map
\begin{equation}
\widetilde{\psi} : \Cl_{\colonideal{\mm}{\OO_K},\rS}(\OO_K) \to \Cl_{\mm,\rS}(\OO).
\end{equation}
To show that $\widetilde{\psi} = \psi$, it suffices to show that $\widetilde{\psi}$ makes the diagram in \eqref{eqn:psi} commute. On the left, for a pair $(\alpha, \e) \in \left(\OO_K/(\colonideal{\mm}{\OO_K})\right)^\times \times \{\pm 1\}^{|\rS|}$, the square looks like
\begin{equation}
\begin{tikzcd}
(\alpha,\e) \arrow[r, mapsto] \arrow[d, mapsto] & \{\beta\OO_K : \beta \equiv \alpha \Mod{\colonideal{\mm}{\OO_K}} \mbox{ and } \rho(\beta)=\e_\rho\} \arrow[d, mapsto, "\widetilde{\psi}"] \\
(\alpha \Mod{\U_{\mm}\!\left(\OO/(\colonideal{\mm}{\OO_K})\right)},\e) \arrow[r, mapsto] & \{\beta\OO : \beta \equiv \alpha \Mod{\mm} \mbox{ and } \rho(\beta)=\e_\rho\}, \\
\end{tikzcd}
\end{equation}
and we observe that it commutes. On the right, $\widetilde{\psi}$ clearly does not change the class of $\aa$ in $\Cl(\OO_K)$, so the right square commutes as well. So $\widetilde{\psi} = \psi$, and the lemma is proved.
\end{proof}

\begin{proof}[Proof of \Cref{thm:main1}]
Consider the map $\psi : \Cl_{\colonideal{\mm}{\OO_K},\rS}(\OO_K) \to \Cl_{\mm,\rS}(\OO) $ defined by \eqref{eqn:psi}, and let $J = \ker\psi$.
By \Cref{thm:what}, $H_{\mm,\rS}^{\OO} := \left(H_{\colonideal{\mm}{\OO_K},\rS}\right)^{\Art(J)}$ is the unique abelian extension of $K$ such that a prime $\pp$ of $\OO_K$ splits completely in $H_{\mm,\rS}^{\OO}$ if and only if $[\pp]$ lies in $J$. But $[\pp] \in J$ if and only if $\psi([\pp])=0$, and by \Cref{lem:psicon}, $\psi([\pp]) = [\con(\pp)] = [\pp \cap \OO]$. (Since $\pp$ is a maximal ideal, $\pp \cap \OO$ is also a maximal ideal by \Cref{lem:conextmax}(2).) 
\end{proof}

%
%
\subsection{Proof of \Cref{thm:main2}}\label{subsec:64}
 
We prove a more general result.
\begin{thm}\label{thm:main2b} 
For two orders $\OO \subseteq \OO'$ in a number field $K$ and any level datum $(\OO; \mm, \rS)$, there are inclusions of ray class fields $H_{\mm\OO',\rS}^{\OO'} \subseteq H_{\mm,\rS}^{\OO} \subseteq H_{\colonideal{\mm}{\OO'},\rS}^{\OO'}$. 
\end{thm}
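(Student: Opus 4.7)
The plan is to derive each inclusion from the prime-splitting characterization of \Cref{thm:main1}. Two abelian extensions $H_1, H_2$ of $K$ satisfy $H_1 \subseteq H_2$ if and only if, up to a finite symmetric difference, the set of primes of $\OO_K$ splitting completely in $H_2$ is contained in those splitting in $H_1$. I will therefore fix a prime $\pp$ of $\OO_K$ coprime to $\colonideal{\mm}{\OO_K}$; since $\colonideal{\mm}{\OO_K} \subseteq \ff(\OO) \subseteq \ff_{\OO'}(\OO)$ (any $\OO_K$-ideal in $\OO$ is also an $\OO'$-ideal in $\OO$) and $\colonideal{\mm}{\OO_K} \subseteq \colonideal{\mm\OO'}{\OO_K}$, this single coprimality is enough to invoke \Cref{thm:main1} for all three ray class fields and to apply the extension/contraction bijections of \Cref{lem:conext} freely to $\pp \cap \OO$ and $\pp \cap \OO'$.

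For the first inclusion $H_{\mm\OO',\rS}^{\OO'} \subseteq H_{\mm,\rS}^{\OO}$, I will start with $\pp$ splitting in $H_{\mm,\rS}^{\OO}$, so by \Cref{thm:main1} there exists $\pi \in \OO$ with $\pp \cap \OO = \pi\OO$, $\pi \equiv 1 \Mod{\mm}$ in $\OO$, and $\rho(\pi) > 0$ for $\rho \in \rS$. Taking $\pi' = \pi \in \OO \subseteq \OO'$, I will verify via \Cref{lem:conext} that $\pp \cap \OO' = \ext(\pp \cap \OO) = \pi\OO'$, and I will transfer the congruence along the inclusion $\mm\loc{\OO}{\mm} \subseteq \mm\OO'\loc{\OO'}{\mm\OO'}$ to obtain $\pi \equiv 1 \Mod{\mm\OO'}$ in $\OO'$. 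Applying \Cref{thm:main1} to the level datum $(\OO';\mm\OO',\rS)$ then certifies splitting in $H_{\mm\OO',\rS}^{\OO'}$.

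For the second inclusion $H_{\mm,\rS}^{\OO} \subseteq H_{\colonideal{\mm}{\OO'},\rS}^{\OO'}$, I will start with $\pp$ splitting in $H_{\colonideal{\mm}{\OO'},\rS}^{\OO'}$, so by \Cref{thm:main1} there exists $\pi \in \OO'$ with $\pp \cap \OO' = \pi\OO'$, $\pi \equiv 1 \Mod{\colonideal{\mm}{\OO'}}$ in $\OO'$, and $\rho(\pi) > 0$ for $\rho \in \rS$. The crucial observation is that this $\OO'$-congruence reduces to $\pi - 1 \in \colonideal{\mm}{\OO'}$ (no denominators needed, since $\pi$ already lies in $\OO'$), and the chain $\colonideal{\mm}{\OO'} \subseteq \mm \subseteq \OO$ then delivers both $\pi = 1 + (\pi-1) \in \OO$ and the congruence $\pi \equiv 1 \Mod{\mm}$ in $\OO$. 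To finish I will verify $\pp \cap \OO = \pi\OO$: since $\colonideal{\mm}{\OO'} \subseteq \ff_{\OO'}(\OO)$, the same reasoning yields $1 - \pi \in \ff_{\OO'}(\OO)$ and hence $\pi\OO + \ff_{\OO'}(\OO) = \OO$, so \Cref{lem:conext} produces $\pi\OO = \con(\ext(\pi\OO)) = \con(\pi\OO') = \pi\OO' \cap \OO = \pp \cap \OO$. Then \Cref{thm:main1} delivers splitting in $H_{\mm,\rS}^{\OO}$.

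The principal obstacle is the second inclusion, where one must simultaneously lift $\pi$ out of $\OO'$ into $\OO$ and certify the ideal equality $\pi\OO = \pp \cap \OO$. Both tasks rest on the double containment $\colonideal{\mm}{\OO'} \subseteq \mm \cap \ff_{\OO'}(\OO)$, which cleanly controls the arithmetic congruence via $\mm$ and the extension/contraction compatibility via the relative conductor $\ff_{\OO'}(\OO)$.
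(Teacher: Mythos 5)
Your proposal is correct, but it takes a genuinely different route from the paper. The paper argues entirely on the class-group side: it assembles a commutative diagram of the exact sequences from \Cref{thm:exseq} for the three level data $(\OO';\colonideal{\mm}{\OO'},\rS)$, $(\OO;\mm,\rS)$, $(\OO';\mm\OO',\rS)$, extracts the chain of surjections $\Cl_{\colonideal{\mm}{\OO'},\rS}(\OO') \twoheadrightarrow \Cl_{\mm,\rS}(\OO) \twoheadrightarrow \Cl_{\mm\OO',\rS}(\OO')$, and then invokes \Cref{thm:what}(1) to translate these into a tower of fields. You instead work on the field side, using the prime-splitting characterization of \Cref{thm:main1} (which is proved before this theorem, so there is no circularity) and transferring the data ``principal prime generator satisfying congruence and sign conditions'' between $\OO$ and $\OO'$ via the containments $\colonideal{\mm}{\OO'} \subseteq \mm \cap \ff_{\OO'}(\OO)$ and the extension/contraction bijection of \Cref{lem:conext}; your key computations (that $\pp\cap\OO'$ and $\pi\OO$ are coprime to $\ff_{\OO'}(\OO)$, so that $\ext(\pp\cap\OO)=\pp\cap\OO'$ and $\con(\pi\OO')=\pi\OO$) all check out. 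Your approach is more concrete and makes the arithmetic reason for the inclusions transparent, but it leans on the standard fact that containment of splitting sets (up to finite symmetric difference) characterizes containment of abelian extensions of $K$ --- a fact that is standard Chebotarev-type reasoning but is not explicitly isolated in the paper's \Cref{thm:what}, so you should justify or cite it. The paper's diagram-chase buys more than the bare inclusions: it identifies the relative Galois groups $\Gal\bigl(H^{\OO'}_{\colonideal{\mm}{\OO'},\rS}/H^{\OO}_{\mm,\rS}\bigr) \isom \ker\psi$ and $\Gal\bigl(H^{\OO}_{\mm,\rS}/H^{\OO'}_{\mm\OO',\rS}\bigr) \isom \ker\phi$ together with their descriptions in terms of unit groups, which is used elsewhere (e.g., \Cref{rmk:raystructure}).
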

\begin{proof}
Consider the following (commutative) diagram, with the dotted lines denoting maps induced by the others. 
In this diagram, rows are exact, but the columns are not exact; all vertical maps are surjective.
\begin{equation}\label{eq:big-diagram2}
\begin{tikzcd}[column sep=small]
1 \arrow[r] 
& \frac{(\OO')^\times}{\U_{\colonideal{\mm}{\OO'},\rS}\left(\OO'\right)} \arrow[r] \arrow[d, two heads] 
& 
\left(\OO'/\colonideal{\mm}{\OO'}\right)^\times 
\times \{\pm 1\}^{|\rS|} \arrow[r] \arrow[d, two heads] 
& \Cl_{\colonideal{\mm}{\OO'},\rS}(\OO') \arrow[r] \arrow[d, two heads, dashed, "\psi"] 
& \Cl(\OO') \arrow[r] \arrow[d, equal] 
& 1
\\
1 \arrow[r] 
& \frac{(\OO')^\times}{\U_{\mm,\rS}\left(\OO\right)} \arrow[r] \arrow[d, two heads] 
& \frac{\left(\OO'/\colonideal{\mm}{\OO'}\right)^\times}{\U_{\mm}\left(\OO/\colonideal{\mm}{\OO'}\right)} \times \{\pm 1\}^{|\rS|} \arrow[r] \arrow[d, two heads] 
& \Cl_{\mm,\rS}(\OO) \arrow[r] \arrow[d, two heads, dashed, "\phi"] 
& \Cl(\OO') \arrow[r] \arrow[d, equal] 
& 1 
\\
1 \arrow[r] 
& \frac{(\OO')^\times}{\U_{\mm\OO',\rS}\left(\OO'\right)} \arrow[r] 
& 
\left(\OO'/\mm\OO'\right)^\times 
\times \{\pm 1\}^{|\rS|} \arrow[r] 
& \Cl_{\mm\OO',\rS}(\OO') \arrow[r] 
& \Cl(\OO') \arrow[r] 
& 1
\end{tikzcd}
\end{equation}
The horizontal rows are exact sequences from \Cref{thm:exseq} 
with data given in the following table.
\begin{center}
\begin{tabular}{|c||c|c|c|}
\hline
\Cref{thm:exseq} & $\ddD=(\OO; \mm, \rS)$ & $\ddD'=(\OO'; \mm', \rS')$ & $\dd$ \\
\hhline{|=||=|=|=|}
top row & $(\OO'; \colonideal{\mm}{\OO'}, \rS)$ & $(\OO'; \OO', \emptyset)$ & $\colonideal{\mm}{\OO'}$ \\
\hline
middle row & $(\OO; \mm, \rS)$ & $(\OO'; \OO', \emptyset)$ & $\colonideal{\mm}{\OO'}$ \\
\hline
bottom row & $(\OO'; \mm\OO', \rS)$ & $(\OO'; \OO', \emptyset)$ & $\mm\OO'$ \\
\hline
\end{tabular}
\end{center}
In the second nontrivial column,
we have used in all rows that $\U_{\OO'}(\OO'/\dd) = (\OO'/\dd)^\times$ and in the top and bottom rows row that $\U_{\dd}(\OO/\dd) = \{1\}$.

The vertical maps in the first two nontrivial columns are given by quotienting by everything that is $1$ modulo $\mm$ (from the top row to the middle) and then by everything that is $1$ modulo $\mm\OO'$ (from the middle row to the bottom). These maps are well-defined because $\colonideal{\mm}{\OO'} \subseteq \mm \subseteq \mm\OO'$. The commutativity of the leftmost two squares is thus clear. The commutativity of the diagram, excepting the dotted lines, follows by exactness because, on the longer horizontal rectangles, it simply encodes an equality between two zero maps.

The maps denoted by dotted lines are induced, so the whole diagram \eqref{eq:big-diagram2} commutes.
The upper induced map $\psi$ was described in \Cref{subsec:61}.
The lower induced map $\phi$ is 
equal to the map induced by extension of ideals, as can be seen by commutativity of the diagram 
and comparison with the ``change of order'' exact sequence 
\begin{equation}
\begin{tikzcd}[column sep=small]
1 \arrow[r] 
& \frac{\U_{\mm\OO',\rS}(\OO')}{\U_{\mm,\rS}(\OO)} \arrow[r] 
& \frac{\U_{\mm\OO'}\!\left(\OO'/\colonideal{\mm}{\OO'}\right)}{\U_\mm\!\left(\OO/\colonideal{\mm}{\OO'}\right)} \arrow[r] 
& \Cl_{\mm,\rS}(\OO) \arrow[r,"\phi"] 
& \Cl_{\mm\OO',\rS}(\OO') \arrow[r] 
& 1
\end{tikzcd}
\end{equation}
obtained from \Cref{thm:exseq} (taking $\ddD = (\OO; \mm, \rS)$, $\ddD' = (\OO'; \mm\OO', \rS)$, and $\dd :=\colonideal{\mm}{\OO'}$).

Similarly, the composition $\phi \circ \psi$ fits into the ``change of modulus'' exact sequence
\begin{equation}
\begin{tikzcd}[column sep=small]
1 \arrow[r] 
& \frac{\U_{\mm\OO',\rS}(\OO')}{\U_{\colonideal{\mm}{\OO'},\rS}(\OO')} \arrow[r] 
& \U_{\mm\OO'}\!\left(\OO'/\colonideal{\mm}{\OO'}\right) \arrow[r] 
& \Cl_{\colonideal{\mm}{\OO'},\rS}(\OO') \arrow[r,"\phi \circ \psi"] 
& \Cl_{\mm\OO',\rS}(\OO') \arrow[r] 
& 1,
\end{tikzcd}
\end{equation}
also a special case of \Cref{thm:exseq}
(where we take $\ddD = (\OO', \colonideal{\mm}{\OO'}, \rS)$, $\ddD' = (\OO', \mm\OO', \rS)$, and $\dd :=\colonideal{\mm}{\OO'}$).

The diagram \eqref{eq:big-diagram2} establishes (from its third nontrivial column) the sequence of surjections
\begin{equation}
\begin{tikzcd}[column sep=small]
& \Cl_{\colonideal{\mm}{\OO'},\rS}(\OO') \arrow[r,"\psi",two heads] 
& \Cl_{\mm,\rS}(\OO) \arrow[r,"\phi",two heads] 
& \Cl_{\mm\OO',\rS}(\OO').
\end{tikzcd}
\end{equation}
Now by 
\Cref{thm:what}(1),
we have the tower of ray class fields of orders
\begin{equation}
\begin{tikzcd}
H^{\OO'}_{\colonideal{\mm}{\OO'},\rS} \arrow[d, dash, "\ker\psi"] \\
H^{\OO}_{\mm,\rS} \arrow[d, dash, "\ker\phi"] \\
H^{\OO'}_{\mm\OO',\rS} \arrow[d, dash] \\
K
\end{tikzcd}
\end{equation}
with Galois groups as labeled, thus proving the theorem.
\end{proof} 

\begin{proof}[Proof of \Cref{thm:main2}]
This is the special case $\OO'=\OO_K$ of \Cref{thm:main2b}.
\end{proof}

%
%
\subsection{Proof of \Cref{thm:main3}}\label{subsec:65}

We now prove \Cref{thm:main3}, 
giving a form of Artin reciprocity for a ray class group and ray class field of an order. The result is obtained
from the usual Artin reciprocity law together with properties of the map $\psi$ in \eqref{eqn:psi} 
established earlier in this section.

\begin{proof}[Proof of \Cref{thm:main3}]
Let $H_1 = H_{\colonideal{\mm}{\OO_K},\rS}^{\OO_K}$, and let $\Art : \Cl_{\colonideal{\mm}{\OO_K},\rS}(\OO_K) \to \Gal(H_1/K)$ 
be the (usual) Artin map. Let $\psi : \Cl_{\colonideal{\mm}{\OO_K},\rS}(\OO_K) \to \Cl_{\mm,\rS}(\OO)$ 
be the map constructed in \eqref{eqn:psi}. By \Cref{lem:psicon}, for any class 
$[\bb] \in \Cl_{\colonideal{\mm}{\OO_K},\rS}(\OO_K)$, one has $\psi([\bb]) = [\con(\bb)]$.

Let $H_0 = H_{\mm,\rS}^\OO$. By \Cref{defn:61} and the Galois correspondence it describes, 
there is an isomorphism $\Art_\OO$ making the following diagram commute.
\begin{equation}\label{eq:new-Art}
\begin{tikzcd}
\Cl_{\colonideal{\mm}{\OO_K},\rS}(\OO_K) \arrow[r, "\sim" inner sep=.3mm, "\Art" inner sep=1.7mm] \arrow[d, two heads, "\psi"] & \Gal(H_1/K) \arrow[d, two heads] \\
\Cl_{\mm,\rS}(\OO) \arrow[r, "\sim" inner sep=.3mm, "\Art_\OO" inner sep=1.7mm] & \Gal(H_0/K)
\end{tikzcd}
\end{equation}
To give an explicit formula for $\Art_\OO$, consider any fractional ideal $\aa$ of $\OO$ coprime to $\ff(\OO) \cap \mm$ (or, equivalently, coprime to $\colonideal{\mm}{\OO_K}$).
We compute $\Art_\OO([\aa])$ by lifting $[\aa]$ to $\Cl_{\colonideal{\mm}{\OO_K},\rS}(\OO_K)$ along $\psi$.
The coprimality condition implies that $\con(\aa\OO_K) = \con(\ext(\aa)) = \aa$ (by \Cref{prop:conext}), so $\psi([\aa\OO_K])= [\con(\ext(\aa))]=[\aa]$, that is, $[\aa\OO_K]$ is a lift of $[\aa]$. Therefore \eqref{eq:new-Art} gives 
\begin{equation}
\Art_\OO([\aa]) = \left.\Art([\aa\OO_K])\right|_{H_0}.
\end{equation}

Now let $\pp$ be any prime ideal of $\OO$ coprime to $\ff(\OO) \cap \mm$, 
with residue field $\OO/\pp$ having characteristic $p$.
Let $\PP$ be a prime ideal of $\OO_{H_0}$ lying over 
$\pp\OO_K$. We claim that for all $\alpha\in \OO_{H_0}$, 
\begin{equation}\label{eq:artoop}
\Art_\OO([\pp])(\alpha) \equiv \alpha^q  \Mod{\PP},
\end{equation}
where $q=p^j$ is the number of elements in $\OO/\pp$,
identifying $\Art_{\OO}([\pp])$ as a Frobenius automorphism.

To see this, let $\mathcal{P}$ be a prime ideal of $\OO_{H_1}$ lying over $\PP$. By Artin reciprocity (part (2) of \Cref{thm:what}; see also \Cref{rmk:artin}), for any $\alpha \in \OO_{H_0}$,
\begin{equation}
\Art_\OO([\pp])(\alpha) = \Art([\pp\OO_K])|_{H_0}(\alpha)
\end{equation}
Now the condition that $\pp$ is coprime to $\ff(\OO) \cap \mm$ implies that
$\pp\OO_K$ is a prime ideal of $\OO_K$ by \Cref{lem:conextmax} and \Cref{prop:conext}; 
in addition, $\PP$ and $\mathcal{P}$ are unramified over $\pp\OO_K$ because $H_0$ and $H_1$ only have ramification over $K$ at the primes dividing $\ff(\OO) \cap \mm$ (that is, the primes dividing $\colonideal{\mm}{\OO_K}$). 
Now for any $\beta \in \OO_{H_1}$ we have by definition
\begin{equation}
\Art([\pp\OO_K])(\beta))\equiv \beta^{q'} \Mod{\mathcal{P}},
\end{equation}
where $q'$ is the number of elements in $\OO_K/\pp\OO_K$.
We have $q'= q$ since the  hypotheses guarantee that  $\OO/\pp \cong \OO_K/ \pp \OO_K$.
Applying this congruence with $\beta= \alpha \in \OO_{H_0}$, noting that $\mathcal{P} \cap \OO_{H_0} = \PP$
and $\alpha^q \in \OO_{H_0}$,
we may conclude 
\begin{equation}
\Art_\OO([\pp])(\alpha) \equiv \alpha^q \Mod{\PP},
\end{equation}
proving the claim.
 
Finally we note that by \Cref{prop:m-integral-coprime} and \Cref{lem:coloninv}, fractional ideals of $\OO$ coprime to $\ff(\OO) \cap \mm$ factor into powers of prime ideals coprime to $\ff(\OO) \cap \mm$, so
the map $\Art_\OO$ is uniquely determined by \eqref{eq:artoop}.
\end{proof}

\section{Computations of ray class groups of orders}\label{sec:examples}

To illustrate how to compute with the exact sequence for change of order and modulus, we calculate several ray class groups of orders.
Let $\OO$ be an order in a number field $K$, and let $\rho_1, \ldots, \rho_r$ be the real embeddings of $K$.
Consider a ray class modulus $(\mm, \rS)$ for an order $\OO$, 
where $\mm$ is an integral $\OO$-ideal and $\rS = \{\rho_{k_1}, \ldots, \rho_{k_\ell}\}$.
In the following examples, we will abbreviate the pair $(\mm, \rS)$ by the formal product $\mm\infty_{k_1}\cdots\infty_{k_\ell}$.
We will also denote principal ideals such as $\alpha\OO$ by $(\alpha)$.
We consider a second $K$-order $\OO'$ with $\OO \subseteq \OO'$ and a
corresponding modulus $(\mm', \rS')$, requiring $\mm \subseteq \mm'$ and $\rS' \subseteq \rS.$
We apply  \Cref{thm:exseq} with $\ddD=(\OO; \mm, \rS)$ and $\ddD'=(\OO'; \mm', \rS')$,
and make the choice $\dd = \colonideal{\mm}{\OO'}$, to obtain the exact sequence
\begin{equation}\label{eq:71exact} 
1 \to \frac{\U_{\mm',\rS'}\!\left(\OO'\right)}{\U_{\mm,\rS}\!\left(\OO\right)} \to \frac{\U_{\mm'}\!\left(\OO'/\colonideal{\mm}{\OO'}\right)}{\U_{\mm}\!\left(\OO/\colonideal{\mm}{\OO'}\right)} \times \{\pm 1\}^{|\rS \setminus \rS'|} \to \Cl_{\mm,\rS}(\OO) \to \Cl_{\mm',\rS'}(\OO') \to 1.
\end{equation}

The following series of examples treats several different orders in the real quadratic field $K = \Q(\sqrt{2})$ 
and the computation of some of their ray class groups at moduli ramified at 
prime ideals lying over $(7)$ and $\infty$ of $\Z$. We let $\infty_1$, $\infty_2$  denote the real embeddings $\rho_i: \Q(\sqrt{2}) \to \R$
defined by $\rho_1(\sqrt{2}) = \sqrt{2}$ and $\rho_2(\sqrt{2}) = -\sqrt{2},$ respectively.

The first three examples taken together give information on
the corresponding ray class fields.
They show  that a ray class field of an order can be strictly larger than the compositum of the corresponding ray class field of the maximal order with the ring class field. Using \Cref{thm:main3}, the class number calculations imply
\begin{equation}
H^{\Z[\sqrt{2}]}_{(7)\infty_2} \cdot H_{(1)}^{\Z[2\sqrt{2}]} \subsetneq H^{\Z[2\sqrt{2}]}_{(7)\infty_2},
\end{equation}
since
$\left[H^{\Z[\sqrt{2}]}_{(7)\infty_2} H^{\Z[2\sqrt{2}]}_{(1)} : K\right] \le \left[H^{\Z[\sqrt{2}]}_{(7)\infty_2} :K\right] \cdot \left[H^{\Z[2\sqrt{2}]}_{(1)}: K\right] =6$
while $\left[H^{\Z[2\sqrt{2}]}_{(7)\infty_2}: K\right] = 12$. The final example 
presents a calculation in a case where the corresponding ring class field is nontrivial.

\subsection{Example 1}
Take level data $\ddD = \left(\Z[\sqrt{2}]; 7\Z[\sqrt{2}], \{\rho_2\}\right) = \left(\Z[\sqrt{2}]; (7)\infty_2\right)$ and $\ddD' = \left(\Z[\sqrt{2}]; \Z[\sqrt{2}], \emptyset\right) = \left(\Z[\sqrt{2}]; (1)\right)$. 
Then by \eqref{eq:71exact},
\begin{equation}
1 \to \frac{\Z[\sqrt{2}]^\times}{\U_{(7)\infty_2}\!\left(\Z[\sqrt{2}]\right)} \to \left(\Z[\sqrt{2}]/(7)\right)^\times \times \{\pm 1\} \to \Cl_{(7)\infty_2}\!\left(\Z[\sqrt{2}]\right) \to \Cl\!\left(\Z[\sqrt{2}]\right) \to 1.
\end{equation}
The class group $\Cl\!\left(\Z[\sqrt{2}]\right) = 1$.
Thus, the ray class group $\Cl_{(7)\infty_2}\!\left(\Z[\sqrt{2}]\right)$ is isomorphic to the previous term in the sequence modulo the image of global units.
By the Chinese Remainder Theorem,
\begin{align}
\left(\Z[\sqrt{2}]/(7)\right)^\times 
&\isom \left(\Z[\sqrt{2}]/(3+\sqrt{2})\right)^\times \times \left(\Z[\sqrt{2}]/(3-\sqrt{2})\right)^\times \\
&\isom \Z/6\Z \times \Z/6\Z.
\end{align}
Moreover,
\begin{equation}
\frac{\Z[\sqrt{2}]^\times}{\U_{(7)\infty_2}\!\left(\Z[\sqrt{2}]\right)} = \frac{\langle -1, 1+\sqrt{2} \rangle}{\langle (1+\sqrt{2})^6 \rangle} \isom \Z/6\Z \times \Z/2\Z.
\end{equation}
Thus, we see that
\begin{equation}
\Cl_{(7)\infty_2}\!\left(\Z[\sqrt{2}]\right) \isom \Z/6\Z.
\end{equation}

\subsection{Example 2} 
Take $\ddD = \left(\Z[2\sqrt{2}]; 7\Z[2\sqrt{2}], \{\rho_2\}\right) = \left(\Z[2\sqrt{2}]; (7)\infty_2\right)$ and, as above, take $\ddD' = \left(\Z[\sqrt{2}]; \Z[\sqrt{2}], \emptyset\right) = \left(\Z[\sqrt{2}]; (1)\right)$. 
Then \eqref{eq:71exact} gives
\begin{equation}
1 \to \frac{\Z[\sqrt{2}]^\times}{\U_{(7)\infty_2}\!\left(\Z[2\sqrt{2}]\right)} \to \frac{\left(\frac{\Z[\sqrt{2}]}{(14)}\right)^\times}{\U_{(7)}\!\left(\frac{\Z[2\sqrt{2}]}{(14\Z+14\sqrt{2}\Z)}\right)} \times \{\pm 1\} \to \Cl_{(7)\infty_2}\!\left(\Z[2\sqrt{2}]\right) \to \Cl\!\left(\Z[\sqrt{2}]\right) \to 1.
\end{equation}
As in the previous example, $\Cl\!\left(\Z[\sqrt{2}]\right) = 1$, so the ray class group $\Cl_{(7)\infty_2}\!\left(\Z[2\sqrt{2}]\right)$ is isomorphic to the previous term in the sequence modulo the image of global units.
By the Chinese Remainder Theorem, we have
\begin{align}
\left(\Z[\sqrt{2}]/(14)\right)^\times 
&\isom \left(\Z[\sqrt{2}]/(3+\sqrt{2})\right)^\times \times \left(\Z[\sqrt{2}]/(3-\sqrt{2})\right)^\times \times \left(\Z[\sqrt{2}]/(\sqrt{2})^2\right)^\times \\
&\isom \Z/6\Z \times \Z/6\Z \times \Z/2\Z.
\end{align}
We also have
\begin{equation}
\U_{(7)}\!\left(\Z[2\sqrt{2}]/(14\Z+14\sqrt{2}\Z)\right) = 1
\end{equation}
and
\begin{equation}
\frac{\Z[\sqrt{2}]^\times}{\U_{(7)\infty_2}\!\left(\Z[2\sqrt{2}]\right)} = \frac{\langle -1, 1+\sqrt{2} \rangle}{\langle (1+\sqrt{2})^6 \rangle} \isom \Z/6\Z \times \Z/2\Z.
\end{equation}
We see that 
\begin{equation}
\abs{\Cl_{(7)\infty_2}\!\left(\Z[2\sqrt{2}]\right)} = 12.
\end{equation}
With more careful accounting, we can obtain an isomorphism
\begin{equation}
\Cl_{(7)\infty_2}\!\left(\Z[2\sqrt{2}]\right) \isom \Z/6\Z \times \Z/2\Z.
\end{equation}

\subsection{Example 3} 
We compute the same group $\Cl_{(7)\infty_2}\!\left(\Z[2\sqrt{2}]\right)$ a different way.
Take $\ddD = \left(\Z[2\sqrt{2}]; (7)\infty_2\right)$ as above, but instead take $\ddD' = \left(\Z[2\sqrt{2}]; (1)\right)$. 
Then \eqref{eq:71exact} gives
\begin{equation}
1 \to \frac{\Z[2\sqrt{2}]^\times}{\U_{(7)\infty_2}\!\left(\Z[2\sqrt{2}]\right)} \to \left(\Z[2\sqrt{2}]/(7)\right)^\times \times \{\pm 1\} \to \Cl_{(7)\infty_2}\!\left(\Z[2\sqrt{2}]\right) \to \Cl\!\left(\Z[2\sqrt{2}]\right) \to 1.
\end{equation}
The ring class group $\Cl\!\left(\Z[2\sqrt{2}]\right) = 1$.
By the Chinese Remainder Theorem,
\begin{align}
\left(\Z[2\sqrt{2}]/(7)\right)^\times 
&\isom \left(\Z[2\sqrt{2}]/(1+2\sqrt{2})\right)^\times \times \left(\Z[2\sqrt{2}]/(1-2\sqrt{2})\right)^\times \\
&\isom \Z/6\Z \times \Z/6\Z.
\end{align}
The quotient of global unit groups is
\begin{equation}
\frac{\Z[2\sqrt{2}]^\times}{\U_{(7)\infty_2}\!\left(\Z[2\sqrt{2}]\right)} = \frac{\langle -1, (1+\sqrt{2})^2 \rangle}{\langle (1+\sqrt{2})^6 \rangle} \isom \Z/2\Z \times \Z/3\Z \isom \Z/6\Z.
\end{equation}
It follows that
\begin{equation}
\Cl_{(7)\infty_2}\!\left(\Z[2\sqrt{2}]\right) \isom \Z/6\Z \times \Z/2\Z.
\end{equation}

\subsection{Example 4} 
Take $\ddD = \left(\Z[5\sqrt{2}]; (7)\infty_2\right)$ and $\ddD' = \left(\Z[5\sqrt{2}]; (1)\right)$. 
Then \eqref{eq:71exact} gives
\begin{equation}
1 \to \frac{\Z[5\sqrt{2}]^\times}{\U_{(7)\infty_2}\!\left(\Z[5\sqrt{2}]\right)} \to \left(\Z[5\sqrt{2}]/(7)\right)^\times \times \{\pm 1\} \to \Cl_{(7)\infty_2}\!\left(\Z[5\sqrt{2}]\right) \to \Cl\!\left(\Z[5\sqrt{2}]\right) \to 1.
\end{equation}
In a similar method to the above examples (e.g., by another change of order calculation), we can compute
\begin{align}
\Cl\!\left(\Z[5\sqrt{2}]\right) := \Cl_{(1)}\!\left(\Z[5\sqrt{2}]\right) &\isom \Z/2\Z.
\end{align} 
The ring class number is $2$.
As a consequence, by \Cref{thm:main3},
the ring class field for $\Z[5\sqrt{2}]$ is a quadratic extension of $K$, so it is a degree $4$ extension of $\Q$.
We also have 
\begin{align} 
\left(\Z[5\sqrt{2}]/(7)\right)^\times \times \{\pm 1\} &\isom \Z/6\Z \times \Z/6\Z \times \Z/2\Z; \\
\frac{\Z[5\sqrt{2}]^\times}{\U_{(7)\infty_2}\!\left(\Z[5\sqrt{2}]\right)}
= \frac{\langle -1, (1+\sqrt{2})^3 \rangle}{\langle (1+\sqrt{2})^6 \rangle} &\isom \Z/2\Z \times \Z/2\Z.
\end{align}
We see that
\begin{equation}
\abs{\Cl_{(7)\infty_2}\!\left(\Z[5\sqrt{2}]\right)} = \frac{(6 \cdot 6 \cdot 2)(2)}{2 \cdot 2} = 36.
\end{equation}
To determine the ray class group structure, it would be necessary to compute the maps in the exact sequence.

\section{Concluding remarks} \label{sec:final} 
The main results of this paper assigned ray class fields to invertible ray class groups of orders defined for level data specifying an order and a modulus (an ideal and a set of real places). The constructed class fields were characterized in terms of splitting conditions on their prime ideals.

There are several other aspects of class field theory, specified by the main theorems of class field theory listed by Hasse \cite{Hasse:67}, 
that might have analogues in the ray class field theory of orders.
\begin{enumerate}
\item
{\em Norms of ideals in orders.} The Takagi class field theory has an interpretation of ray class groups 
in which the kernels of some group maps involve groups generated by norms of ideals. 
There is a natural way to define norms of integral ideals $\aa$ in orders, as $\Nm_{\OO}(\aa) = \abs{\OO/\aa}$.
Norms of fractional ideals are discussed in \Cref{appendix:norms}.
One subtlety that arises is that norms of non-invertible ideals are in general not multiplicative.
\item
{\em Zeta functions of orders.} Zeta functions played an important role in
the historical development of class field theory. We can associate zeta functions and $L$-functions to orders.
There is a zeta function whose terms involve norms of invertible integral ideals of orders. There
is another zeta function obtained by summing over 
norms of a larger set of ideals of an order, including non-invertible ideals. 
The uniqueness of primary decomposition (\Cref{prop:221}) implies that both of these 
zeta functions have Euler products; the latter one will have unusual factors at the finite set of maximal ideals of $\OO$ that 
contain the conductor ideal $\ff(\OO)$.
\item
{\em Ray class monoids and $L$-functions.}
If we allow non-invertible ideals, then the ray class group is enlarged to become a 
{\em ray class monoid}. We treat aspects of the structure of ray class monoids of orders in \cite{KLmonoid}. Additionally one can define $L$-functions associated to characters of ray class groups of orders.
If one allows non-invertible ideals, then one can also allow new $L$-functions using characters of ray class monoids. See \cite{CliffordP61, CliffordP67, Hill67, MasudaQS15, McAlister68, McAlister72} for the character theory of monoids and semigroups.
\end{enumerate}

There is a development of class field theory for orders 
inside an id\`elic framework formulated by Pengo \cite{Pengo20} and
detailed by Campagna and Pengo \cite[Sec.\ 4]{CampagnaP22}
(see in particular their Defn.\ 4.1). It remains to relate that id\`elic definition to the ray class group defined in \Cref{defn:61} of this paper.
The equivalence of the two notions of class fields of orders (in the case of no ramification at infinity considered by Campagna and Pengo)
has not been definitively established (but see \Cref{rmk:raystructure}). \\

\paragraph{\bf Acknowledgments.}
The authors thank Pete L. Clark, Francesco Campagna and Riccardo Pengo, and John Voight for helpful comments and references.
The first author was partially supported by the NSF grant DMS-2302514 and by University of Bristol, the Heilbronn Institute for Mathematical Research, and Purdue University. He thanks Trevor Wooley for support and for helpful mathematical conversations.
The second author was partially supported by NSF grant DMS-1701576.
\appendix

\section{Norms of ideals in orders of number fields}\label{appendix:norms}

Let $K$ be a number field and $\OO$ an order in $K$.
We give a criterion for multiplicativity of (absolute) norms of integral ideals of an order to hold; it does not hold in general.
We use this criterion to extend the notion of norm of an integral ideal of $\OO$ to norm of a fractional ideal of $\OO$.
We discuss the effect of change of order on norms of fractional ideals.

\begin{defn}\label{defn:norm}
Let $\aa$ be an integral $\OO$-ideal. If $\aa$ is nonzero, define the \textit{norm} of $\aa$ to be 
$\Nm_\OO(\aa) = \left[\OO : \aa\right]$, where $\left[\OO : \aa\right] = \abs{\OO/\aa}$ is the index of $\aa$ in $\OO$ as an abelian group.
Define $\Nm_\OO(0)=0$.
\end{defn}

For invertible ideals $\aa, \bb$, the norm is multiplicative: $\Nm(\aa\bb) = \Nm_{\OO}(\aa)\Nm_{\OO}(\bb)$.
The following proposition shows the stronger result that the norm of the product of two integral ideals is multiplicative 
whenever one of them
is invertible.

\begin{prop}\label{prop:normmult}
Let $\aa \in \rI^\ast(\OO)$ and $\bb \in \rI(\OO)$ (so $\aa$ is invertible, whereas $\bb$ may or may not be invertible).
Then, $\Nm_{\OO}(\aa\bb) = \Nm_{\OO}(\aa)\Nm_{\OO}(\bb)$.
\end{prop}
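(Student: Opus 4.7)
The plan is to reduce the claim to a statement about indices of $\Z$-modules and then use localization, exploiting the fact that invertible ideals are locally principal by \Cref{prop:lociso}. The case $\bb = 0$ is trivial since both sides vanish, so I assume $\bb$ is nonzero. Because $\aa\bb \subseteq \aa \subseteq \OO$ are all full-rank $\Z$-lattices in $K$, the tower law for indices gives
\begin{equation}
\Nm_\OO(\aa\bb) = [\OO : \aa\bb] = [\OO : \aa]\cdot[\aa : \aa\bb] = \Nm_\OO(\aa) \cdot [\aa : \aa\bb],
\end{equation}
so it suffices to prove $[\aa : \aa\bb] = [\OO : \bb]$.

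Since $\bb$ is nonzero, $\OO/\bb$ is a finite $\OO$-module, and so is $\aa/\aa\bb$. I would establish these two $\OO$-modules are isomorphic by working locally. For each maximal ideal $\pp$ of $\OO$, the localization map decomposes the finite torsion module $\OO/\bb$ as $\OO/\bb \cong \bigoplus_{\pp \supseteq \bb} \OO_\pp/\bb\OO_\pp$, and likewise $\aa/\aa\bb \cong \bigoplus_{\pp} \aa\OO_\pp/\aa\bb\OO_\pp$, with only finitely many nonzero summands (those indexed by primes $\pp$ containing $\bb$, since away from such $\pp$ we have $\bb\OO_\pp = \OO_\pp$ and therefore $\aa\bb\OO_\pp = \aa\OO_\pp$).

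By \Cref{prop:lociso}, invertibility of $\aa$ implies that $\aa\OO_\pp = \pi_\pp\OO_\pp$ for some $\pi_\pp \in K^\times$ at every maximal $\pp$. Since $\OO \subseteq K$ is an integral domain, multiplication by $\pi_\pp$ is an injective $\OO_\pp$-linear map $\OO_\pp \to K$ with image $\aa\OO_\pp$, hence an $\OO_\pp$-module isomorphism $\OO_\pp \stackrel{\sim}{\to} \aa\OO_\pp$ that sends $\bb\OO_\pp$ onto $\pi_\pp\bb\OO_\pp = \aa\bb\OO_\pp$. Passing to quotients yields $\OO_\pp/\bb\OO_\pp \cong \aa\OO_\pp/\aa\bb\OO_\pp$ as finite $\OO_\pp$-modules. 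Taking cardinalities and summing (equivalently, multiplying) over the finitely many primes $\pp$ in the support yields $[\OO : \bb] = [\aa : \aa\bb]$, completing the proof by the tower law.

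The main technical point to verify carefully is the local-global decomposition of the two finite torsion $\OO$-modules into direct sums of their localizations; this is a standard consequence of primary decomposition in the one-dimensional Noetherian domain $\OO$ (\Cref{prop:221}) together with the Chinese Remainder Theorem, but it is worth spelling out the identification $\aa\bb\OO_\pp = \aa\OO_\pp \cdot \bb\OO_\pp$ to ensure the localized numerators and denominators match across both decompositions.
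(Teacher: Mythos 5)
Your argument is correct, and it reaches the same reduction as the paper --- the tower law $\Nm_\OO(\aa\bb) = \Nm_\OO(\aa)\,[\aa : \aa\bb]$ followed by the identity $[\aa : \aa\bb] = [\OO : \bb]$, with local principality of $\aa$ (\Cref{prop:lociso}) as the essential input --- but the mechanism for the second step is genuinely different. The paper \emph{globalizes}: using the primary decompositions of $\aa$ and $\bb$ (\Cref{prop:221}), it chooses by an approximation/CRT argument a single element $\alpha \in \aa$ congruent to a local generator of $\aa\OO_{\pp_j}$ modulo $\qq_j\rr_j$ at each relevant prime, and then verifies directly that $x \mapsto \alpha x + \aa\bb$ is a surjective $\OO$-module map $\OO \to \aa/\aa\bb$ with kernel exactly $\bb$, giving a global isomorphism $\OO/\bb \cong \aa/\aa\bb$. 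You instead stay local throughout: you decompose the two finite torsion modules $\OO/\bb$ and $\aa/\aa\bb$ into direct sums of their localizations at the finitely many primes containing $\bb$, produce an isomorphism $\OO_\pp/\bb\OO_\pp \cong \aa\OO_\pp/\aa\bb\OO_\pp$ at each prime by multiplication by a local generator $\pi_\pp$, and multiply cardinalities. This buys you a cleaner argument --- no global element to construct, no surjectivity or kernel computation --- at the cost of invoking the structure theory of finite modules over a one-dimensional Noetherian domain (that a finite module is the direct sum of its localizations over its support, and that localization commutes with ideal products so that $(\aa\bb)\OO_\pp = (\aa\OO_\pp)(\bb\OO_\pp)$). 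You correctly flag both of these as the points to spell out; since $\aa/\aa\bb$ is annihilated by $\bb$ and is therefore a module over the Artinian ring $\OO/\bb \cong \prod_j \OO/\rr_j$, the decomposition follows from primary decomposition and the Chinese Remainder Theorem exactly as you indicate, so there is no gap.
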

\begin{proof}
If $\bb=0$, then $\Nm_{\OO}(\aa\bb) = 0 = \Nm_{\OO}(\aa)\Nm_{\OO}(\bb)$. Assume from now on that $\bb \neq 0$.

The norm of $\aa\bb$ and the norm of $\aa$ are related by the following equation:
\begin{equation}\label{eq:norm1}
\Nm_{\OO}(\aa\bb) = [\OO : \aa\bb] = [\OO : \aa] [\aa : \aa\bb] = \Nm_{\OO}(\aa)[\aa : \aa\bb].
\end{equation}
Let $\{\pp_1, \ldots, \pp_k\}$ be the set of maximal ideals of $\OO$ containing $\aa\bb$.
Using primary decomposition (\Cref{prop:221}), we may write
\begin{align}
\aa &= \bigcap_{j=1}^k \qq_j = \prod_{j=1}^k \qq_j, & \bb &= \bigcap_{j=1}^k \rr_j = \prod_{j=1}^k \rr_j,
\end{align}
where $\qq_j$ and $\rr_j$ are separately either primary ideals having radical $\pp_j$, or else equal to the unit ideal $\OO$.
Locally, $\aa\OO_{\pp_j} = \qq_j\OO_{\pp_j}$ and $\bb\OO_{\pp_j} = \rr_j\OO_{\pp_j}$.
Moreover since $\aa$ is invertible, by \Cref{prop:lociso} it  is locally principal, so we may write $\aa\OO_{\pp_j} = \alpha_j\OO_{\pp_j}$ for $1 \leq j \leq k$. 
Choose some $\alpha \in \OO$ such that $\alpha \equiv \alpha_j \Mod{\qq_j \rr_j}$ for $1 \leq j \leq k$.
These conditions imply $\alpha \in \aa$,
Define an additive group homomorphism (indeed, an isomorphism of $\OO$-modules)
\begin{equation}
\phi : \OO \to \aa/\aa\bb
\end{equation}
by $\phi(x) = \alpha x + \aa\bb$.

We first show that $\phi$ is surjective. Consider $y \in \aa$. Locally in $\OO_{\pp_j}$, write $y = \alpha_j x_j$ for some $x_j \in \OO_{\pp_j}$. Choose some $x \in \OO$ such that $x \equiv x_j \Mod{\qq_j\rr_j}$ for $1 \leq j \leq k$. Thus, $y \equiv \alpha x \Mod{\qq_j\rr_j}$ for $1 \leq j \leq k$, so
\begin{equation}
y - \alpha x \in \bigcap_{j=1}^k \qq_j\rr_j = \prod_{j=1}^k \qq_j\rr_j = \aa\bb.
\end{equation}
That is, $\phi(x) = y + \aa\bb$.

We now compute the kernel of $\phi$.
We have $\phi(x)=0$ if and only if $\alpha x \in \aa\bb$. 
Now $\alpha \in \aa$, so $\alpha x \in \aa\bb$ whenever $x \in \bb$.
Conversely, suppose $\alpha x \in \aa\bb$. Then, in the local ring $\OO_{\pp_j}$, we have
$\alpha x \in \aa \bb \OO_{\pp_j} = \alpha_j\rr_j\OO_{\pp_j}$. Also, 
$\alpha-\alpha_j \in \qq_{j}\rr_{j}\OO_{\pp_j} = \alpha_{j}\rr_{j}\OO_{\pp_j}$, and thus $\alpha_j x = \alpha x - (\alpha -\alpha_j)x \in \alpha_j\rr_j\OO_{\pp_j}$. Dividing, $x \in \rr_j\OO_{\pp_j}$, so $x \in \rr_j$ (because $x \in \OO$). Thus,
\begin{equation}
x \in \bigcap_{j=1}^k \rr_j = \bb.
\end{equation}
So $\ker \phi = \bb$. 

By the first isomorphism theorem, there is an isomorphism of abelian groups (indeed, of $\OO$-modules)
\begin{equation}
\OO/\bb \isom \aa/\aa\bb.
\end{equation}
Equating the sizes of the two abelian groups, $[\aa : \aa\bb] = [\OO : \bb] = \Nm_\OO(\bb)$. Substituting into \eqref{eq:norm1}, $\Nm_\OO(\aa\bb) = \Nm_\OO(\aa)\Nm_\OO(\bb)$.
\end{proof}

The norm also enjoys a multiplicativity property for coprime ideals, allowing the norm to be calculated 
from its values on primary ideals.
\begin{prop}\label{prop:multcoprime}
If $\aa,\bb$ are coprime integral $\OO$-ideals, then $\Nm_{\OO}(\aa\bb) = \Nm_{\OO}(\aa)\Nm_{\OO}(\bb)$. In particular, if $\mm$ is any nonzero integral $\OO$-ideal with primary decomposition
\begin{equation}
\mm  = \bigcap_{i=1}^n \qq_i = \prod_{i=1}^n \qq_i,
\end{equation}
then its norm is given by $\Nm_{\OO}(\mm) =  \prod_{i=1}^n \Nm_{\OO}(\qq_i)$.
\end{prop}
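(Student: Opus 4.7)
The plan is to deduce multiplicativity for coprime ideals from the Chinese Remainder Theorem, then bootstrap to the primary decomposition statement by checking that primary ideals with distinct radicals are coprime.

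For the first assertion, suppose $\aa, \bb$ are coprime integral $\OO$-ideals. I will first establish that $\aa \cap \bb = \aa \bb$. The inclusion $\aa\bb \subseteq \aa \cap \bb$ is automatic. For the reverse, use coprimality to write $1 = a + b$ with $a \in \aa$, $b \in \bb$; then for any $x \in \aa \cap \bb$, the decomposition $x = xa + xb$ puts $xa \in \bb \cdot \aa = \aa\bb$ (using $x \in \bb$ and $a \in \aa$) and $xb \in \aa \cdot \bb = \aa\bb$ (using $x \in \aa$ and $b \in \bb$), so $x \in \aa\bb$. Next, the Chinese Remainder Theorem provides a ring isomorphism
\begin{equation}
\OO/\aa\bb \;=\; \OO/(\aa \cap \bb) \;\isom\; \OO/\aa \times \OO/\bb.
\end{equation}
Comparing cardinalities (as abelian groups) yields $\Nm_{\OO}(\aa\bb) = \Nm_{\OO}(\aa)\Nm_{\OO}(\bb)$.

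For the second assertion, I apply this coprime multiplicativity inductively to the factors of the primary decomposition. The key point to verify is that the $\qq_i$ are pairwise coprime. Suppose for contradiction that $\qq_i + \qq_j \neq \OO$ for some $i \neq j$; then $\qq_i + \qq_j$ is contained in some maximal ideal $\mm$ of $\OO$. Since $\mm \supseteq \qq_i$, it contains $\rad(\qq_i) = \pp_i$, and since $\OO$ is one-dimensional, $\pp_i$ is itself maximal, so $\mm = \pp_i$. By the same reasoning applied to $\qq_j$, we get $\mm = \pp_j$, contradicting the assumption in \Cref{prop:221} that the radicals are pairwise distinct. Hence $\qq_i + \qq_j = \OO$ for all $i \neq j$. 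An easy induction (combining one $\qq_i$ at a time, and using that the ideal product $\qq_1 \cdots \qq_{n-1}$ remains coprime to $\qq_n$, since a sum expression $1 = u_k + v_k$ with $u_k \in \qq_k$, $v_k \in \qq_n$ for $k < n$ multiplies to give $1 \in \qq_1 \cdots \qq_{n-1} + \qq_n$) then extends the two-factor case to
\begin{equation}
\Nm_{\OO}(\mm) \;=\; \Nm_{\OO}\!\left(\prod_{i=1}^{n} \qq_i\right) \;=\; \prod_{i=1}^{n} \Nm_{\OO}(\qq_i).
\end{equation}

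There is no real obstacle here; the argument is a standard Chinese Remainder Theorem computation. The only item requiring a moment of thought is the pairwise coprimality of the primary factors, which fails in higher Krull dimension but holds in our setting because every nonzero prime ideal of $\OO$ is maximal. Note that, unlike \Cref{prop:normmult}, no invertibility hypothesis on $\aa$ or $\bb$ is needed here; coprimality alone drives the argument.
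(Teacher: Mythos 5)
Your proof is correct and follows essentially the same route as the paper: the Chinese Remainder Theorem applied after checking $\aa\cap\bb=\aa\bb$, followed by pairwise coprimality of the primary factors (which the paper merely asserts and you verify using that nonzero primes are maximal). The only detail the paper treats that you skip is the degenerate case $\aa=0$ (where the index is infinite but $\Nm_\OO(0)=0$ by convention), which is trivial since coprimality then forces $\bb=\OO$.
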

\begin{proof}
When $\aa$ or $\bb$ is zero, $\Nm_{\OO}(\aa\bb) = 0 = \Nm_{\OO}(\aa)\Nm_{\OO}(\bb)$. Otherwise, $\aa\bb = \aa \cap \bb$, and by the Chinese Remainder Theorem \cite[Ch.~7, Thm.~17]{DF:04},
\begin{equation}
\OO/(\aa\bb) = \OO/(\aa \cap \bb) \isom \OO/\aa \oplus \OO/\bb,
\end{equation}
so $\Nm_{\OO}(\aa\bb) = \abs{\OO/(\aa\bb)} = \abs{\OO/\aa} \cdot \abs{\OO/\bb} = \Nm_{\OO}(\aa)\Nm_{\OO}(\bb)$.

Primary ideals associated to different maximal ideals are coprime, so the norm of an nonzero integral $\OO$-ideal is the product of the norms of its primary constituents. 
\end{proof}

The norm need not be multiplicative if both ideals $\aa$, $\bb$ are non-invertible and non-coprime. Marseglia \cite{Marseglia20b} observed in a single order $\OO$ instances of both strict submultiplicativity $\Nm_{\OO}(\aa \bb) < \Nm_{\OO}(\aa)\Nm_{\OO}(\bb)$ and strict supermultiplicativity $\Nm_{\OO}(\aa\bb) > \Nm_{\OO}(\aa) \Nm_{\OO}(\bb)$.

\begin{eg}[Super-multiplicativity and sub-multiplicativity of norms] 
The multiplicativity condition $\Nm_{\OO}(\aa\bb) = \Nm_{\OO}(\aa) \Nm_{\OO}(\bb)$ for $\aa, \bb \in \rI(\OO)$ can only fail when both $\aa$ and $\bb$ are not invertible.
In particular, neither $\aa$ nor $\bb$ can be coprime to the conductor $\ff(\OO)$.

\begin{itemize}
\item[(1)] An example of strict super-multiplicativity of norms was given in \Cref{exam:214}. It showed
in $\OO= \Z[2i]$ having conductor ideal $\QQ_2= 2\OO_K$, where $\OO_K= \Z[i]$, that:
\begin{equation}
8 = \Nm_{\OO}(( \QQ_2)^2) > \left(\Nm_{\OO} (\QQ_2)\right)^2 =4.
\end{equation}
\item[(2)] An example  of strict sub-multiplicativity of norms are given by  Marseglia \cite[Ex.\ 3.4]{Marseglia20b}.
The example takes $K=\Q(\alpha)$, where $\alpha$ is a root of a monic irreducible polynomial of degree $4$
with integer coefficients, such as $x^4-x-1$. Consider the order
\begin{equation}
\OO := \Z + p \Z[\alpha]= \Z + p\alpha\Z + p\alpha^2\Z + p\alpha^3\Z,
\end{equation}
where $p \ge 5$ is a rational prime number. Then the lattices
\begin{align}
\aa &:= p\Z + p \alpha \Z + p^2 \alpha^2\Z + p^2\alpha^3 \Z \mbox{ and} \\
\bb &:= p\Z + p^2\alpha\Z + p \alpha^2\Z + p^2\alpha^3\Z.
\end{align}
are $\OO$-ideals, and
\begin{equation}
\aa\bb = p^2 \Z + p^2\alpha\Z + p^2 \alpha^2\Z + p^2 \alpha^3\Z.
\end{equation}
We have 
\begin{equation}
p^5 = \Nm_{\OO}(\aa \bb) < \Nm_{\OO}(\aa) \Nm_{\OO}(\bb) = p^3 \cdot p^3= p^6.
\end{equation}
\end{itemize}
Marseglia gave an example of strict super-multiplicativity in this order $\OO$ as well. 
\end{eg}

\Cref{prop:normmult} will justify an extension of the norm to fractional ideals.

\begin{defn}\label{defn:fracnorm}
Let $\dd \in \rJ(\OO)$, and write $\dd = \colonideal{\aa}{\bb} = \aa\bb^{-1}$ for some $\aa \in \rI(\OO)$ and $\bb \in \rI^\ast(\OO)$.
Define $\Nm_\OO(\dd) = \frac{\Nm_\OO(\aa)}{\Nm_\OO(\bb)}$. (The next proposition shows this norm is well-defined.) 
\end{defn}

\begin{prop}\label{prop:normfracmult}
The norm of a fractional ideal $\dd$  in \Cref{defn:fracnorm} is well-defined. 
If  $\dd \subseteq \OO$, then its fractional ideal norm agrees with its integral ideal norm.
If $\cc \in \rJ^\ast(\OO)$ and $\dd \in \rJ(\OO)$, then $\Nm_\OO(\cc\dd) = \Nm_\OO(\cc)\Nm_\OO(\dd)$. 
\end{prop}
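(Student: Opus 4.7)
The plan is to deduce all three assertions from Proposition \ref{prop:normmult}, which gives multiplicativity of the integral ideal norm whenever one factor is invertible. First I would check that the norm is well-defined. Suppose
\begin{equation}
\dd = \aa_1\bb_1^{-1} = \aa_2\bb_2^{-1}
\end{equation}
for $\aa_i \in \rI(\OO)$ and $\bb_i \in \rI^\ast(\OO)$. Clearing denominators gives the equality of fractional ideals $\aa_1\bb_2 = \aa_2\bb_1$, and since both sides are products of integral ideals, this is an equality in $\rI(\OO)$. Applying Proposition \ref{prop:normmult} (each product contains an invertible factor $\bb_i$) yields
\begin{equation}
\Nm_\OO(\aa_1)\Nm_\OO(\bb_2) = \Nm_\OO(\aa_1\bb_2) = \Nm_\OO(\aa_2\bb_1) = \Nm_\OO(\aa_2)\Nm_\OO(\bb_1),
\end{equation}
so the ratios $\Nm_\OO(\aa_i)/\Nm_\OO(\bb_i)$ agree. (Note that a valid decomposition always exists: given any $\dd \in \rJ(\OO)$, choose $\lambda \in \OO$ nonzero with $\lambda\dd \subseteq \OO$; then $\dd = (\lambda\dd)(\lambda\OO)^{-1}$ with $\lambda\OO$ principal and hence invertible.)

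The second claim is immediate from the definition: if $\dd \subseteq \OO$, take $\aa = \dd$ and $\bb = \OO$, noting $\Nm_\OO(\OO) = [\OO:\OO] = 1$, so the fractional norm $\Nm_\OO(\aa)/\Nm_\OO(\bb)$ recovers the integral norm.

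For multiplicativity, write $\cc = \aa_1\bb_1^{-1}$ and $\dd = \aa_2\bb_2^{-1}$ with $\bb_1, \bb_2 \in \rI^\ast(\OO)$. Because $\cc$ is invertible and $\bb_1$ is invertible, $\aa_1 = \cc\bb_1$ is an invertible integral ideal by Lemma \ref{lem:fract-invertible}(1). Now
\begin{equation}
\cc\dd = (\aa_1\aa_2)(\bb_1\bb_2)^{-1},
\end{equation}
and $\bb_1\bb_2$ is invertible. Applying Proposition \ref{prop:normmult} twice — once to $\aa_1\aa_2$ (using invertibility of $\aa_1$) and once to $\bb_1\bb_2$ — gives
\begin{equation}
\Nm_\OO(\cc\dd) = \frac{\Nm_\OO(\aa_1\aa_2)}{\Nm_\OO(\bb_1\bb_2)} = \frac{\Nm_\OO(\aa_1)\Nm_\OO(\aa_2)}{\Nm_\OO(\bb_1)\Nm_\OO(\bb_2)} = \Nm_\OO(\cc)\Nm_\OO(\dd).
\end{equation}

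There is no real obstacle here; the only subtle point is recognizing that well-definedness requires \emph{two} applications of Proposition \ref{prop:normmult} (rather than the weaker coprime case of Proposition \ref{prop:multcoprime}), since the integral ideals $\aa_1, \aa_2, \bb_1, \bb_2$ in two different decompositions of the same fractional ideal need not be coprime in any useful way.
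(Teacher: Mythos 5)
Your proof is correct and follows essentially the same route as the paper's: well-definedness via cross-multiplying two decompositions and applying \Cref{prop:normmult} (using invertibility of the $\bb_i$), the integral case by taking $\bb = \OO$, and multiplicativity by multiplying the decompositions and applying \Cref{prop:normmult} to numerator and denominator. Your explicit remarks that a decomposition always exists and that $\aa_1 = \cc\bb_1$ is invertible (justifying the application of \Cref{prop:normmult} to $\aa_1\aa_2$) are small refinements of details the paper leaves implicit, but the argument is the same.
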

\begin{proof}
If $\dd \in \rJ(\OO)$ and $\dd = \aa_1\bb_1^{-1} = \aa_2\bb_2^{-1}$ for some $\aa_1, \aa_2 \in \rI(\OO)$ and $\bb_1, \bb_2 \in \rI^\ast(\OO)$, then $\aa_1\bb_2 = \aa_2\bb_1$, so $\Nm_\OO(\aa_1)\Nm_\OO(\bb_2) = \Nm_\OO(\aa_2)\Nm_\OO(\bb_1)$ by \Cref{prop:normmult}. Thus, $\frac{\Nm_\OO(\aa_1)}{\Nm_\OO(\bb_1)} = \frac{\Nm_\OO(\aa_2)}{\Nm_\OO(\bb_2)}$, so $\Nm(\dd)$ is well-defined.

In the case $\dd \subseteq \OO$, we may choose $\aa_1= \dd$ and $\bb_1=\OO$, so the integral ideal norm agrees with the fractional ideal norm.

Now, consider $\cc \in \rJ^\ast(\OO)$ and $\dd \in \rJ(\OO)$. Write $\cc = \aa_1\bb_1^{-1}$ and $\dd = \aa_2\bb_2^{-1}$ for $\aa_1,\bb_1,\bb_2 \in \rJ^\ast(\OO)$ and $\aa_2 \in \rJ(\OO)$. Then, $\cc\dd = (\aa_1\aa_2)(\bb_1\bb_2)^{-1}$, so
\begin{equation}
\Nm_\OO(\cc\dd) = \frac{\Nm_\OO(\aa_1\aa_2)}{\Nm_\OO(\bb_1\bb_2)} = \frac{\Nm_\OO(\aa_1)\Nm_\OO(\aa_2)}{\Nm_\OO(\bb_1)\Nm_\OO(\bb_2)} = \Nm_\OO(\cc)\Nm_\OO(\dd),
\end{equation}
using \Cref{prop:normmult} and \Cref{defn:fracnorm}.
\end{proof}

\begin{eg}[Behavior of norms of non-integral fractional ideals having an ideal power that is an integral ideal]
\label{exam:A7} 
Consider the non-maximal order $\OO=\Z[2i]$ of the Gaussian field $K=\Q(i)$, with maximal order $\OO_K= \Z[i]$. 
\begin{itemize}
\item[(1)] The (non-integral) fractional $\OO$-ideal 
\begin{equation}
\rr_1 := (1+i)\OO = 4\Z + (1+i)\Z,
\end{equation}
is a principal fractional $\OO$-ideal, hence it is an invertible fractional $\OO$-ideal. 
Here $\rr_1= \cc \dd^{-1}$ where $\cc= 2(1+i)\OO$ and $\dd= 2\OO$ are invertible integral $\OO$-ideals, from which we may compute $\Nm_{\OO}(\rr_1)=2$.
 
Recall from \Cref{exam:224} that $\rr_1$ has ideal square $\rr_1^2= 2i \OO = \qq_4'$, which was shown to be an irreducible integral $\OO$-ideal in \Cref{exam:214}, having $\Nm_{\OO}(\qq_4')=4$. The fractional $\OO$-ideal $\rr_1$ then has fractional $\OO$-ideal  norm $\Nm_{\OO}(\rr_1) =2$.
 
The equality $\Nm_{\OO}(\rr_1^2) = (\Nm_{\OO}(\rr_1))^2=4$ is consistent with \Cref{prop:normfracmult}, since $\rr_1$ is an invertible ideal.
 
\item[(2)] The (non-integral) fractional $\OO$-ideal
\begin{equation}
\rr_2:= (1+i)\OO_K = 2\Z + (1+i)\Z,
\end{equation}
is a non-invertible fractional $\OO$-ideal. 
Viewed as a fractional $\OO$-ideal, $\Nm_{\OO}(\rr_2) = 2$,
using $\rr_2 = \cc_2 (\dd_2)^{-1}$ with $\cc_2= 2(1+i)\OO_K$ and $\dd_2 = 2\OO$.
Next we have 
\begin{equation}
(\rr_2)^2 = 2 \OO_K = \QQ_2,
\end{equation}
and $\rr_2^2$  is  a non-invertible $\OO$-ideal  since it is an $\OO_K$-ideal.
We have  
$\Nm_{\OO}(\QQ_2) = 2$. 
Consequently, 
\begin{equation}
2= \Nm_{\OO}(\rr_2^2)\ne (\Nm_{\OO}(\rr_2))^2=4,
\end{equation}
showing that the hypothesis of  $\OO$-invertibility of at least  one of $\cc$ and $\dd$
cannot be relaxed  in \Cref{prop:normfracmult}.

\item[(3)] The (non-integral) fractional $\OO$-ideal
\begin{equation}
\rr_3:= i \OO= 2\Z + i\Z
\end{equation}
is an invertible fractional $\OO$-ideal and it has square $(\rr_3)^2= \OO$,
which is  an  invertible integral $\OO$-ideal.
In this case, $\Nm_{\OO}( \rr_3) = 1$.
\end{itemize}
\end{eg} 

The final proposition shows that norms of invertible fractional $\OO$-ideals are preserved under extension. 

\begin{prop}\label{prop:B5} 
Suppose $\OO \subseteq \OO'$ for $K$-orders, and let $\ext$ be the extension map from fractional ideals on $\OO$ to fractional ideals on $\OO'$. 
If $\cc \in \rJ^\ast(\OO)$, then $\Nm_{\OO'}(\ext(\cc)) = \Nm_{\OO}(\cc)$.
\end{prop}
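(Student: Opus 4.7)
The plan has three stages: reduce to integral invertible ideals, decompose the resulting global index identity into primary components, and prove each local identity by a short index cancellation. First, write $\cc = \aa\bb^{-1}$ with $\aa,\bb \in \rI^{\ast}(\OO)$ via \Cref{lem:coloninv}. Because $\ext$ is a monoid homomorphism preserving invertibility (\Cref{subsec:31}), $\ext(\cc) = \ext(\aa)\ext(\bb)^{-1}$ in $\rJ^{\ast}(\OO')$. By \Cref{prop:normfracmult}, both $\Nm_{\OO}(\cc)$ and $\Nm_{\OO'}(\ext(\cc))$ split as ratios of norms of integral invertible ideals, so it suffices to prove $[\OO : \aa] = [\OO' : \aa\OO']$ for every invertible integral $\OO$-ideal $\aa$.

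For such an $\aa$, both $\OO/\aa$ and $\OO'/\aa\OO'$ are finite abelian groups---the latter because $\aa$ contains some nonzero $\gamma \in \OO$, so $\aa\OO' \supseteq \gamma\OO'$, which has finite index in $\OO'$. As $\OO$-modules, each decomposes as a direct sum of $\pp$-primary components over the finite set of maximal ideals $\pp$ of $\OO$ containing $\aa$ (by \Cref{prop:221} applied at the module level), yielding
\begin{equation*}
[\OO : \aa] = \prod_{\pp}\,[\OO_{\pp} : \aa\OO_{\pp}], \qquad [\OO' : \aa\OO'] = \prod_{\pp}\,[\OO'_{\pp} : \aa\OO'_{\pp}],
\end{equation*}
where $\OO'_{\pp} := \OO' \otimes_{\OO} \OO_{\pp}$ is the semilocalization of $\OO'$ at the multiplicative set $\OO \setminus \pp$. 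In the second formula, the maximal ideals of $\OO'_{\pp}$ correspond by \Cref{lem:conextmax} to maximal ideals $\pp'$ of $\OO'$ with $\pp' \cap \OO = \pp$, which justifies the grouping. It thus suffices to prove the local equality $[\OO_{\pp} : \aa\OO_{\pp}] = [\OO'_{\pp} : \aa\OO'_{\pp}]$ at each such $\pp$.

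For each maximal $\pp$, invertibility of $\aa$ and \Cref{prop:lociso} yield $\aa\OO_{\pp} = \alpha_{\pp}\OO_{\pp}$ for a nonzero $\alpha_{\pp} \in \OO_{\pp}$, whence $\aa\OO'_{\pp} = \alpha_{\pp}\OO'_{\pp}$ by base change. The abelian-group index $[\OO'_{\pp} : \OO_{\pp}]$ is finite because $\OO'_{\pp}/\OO_{\pp} \isom (\OO'/\OO)_{\pp}$ is a localization of the finite group $\OO'/\OO$. Since $\OO'_{\pp}$ is a domain, multiplication by $\alpha_{\pp} \neq 0$ is an injective additive endomorphism of $\OO'_{\pp}$ that restricts to a bijection $\OO'_{\pp}/\OO_{\pp} \to \alpha_{\pp}\OO'_{\pp}/\alpha_{\pp}\OO_{\pp}$, so $[\alpha_{\pp}\OO'_{\pp} : \alpha_{\pp}\OO_{\pp}] = [\OO'_{\pp} : \OO_{\pp}]$. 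Computing $[\OO'_{\pp} : \alpha_{\pp}\OO_{\pp}]$ via the two chains $\OO'_{\pp} \supseteq \OO_{\pp} \supseteq \alpha_{\pp}\OO_{\pp}$ and $\OO'_{\pp} \supseteq \alpha_{\pp}\OO'_{\pp} \supseteq \alpha_{\pp}\OO_{\pp}$, and cancelling the finite common factor $[\OO'_{\pp} : \OO_{\pp}]$, produces the desired local identity. The main obstacle is the primary-decomposition bookkeeping---identifying $\OO'_{\pp}$ as semilocal rather than local and grouping primes of $\OO'$ correctly over each $\pp$---since the final cancellation is elementary once the local indices are known to be finite.
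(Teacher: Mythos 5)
Your proof is correct, but it takes a genuinely different route from the paper's. The paper argues globally and without localization: it chooses an integer $d$ with $d\OO' \subseteq \OO$ and $d\cc\OO' \subseteq \OO$, computes $\Nm_{\OO'}(d\cc\OO')$ and $\Nm_{\OO}(d\cc\OO')$ each in two ways --- via the multiplicativity of \Cref{prop:normfracmult} and via the $\Z$-module index chain $d\cc\OO' \subseteq \OO \subseteq \OO'$ --- and then cancels the factors $d^n$ and $[\OO':\OO]$. You instead first reduce to invertible \emph{integral} ideals by writing $\cc = \aa\bb^{-1}$ (legitimate, since the fractional norm is by definition the corresponding ratio and $\ext$ preserves integrality and invertibility), then localize at each maximal ideal $\pp$ of $\OO$, use local principality from \Cref{prop:lociso} to write $\aa\OO_{\pp} = \alpha_{\pp}\OO_{\pp}$, and cancel the finite index $[\OO'_{\pp}:\OO_{\pp}]$ between the two chains $\OO'_{\pp} \supseteq \OO_{\pp} \supseteq \alpha_{\pp}\OO_{\pp}$ and $\OO'_{\pp} \supseteq \alpha_{\pp}\OO'_{\pp} \supseteq \alpha_{\pp}\OO_{\pp}$. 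The cancellation trick is structurally the same as the paper's, but performed one prime at a time; what your version buys is a transparent conceptual reason for the result (the norm is a product of local indices, invertible ideals are locally principal, and multiplying by a fixed nonzero element does not change the index of $\OO_{\pp}$ in $\OO'_{\pp}$), at the cost of the semilocalization and finite-module bookkeeping that the paper's proof avoids entirely. One small imprecision: the identity $[\OO':\aa\OO'] = \prod_{\pp}[\OO'_{\pp}:\aa\OO'_{\pp}]$ rests on the standard decomposition of a finite-length $\OO$-module as the direct sum of its localizations, which is not literally \Cref{prop:221} (a statement about primary decomposition of ideals); the fact is standard, so this is a citation issue rather than a gap.
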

\begin{proof}
By definition $\ext(\cc) = \cc\OO'$, and we wish to show
\begin{equation}
\Nm_{\OO'}(\cc \OO') = \Nm_{\OO}(\cc).
\end{equation}
Pick a positive integer $d \in \N$ so that $d (\OO' + \cc\OO') \subseteq \OO$; it follows that $d\OO' \subseteq \OO$ and $d\cc\OO' \subseteq \OO$. Let $n$ be the degree of the field extension $K/\Q$. Then $d\OO'$ is an integral $\OO'$-ideal with 
\begin{equation}
\Nm_{\OO'}(d\OO') = [\OO' : d\OO'] = d^n.
\end{equation}
Since $d\OO'$  is an invertible $\OO'$-ideal,    \Cref{prop:normfracmult} for $\OO'$-ideals gives
\begin{equation}
\Nm_{\OO'}(d\cc\OO') 
= \Nm_{\OO'}((d\OO')(\cc\OO'))
= \Nm_{\OO'}(d\OO')\Nm_{\OO'}(\cc\OO').
\end{equation}
Since $d\cc\OO'$ is an integral $\OO'$-ideal (because it is contained in $\OO$),
\begin{align}
\Nm_{\OO'}(d\cc\OO') &= [\OO':d\cc\OO'].
\end{align}
We have $d\cc\OO' \subseteq \OO \subseteq \OO'$,
so the following index relations on $\Z$-modules hold:
\begin{equation} 
[\OO':d\cc\OO'] = [\OO': \OO] [\OO: d\cc\OO'] = [\OO':\OO] \Nm_{\OO}(d\cc\OO').
\end{equation} 
Combining the four calculations thus far, in reverse order, we obtain
\begin{equation}\label{eq:normrel1}
[\OO':\OO] \Nm_{\OO}(d\cc\OO') = d^n \Nm_{\OO'}(\cc\OO').
\end{equation}
Since $\cc$ is an invertible $\OO$-ideal, \Cref{prop:normfracmult} 
for $\OO$-ideals gives
\begin{equation}
\Nm_{\OO}(d\cc\OO') 
= \Nm_{\OO}((d\OO')\cc) 
= \Nm_\OO(\cc)\Nm_\OO(d\OO').
\end{equation}
Since $d\OO' \subseteq \OO \subseteq \OO'$, a $\Z$-module index calculation yields
\begin{equation}
\Nm_{\OO}(d\OO') = [\OO : d\OO']= \frac{[\OO': d\OO']}{[\OO': \OO]} = \frac{d^n}{[\OO':\OO]}.
\end{equation}
Combining the calculations in the previous two lines gives
\begin{equation}\label{eq:normrel2}
\Nm_{\OO}(d\cc\OO') 
= \Nm_\OO(\cc) \, \frac{d^n}{[\OO':\OO]}
= \frac{d^n\Nm_\OO(\cc)}{[\OO':\OO]}.
\end{equation}
Substituting the right-hand side of \eqref{eq:normrel2} for $\Nm_{\OO}(d\cc\OO')$   into \eqref{eq:normrel1} yields
\begin{equation}
[\OO':\OO] \, \frac{d^n\Nm_\OO(\cc)}{[\OO':\OO]} = d^n \Nm_{\OO'}(\cc\OO'),
\end{equation}
which simplifies to 
$\Nm_\OO(\cc) = \Nm_{\OO'}(\cc\OO')$.
\end{proof}

\begin{eg}[Change of norm under extension for non-invertible ideals]\label{exam:A9} 
The norm of a non-invertible  $\OO$-ideal  may change under extension.
For any non-maximal order $\OO$ the  (absolute) conductor ideal $\ff(\OO) = \ff_{\OO_K}(\OO)$ 
is non-invertible integral $\OO$-ideal that is an $\OO_K$-ideal, whose norm will always
increase under extension to $\OO_K$. We have $\ext(\ff(\OO)) = \ff(\OO)$, and
$\Nm_{\OO}( \ff(\OO)) = [ \OO: \ff(\OO)]$,  while
\begin{equation}
\Nm_{\OO_K}(\ff(\OO))= [\OO_K: \ff(\OO)] = [\OO_K: \OO] [\OO: \ff(\OO)]=[\OO_K: \OO]\Nm_{\OO}(\ff(\OO)).
\end{equation}

The usefulness of \Cref{prop:B5} is that it applies to invertible fractional ideals $\cc$
that are not coprime to the relative conductor ideal $\ff_{\OO'}(\OO)$. 
For the non-maximal order $\OO=\Z[2i]$ of the Gaussian field $K=\Q(i)$, treated in \Cref{exam:A7}, the ideal
$\rr_1 := (1+i)\OO = 4\Z + (1+i)\Z$ is a principal fractional $\OO$-ideal 
so is $\OO$-invertible. It is not coprime to the conductor ideal
$\ff(\OO) = \ff_{\OO_K}(\OO)= 2\OO_K$. 
Noting that $\rr_1 \OO_K= (1+i)\OO_K$, it follows that $\Nm_{\OO_K}(\rr_1 \OO_K)= \Nm_{\OO}(\rr_1) =2$, consistent with \Cref{prop:B5}. 
\end{eg}


\end{document}